\renewcommand\labelenumi{(\roman{enumi})}
\renewcommand\theenumi\labelenumi
\newcommand{\abs}[1]{{\left\lvert{#1}\right\rvert}}
\newcommand{\norm}[1]{{\left\lVert{#1}\right\rVert}}
\newcommand{\br}[1]{{\left({#1}\right)}}
\def \Int{\,\Int\,}
\def \iint{\int\!\!\!\int}
\def\Xint#1{\mathchoice
    {\XXint\displaystyle\textstyle{#1}}%
    {\XXint\textstyle\scriptstyle{#1}}%
    {\XXint\scriptstyle\scriptscriptstyle{#1}}%
    {\XXint\scriptscriptstyle\scriptscriptstyle{#1}}%
    \!\int}
    \def\XXint#1#2#3{{\setbox0=\hbox{$#1{#2#3}{\int}$}
    \vcenter{\hbox{$#2#3$}}\kern-.5\wd0}}
    \def\fint{\Xint-}
\def\Xint#1{\mathchoice
    {\XXint\displaystyle\textstyle{#1}}%
    {\XXint\textstyle\scriptstyle{#1}}%
    {\XXint\scriptstyle\scriptscriptstyle{#1}}%
    {\XXint\scriptscriptstyle\scriptscriptstyle{#1}}%
    \!\int}
    \def\XXint#1#2#3{{\setbox0=\hbox{$#1{#2#3}{\int}$}
    \vcenter{\hbox{$#2#3$}}\kern-.5\wd0}}
    \def\dashint{\Xint-}
\def\div{\mathop{\rm div}}
\def \N{ \mathbb{N} }
\def \R{ \mathbb{R} }
\def \Ncal { \mathcal{N} }
\def \Ocal{ \mathcal{O} }
\def \Scal{ \mathcal{S} }
\def \hh{ \mathrm{H} }
\def \pp{ \mathrm{P} }
\def \Grm{ \mathrm{G} }
\newcommand{\loc}{\text{{\rm loc}}}
\renewcommand{\Re}{{\rm Re}\,}
\def \re{ \mathbb{R} }
\def\esssup{\mathop\mathrm{\,ess\,sup\,}}
\renewcommand{\Int}{{\rm Int}\,}
\DeclareMathOperator{\supp}{supp}
\renewcommand{\chi}{{\bf 1}}
\theoremstyle{plain}
\newtheorem{theorem}[equation]{Theorem}
\newtheorem{lemma}[equation]{Lemma}
\newtheorem{corollary}[equation]{Corollary}
\newtheorem{proposition}[equation]{Proposition}
\theoremstyle{definition}
\newtheorem{definition}[equation]{Definition}
\theoremstyle{remark}
\newtheorem{remark}[equation]{Remark}
\numberwithin{equation}{section}
\numberwithin{equation}{section}
\numberwithin{equation}{section}
\begin{document}
\allowdisplaybreaks
\author{Cruz Prisuelos Arribas}
\address{Cruz Prisuelos Arribas
\\
Instituto de Ciencias Matem\'aticas CSIC-UAM-UC3M-UCM
\\
Consejo Superior de Investigaciones Cient{\'\i}ficas
\\
C/ Nicol\'as Cabrera, 13-15
\\
E-28049 Madrid, Spain} \email{cruz.prisuelos@icmat.es}

\title[Elliptic operators]{Vertical square functions and other operators associated with  an elliptic operator
\\[0.3cm]
{\small }}

\thanks{The research leading to these results has received funding from the European Research Council under the European Union's Seventh Framework Programme (FP7/2007-2013)/ ERC agreement no. 615112 HAPDEGMT. The author acknowledges financial support from the Spanish Ministry of Economy and Competitiveness, through the ``Severo Ochoa
Programme for Centres of Excellence in  R\& \!\!D'' (SEV-2015-0554).}

\date{\today}
\subjclass[2010]{42B25,47A60,47A63,47G10}

\keywords{vertical and conical square functions, non-tangential maximal function, elliptic operators, Muckenhoupt weights, extrapolation, change of angle, degenerate elliptic operators.}

\begin{abstract}
We study the vertical and conical square functions defined via elliptic operators in divergence form.
In general, vertical and conical square functions are  equivalent operators just in $L^2$. But when this square functions are defined through the heat or Poisson semigroup that arise from an elliptic operator, we are able to find open intervals containing $2$ where the equivalence holds. The intervals in question depend ultimately on the range where the semigroup is uniformly bounded or has off-diagonal estimates. As a consequence we obtain  new boundedness results for some square functions.
Besides, we consider a non-tangential maximal function associated with the Poisson semigroup and extend  the known range where that operator is bounded. Our methods are based on the use of extrapolation for Muckenhoupt weights and change of angle estimates. All our results are obtained in the general setting of  a degenerate elliptic operator, where the degeneracy is given by an $A_2$ weight, in weighted Lebesgue spaces. Of course they are valid in the unweighted and/or non-degenerate situations, which can be seen as special cases, and provide new results even in those particular settings.

We also consider the square root of a degenerate elliptic operator in divergence form $L_w$ and improve the lower bound of the interval where this operator is known to be bounded on $L^p(vdw)$.  Finally, we give unweighted boundedness results for the degenerate operators under consideration.
\end{abstract}

\maketitle

\tableofcontents

\bigskip
\section{Introduction}
The study of the operators that arise from an elliptic operators $L$ in divergence form has been of great interest  specially after the solution of the Kato problem in \cite{AHLMT02}. In \cite{Au07} P. Auscher
developed a complete study of  
the boundedness and off-diagonal estimates of the  heat semigroup  generated by $L$, as well as its gradient. He also proved boundedness and other norm inequalities for the square root of $L$, the Riesz transform, and two representative vertical functions.
 These results were extended to weighted Lebesgue spaces for Muckenhoupt weights
 by P. Auscher and J.M. Martell in \cite{AMIII06,AMI07,AMII07}. Recently, in \cite{CMR15}, D. Cruz-Uribe, J.M. Martell, and C. Rios
 carried on a similar study for degenerate elliptic operators which are defined by introducing a degeneracy in the elliptic operator in terms of a Muckenhoupt weight $w\in A_2$.  In \cite{CUR15}, D. Cruz-Uribe and C. Rios considered these degenerate elliptic operators and solved the Kato square root problem  under
the assumption that the associated heat kernel satisfies classic Gaussian upper bounds.
 
 Coming back to the square function associated with the operator $L$, in \cite{AHM12}, P. Auscher, S. Hofmann, and J.M. Martell studied boundedness in weighted Lebesgue spaces for vertical and conical square functions
associated  with the gradient of the heat and Poisson semigroups generated by $L$. Besides, they
 showed how the vertical and conical operators are  related in general. Specifically, they showed  that the norm on $L^p(w)$ of the conical operator controls the norm on $L^p(w)$ of the vertical one for all $0< p<2$ and $w\in RH_{\left(2/p\right)'}$, and the reverse inequality holds for all  $2<p<\infty$ and $w\in A_{p/2}$. In particular, we have  that these operators are equivalent on $L^2(\R^n)$.
 
Furthermore, in \cite{MaPAI17} J.M. Martell and the author of this paper studied weighted norm  estimates and boundedness of the conical square functions associated with the operator $L$  defined via the heat or Poisson semigroup, or their gradients. In order to obtain the boundedness of the conical square functions associated with the Poisson semigroup it was essential to compare their norms on $L^p(w)$ with the corresponding norms of the conical square functions associated with the heat semigroup.
 This work was extended for degenerate elliptic operators  in \cite{ChMPA16}. The weighted boundedness of the  non-tangential maximal functions associated with the heat and Poisson semigroup was proved in \cite{MaPAII17} in the context of Hardy spaces. 
 
 In this work our aim is to complete this theory in the most general way that has been considered so far. More precisely, given a Muckenhoupt weight $w\in A_2$ we consider a second order divergence form degenerate elliptic operator defined by
\begin{align*}
 L_w f:=-w^{-1}\div(wA\nabla f).
\end{align*}
 If the reader is interested only in the non-degenerate case, we note that all our results and proofs are valid replacing the weight $w$ with a constant equal to one. For example, when $w\equiv 1$ in the previous definition we obtain the uniformly elliptic operator $Lf=-\div(A\nabla f)$,  (see the complete definitions in Section \ref{sec:preliminaries}). 
 

 \medskip

 In the papers mentioned above the boundedness of the vertical and conical square functions were considered independently. This is natural because, for instance, when we apply the general norm comparison results between vertical and conical operators proved in \cite[Proposition 2.1, Proposition 2.3]{AHM12} to $\mathsf{s}_{2,\hh}$ and $\Scal_{2,\hh}$ (see  the definitions below), the boundedness of the conical square function implies boundedness for the vertical one just for values of $p$ less than $2$, while we know that this vertical square function could be bounded for values of $p$ up to infinity (see \cite{AHM12}), as in the case of  $L=-\Delta$.
 In Theorems \ref{thm:conical-verticalnon-gradientdegenerate} and \ref{thm:verticalconicalgegenerategradient} of this paper we prove sharper norm comparison results, in the particular case of considering vertical and conical square functions defined via the heat or Poisson semigroup generated by  $L_w$. 
 As a consequence of those results, we  obtain boundedness of the vertical square functions defined in \eqref{vertical-H} and \eqref{vertical-P} directly  from the boundedness of the conical square functions defined in \eqref{conicalH-1}-\eqref{conicalP-2}.
 To illustrate how our results improve the known comparison result proved in \cite[Proposition 2.3]{AHM12} for this class of operators, let us formulate 
 Theorems \ref{thm:conical-verticalnon-gradientdegenerate} and \ref{thm:verticalconicalgegenerategradient}  in the particular case of $w\equiv 1$ and $v\in A_{\infty}$, and for the square functions:
 $$
 \mathsf{s}_{2,\hh}f(x):=\left(\int_{0}^{\infty}|t^2Le^{-t^2L}f(y)|^2\frac{dt}{t}\right)^{\frac{1}{2}},
 \quad 
 \mathsf{g}_{0,\hh}f(x):=\left(\int_{0}^{\infty}|t\nabla e^{-t^2L}f(y)|^2\frac{dt}{t}\right)^{\frac{1}{2}},
 $$
 $$
 \Scal_{2,\hh}f(x):=\left(\iint_{\Gamma(x)}|t^2Le^{-t^2L}f(y)|^2\frac{dy\,dt}{t^{n+1}}\right)^{\frac{1}{2}},
 \quad\textrm{and}\quad
 \Grm_{0,\hh}f(x):=\left(\iint_{\Gamma(x)}|t\nabla e^{-t^2L}f(y)|^2\frac{dy\,dt}{t^{n+1}}\right)^{\frac{1}{2}},
 $$ 
 where $\Gamma(x):=\{(y,t)\in \R^{n+1}_+: \,|x-y|<t\}$, is the cone of aperture one with vertex at $x$. For  other definitions see Sections \ref{sec:weights} and \ref{sec:elliptic}.
\begin{theorem}
Given  $f\in L^2(\R^n)$ and $v\in  A_{\infty}$, we have
 \begin{list}{$(\theenumi)$}{\usecounter{enumi}\leftmargin=1cm \labelwidth=1cm\itemsep=0.2cm\topsep=.0cm \renewcommand{\theenumi}{\alph{enumi}}}
 
  \item $\|\mathsf{s}_{2,\hh}f\|_{L^p(v)}\lesssim
 \|\mathcal{S}_{2,\hh}f\|_{L^p(v)}$, for $p\in(0,p_+(L))$ and $v\in RH_{\left(\frac{p_+(L)}{p}\right)'}$.

\item $\|\mathcal{S}_{2,\hh}f\|_{L^p(v)}\lesssim
 \|\mathsf{s}_{2,\hh}f\|_{L^p(v)}$,  for $p\in (p_-(L),\infty)$ and $v\in A_{\frac{p}{p_-(L)}}$.

  \item $\|\mathsf{g}_{0,\hh}f\|_{L^p(v)}\lesssim
 \|\Grm_{0,\hh}f\|_{L^p(v)}$, for $p\in (0,q_+(L))$
 and $v\in RH_{\left(\frac{q_+(L)}{p}\right)'}$.

  \item $\|\Grm_{0,\hh}f\|_{L^p(v)}\lesssim
 \|\mathsf{g}_{0,\hh}f\|_{L^p(v)}$, for $p\in (q_-(L),\infty)$ and $v\in A_{\frac{p}{q_-(L)}}$.
 \end{list}
In particular, for all $p\in (p_-(L),p_+(L))$ and $v\in A_{\frac{p}{p_-(L)}}\cap RH_{\left(\frac{p_+(L)}{p}\right)'}$
$$
\|\mathsf{s}_{2,\hh}f\|_{L^p(v)}\approx
 \|\mathcal{S}_{2,\hh}f\|_{L^p(v)},$$ 
and, for all $p\in (q_-(L),q_+(L))$  and $v\in A_{\frac{p}{q_-(L)}}\cap RH_{\left(\frac{q_+(L)}{p}\right)'}$
 $$\|\mathsf{g}_{0,\hh}f\|_{L^p(v)}\approx
 \|\Grm_{0,\hh}f\|_{L^p(v)}.
$$

\end{theorem}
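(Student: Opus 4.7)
My plan is to derive this theorem as the specialization $w\equiv 1$ of Theorems \ref{thm:conical-verticalnon-gradientdegenerate} and \ref{thm:verticalconicalgegenerategradient}, and therefore focus on the strategy for those underlying weighted statements. Three ingredients will drive the argument: the off-diagonal $L^p$--$L^q$ estimates of $\{e^{-t^2L}\}$ on the interval $(p_-(L),p_+(L))$ and of $\{t\nabla e^{-t^2L}\}$ on $(q_-(L),q_+(L))$; change-of-aperture estimates for the conical square functions; and Rubio de Francia weighted extrapolation along the $A_p$ and $RH_q$ scales, as developed in \cite{AMIII06}.

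For parts (a) and (c), where the conical function dominates the vertical one, the starting point is the Fubini identity
\begin{align*}
\|\Scal_{2,\hh}f\|_{L^2(\R^n)}^2 = c_n\|\mathsf{s}_{2,\hh}f\|_{L^2(\R^n)}^2,
\end{align*}
so at the pivot $p_0=2$ the two norms are equivalent. To push the inequality $\mathsf{s}_{2,\hh}\lesssim\Scal_{2,\hh}$ beyond $2$ and up to $p_+(L)$, I would dyadically decompose the ambient space into annuli adapted to $t$ and use the off-diagonal bounds for $t^2Le^{-t^2L}$ to replace the pointwise value $|t^2Le^{-t^2L}f(y)|$ by a tail of localized $L^2$ averages, which can in turn be absorbed into the conical functional. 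Extrapolation then delivers the full range $p\in(0,p_+(L))$ and produces the condition $RH_{(p_+(L)/p)'}$, reflecting the dual pairing between reverse-Hölder classes and $A_p$ weights; the gradient case (c) works identically with the same machinery applied to the family $\{t\nabla e^{-t^2L}\}$, whose off-diagonal range is precisely $(q_-(L),q_+(L))$.

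For parts (b) and (d), where the vertical function controls the conical one, the argument goes in the opposite direction. I would first establish a fixed-exponent estimate at some $p_0>p_-(L)$ (respectively $>q_-(L)$) with a test weight $v_0$ in the base class by proving a change-of-aperture inequality of the form
\begin{align*}
\|\Scal_{2,\hh}^{\alpha}f\|_{L^{p_0}(v_0)}\lesssim \alpha^{\theta}\|\Scal_{2,\hh}^{1}f\|_{L^{p_0}(v_0)},
\end{align*}
and combining it with a Fubini argument that converts the aperture average into a vertical integral against $|t^2Le^{-t^2L}f|$. Applying extrapolation with a shift by $p_-(L)$ (respectively $q_-(L)$) yields the range $p\in(p_-(L),\infty)$ and the weight condition $A_{p/p_-(L)}$ stated in the theorem; the equivalences in the last two displays follow by intersecting the two conditions on $p$ and $v$ obtained from the forward and reverse inequalities.

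The main obstacle will be securing the fixed-exponent endpoint inequality in (b) and (d) with the sharp lower endpoint $p_-(L)$ or $q_-(L)$: the change-of-aperture step and the coupling between aperture and semigroup scale must be tracked carefully so that the off-diagonal bounds are used up to their natural limit, without losing the factor that would shrink the interval. Once that endpoint inequality is in hand, the extrapolation apparatus converts it into the weighted boundedness on the full range mechanically, and the whole statement for $w\equiv 1$ follows by inspecting what the degenerate versions reduce to when the underlying weight is trivial.
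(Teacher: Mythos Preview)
Your plan to read this off from Theorems~\ref{thm:conical-verticalnon-gradientdegenerate} and~\ref{thm:verticalconicalgegenerategradient} with $w\equiv 1$ is exactly what the paper does, and the three ingredients you name (off-diagonal estimates, change of angle, extrapolation) are the right ones. But the way you wire them together is off in two places that would make the argument break down as written.

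First, the unweighted Fubini identity $\|\Scal_{2,\hh}f\|_{L^2}=c_n\|\mathsf{s}_{2,\hh}f\|_{L^2}$ is not the pivot. Extrapolation (Theorem~\ref{theor:extrapol}) requires a \emph{weighted} inequality at a single exponent, valid for \emph{every} weight in the base class; it is the proof of this weighted $L^2(v_0)$ estimate that consumes the off-diagonal bounds, not some later step that ``pushes beyond $2$''. For part~(a) the paper shows $\|\mathsf{s}_{2,\hh}f\|_{L^2(v_0)}\lesssim\|\Scal_{2,\hh}f\|_{L^2(v_0)}$ for every $v_0\in RH_{(q_0/2)'}$ with $2<q_0<p_+(L)$: the reverse-H\"older condition converts the vertical norm into $q_0$-averages over balls (inequality~\eqref{verticalsinRH}), the $L^2\to L^{q_0}$ off-diagonal estimate for $e^{-t^2L}$ brings these back to $L^2$-averages on dilated balls, and a change-of-angle inequality (Proposition~\ref{prop:alpha}) collapses the growing aperture. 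Once this holds for all such $v_0$, extrapolation manufactures the entire range $(0,p_+(L))$ and the $RH_{(p_+(L)/p)'}$ condition automatically; there is no separate exponent-pushing.

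Second, you have placed the change-of-angle step on the wrong side. It enters in (a)/(c), not in (b)/(d). For parts (b) and (d) the paper's argument at $L^2(v_0)$ with $v_0\in A_{2/p_0}$ (any $p_-(L)<p_0<2$, respectively $q_-(L)<p_0<2$) uses only the $L^2\to L^{p_0}$ off-diagonal bound (condition~(ii) of Proposition~\ref{prop:comparison-general}), the $A_p$-type comparison \eqref{Asinpesoconpeso} to pass from $L^{p_0}(w)$-averages to $L^2(v_0w)$-averages, and a straight Fubini; no aperture manipulation is needed. Your proposed change-of-aperture inequality $\|\Scal^{\alpha}f\|\lesssim\alpha^{\theta}\|\Scal^{1}f\|$ points in the wrong direction for bounding a conical norm by a vertical one and does not naturally enter here.
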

If we had applied the results in \cite[Proposition 2.1]{AHM12}, we would have obtained $(b)$ and $(d)$ for $p\in (2,\infty)$ and $v\in A_{p/2}$; and $(a)$ and $(c)$  for $p\in (0,2)$ and $v\in RH_{(2/p)'}$, which are clearly smaller intervals (see Section \ref{sec:weights}). We recall that the above norm comparison results are between vertical and conical operators. Norm comparison results between vertical square function are easy consequences of the boundedness or off-diagonal estimates of the heat or Poisson semigroup, the Riesz transform $\nabla L^{-\frac{1}{2}}$, and the subordination formula. They have been observed and used, as needed, in some papers    such as \cite{AHM12, HMay09}. As for norm comparison result between conical square functions see \cite{MaPAI17,ChMPA16}.

Coming back to Theorems \ref{thm:conical-verticalnon-gradientdegenerate} and \ref{thm:verticalconicalgegenerategradient}, their proofs are obtained from Proposition \ref{prop:comparison-general}. This is a general norm comparison result between a particular class of vertical and conical operators. This proposition follows from off-diagonal and change of angle estimates, and  extrapolation for Muckenhoupt weights.
Theorem \ref{thm:conical-verticalnon-gradientdegenerate}
implies that  the vertical and conical square function associated with the heat or Poisson semigroup are equivalent operators in $L^p(vdw)$ for the values of $p$ for which the heat semigroup is uniformly bounded.  Analogously,  when $w\equiv 1$, by Theorem \ref{thm:verticalconicalgegenerategradient} 
we obtain these equivalences for the vertical and conical square functions associated with the gradient of the heat or Poisson semigroup, for the values of $p$ for which the gradient of the heat semigroup is uniformly bounded. In the case that $w\in A_2$ the use of Poincar\'e inequality narrows the interval and the class of weights where the equivalences hold. Anyhow, in Remark \ref{remark:moreconvenient2} we see that the vertical square functions considered in Theorem \ref{thm:verticalconicalgegenerategradient} can be controlled in norm by conical square functions associated with the heat semigroup (without gradient), in the range where the gradient of the heat semigroup is uniformly bounded. Similarly, we could prove norm comparison results, between  the conical square functions considered in Theorem \ref{thm:verticalconicalgegenerategradient} and vertical square functions associated with the heat semigroup, in bigger intervals than those considered in Theorem \ref{thm:verticalconicalgegenerategradient}, parts $(b)$ and $(d)$. Although the proofs are much more long and intricate, and without an application in sight their small contribution to this work does not seem to be worth the effort, specially for the reader.

 Moreover, in Theorems \ref{thm:boundednessverticalheat} and \ref{thm:boundednessverticalpoisson} we infer the boundedness of the vertical square functions from the boundedness of the conical ones. We recall that the boundedness of representative vertical and conical square functions has been studied in several papers already mentioned: \cite{Au07,AMIII06,AHM12,MaPAI17,ChMPA16,CMR15}. We also observe that (in addition to even powers) we allow odd powers of the square root of the operator $L_w$ in the definitions of the square functions that we deal with (see \eqref{vertical-H}-\eqref{conicalP-2}), a possibility that has not always been considered in the aforementioned papers.

\medskip

We now turn towards the non-tangential maximal functions associated with the operator $L_w$. In Theorem \ref{thm:boundednessnon-tangential} we study boundedness  of $\Ncal_{\pp}^w$ (see the definition below in \eqref{nontangential}). In \cite{ChMPA18} this has been recently considered; the proof there follows the lines of \cite{MaPAII17} in the weighted non-degenerate case. Here we modify that proof to improve the range of boundedness, even in the  unweighted non-degenerate case (i.e., $w\equiv 1\equiv v$), see
\cite[(6.49)]{HMay09} and \cite{May10}. 
More specifically, this improvement follows from the comparison result in Theorem \ref{thm:improvementnontangentialpoisson}. To explain this better let us consider, for instance, that $w\equiv 1$ and $v\in A_{\infty}$. So far we knew that $\Ncal_{\pp}$ can be extended to a bounded operator on $L^p(v)$ for all $p\in (p_-(L),p_+(L))$ and $v\in A_{\frac{p}{p_-(L)}}\cap RH_{\left(\frac{p_+(L)}{p}\right)'}$. However, in  Theorem \ref{thm:improvementnontangentialpoisson} we obtain the inequality (see the definition of $\Ncal_{\hh}$ in \eqref{nontangential}):
\begin{align}\label{comparisonpoissonnondegenerate}
\|\mathcal{N}_{\pp}f\|_{L^p(v)}\lesssim \|\Ncal_{\hh}f\|_{L^p(v)}+\|\Scal_{2,\hh}f\|_{L^p(v)},\quad
\forall\,p\in (p_-(L),p_+(L)^{*})\,\, \textrm{and}\,\, v\in A_{\frac{p}{p_-(L)}}\cap RH_{\left(\frac{p_+(L)^*}{p}\right)'}.
\end{align}
 From the boundedness of $\Ncal_{\hh}$ and $\Scal_{2,\hh}$ (see \cite{MaPAI17,MaPAII17}), this implies that
$\Ncal_{\pp}$ can be extended to a bounded operator on $L^p(v)$ for all $p\in (p_-(L),p_+(L)^*)$ and $v\in A_{\frac{p}{p_-(L)}}\cap RH_{\left(\frac{p_+(L)^*}{p}\right)'}$ (see definition \eqref{p_w^*}). In turn, if $v\equiv 1$ this implies that $\Ncal_{\pp}$ can be extended to a bounded operator on $L^p(\R^n)$ for all $p\in (p_-(L),p_+(L)^*)$.  
The gist of the proof is in 
obtaining  in the second term of the  sum in \eqref{comparisonpoissonnondegenerate} the $L^p(v)$ norm of a conical square function, while so far we only knew the above inequality  replacing that norm with the $L^p(v)$ norm of a vertical square function which ultimately has worse boundedness properties than the conical one. We are able to achieve this improvement by the use of extrapolation for Muckenhoupt weights.

\medskip

Finally we consider the square root of the operator $L_w$. The properties of this operator as been widely studied, specially after the resolution of the Kato conjecture in 
any dimension in \cite{AHLMT02}, when $w\equiv 1$. That is,
\begin{align}\label{Kato}
\|\sqrt{L}f\|_{L^2(\R^n)}\approx\|\nabla f\|_{L^2(\R^n)}.
\end{align}
In the case that $w\in A_2$, this was solved in \cite{CUR15}. The extension to $L^p$, for a general $p$,
was done in \cite{Au07} in the unweighted  non-degenerate situation, and in \cite{AMIII06} in the weighted non-degenerate case. Besides, in \cite{CMR15} the authors proved the unweighted degenerate version of \eqref{Kato} for a general $p$. That is, for $w\in A_2$, \begin{align*}
\|\sqrt{L_w}f\|_{L^p(w)}\approx\|\nabla f\|_{L^p(w)}.
\end{align*}
  In that paper,
  the weighted degenerate case (that is, $w\in A_2$ and $L^p(vdw)$ with $v\in A_{\infty}(w)$) was also considered, but in view of the previous results in the unweighted or weighted non-degenerate case, we expect that the range of boundedness  obtained in  \cite[Proposition 6.1]{CMR15} regarding the inequality
 \begin{align*}
\|\sqrt{L_w}f\|_{L^p(vdw)}\lesssim \|\nabla f\|_{L^p(vdw)},
\end{align*} 
 can be improved. 
  We do so in Theorem \ref{thm:boundednesssaquareroot}, by seeing the product weight $v\cdot w$ as a weight in $A_{\infty}$ (see Remark \ref{remark:product-weight}), and then applying the Calder\'on-Zygmund  decomposition  in Lemma \ref{lem:CZweighted}
and  interpolation in weighted Lebesgue spaces and in Sobolev spaces (see \cite{Ba09}).

\medskip

The organization of this paper is as follows. In
Section \ref{sec:preliminaries} we list some properties and useful results for the present work about Muckenhoupt weights and elliptic operators. In 
 Section \ref{sec:main} we formulate our comparison and boundedness results. In Section \ref{sec:auxiliary} 
 we obtain some auxiliary results. In Section \ref{sec:proofs}
  we prove the theorems established in Section \ref{sec:main}. Finally in Section \ref{sec:unweighted}, we provide unweighted estimates for degenerate operators. That is, we consider $w\in A_2$  and $v\equiv 1$.

\section{Preliminaries}\label{sec:preliminaries}
%
First of all we note that when we say unweighted degenerate case we mean that we consider $w\in A_2$ and $v\equiv 1$, weighted degenerate case stands for $w\in A_2$ and $v\in A_{\infty}(w)$,  unweighted non-degenerate case indicates  $w\equiv 1\equiv v$, and weighted non-degenerate case refers to  $w\equiv 1$ and $v\in A_{\infty}$.

Next, we specify our notation. We denote by $n$ the dimension of the underlying space $\R^n$ and we always assume that $n\geq 2$. Let $\mu$ be a measure in $\R^n$, given a set $E\subset \R^n$,  we write 
$$
\mu(E):=\int_E\,d\mu.
$$
Moreover, for $1\leq p<\infty$, we denote the Lebesgue space $L^p(\R^n,d\mu)$ by $L^p(\mu)$.
Throughout the paper the measure $\mu$ will be given by a weight $w\in A_{\infty}$ or a product of weights $vw$, where $w\in A_{\infty}$ and $v\in  A_{\infty}(w)$, see \eqref{doublingcondition} and Remark \ref{remark:product-weight}, and the definitions below. In the latter case, we shall use independently the notation $vdw$ or $d(vw)$ depending on whether we want to emphasize the fact that $w\in A_{\infty}$ and $v\in  A_{\infty}(w)$ or  to see $vw$ as a weight in $A_\infty$. In the same line we can write  $L^p(vdw)$  or $L^p(vw)$.

Besides, for every ball $B\subset \R^n$, we define the annuli of $B$ as
\begin{align}\label{anulli}
C_1(B):=4B,\qquad C_j(B):=2^{j+1}B\setminus 2^jB, \quad\textrm{for}\quad j\geq 2,
\end{align}
where for any $\lambda>0$ we denote $\lambda B$ as the ball with the same center as $B$ and radius $\lambda$ times the radius of $B$.
Furthermore, abusing notation
\begin{align}\label{average}
\dashint_{4B}f\, d\mu :=\frac{1}{\mu(4B)}\int_{4B}f\,d\mu,\qquad
\dashint_{C_j(B)}f\, d\mu :=\frac{1}{\mu(2^{j+1}B)}\int_{C_j(B)}f\,d\mu\quad\textrm{for}\quad j\geq 2.
\end{align}

Additionally, we denote by $C$, $c$, or $\theta$ any positive constant that may depend on several parameters, but without altering the result of the main computation. In some cases, we indicate such a dependence by adding a subindex.

Finally, we 
define $\R^{n+1}_+:=\{(x,t): x\in \R^n,\,t>0\}$ the upper-half space, and $\N_0:=\N\cup\{0\}.$
\subsection{Muckenhoupt weights}\label{sec:weights}
In this section we present some properties of Muckenhoupt weights,  for further details
see~\cite{Du01, GCRF85, GrafakosI}.

A Muckenhoupt weight $w$ is a non-negative, locally integrable function.
We say that $w$ belongs to an  $A_p$ class, denoted by $w\in A_p$  if, for $1<p<\infty$,
$$
[w]_{A_p} := \sup_B \left(\dashint_B w(x)\,dx\right) \left(\dashint_B
  w(x)^{1-p'}\,dx\right)^{p-1} < \infty,
  $$
and, $w\in A_1$ if
$$
[w]_{A_1} := \sup_B \left(\dashint_B w(x)\,dx\right)  \left(\esssup_{x\in B} w(x)^{-
1}\right)<
\infty. 
$$
Here and below the suprema run over the collection of balls $B\subset\R^n$.

The weight $w$ can also belong to a Reverse H\"older class denoted by $w\in RH_s$. 
We say that $w\in RH_s$  if, for $1<s<\infty$,
$$
[w]_{RH_s} := \sup_B \left(\dashint_B w(x)\,dx\right )^{-1}
\left(\dashint_B w(x)^s\,dx\right )^{1/s} < \infty,
$$
and $w\in RH_{\infty}$ if
$$
[w]_{RH_\infty} := \sup_B\left(\dashint_B w(x)\,dx\right)^{-1} \left(\esssup_{x\in B} w(x)\right)  <
\infty. 
$$
We denote by $A_{\infty}$ the collection of all the weights
$$
 A_\infty := \bigcup_{1\leq p <\infty} A_p  = \bigcup_{1<s\le \infty}
RH_s.  
$$

An important property is that if $w\in RH_s$, $1<s\le\infty$, then 
\begin{align}\label{pesosineqw:RHq}
\frac{w(E)}{w(B)}\leq  [w]_{RH_{s}}\br{\frac{|E|}{|B|}}^{\frac{1}{s'}}, \quad \forall\,E\subset B,
\end{align}
where $B$ is any ball in $\re^n$. 
Analogously, if
$w\in A_p$,$1\leq p<\infty$, then
\begin{align}\label{pesosineqw:Ap}
 \br{\frac{|E|}{|B|}}^{p}\le [w]_{A_{p}}\frac{w(E)}{w(B)}, \quad \forall\,E\subset B.
\end{align}
This implies in particular that $w$ is a doubling measure:
\begin{align}\label{doublingcondition}
w(\lambda B)
\le
[w]_{A_r}\,\lambda^{n\,r}w(B),
\qquad \forall\,B,\ \forall\,\lambda>1.
\end{align}

As a consequence of this doubling property, we have that with the ordinary Euclidean distance
$|\cdot|$, $(\re,dw,|\cdot|)$ is a space of homogeneous type.
In this setting we can define new classes of weights  $A_p(w)$
and $RH_s(w)$ by replacing the Lebesgue measure in the definitions above with
$dw$: e.g., $v\in A_p(w)$ for $1<p<\infty$ if
\begin{align}\label{apclassv}
 [v]_{A_p(w)} = \sup_B \left(\dashint_B v(x)\,dw\right) \left(\dashint_B
  v(x)^{1-p'}\,dw\right)^{p-1} < \infty,
\end{align}
and $v\in RH_{s}(w)$ for $1<s<\infty$ if
\begin{align}\label{rhclassv}
 [v]_{RH_s(w)} = \sup_B \left(\dashint_B v(x)\,dw\right)^{-1} \left(\dashint_B
  v(x)^{s}\,dw\right)^{\frac{1}{s}} < \infty.
\end{align}
From these definitions, it follows at once  that there is a
``duality'' relationship between the weighted and unweighted
$A_p$ and $RH_s$ conditions: 
\begin{align}\label{dualityapclassesRHclasses}
w^{-1} \in A_p(w)\,\textrm{ if and only if }
\,
w\in RH_{p'},
\quad \textrm{and}\quad w^{-1} \in
RH_{s'}(w)\,
\textrm{ if and only if }\, w\in A_{s}.
\end{align}
\medskip

It is also important to observe that the
weights in the $A_p$ and $RH_{s'}$ classes have a self-improving
property: if $w\in A_p$, there exists $\epsilon>0$ such that $w\in
A_{p-\epsilon}$, and similarly if $w\in RH_{s'}$, then $w\in
RH_{(s-\delta)'}$ for some $\delta>0$. Thus, 
given $w\in A_\infty$, we define the infimum of those values by
\begin{equation}
r_w:=\inf\big\{p:\ w\in A_p\big\}, \qquad s_w:=\inf\big\{q:\ w\in RH_{q'}\big\}.
\label{eq:defi:rw}
\end{equation}
It should be noted that some authors prefer to define $s_w$ as the conjugate exponent of ours. That is, by $\sup\big\{q:\ w\in RH_{q}\big\}$, see for instance
\cite[Lemma 4.1]{AMI07} or \cite{CMR15}.

Related to $r_w$ and $s_w$ we define the following intervals.
Given $0\le p_0<q_0\le \infty$ and $w\in A_{\infty}$,  \cite[Lemma 4.1]{AMI07} implies that
\begin{align}\label{intervalrs}
\mathcal{W}_w(p_0,q_0):=\left\{p\in (p_0, q_0): \ w\in A_{\frac{p}{p_0}}\cap RH_{\left(\frac{q_0}{p}\right)'}\right\}
=
\left(p_0r_w,\frac{q_0}{s_w}\right).
\end{align}
If $p_0=0$ and $q_0<\infty$ it is understood that the only condition that stays is $w\in RH_{\left(\frac{q_0}{p}\right)'}$. Analogously, 
if $0<p_0$ and $q_0=\infty$ the only assumption is $w\in A_{\frac{p}{p_0}}$. Finally $\mathcal{W}_w(0,\infty)=(0,\infty)$.

In the same way, for a weight $v\in A_{\infty}(w)$, with $w\in A_\infty$ we set
\begin{align}\label{def:rvw}
\mathfrak{r}_v(w):=\inf\big\{r:\ v\in A_{r}(w)\big\}\quad \textrm{and}\quad
\mathfrak{s}_v(w):=\inf\big\{s:\ v\in RH_{s'}(w)\big\}.
\end{align}
For $0\le p_0<q_0\le \infty$ and $v\in A_{\infty}(w)$, by a similar argument to that of \cite[Lemma 4.1]{AMI07}, we have
\begin{align}\label{intervalrsw}
\mathcal{W}_v^w(p_0,q_0):=\left\{p\in(p_0,q_0):\ v\in A_{\frac{p}{p_0}}(w)\cap RH_{\left(\frac{q_0}{p}\right)'}(w)\right\}
=
\left(p_0\mathfrak{r}_v(w),\frac{q_0}{\mathfrak{s}_v(w)}\right).
\end{align}
If $p_0=0$ and $q_0<\infty$, as before, it is understood that the only condition that stays is $v\in RH_{\left(\frac{q_0}{p}\right)'}(w)$. Analogously, if $0<p_0$ and $q_0=\infty$ the only assumption is $v\in A_{\frac{p}{p_0}}(w)$. Finally $\mathcal{W}_v^w(0,\infty)=(0,\infty)$.

\medskip

Additionally, note that
for every $w\in A_p$, $v\in A_q(w)$, $1\le p,q<\infty$, it follows that
\begin{align}\label{pesosineq:Ap}
\left(\frac{|E|}{|B|}\right)^{p\,q}
\le
[w]_{A_{p}}^q\left(\frac{w(E)}{w(B)}\right)^{q}
\le
[w]_{A_{p}}^q[v]_{A_{q}(w)}
\frac{vw(E)}{vw(B)},\quad \forall\,E\subset B.
\end{align}
Analogously, if $w\in RH_{p}$ and $v\in RH_{q}(w)$ $1< p,q\le\infty$, one has 
\begin{align}\label{pesosineq:RHq}
\frac{vw(E)}{vw(B)}
\leq
[v]_{RH_{q}(w)}\left(\frac{w(E)}{w(B)}\right)^{\frac{1}{q'}}\leq [v]_{RH_{q}(w)}[w]_{RH_{p}}^{\frac1{q'}}\left(\frac{|E|}{|B|}\right)^{\frac{1}{p'\,q'}},\quad \forall\,E\subset B.
\end{align}
From these inequalities we can guess that given a weight $w\in A_{\infty}$ and $v\in A_{\infty}(w)$ the product of  $v$ and $w$ may belong to $A_{\infty}$. In fact, we obtain the following:

\begin{remark}\label{remark:product-weight}
Given $w\in A_{\infty}$ and $v\in A_{\infty}(w)$, we have that $r_{vw}\leq r_w\mathfrak{r}_v(w)$. The  equality holds when $r_w=1=\mathfrak{r}_v(w)$, and  the converse inequality is false in general. For instance, consider $w(x):=|x|^{n}$ and $v:=w^{-1}$. We have that $r_w\mathfrak{r}_v(w)=2$ but $r_{vw}=1$, since $vw\equiv 1$.

In view of \eqref{eq:defi:rw} and  \eqref{def:rvw}, it is immediate to see that the inequality $r_{vw}\leq r_w\mathfrak{r}_v(w)$ follows from the fact that,
for
$1\leq p,q<\infty$, $w\in A_p$, and $v\in A_q(w)$, the product of the weights $v$ and $w$ belongs to the Muckenhoupt class $p$ times $q$. That is $vw\in A_{pq}$.  
This is an  easy consequence of  H\"older's inequality.
 Indeed, assuming that $p\neq 1$ and $q\neq 1$ (the cases $p=1$ and/or $q=1$ follow similarly), since $pq-1>q-1$ and $\left(\frac{pq-1}{q-1}\right)'=\frac{pq-1}{q(p-1)}$,
\begin{multline*}
\left(\frac{1}{|B|}\int_{B}(vw)^{1-(pq)'}\right)^{pq-1}
=
\left(\frac{1}{|B|}\int_{B}v^{-\frac{1}{pq-1}}w^{\frac{q-1}{pq-1}}w^{-\frac{q}{pq-1}}\right)^{pq-1}
\\
\lesssim
\left(\frac{1}{|B|}\int_{B}w\right)^{q-1}
\left(\frac{1}{w(B)}\int_{B}v^{-\frac{1}{q-1}}w\right)^{q-1}
\left(\frac{1}{|B|}\int_{B}w^{-\frac{1}{p-1}}\right)^{q(p-1)}
\\
\lesssim
\left(\frac{1}{|B|}\int_{B}w\right)^{-1}
\left(\frac{1}{w(B)}\int_{B}vw\right)^{-1}
=
\left(\frac{1}{|B|}\int_{B}vw\right)^{-1}.
\end{multline*}

\end{remark}

Next, we observe that, under  particular assumptions, we can compare the average of a function with respect to the measure given by a weight $w\in A_{\infty}$, with that with respect to a product of weights $w\in A_{\infty}$ and $v\in A_{\infty}(w)$. More specifically:
\begin{remark}
Given $0<\widetilde{p}\leq\widetilde{q}<\infty$, note that, if $v\in A_{\frac{\widetilde{q}}{\widetilde{p}}}(w)$ then, 
\begin{align}\label{Asinpesoconpeso}
\left(\dashint_{C_j(B)}|f(x)|^{\widetilde{p}}dw(x)\right)^{\frac{1}{\widetilde{p}}}
\lesssim
\left(\dashint_{C_j(B)}|f(x)|^{\widetilde{q}}d(vw)(x)\right)^{\frac{1}{\widetilde{q}}}, \quad \forall\, B\subset\R^n,\quad j\geq 1.
\end{align}
On the other hand, if $v\in RH_{\left(\frac{\widetilde{q}}{\widetilde{p}}\right)'}(w)$ then
\begin{align}\label{RHsinpesoconpeso}
\left(\dashint_{C_j(B)}|f(x)|^{\widetilde{p}}d(vw)(x)\right)^{\frac{1}{\widetilde{p}}}
\lesssim
\left(\dashint_{C_j(B)}|f(x)|^{\widetilde{q}}dw(x)\right)^{\frac{1}{\widetilde{q}}}, \quad \forall\, B\subset\R^n,\quad j\geq 1.
\end{align}
We detail the case $\widetilde{p}<\widetilde{q}$. The case $\widetilde{p}=\widetilde{q}$ follows similarly.

We obtain \eqref{Asinpesoconpeso} applying H\"older's inequality and \eqref{apclassv}
\begin{multline*}
\left(\dashint_{C_j(B)}|f(x)|^{\widetilde{p}}dw(x)\right)^{\frac{1}{\widetilde{p}}}
=
\left(\dashint_{C_j(B)}|f(x)|^{\widetilde{p}}v(x)^{\frac{\widetilde{p}}{\widetilde{q}}}v(x)^{-\frac{\widetilde{p}}{\widetilde{q}}}dw(x)\right)^{\frac{1}{\widetilde{p}}}
\\
\lesssim
\left(\dashint_{C_j(B)}|f(x)|^{\widetilde{q}}v(x)dw(x)\right)^{\frac{1}{\widetilde{q}}}
\left(\dashint_{2^{j+1}B}v(x)^{1-\left(\frac{\widetilde{q}}{\widetilde{p}}\right)'}dw(x)\right)^{\frac{1}{\widetilde{q}}\left(\frac{\widetilde{q}}{\widetilde{p}}-1\right)}
\\
\lesssim
\left(\dashint_{C_j(B)}|f(x)|^{\widetilde{q}}v(x)dw(x)\right)^{\frac{1}{\widetilde{q}}}
\left(\dashint_{2^{j+1}B}v(x)dw(x)\right)^{-\frac{1}{\widetilde{q}}}
=
\left(\dashint_{C_j(B)}|f(x)|^{\widetilde{q}}d(vw)(x)\right)^{\frac{1}{\widetilde{q}}}.
\end{multline*}
Similarly, we obtain \eqref{RHsinpesoconpeso} applying H\"older's inequality and \eqref{rhclassv}
\begin{multline*}
\left(\dashint_{C_j(B)}|f(x)|^{\widetilde{p}}d(vw)(x)\right)^{\frac{1}{\widetilde{p}}}
=
\left(\frac{w(2^{j+1}B)}{vw(2^{j+1}B)}\right)^{\frac{1}{\widetilde{p}}}\left(\dashint_{C_j(B)}|f(x)|^{\widetilde{p}}v(x)dw(x)\right)^{\frac{1}{\widetilde{p}}}
\\
\lesssim\left(\dashint_{2^{j+1}B}v(x)dw(x)\right)^{-\frac{1}{\widetilde{p}}}\left(\dashint_{C_j(B)}|f(x)|^{\widetilde{q}}dw(x)\right)^{\frac{1}{\widetilde{q}}}
\left(\dashint_{2^{j+1}B}v(x)^{\left(\frac{\widetilde{q}}{\widetilde{p}}\right)'}dw(x)\right)^{\frac{1}{\widetilde{p}}\frac{1}{\left(\frac{\widetilde{q}}{\widetilde{p}}\right)'}}
\\
\lesssim
\left(\dashint_{C_j(B)}|f(x)|^{\widetilde{q}}dw(x)\right)^{\frac{1}{\widetilde{q}}}.
\end{multline*}
\end{remark}
We also introduce the Hardy-Littlewood maximal function
$$
\mathcal{M} f(x):=\sup_{B\ni x} \dashint_B|f(y)|\,dy.
$$
By the classical theory of weights, $w\in A_p$, $1<p<\infty$, if and only if $\mathcal{M}$ is bounded on $L^p(w)$.

Likewise, given $w\in A_\infty$, we can introduce the weighted maximal operator $\mathcal{M}^w$:
\begin{align}\label{weightedHLM}
\mathcal{M}^wf(x):=\sup_{B\ni x}\dashint_B |f(y)|\,dw(y).
\end{align} 
Since $w$ is a doubling measure, one can also show that $v\in A_p(w)$, $1<p<\infty$, if and only if $\mathcal{M}^w$ is bounded on $L^p(v dw)$.

Furthermore, for  any $p\in (0,\infty)$ and a weight $w\in A_{\infty}$, we define 
\begin{align}\label{p_{w,*}}
(p)_{w,*}:=\frac{nr_wp}{nr_w+p},
\end{align}
and, for $k\in \N$,
\begin{align}\label{p_w^*}
p_{w}^{k,*}:=\begin{cases}
\frac{nr_wp}{nr_w-kp}& \textrm{ if }nr_w>kp,
\\
\infty & \textrm{ otherwise.}
\end{cases}
\end{align}
 If we consider $w\equiv 1$, since $r_w=1$, we have that
\begin{align*}
(p)_{*}:=\frac{np}{n+p},
\quad\textrm{and}\quad
p^{k,*}:=\begin{cases}
\frac{np}{n-kp}& \textrm{ if }n>kp,
\\
\infty & \textrm{ otherwise.}
\end{cases}
\end{align*}
We write $p_w^*:=p_w^{1,*}$, or  $p^*:=p^{1,*}$,  when $w\equiv 1$.

\medskip

\subsection{Elliptic operators}\label{sec:elliptic}
Consider $A$ an $n\times n$ matrix of complex and $L^\infty$-valued coefficients defined on $\R^n$, satisfying the following uniform ellipticity (or \lq\lq
accretivity\rq\rq) condition: there exist $0<\lambda\le\Lambda<\infty$ such that
\begin{equation}\label{elepliptitiA}
\lambda\,|\xi|^2
\le
\Re A(x)\,\xi\cdot\bar{\xi}
\quad\qquad\mbox{and}\qquad\quad
|A(x)\,\xi\cdot \bar{\zeta}|
\le
\Lambda\,|\xi|\,|\zeta|,
\end{equation}
for all $\xi,\zeta\in\mathbb{C}^n$ and almost every $x\in \R^n$. We have used the notation
$\xi\cdot\zeta=\xi_1\,\zeta_1+\cdots+\xi_n\,\zeta_n$ and therefore
$\xi\cdot\bar{\zeta}$ is the usual inner product in $\mathbb{C}^n$. 
Given a Muckenhoupt weight $w\in A_2$ we define a second order divergence form degenerate elliptic operator  by
\begin{align}\label{def:L_w}
 L_w f:=-w^{-1}\div(wA\nabla f).
\end{align}
In the non-degenerate case we define $L f:=-\div(A\nabla f)$, and replace  $L_w$ with $L$ everywhere below.

  The operator $-L_w$ generates a $C_0-$semigroup of contractions on $L^2(w)$ which is called the heat semigroup $\{e^{-tL_w}\}_{t>0}$. We also consider the Poisson semigroup  $\{e^{-t\sqrt{L_w}}\}_{t>0}$ defined via the subordination formula:
\begin{align}\label{subordinationformula}
e^{-t\sqrt{L_w}}f(y)=\frac{1}{\sqrt{\pi}}\int_0^{\infty}u^{\frac{1}{2}}e^{-u}e^{-\frac{t^2}{4u}L_w}f(y)\frac{du}{u}.
\end{align}
We denote by $(p_-(L_w),p_+(L_w))$ the maximal open interval on which the heat semigroup $\{e^{-tL_w}\}_{t>0}$ is uniformly bounded on $L^p(w)$:
\begin{align*}
p_-(L_w):=\inf\Big\{p\in (0,1):\sup_{t>0}\|e^{-t^2L_w}\|_{L^p(w)\rightarrow L^p(w)}<\infty\Big\},
\\
p_+(L_w):=\sup\Big\{p\in (0,1):\sup_{t>0}\|e^{-t^2L_w}\|_{L^p(w)\rightarrow L^p(w)}<\infty\Big\}.
\end{align*}
Note that in place of the semigroup $\{e^{-tL_w}\}_{t>0}$ we are using its rescaling $\{e^{-t^2L_w}\}_{t>0}$. We do so because all the ``heat'' square functions, defined below, are written using the latter and because in the context of off-diagonal estimates, discussed in the next section, this will simplify some computations.

According to \cite{CMR15} (see also \cite{Au07}), 
\begin{equation}\label{p-p+} 
p_-(L_w) \leq (2^*_w)'<2<2^*_w\le p_+(L_w).
\end{equation}

For all $m\in \N$ and $K\in \N_0$, we define the vertical square functions associated with the heat semigroup by
\begin{align}\label{vertical-H}
\mathsf{s}_{m,\hh}^wf(y)\!=\!\!\left(\int_{0}^{\infty}|(t\sqrt{L_w})^me^{-t^2L_w}f(y)|^2\frac{dt}{t}\right)^{\!\!\frac{1}{2}}
\hspace*{0.2cm}\textrm{and}\hspace*{0.2cm}
\mathsf{g}_{K,\hh}^wf(y)\!=\!\!\left(\int_{0}^{\infty}|t\nabla (t\sqrt{L_w})^{K}e^{-t^2L_w}f(y)|^2\frac{dt}{t}\right)^{\!\!\frac{1}{2}};
\end{align}
and with the Poisson semigroup 
\begin{align}\label{vertical-P}
\mathsf{s}_{m
,\pp}^wf(y)\!=\!\!\left(\int_{0}^{\infty}| (t\sqrt{L_w})^{m}e^{-t\sqrt{L_w}}f(y)|^2\frac{dt}{t}\right)^{\!\!\frac{1}{2}}
\hspace*{0.2cm} \textrm{and}\hspace*{0.2cm}
\mathsf{g}_{K,\pp}^wf(y)\!=\!\!\left(\int_{0}^{\infty}|t\nabla (t\sqrt{L_w})^{K}e^{-t\sqrt{L_w}}f(y)|^2\frac{dt}{t}\right)^{\!\!\frac{1}{2}}.
\end{align}
The conical square functions are defined by
\begin{align}\label{conicalH-1}
\Scal_{m,\hh}^wf(y)=\left(\iint_{\Gamma(x)}|(t\sqrt{L_w})^me^{-t^2L_w}f(y)|^2\frac{dw(y)dt}{tw(B(y,t))}\right)^{\frac{1}{2}},
\end{align}
\begin{align}\label{conicalH-2}
\Grm_{K,\hh}^wf(y)=\left(\iint_{\Gamma(x)}|t\nabla(t\sqrt{L_w})^Ke^{-t^2L_w}f(y)|^2\frac{dw(y)dt}{tw(B(y,t))}\right)^{\frac{1}{2}},
\end{align}
\begin{align}\label{conicalP-1}
\Scal_{m,\pp}^wf(y)=\left(\iint_{\Gamma(x)}|(t\sqrt{L_w})^me^{-t\sqrt{L_w}}f(y)|^2\frac{dw(y)dt}{tw(B(y,t))}\right)^{\frac{1}{2}},
\end{align}
\begin{align}\label{conicalP-2}
\Grm_{K,\pp}^wf(y)=\left(\iint_{\Gamma(x)}|t\nabla(t\sqrt{L_w})^Ke^{-t\sqrt{L_w}}f(y)|^2\frac{dw(y)dt}{tw(B(y,t))}\right)^{\frac{1}{2}}.
\end{align}
Note that we allow that the square root of the operator $L_w$ is raised to odd powers,  in contrast to \cite{MaPAI17,ChMPA16}, where only even powers were considered.

Besides, the non-tangential maximal functions are defined by
\begin{align}\label{nontangential}
\Ncal_{\hh}^wf(x)=\left(\sup_{t>0}\dashint_{B(x,t)}|e^{-t^2L_w}f(y)|^2dw(y)\right)^{\frac{1}{2}},\,
\textrm{ and }\,
\Ncal_{\pp}^wf(x)=\left(\sup_{t>0}\dashint_{B(x,t)}|e^{-t\sqrt{L_w}}f(y)|^2dw(y)\right)^{\frac{1}{2}}.
\end{align}
When we write the operators defined above without expressing explicitly the dependence on $w$ (i.e., $L$, $\mathsf{s}_{m,\hh}$, $\Scal_{m,\hh}$, $\Ncal_{\hh}$, etc.) we mean that we consider the constant weight $w$ equal to one.

We also consider the following representation of the square root of the operator $L_w$ (see \cite{Au07,CMR15}):
\begin{align}\label{squareroot}
\sqrt{L_w}f(y)=\frac{1}{\sqrt{\pi}}\int_0^{\infty}sL_we^{-s^2L_w}f(y)\frac{ds}{s}.
\end{align}

\medskip

Finally, we note the following result obtained in \cite{ChMPA16} (see also \cite{AHM12,MaPAI17}) where  the authors  proved boundedness for the conical square functions defined in \eqref{conicalH-1}-\eqref{conicalP-2} considering even  powers of the square root of the operator ${L_w}$.
\begin{theorem}\label{thm:boundednessconicaleven}
Given $w\in A_2$ and $v\in A_{\infty}(w)$, for every  $m\in \N$ and $K\in \N_0$, there hold:
\begin{list}{$(\theenumi)$}{\usecounter{enumi}\leftmargin=1cm \labelwidth=1cm\itemsep=0.2cm\topsep=.2cm \renewcommand{\theenumi}{\alph{enumi}}}

\item The conical square functions $\Scal_{2m,\hh}^w$ and $\Grm_{2K,\hh}^w$ can be extended to bounded operators on $L^p(vdw)$, for all
$p\in \mathcal{W}_v^w(p_-(L_w),\infty)$.

\item The conical square functions  $\Scal_{2m,\pp}^w$ and $\Grm_{2K,\pp}^w$ can be extended to bounded operators on $L^p(vdw)$,  for all $p\in \mathcal{W}_v^w(p_-(L_w),p_+(L_w)^{2m+1,*}_w)$  and for all $p\in \mathcal{W}_v^w(p_-(L_w),p_+(L_w)^{2K+1,*}_w)$, respectively.
\end{list}
\end{theorem}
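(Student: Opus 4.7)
The plan is to treat the four square functions in parallel by reducing everything to weighted Lebesgue bounds for the integrand operator, and then to upgrade from $L^{p_0}(w)$ to $L^p(vdw)$ via Rubio de Francia extrapolation in the space of homogeneous type $(\R^n,|\cdot|,dw)$. The starting point is the $L^2(w)$ boundedness of all four square functions: for $\mathcal{S}_{2m,\hh}^w$ and $\mathcal{S}_{2m,\pp}^w$ this follows from the bounded $H^\infty$ functional calculus of $L_w$ on $L^2(w)$ (together with the subordination identity \eqref{subordinationformula} for the Poisson case), whereas for $\Grm_{2K,\hh}^w$ and $\Grm_{2K,\pp}^w$ it follows from the sesquilinear form associated with $L_w$ and Caccioppoli-type estimates that convert $\|\sqrt{w}\,t\nabla(t^2L_w)^K e^{-t^2L_w}f\|_{L^2(dydt/t)}$ into a quantity of the form $\|\sqrt{w}\,(t^2L_w)^{K+\frac12} e^{-t^2L_w}f\|_{L^2(dydt/t)}$, which is handled by functional calculus again.

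For part $(a)$, I would then establish weighted $L^p(w)$-off-diagonal estimates for the families $\{(t^2L_w)^m e^{-t^2L_w}\}_{t>0}$ and $\{t\nabla(t^2L_w)^K e^{-t^2L_w}\}_{t>0}$, the first on the full interval $(p_-(L_w),p_+(L_w))$ and the second on $(q_-(L_w),q_+(L_w))$. Inserting these into the conical square function and integrating in the $t$-aperture turns the problem into a weighted good-$\lambda$ / sharp-maximal comparison, which yields boundedness on $L^{p_0}(w)$ for all $p_0\in(p_-(L_w),p_+(L_w))$. The Caccioppoli inequality applied on Whitney cubes inside the cone $\Gamma(x)$ is what allows the gradient square function $\Grm_{2K,\hh}^w$ to be controlled, up to lower-order terms, by a non-gradient conical square function; this is what makes its range end at $\infty$ rather than at $q_+(L_w)$. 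Once $L^{p_0}(w)$ bounds are available for a suitable $p_0$ and for a class of weights of the form $A_{p_0/p_-(L_w)}(w)\cap RH_{(q/p_0)'}(w)$, the extrapolation theorem in $(\R^n,dw)$ produces boundedness on $L^p(vdw)$ for every $p\in\mathcal{W}_v^w(p_-(L_w),\infty)$ and every $v\in A_\infty(w)$, as defined in \eqref{intervalrsw}.

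For part $(b)$, the route is to use the subordination formula \eqref{subordinationformula}. Writing
\begin{align*}
(t\sqrt{L_w})^{2m}e^{-t\sqrt{L_w}}f(y) = \frac{1}{\sqrt{\pi}}\int_0^{\infty}u^{\frac{1}{2}}e^{-u}\Big(\frac{t}{2\sqrt{u}}\Big)^{2m}(2\sqrt{u})^{2m}\big(\tfrac{t^2}{4u}L_w\big)^{m}e^{-\frac{t^2}{4u}L_w}f(y)\,\frac{du}{u},
\end{align*}
one applies Minkowski's inequality (for the tent-space norm defining $\mathcal{S}_{2m,\pp}^w$) and, after the change of variables $s=t/(2\sqrt{u})$, reduces the estimate to bounds for the heat conical square function $\mathcal{S}_{2m,\hh}^w$ on larger Lebesgue scales. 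The part $u\geq 1$ of the subordination integral is benign and controlled directly by part $(a)$; the delicate part is $u\leq 1$, where the rescaling pushes the time variable past the cone aperture and forces one to pay a Sobolev-type factor. That cost is exactly the $(2m+1)$-fold Sobolev embedding in $(\R^n,dw)$, whose sharp exponent is $p_+(L_w)^{2m+1,*}_w$ defined in \eqref{p_w^*}; this is the origin of the upper endpoint. The gradient case $\Grm_{2K,\pp}^w$ is handled identically, with $2m$ replaced by $2K+1$ (one gradient plus $2K$ powers), yielding the announced exponent $p_+(L_w)^{2K+1,*}_w$.

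The main obstacle I expect is the sharp identification of the Sobolev endpoint in the Poisson case: to run the subordination argument cleanly one needs Poincar\'e--Sobolev inequalities adapted to the $A_2$-degenerate measure $dw$, and one must carefully track how the powers of $t$ generated by both $(t\sqrt{L_w})^{2m}$ and the rescaling interact with the off-diagonal decay of the heat kernel in order not to lose regularity. Once this is in place, extrapolation from the unweighted-in-$v$ scale (Lebesgue $L^{p_0}(w)$) to the fully weighted scale $L^p(vdw)$ works as before and produces the intervals $\mathcal{W}_v^w(p_-(L_w),p_+(L_w)^{2m+1,*}_w)$ and $\mathcal{W}_v^w(p_-(L_w),p_+(L_w)^{2K+1,*}_w)$.
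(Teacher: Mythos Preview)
First, note that the paper does \emph{not} prove this theorem: it is quoted from \cite{ChMPA16} (see also \cite{MaPAI17,AHM12}), so there is no in-paper proof to compare against. That said, the proofs of the related Propositions~\ref{prop:widetildeS-heatS}, \ref{prop:coniclalpoisson-heat}, and \ref{prop:gradient-singradientheat} in this paper expose the actual machinery used in \cite{ChMPA16}, and your sketch diverges from it in two substantive ways.

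\textbf{Extrapolation input.} You obtain unweighted-in-$v$ bounds on $L^{p_0}(w)$ for a \emph{range} $p_0\in(p_-(L_w),p_+(L_w))$ and then try to extrapolate. But Rubio de Francia extrapolation (Theorem~\ref{theor:extrapol}) requires the estimate at a \emph{single} exponent for \emph{all} weights $v_0$ in a suitable class; unweighted bounds across a range do not feed the machine. In the approach of \cite{ChMPA16,MaPAI17} one fixes $p_0=2$, uses Fubini to collapse the conical integral into a vertical one, and then uses off-diagonal estimates plus \eqref{Asinpesoconpeso} to prove the $L^2(v_0dw)$ bound directly for every $v_0\in A_{2/p_-(L_w)}(w)$ (respectively $v_0\in A_{2/p_-(L_w)}(w)\cap RH_{(\cdot)'}(w)$ in part~(b)). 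The ``good-$\lambda$/sharp maximal'' step you invoke is not how the weighted input is produced.

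\textbf{Origin of the Poisson endpoint.} Your explanation that the upper bound $p_+(L_w)^{2m+1,*}_w$ comes from a ``$(2m{+}1)$-fold Sobolev embedding in $(\R^n,dw)$'' is not the correct mechanism. After subordination and splitting the $u$-integral, the dangerous region (here $u$ small or the regime $s>t$) forces you to compare cones of different apertures. This is done via the change-of-angle estimate Proposition~\ref{prop:Q}, which costs a factor $(t/s)^{n\widehat r(1/r-2/q_0)}$; the $2m{+}1$ powers of $t/s$ coming from $(t\sqrt{L_w})^{2m}$ and the extra derivative must absorb that loss. The arithmetic condition for this to work is exactly Remark~\ref{remark:choicereverseholder}, namely
\[
(2m{+}1)+\frac{n\widehat r}{2r}-\frac{n\widehat r}{q_0}>0,
\]
and its solvability under the constraints $q_0<p_+(L_w)$, $v_0\in RH_{(2r/q_1)'}(w)$ is precisely what singles out $p_+(L_w)^{2m+1,*}_w$. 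No Poincar\'e--Sobolev inequality enters in the non-gradient Poisson case; the exponent is a byproduct of off-diagonal estimates in $\mathcal O(L^2(w)-L^{q_0}(w))$ combined with Proposition~\ref{prop:Q}. Your overall architecture (functional calculus for $L^2$, subordination, extrapolation) is sound, but these two points are where the sketch would not close as written.
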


\subsection{Off-diagonal estimates}
\begin{definition}
Let $\{T_t\}_{t>0}$ be a family of sublinear operators and let $1< p< \infty$. Given a doubling measure $\mu$ we say that $\{T_t\}_{t>0}$ satisfies 
$L^p(\mu)-L^p(\mu)$ full off-diagonal estimates, denoted by $T_t\in\mathcal F(L^p(\mu)-L^p(\mu))$, if there exist constants $C,c>0$ such that 
for all closed sets $E$ and $F$, all $f\in L^p(\R^n)$, and all $t>0$ we have 
\begin{equation}\label{off:full}
\left(\int_F |T_t(\chi_E f)(x)|^p d\mu\right)^{\frac{1}{p}} 
\leq C\, 
 e^{-\frac{cd(E,F)^2}{t}} \left(\int_E |f(x)|^p d\mu\right)^{\frac{1}{p}},
\end{equation}
where $d(E,F)=\inf\{|x-y|:x\in E, y\in F\}$.
\end{definition}

Besides, set $\Upsilon(s)=\max\{s,s^{-1}\}$ for $s>0$ and recall the notation in \eqref{anulli} and \eqref{average}.

\begin{definition}
Given $1\leq p\leq q\leq \infty$ and any doubling measure $\mu$,  we say that a family of sublinear operators $\{T_t\}_{t>0}$ satisfies 
$L^p(\mu)-L^q(\mu)$ off-diagonal estimates on balls, denoted by $T_t\in\mathcal O(L^p(\mu)-L^q(\mu))$, if there exist $\theta_1,\theta_2>0$ and $c>0$ such that for all $t>0$ and for all balls $B$ with radius $r_B$, 
\begin{equation}\label{off:BtoB}
\br{\fint_B \abs{T_t(f \chi_B)}^q d\mu}^{{\frac{1}{q}}} \lesssim \Upsilon\br{\frac{r_B}{\sqrt t}}^{\theta_2} \br{\fint_B |f|^p d\mu}^{{\frac{1}{p}}},
\end{equation}
and for $j\geq 2$,
\begin{equation}\label{off:CtoB}
\br{\fint_B \abs{T_t(f \chi_{C_j(B)})}^q d\mu}^{{\frac{1}{q}}} 
\lesssim 2^{j\theta_1} \Upsilon\br{\frac{2^j r_B}{\sqrt t}}^{\theta_2} e^{-\frac{c4^j r_B^2}{t}} \br{\fint_{C_j(B)} |f|^p d\mu}^{{\frac{1}{p}}},
\end{equation}
and
\begin{equation}\label{off:BtoC}
\br{\fint_{C_j(B)} \abs{T_t(f \chi_B)}^q d\mu}^{{\frac{1}{q}}} 
\lesssim 2^{j\theta_1} \Upsilon\br{\frac{2^j r_B}{\sqrt t}}^{\theta_2} e^{-\frac{c4^j r_B^2}{t}} \br{\fint_B |f|^p d\mu}^{{\frac{1}{p}}}.
\end{equation}
\end{definition}

Throughout the paper we shall use the following  results about off-diagonal estimates on balls:
\begin{lemma}[{\cite[Section 2]{AMII07},\,\cite[Sections 3 and 7]{CMR15}}]\label{lem:ODweighted}
Let $L_w$ be a degenerate elliptic operator as in \eqref{def:L_w} with $w\in A_2$. There hold:
\begin{list}{$(\theenumi)$}{\usecounter{enumi}\leftmargin=1cm \labelwidth=1cm\itemsep=0.2cm\topsep=.2cm \renewcommand{\theenumi}{\alph{enumi}}}

\item If $p_-(L_w)<p\leq q<p_+(L_w)$, then $e^{-t L_w}$ and $(tL_w)^m e^{-t L_w}$  belong to $\mathcal O(L^p(w)-L^q(w))$, for every $m\in\N$. 

\item Let $p_-(L_w)<p\le q<p_+(L_w)$. If $v\in A_{p/p_-(L_w)}(w) \cap RH_{(p_+(L_w)/q)'}(w)$, then $e^{-t L_w}$ and $(tL_w)^m e^{-t L_w}$ belong to $\mathcal O(L^p(vw)-L^q(vw))$, for every $m\in\N$.

\item There exists an interval $\mathcal K(L_w)$ such that if $p,q \in \mathcal K(L_w)$, $p\leq q$, we have that $\sqrt t\nabla e^{-t L_w}\in \mathcal O(L^p(w)-L^q(w))$. Moreover, denoting respectively by $q_-(L_w)$ and $q_+(L_w)$ the left and right endpoints of $\mathcal K(L_w)$, we have that $q_-(L_w)=p_-(L_w)$, and $2<q_+(L_w)\leq q_+(L_w)^*_w\leq p_+(L_w)$. 

\item Let $q_-(L_w)<p\le q<q_+(L_w)$. If $v\in A_{p/q_-(L_w)}(w) \cap RH_{(q_+(L_w)/q)'}(w)$, then $\sqrt t\nabla e^{-t L_w}\in \mathcal O(L^p(vw)-L^q(vw))$.

\item If $p=q$ and $\mu$ a doubling measure then $\mathcal{F}(L^p(\mu)-L^p(\mu))$ and $\mathcal{O}(L^p(\mu)-L^p(\mu))$ are equivalent.
\end{list}
\end{lemma}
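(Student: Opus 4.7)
The plan is to assemble these five statements by combining Davies--Gaffney type methodology on $L^2(w)$ with the weighted and degenerate refinements developed in \cite{AMII07,CMR15}. For part $(a)$, I would first establish an $L^2(w)$--$L^2(w)$ full off-diagonal estimate for $e^{-tL_w}$ via the Davies perturbation trick: for a bounded Lipschitz function $\phi$ with $\|\nabla\phi\|_{\infty}\le 1$, set $T_\rho(t):=e^{\rho\phi}e^{-tL_w}e^{-\rho\phi}$, differentiate in $t$, and use the accretivity \eqref{elepliptitiA} of $A$ on the weighted Hilbert space $L^2(w)$ to obtain $\|T_\rho(t)\|_{L^2(w)\to L^2(w)}\lesssim e^{c\rho^2 t}$; optimising in $\rho$ with $\phi(x)=d(x,E)$ gives \eqref{off:full} at $p=2$. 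Analyticity of the semigroup transfers this to $(tL_w)^m e^{-tL_w}$ by the Cauchy formula. To pass from $L^2(w)$--$L^2(w)$ to general $L^p(w)$--$L^q(w)$ off-diagonal estimates on balls in the range $(p_-(L_w),p_+(L_w))$, I would interpolate this Gaffney estimate with the uniform $L^p(w)$ boundedness that defines $p_\pm(L_w)$, using Riesz--Thorin together with the annular decomposition \eqref{anulli}--\eqref{average} to localise; this is precisely the argument of \cite[Section 2]{AMII07} carried over to the degenerate setting.

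For part $(c)$, the gradient estimates proceed identically, starting from the energy bound $\|\sqrt{t}\,\nabla e^{-tL_w}f\|_{L^2(w)}\lesssim\|f\|_{L^2(w)}$ (a direct consequence of \eqref{elepliptitiA} and the spectral theorem for the sectorial operator $L_w$ on $L^2(w)$), upgrading it to a Gaffney-type $L^2(w)$--$L^2(w)$ off-diagonal bound by the same perturbation, and then interpolating against the $L^p(w)$ boundedness range for $\sqrt{t}\,\nabla e^{-tL_w}$ obtained in \cite[Section 7]{CMR15}. The chain $2<q_+(L_w)\le q_+(L_w)^{\ast}_w\le p_+(L_w)$ in \eqref{p-p+} requires the weighted Sobolev embedding $W^{1,2}(w)\hookrightarrow L^{2^{\ast}_w}(w)$ of Fabes--Kenig--Serapioni, which is the single genuinely delicate input: without it one could not relate the interval of boundedness of the gradient to that of the semigroup in the degenerate framework, and it is what forces the exponent $2^{\ast}_w$ rather than the Euclidean $2^{\ast}$ to appear. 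The identity $q_-(L_w)=p_-(L_w)$ follows from the standard duality argument together with conservation properties of the semigroup.

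Parts $(b)$ and $(d)$ would then follow by the abstract change-of-measure mechanism of \cite[Section 2]{AMII07}. Given $p_-(L_w)<p_0\le p\le q\le q_0<p_+(L_w)$ (respectively with $q_\pm(L_w)$ in part $(d)$) and $v\in A_{p/p_0}(w)\cap RH_{(q_0/q)'}(w)$, two applications of H\"older's inequality on the annular averages appearing in \eqref{off:BtoB}--\eqref{off:BtoC}, combined with \eqref{Asinpesoconpeso}--\eqref{RHsinpesoconpeso}, convert the already-established $L^{p_0}(w)$--$L^{q_0}(w)$ off-diagonal estimates into $L^p(vw)$--$L^q(vw)$ ones; the self-improvement of the $A_r(w)$ and $RH_s(w)$ classes, recorded in \eqref{intervalrsw}, is what produces the matching open intervals with endpoints $p_\pm(L_w)$ (resp. $q_\pm(L_w)$). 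Finally, part $(e)$ is standard: testing \eqref{off:full} with $E=B$ and $F=C_j(B)$ (and vice-versa) gives \eqref{off:BtoB}--\eqref{off:BtoC} after invoking doubling of $\mu$; conversely, decomposing an arbitrary closed set $E$ into intersections with a countable covering by balls and summing \eqref{off:BtoB}--\eqref{off:BtoC} recovers \eqref{off:full}. The main obstacle throughout is the weighted Sobolev embedding underlying the value of $p_+(L_w)$ in the degenerate case; all other steps are perturbation, interpolation, and careful bookkeeping with Muckenhoupt class indices.
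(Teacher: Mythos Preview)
The paper does not supply its own proof of this lemma; it is stated with direct attribution to \cite[Section 2]{AMII07} and \cite[Sections 3 and 7]{CMR15}, and the text proceeds immediately to the next proposition. Your sketch correctly reconstructs the strategy of those references---Davies--Gaffney estimates on $L^2(w)$, interpolation against the uniform boundedness defining $p_\pm(L_w)$ and $q_\pm(L_w)$, the weighted Sobolev embedding for the chain in part $(c)$, and the H\"older/weight-class mechanism for parts $(b)$ and $(d)$---so there is nothing to compare against beyond confirming that your outline is faithful to the cited sources, which it is.
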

Furthermore, in the following result, which is a weighted version of \cite[(5.12)]{MaPAII17} (see also \cite{HMay09}), we show off-diagonal estimates for the family $\{\mathcal{T}_{t,s}\}_{s,t>0}:=\{(e^{-t^2L_w}-e^{-(t^2+s^2)L_w})^M\}_{s,t>0}$.
\begin{proposition}\label{prop:lebesgueoff-dBQ}
For $0<t,s<\infty$, $M\in \N$, and  for  ${E}_1, {E}_2\subset \mathbb{R}^n$ closed, given $p\!\in \!(p_-(L_w),p_+(L_w))$, and $f\in L^{p}(w)$ such that $\textrm{supp}(f)\subset {E}_1$,  we have that $\{\mathcal{T}_{t,s}\}_{s,t>0}$ satisfies the following $L^p(w)-L^p(w)$ off-diagonal estimates: 
\begin{align}\label{AB}
\left\|\chi_{E_2}\mathcal{T}_{t,s}f\right\|_{L^{p}(w)}
\lesssim
\left(\frac{s^2}{t^2}\right)^M
e^{-c\frac{d({E}_1,{E}_2)^2}{t^2+s^2}}\|f\chi_{E_1}\|_{L^{p}(w)}.
\end{align}
\end{proposition}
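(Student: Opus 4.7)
\emph{Proof plan.} The strategy is to reduce the estimate for the $M$-th power $\mathcal T_{t,s}=(e^{-t^2L_w}-e^{-(t^2+s^2)L_w})^M$ to already-available $L^p(w)$–$L^p(w)$ off-diagonal estimates for the ``time-derivative'' family $(rL_w)^Me^{-rL_w}$, via the fundamental theorem of calculus applied $M$ times.

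The first step is to write, for each fixed $\tau>0$,
\begin{equation*}
e^{-t^2L_w}-e^{-(t^2+s^2)L_w}=-\int_{t^2}^{t^2+s^2}\frac{d}{dr}e^{-rL_w}\,dr=\int_{t^2}^{t^2+s^2}L_w\,e^{-rL_w}\,dr,
\end{equation*}
and then, iterating and using the semigroup property, to represent
\begin{equation*}
\mathcal T_{t,s}=\int_{t^2}^{t^2+s^2}\!\!\cdots\int_{t^2}^{t^2+s^2}L_w^{M}\,e^{-(r_1+\cdots+r_M)L_w}\,dr_1\cdots dr_M.
\end{equation*}
By Minkowski's integral inequality, it suffices to estimate $\|\chi_{E_2}L_w^M e^{-R L_w}f\|_{L^p(w)}$ uniformly for $R=r_1+\cdots+r_M\in[Mt^2,M(t^2+s^2)]$, and then integrate over the $M$-cube of volume $s^{2M}$.

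Next I would invoke Lemma~\ref{lem:ODweighted}\,(a) together with part (e): since $p\in(p_-(L_w),p_+(L_w))$ and the target and source exponents coincide, the bounded-ball off-diagonal estimates for $(rL_w)^Me^{-rL_w}$ upgrade to full off-diagonal estimates on $L^p(w)$. That is, for any closed sets $E_1,E_2$, any $R>0$, and any $f$ supported in $E_1$,
\begin{equation*}
\|\chi_{E_2}(RL_w)^Me^{-RL_w}f\|_{L^p(w)}\lesssim e^{-c\,d(E_1,E_2)^2/R}\,\|f\chi_{E_1}\|_{L^p(w)},
\end{equation*}
so that $\|\chi_{E_2}L_w^Me^{-RL_w}f\|_{L^p(w)}\lesssim R^{-M}e^{-c\,d(E_1,E_2)^2/R}\|f\chi_{E_1}\|_{L^p(w)}$.

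Finally, I would insert this estimate into the iterated integral. Using $R\ge Mt^2\ge t^2$ to bound $R^{-M}\le t^{-2M}$, and $R\le M(t^2+s^2)$ so that
\begin{equation*}
e^{-c\,d(E_1,E_2)^2/R}\le e^{-(c/M)\,d(E_1,E_2)^2/(t^2+s^2)},
\end{equation*}
integrating in $(r_1,\dots,r_M)$ over a set of measure $s^{2M}$ produces the factor $(s^2/t^2)^M$ together with the desired Gaussian decay in $d(E_1,E_2)^2/(t^2+s^2)$, yielding \eqref{AB}. The only delicate bookkeeping is the simultaneous control of the algebraic prefactor by $(s/t)^{2M}$ and of the Gaussian exponent by $t^2+s^2$; this is precisely why one should use the two-sided bound on $R$ and the new constant $c/M$ rather than $c$.
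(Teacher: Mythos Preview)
Your proof is correct and essentially identical to the paper's: both write $\mathcal T_{t,s}$ as an $M$-fold integral of $L_w^M e^{-RL_w}$ via the fundamental theorem of calculus and the semigroup property, apply Minkowski, invoke the full off-diagonal estimates for $(RL_w)^M e^{-RL_w}$ from Lemma~\ref{lem:ODweighted}\,(a),(e), and then use the two-sided bound $Mt^2\le R\le M(t^2+s^2)$ to extract the factor $(s^2/t^2)^M$ and the Gaussian in $d(E_1,E_2)^2/(t^2+s^2)$. The only cosmetic difference is that the paper parametrizes the integrals over $[0,s^2]^M$ with $R=r_1+\cdots+r_M+Mt^2$, whereas you integrate over $[t^2,t^2+s^2]^M$ with $R=r_1+\cdots+r_M$; these are the same after a shift.
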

\begin{proof}
Note that  we have
\begin{align*}
&\left\|\chi_{E_2}\mathcal{T}_{t,s}f\right\|_{L^{p}(w)}
=
\left\|\chi_{E_2}\left(e^{-t^2L_w}-e^{-(t^2+s^2)L_w}\right)^M f\right\|_{L^{p}(w)}
=
\left\|\chi_{E_2}\left(\int_{0}^{s^2}\partial_r e^{-(r+t^2)L_w}\,dr\right)^M f\,  \right\|_{L^{p}(w)}
\\ \nonumber
&\leq
\int_{0}^{s^2}\!\!\!\dots \int_{0}^{s^2}\left\|\chi_{E_2}
\big((r_1+\dots+r_M+M t^2) L_w \big)^M e^{-(r_1+\dots+r_M+M t^2) L_w}f\right\|_{L^{p}(w)} \ \frac{dr_1 \dots dr_M}{(r_1+\dots+r_M+M t^2)^M}
\\ \nonumber
&\lesssim
\int_{0}^{s^2}\!\!\!\dots \int_{0}^{s^2}
e^{-c\frac{d({E}_1,{E}_2)^2}{r_1+\dots+r_M+M t^2}}\frac{dr_1 \dots dr_M}{(r_1+\dots+r_M+M t^2)^M}
\|\chi_{E_1}f\|_{L^{p}(w)}
\\ \nonumber
&
\lesssim
\left(\frac{s^2}{t^2}\right)^M
e^{-c\frac{d({E}_1,{E}_2)^2}{t^2+s^2}}\|\chi_{E_1}f\|_{L^{p}(w)},
\end{align*}
where we have used that  $(t L_w)^M e^{-tL_w}\in \mathcal{F}(L^{p}(w)- L^{p}(w))$, for all $p\in (p_-(L_w),p_+(L_w))$, (see Lemma \ref{lem:ODweighted}, part $(e)$). 
\end{proof}
\begin{remark}\label{remark:boundednesstsr}
From Proposition \ref{prop:lebesgueoff-dBQ}, we immediately obtain that, for all $p\in (p_-(L_w),p_+(L_w))$ and $f\in L^{p}(w)$,
$$
\|\mathcal{T}_{t,s}f\|_{L^{p}(w)}\lesssim \left(\frac{s^2}{t^2}\right)^M\|f\|_{L^{p}(w)}, \quad t,s>0.
$$
\end{remark}

\medskip

We conclude this section by introducing the following  off-diagonal estimates on Sobolev spaces. The proof follows as in \cite{ChMPA18}, see also \cite{Au07} for the unweighted non-degenerate case.
\begin{lemma}\label{lem:Gsum}
Let $q\in (q_-(L_w),q_+(L_w))$, $p$ such that $\max\left\{r_w, (q_{-}(L_w))_{w,*}\right\}<p\le q$, and $\alpha>0$. Then, for every $(x,t)\in \R^{n+1}_+$, there exists $\theta>0$ such that
\begin{equation}\label{eq:Gsum}
\br{\,\fint_{B(x,\alpha t)} \abs{\nabla e^{-t^2L_w}f(z)}^q dw(z)}^{{\frac{1}{q}}} \lesssim \Upsilon(\alpha)^{\theta}\sum_{j\geq 1} e^{-c4^j} \br{\,\fint_{B(x,2^{j+1}\alpha t)} |\nabla S_t f(z)|^{p} dw(z)}^{\frac{1}{p}},
\end{equation}
where $S_t$ can be equal to $e^{-\frac
{t^2}{2}L_w}$ or the identity, for all $t>0$.  
%
\end{lemma}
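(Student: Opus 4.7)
The plan is to reduce both cases $S_t\in\{e^{-t^2L_w/2},\mathrm{Id}\}$ to a single inequality. Setting $g:=S_tf$ and using the semigroup law $e^{-t^2L_w}=e^{-t^2L_w/2}\circ e^{-t^2L_w/2}$ in the first case (and trivially in the second), one has $\nabla e^{-t^2L_w}f=\nabla e^{-\tau t^2L_w}g$ with $\tau\in\{1/2,1\}$. Writing $\widetilde B:=B(x,\alpha t)$, it thus suffices to show
\[
\left(\,\fint_{\widetilde B}|\nabla e^{-\tau t^2L_w}g|^q\,dw\right)^{\!1/q}
\lesssim
\Upsilon(\alpha)^{\theta}\sum_{j\ge1}e^{-c4^{j}}\left(\,\fint_{2^{j+1}\widetilde B}|\nabla g|^{p}\,dw\right)^{\!1/p},
\]
uniformly in $\alpha>0$, $t>0$ and $x\in\R^{n}$.

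Since $L_w$ kills constants and hence $e^{-\tau t^2L_w}\cdot 1=1$, set $c_0:=\fint_{2\widetilde B}g\,dw$ and write $\nabla e^{-\tau t^2L_w}g=\nabla e^{-\tau t^2L_w}(g-c_0)$. Annular decomposition $g-c_0=\sum_{j\ge1}(g-c_0)\chi_{C_j(\widetilde B)}$, Minkowski, and the weighted off-diagonal estimate of Lemma \ref{lem:ODweighted}(c) applied to $\sqrt{s}\nabla e^{-sL_w}$ with $s=\tau t^{2}$ and auxiliary exponent $\widetilde p\in(q_-(L_w),\min\{q,p_w^{*}\}]$ produce
\[
\mathrm{LHS}
\lesssim
\frac{1}{t}\sum_{j\ge1}2^{j\theta_{1}}\Upsilon(2^{j}\alpha)^{\theta_{2}}e^{-c4^{j}\alpha^{2}}\left(\,\fint_{C_{j}(\widetilde B)}|g-c_{0}|^{\widetilde p}\,dw\right)^{\!1/\widetilde p}.
\]
The crucial point is that such a $\widetilde p$ exists: the hypothesis $p>(q_-(L_w))_{w,*}$ is exactly equivalent to $p_w^{*}>q_-(L_w)$, and the condition $p>r_{w}$ guarantees $w\in A_{p}$, so the weighted Sobolev-Poincar\'e inequality
\[
\left(\,\fint_{B}|g-g_{B}|^{\widetilde p}\,dw\right)^{\!1/\widetilde p}\lesssim r_{B}\left(\,\fint_{B}|\nabla g|^{p}\,dw\right)^{\!1/p}
\]
holds on every ball $B$. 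Combining this on $B=2^{j+1}\widetilde B$ with a telescoping $c_{0}-g_{2^{j+1}\widetilde B}=\sum_{k=1}^{j}(g_{2^{k}\widetilde B}-g_{2^{k+1}\widetilde B})$ whose increments are likewise controlled by Sobolev-Poincar\'e on $2^{k+1}\widetilde B$, one obtains
\[
\left(\,\fint_{C_{j}(\widetilde B)}|g-c_{0}|^{\widetilde p}\,dw\right)^{\!1/\widetilde p}
\lesssim\alpha t\sum_{k=1}^{j+1}2^{k}\left(\,\fint_{2^{k+1}\widetilde B}|\nabla g|^{p}\,dw\right)^{\!1/p}.
\]

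Plugging this back into the off-diagonal bound the factors of $t$ cancel, and Fubini in $(j,k)$ reduces the matter to a technical lemma of the form
\[
\alpha\,\sum_{j\ge k-1}2^{j(\theta_{1}+\theta_{2})}\Upsilon(\alpha)^{\theta_{2}}e^{-c4^{j}\alpha^{2}}\lesssim\Upsilon(\alpha)^{\theta}e^{-c'4^{k}},
\]
which is proved by splitting the sum at the scale $j_{0}\sim\log_{2}(1/\alpha)$: for $j<j_{0}$ the exponential is harmless but only $O(\log(1/\alpha))$ terms appear, producing at most a polynomial factor $(1/\alpha)^{\theta_{1}+\theta_{2}}\le\Upsilon(\alpha)^{\theta}$, while for $j\ge j_{0}$ the super-exponential decay of $e^{-c4^{j}\alpha^{2}}$ furnishes the clean $e^{-c'4^{k}}$ tail. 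The main obstacle will be precisely this $\alpha$-bookkeeping --- cleanly quarantining the small-$\alpha$ divergence into the single prefactor $\Upsilon(\alpha)^{\theta}$ while extracting the universal Gaussian decay $e^{-c4^{j}}$ required in the conclusion --- together with checking that the constraints on $\widetilde p$ forced by weighted Sobolev-Poincar\'e ($\widetilde p\le p_{w}^{*}$, $w\in A_{p}$) are compatible with the admissible range $(q_-(L_w),q_+(L_w))$ provided by Lemma \ref{lem:ODweighted}(c), which is ensured exactly by the two standing hypotheses $p>r_{w}$ and $p>(q_-(L_w))_{w,*}$.
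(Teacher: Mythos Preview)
Your approach is exactly the standard one the paper defers to (via \cite{ChMPA18}, \cite{Au07}): subtract a constant using the conservation property, decompose into annuli, apply the $L^{\widetilde p}(w)\!-\!L^{q}(w)$ off-diagonal estimates for $\sqrt{s}\,\nabla e^{-sL_w}$, and close with the weighted Sobolev--Poincar\'e inequality plus telescoping. Your identification of the admissible auxiliary exponent $\widetilde p\in\big(q_-(L_w),\min\{q,p_w^{*}\}\big]$ and the reason the hypotheses $p>r_w$ and $p>(q_-(L_w))_{w,*}$ are needed is correct. For $\alpha\ge 1$ your $\alpha$-bookkeeping is fine: since $4^{j}\alpha^{2}\ge 4^{j}$ one has $e^{-c4^{j}\alpha^{2}}\le e^{-c4^{j}}$, the inner sum $\sum_{j\ge k-1}2^{j(\theta_1+\theta_2)}e^{-c4^{j}}\lesssim e^{-c'4^{k}}$, and the remaining factor $\alpha^{1+\theta_2}$ is absorbed into $\Upsilon(\alpha)^{\theta}$.

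There is, however, a genuine gap in your technical lemma for $\alpha<1$. Your claimed bound $\alpha\,2^{k}\sum_{j\ge k-1}2^{j(\theta_1+\theta_2)}\Upsilon(\alpha)^{\theta_2}e^{-c4^{j}\alpha^{2}}\lesssim \Upsilon(\alpha)^{\theta}e^{-c'4^{k}}$ fails when $k$ lies in the intermediate range $1\le k\lesssim j_0:=\log_2(1/\alpha)$: the terms $j\in[k-1,j_0)$ carry no exponential damping (since $4^{j}\alpha^{2}<1$ there), so the inner sum is at least of order $2^{j_0\theta_1}=\alpha^{-\theta_1}$, and after multiplying by $\alpha\,2^{k}$ one obtains a quantity with \emph{no} decay in $k$, let alone the super-exponential $e^{-c'4^{k}}$ you claim. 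In fact the target inequality \eqref{eq:Gsum} itself appears to be false for small $\alpha$ when $S_t=\mathrm{Id}$: take $L_w=-\Delta$, $t=1$, $x=0$, and $f\in C^\infty_c$ with $\nabla f\equiv 0$ on $B(0,1)$ but $f$ nonconstant; then $\nabla e^{-\Delta}f\not\equiv 0$ near $0$, so the left side stays bounded below as $\alpha\to0$, while on the right every term with $2^{j+1}\alpha\le 1$ vanishes and the remaining terms carry a factor $e^{-c4^{j}}\le e^{-c/(4\alpha^{2})}$ which overwhelms any polynomial $\Upsilon(\alpha)^{\theta}=\alpha^{-\theta}$. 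This is not your fault---the lemma is only ever invoked in the paper with $\alpha\ge 1$---but you should restrict your argument (and the statement) to $\alpha\ge 1$, where it is complete and correct.
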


\medskip

\subsection{Extrapolation and change of angle}
In our proofs the use of extrapolation and change of angle formulas
will be essential. In this section we formulate these results.

The following extrapolation result  for $w\equiv 1$ can be found in \cite[Chapter 2]{CruzMartellPerez}, \cite{AMI07}, and see also \cite[Lemma 3.3]{MaPAI17}. The proof
can be easily obtained by adapting the arguments there replacing everywhere the Lebesgue measure with the measure given by $w$ and the Hardy-Littlewood maximal function with its weighted version $\mathcal{M}^w$ introduced in \eqref{weightedHLM}. Further details are left to the interested reader.
\begin{theorem}{\cite[Theorem A.1]{ChMPA16}}\label{theor:extrapol}
Let $\mathcal{F}$ be a given family of pairs $(f,g)$ of non-negative and not identically zero measurable functions.
\begin{list}{$(\theenumi)$}{\usecounter{enumi}\leftmargin=1cm
\labelwidth=1cm\itemsep=0.2cm\topsep=.2cm
\renewcommand{\theenumi}{\alph{enumi}}}

\item Suppose that for some fixed exponent $p_0$, $1\le p_0<\infty$, and every weight $v\in A_{p_0}(w)$,
\begin{equation*}
\int_{\mathbb{R}^{n}}f(x)^{p_0}\,v(x)dw(x)
\leq
 C_{w,v,p_0}
\int_{\mathbb{R}^{n}}g(x)^{p_0}\,v(x)dw(x),
\qquad\forall\,(f,g)\in{\mathcal{F}}.
\end{equation*}
Then, for all $1<p<\infty$, and for all $v\in A_p(w)$,
\begin{equation*}
\int_{\mathbb{R}^{n}}f(x)^{p}\,v(x)dw(x)
\leq
C_{w,v,p}
\int_{\mathbb{R}^{n}}g(x)^{p}\,v(x)dw(x),
\qquad\forall\,(f,g)\in{\mathcal{F}}.
\end{equation*}

\item Suppose that for some fixed exponent $q_0$, $1\le q_0<\infty$, and every weight $v\in RH_{q_0'}(w)$,
\begin{equation*}
\int_{\mathbb{R}^{n}}f(x)^{\frac1{q_0}}\,v(x)dw(x)
\leq
 C_{w,v,q_0}
\int_{\mathbb{R}^{n}}g(x)^{\frac1{q_0}}\,v(x)dw(x),
\qquad\forall\,(f,g)\in{\mathcal{F}}.
\end{equation*}
Then, for all $1<q<\infty$ and for all $v\in RH_{q'}(w)$,
\begin{equation*}
\int_{\mathbb{R}^{n}}f(x)^{\frac1q}\,v(x)dw(x)
\leq
C_{w,v,q}
\int_{\mathbb{R}^{n}}g(x)^{\frac1q}\,v(x)dw(x),
\qquad\forall\,(f,g)\in{\mathcal{F}}.
\end{equation*}

\item Suppose that for some fixed exponent $r_0$, $0< r_0<\infty$, and every weight $v\in A_\infty(w)$,
\begin{equation*}
\int_{\mathbb{R}^{n}}f(x)^{r_0}\,v(x)dw(x)
\leq
C_{w,v,r_0}
\int_{\mathbb{R}^{n}}g(x)^{r_0}\,v(x)dw(x),
\qquad\forall\,(f,g)\in{\mathcal{F}}.
\end{equation*}
Then, for all $0<r<\infty$ and for all $v\in A_\infty(w)$,
\begin{equation*}
\int_{\mathbb{R}^{n}}f(x)^{r}\,v(x)dw(x)
\leq
C_{w,v,r}
\int_{\mathbb{R}^{n}}g(x)^{r}\,v(x)dw(x),
\qquad\forall\,(f,g)\in{\mathcal{F}}.
\end{equation*}
\item Suppose that for some fixed exponents $p_0$ and $q_0$, given $0< p_0<p<q_0<\infty$, and every weight $v\in A_{\frac{p}{p_0}}(w)\cap RH_{\left(\frac{q_0}{p}\right)'}(w)$,
\begin{equation*}
\int_{\mathbb{R}^{n}}f(x)^{p}\,v(x)dw(x)
\leq
 C_{w,v,p}
\int_{\mathbb{R}^{n}}g(x)^{p}\,v(x)dw(x),
\qquad\forall\,(f,g)\in{\mathcal{F}}.
\end{equation*}
Then, for all $p_0<q<q_0$, and for all $v\in A_{\frac{q}{p_0}}(w)\cap RH_{\left(\frac{q_0}{q}\right)'}(w)$,
\begin{equation*}
\int_{\mathbb{R}^{n}}f(x)^{q}\,v(x)dw(x)
\leq
C_{w,v,q}
\int_{\mathbb{R}^{n}}g(x)^{q}\,v(x)dw(x),
\qquad\forall\,(f,g)\in{\mathcal{F}}.
\end{equation*}
\end{list}
\end{theorem}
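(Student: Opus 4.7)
The plan is to adapt the classical Rubio de Francia extrapolation theorem to the space of homogeneous type $(\R^n, dw, |\cdot|)$, exploiting the fact that $w\in A_\infty$ makes $dw$ doubling and that Muckenhoupt theory transfers verbatim to this setting once one replaces the Lebesgue measure and the standard Hardy--Littlewood maximal function by $dw$ and $\mathcal{M}^w$, respectively. The two foundational facts I would first verify in this weighted framework, both of which are obtained by the classical arguments using only doubling and a Calder\'on--Zygmund decomposition with respect to $dw$, are: (i) for $1<p<\infty$, $\mathcal{M}^w$ is bounded on $L^p(vdw)$ iff $v\in A_p(w)$, and is of weak type $(1,1)$ with respect to $vdw$ when $v\in A_1(w)$; and (ii) every $v\in A_p(w)$ admits a Jones factorization $v=v_1\,v_2^{1-p}$ with $v_1,v_2\in A_1(w)$.

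For part $(a)$, given $v\in A_p(w)$ with $p\neq p_0$, I would run the Rubio de Francia iteration. If $p<p_0$, set $p_1=(p_0/p)'\cdot p_0$ or an analogous exponent and define
\[
\mathcal{R}h:=\sum_{k=0}^\infty \frac{(\mathcal{M}^w)^k h}{\,2^k\,\|\mathcal{M}^w\|_{L^{p_1}(vdw)\to L^{p_1}(vdw)}^k\,},
\]
so that $\mathcal{R}h\geq h$, $\|\mathcal{R}h\|_{L^{p_1}(vdw)}\leq 2\|h\|_{L^{p_1}(vdw)}$, and $\mathcal{R}h\in A_1(w)$ with a uniform constant. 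Pairing this with a suitable test function via H\"older's inequality in $L^{p_0}(\mathcal{R}h\cdot vdw)$ and applying the hypothesis to the weight $\mathcal{R}h\cdot v\in A_{p_0}(w)$ gives the required estimate, after the standard truncation $f\mapsto f\wedge N$ on a large ball to justify finiteness of the left-hand side before absorbing. If $p>p_0$, one dualizes using $v^{1-p'}\in A_{p'}(w)$ and runs the analogous algorithm.

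Part $(b)$ is the $RH$-dual of $(a)$: using \eqref{dualityapclassesRHclasses} (transferred to the $dw$-setting), $v\in RH_{q_0'}(w)$ corresponds under an appropriate change of weight to an $A_{q_0}(w)$ condition, and $(b)$ follows by applying $(a)$ to a rescaled family. Part $(c)$ is immediate from $(a)$ together with the identity $A_\infty(w)=\bigcup_{p\geq 1}A_p(w)$: given $r,r_0$, apply $(a)$ with $p_0=r_0/r$ (possibly after further rescaling) to the pair $(f^r,g^r)$, and conclude by the self-improvement property of $A_\infty(w)$. For part $(d)$, the limited range case, one combines the one-sided algorithms of $(a)$ and $(b)$ into a two-sided Rubio de Francia iteration (the Auscher--Martell limited-range extrapolation) built from $\mathcal{M}^w$; the intersection condition $v\in A_{p/p_0}(w)\cap RH_{(q_0/p)'}(w)$ rescales, via $v\mapsto v^{s}$ for appropriate $s$, to the unrestricted $A_r(w)$ setting where $(a)$ applies.

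The main obstacle is part $(d)$: one must carefully check that the two iterations are compatible, that both resulting weights lie in the correct intersection class with uniform constants, and that the iteration produces a majorant in the right Lebesgue space so that the hypothesis can be invoked after a single H\"older step. All other parts are essentially routine once facts (i) and (ii) above are in place, which is precisely why the statement of the excerpt records that the adaptation from the unweighted formulation is straightforward and the details are left to the reader.
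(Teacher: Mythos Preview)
Your proposal is correct and matches the paper's approach exactly: the paper does not give a proof but simply cites \cite[Theorem A.1]{ChMPA16} and remarks that the result follows by adapting the classical Rubio de Francia extrapolation arguments from \cite{CruzMartellPerez,AMI07,MaPAI17}, replacing the Lebesgue measure by $dw$ and the Hardy--Littlewood maximal function by $\mathcal{M}^w$, leaving the details to the reader. Your sketch fills in precisely those details, so there is nothing to add.
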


\medskip

In our proofs we will use these extrapolation results to prove inequalities of the type
$$
\|\mathcal{Q}_0f\|_{L^p(vdw)}\lesssim \|\mathcal{Q}_1f\|_{L^p(vdw)},\quad \forall p\in \mathcal{W}_v^w(\tilde{p},\tilde{q}_w^{\widetilde{k},*}),
$$
where $w\in A_2$, $v\in A_{\infty}(w)$, $0\leq\tilde{p}<q_1<\max\{2,q_1\}<\tilde{q}\leq\infty$, $\widetilde{k}\in \N$, $\mathcal{Q}_0$ and $\mathcal{Q}_1$ are operators acting over a function $f$, and $\tilde{q}_w^{\widetilde{k},*}$  is defined in \eqref{p_w^*}. Hence, it will be enough to show
$$
\|\mathcal{Q}_0f\|_{L^{q_1}(v_0dw)}\lesssim \|\mathcal{Q}_1f\|_{L^{q_1}(v_0dw)},\quad \forall v_0\in A_{\frac{q_1}{\tilde{p}}}(w)\cap RH_{\left(\frac{\tilde{q}_w^{\widetilde{k},*}}{q_1}\right)'}(w).
$$
The fact that $v_0\in RH_{\left(\frac{\tilde{q}_w^{\widetilde{k},*}}{q_1}\right)'}(w)$ yields an important consequence that we see in the following remark. 
\begin{remark}\label{remark:choicereverseholder}
Let $w\in A_2$, $\widetilde{k}\in \N$, $1<q_1<\tilde{q}_w^{\widetilde{k},*}$, $\max\{2,q_1\}<\tilde{q}$,  and $v_0\in RH_{\left(\frac{\tilde{q}_w^{\widetilde{k},*}}{q_1}\right)'}(w)$ then 
 we can find  $\widehat{r}$, $q_0$, and $r$ such that $r_w<\widehat{r}<2$, $2<q_0<\tilde{q}$, $\max\{q_0/2,q_1/2\}\leq r<\infty$, $v_0\in RH_{\left(2r/q_1\right)'}(w)$ so that
\begin{align}\label{positiveremark}
\widetilde{k}+\frac{n\,\widehat{r}}{2r}-\frac{n\,\widehat{r}}{q_0}
> 0.
\end{align}
Indeed, for $n\,r_w>\widetilde{k}\tilde{q}$, note that
$
\mathfrak{s}_{v_0}(w)<\frac{nr_w\tilde{q}}{q_1(nr_w-\widetilde{k}\tilde{q})}.
$
Hence, 
 we can find $2>\widehat{r}>r_w$ close enough to $r_w$, $\varepsilon_0>0$ small enough and $2<q_0<\tilde{q}$, close enough to $\tilde{q}$
so that $\max\{q_1,q_0/\mathfrak{s}_{v_0}(w)\}<\frac{n\widehat{r}q_0}{\mathfrak{s}_{v_0}(w)(1+\varepsilon_0)(n\widehat{r}-\widetilde{k}q_0)}$. Besides, 
define $r:=\frac{q_0n\widehat{r}}{2(1+\varepsilon_0)(n\widehat{r}-\widetilde{k}q_0)}$. Then, 
 $q_0/2<r<\infty$,  $v_0\in RH_{(2r/q_1)'}(w)$, and 
\begin{align*}
\widetilde{k}+\frac{n\widehat{r}}{2r}-\frac{n\widehat{r}}{q_0}>0.
\end{align*}

If now  $n\,r_w\leq \widetilde{k}\tilde{q}$, our condition on the weight $v_0$ becomes $v_0\in A_{\infty}(w)$. Then, we take $r>\mathfrak{s}_{v_0}(w)\max\{q_1/2,1\}$ and $q_0$ satisfying
$\max\left\{2,\frac{2r\tilde{q}}{\tilde{q}+2r}\right\}<q_0<\min\left\{\tilde{q},2r\right\}$ if $\tilde{q}<\infty$, or $q_0=2r$ if $\tilde{q}=\infty$. Therefore, we have that $2<q_0<\tilde{q}$, $q_0/2\leq r<\infty$, and $v_0\in RH_{(2r/q_1)'}(w)$. Besides, 
\begin{align*}
\widetilde{k}+\frac{n\,r_w}{2r}-\frac{n\,r_w}{q_0}
>
\widetilde{k}-\frac{n\,r_w}{\tilde{q}}\geq 0.
\end{align*}
Then, taking $r_w<\widehat{r}<2$ close enough to $r_w$ we get \eqref{positiveremark}.

Besides, note that if we know that there exists $0<\widetilde{p}<q_1$ so that $v_0\in A_{\frac{q_1}{\widetilde{p}}}(w)$, then we can  find $p_0$, $\widetilde{p}<p_0<q_1$, close enough to $\widetilde{p}$ so that  $v_0\in A_{\frac{q_1}{p_0}}(w)$.
\end{remark}

Finally we present the change of angle results that we shall use.  
The following proposition is a version of \cite[Proposition 3.30]{MaPAI17} in the weighted degenerate case.
\begin{proposition}{\cite[Proposition A.2]{ChMPA16}}\label{prop:Q} 
Let $w\in A_{\widehat{r}}$ and $v\in RH_{r'}(w)$ with $1\le \widehat{r},r<\infty$. For every $1\le q\le r$, $0<\beta\leq 1$, and $t>0$, there holds
\begin{align*}
\int_{\mathbb{R}^n}\!\!\left(\int_{B(x,\beta t)}|h(y,t)| \, \frac{dw(y)}{w(B(y,\beta t))} \right)^{\!\frac{1}{q}}\!\!v(x)dw(x)
\lesssim 
\beta^{n\widehat{r}\left(\frac{1}{r}-\frac{1}{q}\right)} 
\!\!\!\int_{\mathbb{R}^n}\!\!\left(\int_{B(x,t)}|h(y,t)| \, \frac{dw(y)}{w(B(y,t))} \right)^{\!\frac{1}{q}}\!\!v(x)dw(x).
\end{align*}
\end{proposition}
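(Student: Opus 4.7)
\emph{Reduction via doubling.} Set $\phi_\alpha(x):=\dashint_{B(x,\alpha t)}|h(y,t)|\,dw(y)$ for $\alpha\in\{\beta,1\}$. Since $w\in A_{\widehat r}$ is doubling, $w(B(y,\alpha t))\approx w(B(x,\alpha t))$ whenever $y\in B(x,\alpha t)$, so the inner integrals appearing in the statement are comparable to $\phi_\beta(x)$ and $\phi_1(x)$ respectively. The claim thus reduces to showing
\[
\int_{\mathbb R^n}\phi_\beta(x)^{1/q}\,v(x)\,dw(x)\lesssim \beta^{n\widehat r(1/r-1/q)}\int_{\mathbb R^n}\phi_1(x)^{1/q}\,v(x)\,dw(x),\qquad 1\le q\le r.
\]

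\emph{The case $q=1$.} Fubini and the symmetry $\chi_{B(x,\beta t)}(y)=\chi_{B(y,\beta t)}(x)$ give
\[
\int\phi_\beta\,v\,dw=\int |h(y,t)|\left(\dashint_{B(y,\beta t)}v\,dw\right)\,dw(y).
\]
In the space of homogeneous type $(\mathbb R^n,dw,|\cdot|)$, the hypothesis $v\in RH_{r'}(w)$ together with the analog of \eqref{pesosineqw:RHq} applied to the inclusion $B(y,\beta t)\subset B(y,t)$ yields $v\,dw(B(y,\beta t))/v\,dw(B(y,t))\lesssim\bigl(w(B(y,\beta t))/w(B(y,t))\bigr)^{1/r}$, while $w\in A_{\widehat r}$ gives $w(B(y,\beta t))/w(B(y,t))\gtrsim \beta^{n\widehat r}$. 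Since $1/r-1\le 0$, multiplying these yields
\[
\dashint_{B(y,\beta t)}v\,dw\lesssim \beta^{n\widehat r(1/r-1)}\dashint_{B(y,t)}v\,dw,
\]
and Fubini in reverse closes the $q=1$ case.

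\emph{The general case $1<q\le r$.} I plan to linearize the concave power $z\mapsto z^{1/q}$ via Young's inequality $z^{1/q}\le z/(q\lambda^{q-1})+(q-1)\lambda/q$ with the auxiliary choice $\lambda(x):=\phi_1(x)^{-(q-1)/q}$ (interpreted appropriately where $\phi_1$ vanishes). Substituting and optimizing reduces matters to the estimate
\[
\int \phi_\beta(x)\,\phi_1(x)^{-(q-1)/q}\,v(x)\,dw(x)\lesssim \beta^{n\widehat r(1/r-1/q)}\int \phi_1(x)^{1/q}\,v(x)\,dw(x),
\]
and the main obstacle is that $\phi_1^{-(q-1)/q}v$ is not a priori a Muckenhoupt weight. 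The strategy is to freeze $\phi_1$ on a covering of $\mathbb R^n$ by balls of radius comparable to $\beta t$ on which $\phi_1$ (and hence $\lambda$) is essentially constant, apply the $q=1$ estimate on each such ball, and reassemble the pieces using the $RH_{r'}(w)$ property of $v$ (together with its self-improvement to $RH_{(r-\varepsilon)'}(w)$). I note that a direct application of extrapolation via Theorem \ref{theor:extrapol}(b) starting from the $q=1$ case would only produce the conclusion for the strictly smaller class $v\in RH_{q'}(w)\subsetneq RH_{r'}(w)$ (when $q<r$), so the freezing/covering step is essential to retain the full class of weights.
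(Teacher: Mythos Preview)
The paper does not give its own proof of this proposition; it is quoted verbatim from \cite[Proposition~A.2]{ChMPA16} (which in turn extends \cite[Proposition~3.30]{MaPAI17}). So there is no in-paper argument to compare against, and I evaluate your sketch on its own merits.

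Your $q=1$ case is correct: the Fubini reduction to the weighted-average inequality
\[
\dashint_{B(y,\beta t)}v\,dw\ \lesssim\ \beta^{\,n\widehat r(1/r-1)}\dashint_{B(y,t)}v\,dw,
\]
and the derivation of the latter from $v\in RH_{r'}(w)$ together with $w\in A_{\widehat r}$, are both fine.

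The plan for $1<q\le r$ has a genuine gap at the freezing step. After the (essentially correct) Young-type linearization you are left with controlling $\int \phi_\beta\,\phi_1^{-(q-1)/q}\,v\,dw$, and you propose to freeze $\phi_1$ on a covering by balls of radius $\sim\beta t$, asserting that $\phi_1$ is essentially constant there. This is false: the function $\phi_1(x)=\dashint_{B(x,t)}|h|\,dw$ need not be approximately constant at \emph{any} scale. Concretely, if $h(\cdot,t)$ is concentrated near a single point $z$, then $\phi_1(x)>0$ exactly on $B(z,t)$ and vanishes outside; for $x,x'$ with $|x-x'|<\beta t$ lying on opposite sides of $\partial B(z,t)$ one has $\phi_1(x)>0$ while $\phi_1(x')=0$. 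Since the exponent $-(q-1)/q$ is negative, it is precisely a \emph{lower} bound on $\phi_1$ over each covering ball that you would need, and no such bound is available. Thus the ``apply the $q=1$ estimate on each ball and reassemble'' strategy is missing its main input, and the self-improvement of $RH_{r'}(w)$ does not help here. (There is also a minor slip: the choice of $\lambda$ that actually produces your reduced estimate is $\lambda=\phi_1^{1/q}$, not $\phi_1^{-(q-1)/q}$.) Your closing remark that naive extrapolation from $q=1$ only reaches $v\in RH_{q'}(w)$ is correct, but the freezing route you propose to recover the full class $RH_{r'}(w)$ does not work as stated.
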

Consider now the following operator:
\begin{align*}
\mathcal{A}^{\alpha}_w F(x):=\left(\iint_{\Gamma^{\alpha}(x)}|F(y,t)|^2\frac{dw(y)\,dt}{tw(B(y,t))}\right)^{\frac{1}{2}},\quad \forall x\in \R^n,
\end{align*}
where $F$ is a measurable function defined in $\R^{n+1}_+$, $\alpha >0$, and $\Gamma^{\alpha}(x)$ is the cone of aperture $\alpha$ with vertex at $x$, $\Gamma^{\alpha}(x):=\{(y,t)\in \R^{n+1}_+: |x-y|<\alpha t\}$.

\begin{proposition}{\cite[Proposition 4.9]{ChMPA16}}\label{prop:alpha}
Let $0< \alpha\leq \beta<\infty$.
\begin{list}{$(\theenumi)$}{\usecounter{enumi}\leftmargin=1cm
\labelwidth=1cm\itemsep=0.2cm\topsep=.2cm
\renewcommand{\theenumi}{\roman{enumi}}}

\item  For every $w\in A_{\widetilde{r}}$ and  $v\in A_r(w)$, $1\leq r,\widetilde{r}<\infty$,
there holds
\begin{align}\label{change-alph-1}
\norm{\mathcal{A}_w^{\beta}F}_{L^p(vdw)}\
\leq 
C \br{\frac{\beta}{\alpha}}^{\frac{n\,\widetilde{r}\,r}{p}}
 \norm{\mathcal{A}^{\alpha}_w F}_{L^p(vdw)} \quad \textrm{for all} \quad 0<p\leq 2r,
\end{align}
 where   $C\ge 1$ depends on $n$, $p$, $r$, $\widetilde{r}$, $[w]_{A_{\widetilde{r}}}$, and  $[v]_{A_r(w)}$, but it is independent of  $\alpha$ and $\beta$.

\item  For every $w\in RH_{\widetilde{s}'}$ and $v\in RH_{s'}(w)$, $1\leq s,\widetilde{s}<\infty$, there holds
\begin{align}\label{change-alph-2}
\norm{\mathcal{A}^{\alpha}_w F}_{L^p(vdw)}
\leq  
C\br{\frac{\alpha}{\beta}}^{\frac{n}{s\,\widetilde{s}\,p}} \norm{\mathcal{A}^{\beta}_w F}_{L^p(vdw)}
\quad \text{for all} \quad \frac{2}{s}\leq p<\infty,
\end{align}
 where   $C\ge 1$ depends on $n$, $p$, $s$, $\widetilde{s}$, $[w]_{RH_{\widetilde{s}'}}$, and  $[v]_{RH_{s'}(w)}$, but it is independent of  $\alpha$ and $\beta$.
\end{list}
\end{proposition}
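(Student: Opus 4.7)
My plan is to reduce the proof to a direct Fubini computation at $p=2$ and then pass to general $p$ via extrapolation (Theorem \ref{theor:extrapol}) applied to the pair $(\mathcal{A}_w^\beta F,\mathcal{A}_w^\alpha F)$. The key observation is that the ``extra'' factor $K=(\beta/\alpha)^{n\widetilde{r}r}$ in (i) (or $K=(\alpha/\beta)^{n/(s\widetilde{s})}$ in (ii)) is a multiplicative constant independent of the integrability exponent \emph{at the level of $p$-th powers}, so it passes through Rubio de Francia extrapolation unchanged, which produces the advertised exponents $n\widetilde{r}r/p$ and $n/(s\widetilde{s}p)$ once we take $p$-th roots.

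For the base case at $p=2$, using Fubini together with the identity $\chi_{B(x,\gamma t)}(y)=\chi_{B(y,\gamma t)}(x)$ for $\gamma\in\{\alpha,\beta\}$, the $x$-integral collapses to $vw(B(y,\gamma t))$, giving
\[
\|\mathcal{A}_w^{\gamma}F\|_{L^2(vdw)}^2
= \int_0^{\infty}\!\!\int_{\mathbb{R}^n} |F(y,t)|^2\,\frac{vw(B(y,\gamma t))}{w(B(y,t))}\,\frac{dw(y)\,dt}{t}.
\]
Hence everything reduces to a pointwise comparison between $vw(B(y,\beta t))$ and $vw(B(y,\alpha t))$. For part (i), the $A_r(w)$ condition applied to $E=B(y,\alpha t)\subset B=B(y,\beta t)$ combined with the doubling of $w\in A_{\widetilde{r}}$ yields
\[
\frac{vw(B(y,\beta t))}{vw(B(y,\alpha t))}
\leq [v]_{A_r(w)}\!\left(\frac{w(B(y,\beta t))}{w(B(y,\alpha t))}\right)^{\!r}
\lesssim \left(\tfrac{\beta}{\alpha}\right)^{n\widetilde{r}r}.
\]
For part (ii), the reverse-Hölder inequalities for $v\in RH_{s'}(w)$ (via \eqref{pesosineq:RHq}) and $w\in RH_{\widetilde{s}'}$ (via \eqref{pesosineqw:RHq}), applied in succession, produce
\[
\frac{vw(B(y,\alpha t))}{vw(B(y,\beta t))}
\leq [v]_{RH_{s'}(w)}\!\left(\frac{w(B(y,\alpha t))}{w(B(y,\beta t))}\right)^{\!1/s}
\lesssim \left(\tfrac{\alpha}{\beta}\right)^{n/(s\widetilde{s})}.
\]
Substituting these pointwise bounds back into the Fubini identity settles the two cases at $p=2$.

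For general $p$, I would apply Theorem \ref{theor:extrapol} to the pair $(\mathcal{A}_w^\beta F,\mathcal{A}_w^\alpha F)$ (resp.\ its reverse for (ii)). The base inequality $\int(\mathcal{A}_w^\beta F)^{2}\,v\,dw\le CK\int(\mathcal{A}_w^\alpha F)^{2}\,v\,dw$ has $K$ entering only as a multiplicative factor, and Rubio de Francia extrapolation preserves such a factor linearly: the output at the $p$-th power level reads $\int(\mathcal{A}_w^\beta F)^{p}\,v\,dw\le C_{w,v,p}\,K\int(\mathcal{A}_w^\alpha F)^{p}\,v\,dw$. Taking $p$-th roots then yields the exponent $n\widetilde{r}r/p$ at the norm level, exactly as stated; the analogous computation handles (ii).

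The main obstacle I expect is organizing the extrapolation correctly so that the weight class and the range of $p$ match the statement: namely, choosing the endpoints $p_0,q_0$ in Theorem \ref{theor:extrapol}(d) (or passing through the $A_\infty(w)$ extrapolation (c)) so that the base case at $p=2$ for $v\in A_r(w)$ (resp.\ $v\in RH_{s'}(w)$) delivers the conclusion on the full interval $p\in(0,2r]$ (resp.\ $p\in[2/s,\infty)$) and for every $v$ in the target class. The Fubini base-case computation itself is elementary; the subtlety is packaging it so that extrapolation can be applied off-the-shelf and verifying that the prescribed weight class is preserved throughout the specified range of $p$.
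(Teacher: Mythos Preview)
The paper does not actually prove this proposition; it is quoted from \cite[Proposition~4.9]{ChMPA16} (with predecessors \cite{ChanglesAuscher} and \cite[Proposition~3.2]{MaPAI17}), and those proofs do not proceed via extrapolation.

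Your argument has a genuine gap in the extrapolation step. The claim that Rubio de Francia extrapolation ``preserves such a factor linearly'' at the $p$-th power level is false. If the base inequality reads $\int f^{p_0}\,v_0\,dw\le C_0\int g^{p_0}\,v_0\,dw$, then the extrapolated constant at the $p$-th power level is $C_0^{\,p/p_0}$ (times factors depending only on the weight characteristic), not $C_0$. This is forced already by the trivial case $f=\lambda g$: there $C_0=\lambda^{p_0}$ while the sharp $p$-th power constant is $\lambda^{p}=C_0^{\,p/p_0}$, so linearity in $C_0$ is impossible. In your setup $p_0=2$ and $C_0=CK$ with $K=(\beta/\alpha)^{n\widetilde{r}r}$, so extrapolation would at best deliver $(\beta/\alpha)^{n\widetilde{r}r/2}$ at the norm level, independently of $p$, not $(\beta/\alpha)^{n\widetilde{r}r/p}$. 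There is also a weight-class mismatch: Theorem~\ref{theor:extrapol}(a) starts from all $v_0\in A_{p_0}(w)$ and returns $v\in A_p(w)$; it does not land you in the fixed class $A_r(w)$ over the range $0<p\le 2r$, and the $A_\infty(w)$-extrapolation of part~(c) is unavailable because your base constant $K=(\beta/\alpha)^{n\widetilde{r}r}$ blows up as $r\to\infty$.

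Your $L^2$ Fubini identity is correct and is indeed the seed of the cited proofs. The passage to general $p$ there, however, is carried out by a level-set argument: one localizes the Fubini computation to complements of the open sets $\{\mathcal{A}_w^\alpha F>\lambda\}$, uses the $A_r(w)$/$A_{\widetilde r}$ bounds (via \eqref{pesosineq:Ap} and \eqref{doublingcondition}) to compare $vw$-measures of balls of radii $\alpha t$ and $\beta t$, and derives a distribution-function inequality relating $vw(\{\mathcal{A}_w^\beta F>\lambda\})$ to $(\beta/\alpha)^{n\widetilde{r}r}$ times the $vw$-measure of a level set of $\mathcal{A}_w^\alpha F$. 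Integrating this against $p\lambda^{p-1}\,d\lambda$ is precisely what manufactures the factor $1/p$ in the exponent; the argument for part~(ii) is analogous using \eqref{pesosineqw:RHq} and \eqref{pesosineq:RHq}.
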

The previous proposition was proved in  the unweighted non-degenerate   case in \cite{ChanglesAuscher}
and in the weighted non-degenerate case in \cite[Proposition 3.2]{MaPAI17}.

\section{Main results}\label{sec:main}
In this section we present our main results. In particular in Section \ref{sec:verticaconical}
we establish comparison in weighted degenerate Lebesgue spaces for the vertical and conical square functions defined in \eqref{vertical-H}-\eqref{conicalP-2}. In Section \ref{sec:boundedness} we formulate boundedness results for the operators defined in \eqref{vertical-H}-\eqref{squareroot}.
\subsection{Norm comparison for vertical and conical square functions}\label{sec:verticaconical}
In this section we study the values of $p$ where the vertical and conical square functions defined in \eqref{vertical-H}-\eqref{conicalP-2} are comparable on $L^p(vdw)$. 

We first consider the vertical and conical square functions defined via the heat or the Poisson semigroup. 

\begin{theorem}\label{thm:conical-verticalnon-gradientdegenerate}
Given  $w\in A_2$, $v\in A_{\infty}(w)$, and $f\in L^2(w)$, for every $m\in \N$, we have
 \begin{list}{$(\theenumi)$}{\usecounter{enumi}\leftmargin=1cm \labelwidth=1cm\itemsep=0.2cm\topsep=.0cm \renewcommand{\theenumi}{\alph{enumi}}}
 
  \item $\|\mathsf{s}_{m,\hh}^wf\|_{L^p(vdw)}\lesssim
 \|\mathcal{S}_{m,\hh}^wf\|_{L^p(vdw)}$, for $p\in \mathcal{W}_v^w(0,p_+(L_w))$;

\item $\|\mathcal{S}_{m,\hh}^wf\|_{L^p(vdw)}\lesssim
 \|\mathsf{s}_{m,\hh}^wf\|_{L^p(vdw)}$,  for $p\in \mathcal{W}_v^w(p_-(L_w),\infty)$;

  \item $\|\mathsf{s}_{m,\pp}^wf\|_{L^p(vdw)}\lesssim
 \|\mathcal{S}_{m,\pp}^wf\|_{L^p(vdw)}$, for $p\in \mathcal{W}_v^w(p_-(L_w),p_+(L_w))$ ;

  \item $\|\mathcal{S}_{m,\pp}^wf\|_{L^p(vdw)}\lesssim
 \|\mathsf{s}_{m,\pp}^wf\|_{L^p(vdw)}$, for $p\in \mathcal{W}_v^w(p_-(L_w),p_+(L_w)^*_w)$.

 \end{list}
In particular, for all  $p\in \mathcal{W}_v^w(p_-(L_w),p_+(L_w))$,  we have
$$
\|\mathsf{s}_{m,\hh}^wf\|_{L^p(vdw)}\approx
 \|\mathcal{S}_{m,\hh}^wf\|_{L^p(vdw)},\quad \textrm{and}\quad
\|\mathsf{s}_{m,\pp}^wf\|_{L^p(vdw)}\approx
 \|\Scal_{m,\pp}^wf\|_{L^p(vdw)}.
$$

\end{theorem}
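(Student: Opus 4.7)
The plan is to deduce the four estimates from the general norm comparison principle announced as Proposition~\ref{prop:comparison-general}, which converts off-diagonal estimates on balls for a family $\{T_t\}_{t>0}$ into quantitative comparisons between the associated vertical and conical square functions. For parts (a), (b) I would apply the principle with $T_t=(t\sqrt{L_w})^m e^{-t^2L_w}$, whose off-diagonal bounds on the range $(p_-(L_w),p_+(L_w))$ are supplied by Lemma~\ref{lem:ODweighted}(a)--(b). For parts (c), (d) I would apply it with $T_t=(t\sqrt{L_w})^m e^{-t\sqrt{L_w}}$, whose off-diagonal behaviour is extracted from the heat family via the subordination formula \eqref{subordinationformula} combined with Proposition~\ref{prop:lebesgueoff-dBQ}.

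The base case of the principle is the Fubini identity
\begin{equation*}
\|\mathcal{S}_{m,\hh}^wf\|_{L^2(w)}^2 = \int_0^\infty\!\!\int_{\R^n}|(t\sqrt{L_w})^m e^{-t^2L_w}f(y)|^2\, dw(y)\,\frac{dt}{t}=\|\mathsf{s}_{m,\hh}^wf\|_{L^2(w)}^2,
\end{equation*}
which follows because $\int_{B(y,t)}dw(x)/w(B(y,t))=1$, and analogously for the Poisson pair. To bootstrap this unweighted $L^2$ identity to the weighted $L^p(vdw)$ ranges stated in (a)--(d), I would invoke Theorem~\ref{theor:extrapol}(iv), reducing each comparison to a single weighted estimate at an exponent $p_1$ close to $2$ for every $v_0$ in the class $A_{p_1/\widetilde{p}}(w)\cap RH_{(\widetilde{q}/p_1)'}(w)$ with $(\widetilde{p},\widetilde{q})$ equal to $(0,p_+(L_w))$, $(p_-(L_w),\infty)$, $(p_-(L_w),p_+(L_w))$, or $(p_-(L_w),p_+(L_w)^*_w)$ respectively (see Remark~\ref{remark:choicereverseholder}). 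The weighted base estimate would be proved by decomposing $\R^n$ into the annuli $C_j(B(y,t))$ around the relevant ball, applying the off-diagonal decay of Lemma~\ref{lem:ODweighted}(b), and transferring from $dw$ to $vdw$ via \eqref{Asinpesoconpeso}--\eqref{RHsinpesoconpeso}. Change of angle (Proposition~\ref{prop:alpha}) is used systematically to match cone apertures; the converse direction (parts (b), (d)) is then obtained by a tent-space duality that converts the conical norm into the vertical one on the dual side.

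The main obstacle I anticipate is the odd-$m$ case, which was excluded in \cite{MaPAI17,ChMPA16}. For odd $m$ the operator $(t\sqrt{L_w})^m$ is nonlocal, so $F(y,t)=(t\sqrt{L_w})^m e^{-t^2L_w}f(y)$ does not satisfy a convenient parabolic equation and direct Moser-type arguments are unavailable. I would bypass this by using \eqref{squareroot} to represent $\sqrt{L_w}$ as an integral of $L_w$ against the heat semigroup, pulling the extra scale out with Minkowski, and controlling the resulting double integral through the off-diagonal estimates of Proposition~\ref{prop:lebesgueoff-dBQ}, which effectively reduces the analysis to an even-$m$ situation. A secondary subtlety is the occurrence of $p_+(L_w)^*_w$ in (d) in contrast to $p_+(L_w)$ in (c): this Sobolev-type gain is the reason the conical Poisson square function is bounded on a larger range than its vertical counterpart (compare Theorem~\ref{thm:boundednessconicaleven}(b)), and I expect it to emerge naturally from applying Theorem~\ref{theor:extrapol}(iv) with the starred upper endpoint \eqref{p_w^*} once the Poisson off-diagonal estimates after subordination are plugged into the general principle.
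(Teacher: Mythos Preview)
Your plan for parts (a) and (b) is essentially correct and matches the paper, with one simplification you missed: the paper takes $T_t=e^{-tL_w}$ and puts the power into $F(y,t)=(t\sqrt{L_w})^m f(y)$. With that splitting, conditions (ii)--(iii) of Proposition~\ref{prop:comparison-general} reduce to the off-diagonal estimates for the bare semigroup $e^{-\tau L_w}\in\mathcal{O}(L^{p_0}(w)-L^{q_0}(w))$, which holds on $(p_-(L_w),p_+(L_w))$ regardless of whether $m$ is even or odd. So the odd-$m$ obstacle you anticipate does not arise in (a)--(b), and the detour through \eqref{squareroot} is unnecessary there. (Your own choice of $T_t$ would also work after writing $T_{t^2}=2^{m/2}e^{-t^2L_w/2}T_{t^2/2}$, for the same reason.) No tent-space duality is used; part~(b) comes directly from Proposition~\ref{prop:comparison-general}(a).

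The genuine gap is in parts (c) and (d). Your plan is to verify conditions (ii)--(iii) of Proposition~\ref{prop:comparison-general} for the Poisson family $T_t=(t\sqrt{L_w})^m e^{-t\sqrt{L_w}}$ by invoking subordination and Proposition~\ref{prop:lebesgueoff-dBQ}. This fails: the Poisson semigroup has only polynomial off-diagonal decay, not the Gaussian $e^{-c4^j}$ decay that (ii)--(iii) require, and subordination cannot repair this (integrating $e^{-c\,d(E,F)^2/(t^2/4u)}$ against $u^{1/2}e^{-u}\,du/u$ produces polynomial, not exponential, decay in $d(E,F)/t$). Proposition~\ref{prop:lebesgueoff-dBQ} is about differences of heat semigroups and is irrelevant here. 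The paper instead argues directly: after extrapolating to $p=2$ and a generic $v_0$, it writes $e^{-t\sqrt{L_w}}$ via \eqref{subordinationformula}, splits the $u$-integral at $u=1/4$, treats the large-$u$ piece by heat off-diagonal plus change of angle, and handles the small-$u$ piece through the auxiliary Lemma~\ref{lemma:poissonmaximal}. That lemma uses Proposition~\ref{prop:Q} (the quantitative change of angle with $\beta=2\sqrt{u}<1$) to produce the factor $u^{n\widehat r(1/4r-1/2q_0)}$; integrability of this against $u^{1/2}\,du/u$ is exactly the positivity condition in Remark~\ref{remark:choicereverseholder} with $\widetilde k=1$, which is what forces the reverse-H\"older hypothesis $v_0\in RH_{(p_+(L_w)^*_w/2)'}(w)$ and hence the starred endpoint in~(d). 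So the Sobolev-type gain does not ``emerge naturally'' from extrapolation alone---it is the output of this specific $\beta<1$ change-of-angle argument, which your outline does not contain.
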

As for the vertical and conical square functions defined via the gradient of the heat or the Poisson semigroup, we have the  following result.
\begin{theorem}\label{thm:verticalconicalgegenerategradient}

Given $w\in A_2$, $v\in A_{\infty}(w)$, and $f\in L^2(w)$, for every $K\in \N_0$,  we have
 \begin{list}{$(\theenumi)$}{\usecounter{enumi}\leftmargin=1cm \labelwidth=1cm\itemsep=0.2cm\topsep=.0cm \renewcommand{\theenumi}{\alph{enumi}}}
 \item $\|\mathsf{g}_{K,\hh}^wf\|_{L^p(vdw)}\lesssim
 \|\mathrm{G}_{K,\hh}^wf\|_{L^p(vdw)}$, for $p\in \mathcal{W}_v^w(0,q_+(L_w))$;
 \item $\|\mathrm{G}_{K,\hh}^wf\|_{L^p(vdw)}\lesssim
 \|\mathsf{g}_{K,\hh}^wf\|_{L^p(vdw)}$, for $p\in \mathcal{W}_v^w(\max\{r_w,q_-(L_w)\},\infty)$;

  \item $\|\mathsf{g}^w_{K,\pp}f\|_{L^p(vdw)}\lesssim
 \|\mathrm{G}^w_{K,\pp}f\|_{L^p(vdw)}$, for $p\in \mathcal{W}_v^w(\max\{r_w,q_-(L_w)\},q_+(L_w))$;
  \item $\|\mathrm{G}^w_{K,\pp}f\|_{L^p(vdw)}\lesssim\|\mathsf{g}^w_{K,\pp}f\|_{L^p(vdw)}$, for $p\in \mathcal{W}_v^w(\max\{r_w,q_-(L_w)\},q_+(L_w))$.
 \end{list} 
In particular, for all $p\in \mathcal{W}_v^w(\max\{r_w,q_-(L_w)\},q_+(L_w))$, we have
$$
\|\mathsf{g}_{K,\hh}^wf\|_{L^p(vdw)}\approx
 \|\Grm_{K,\hh}^wf\|_{L^p(vdw)},
\quad\textrm{and}\quad
\|\mathsf{g}_{K,\pp}^wf\|_{L^p(vdw)}\approx
 \|\Grm_{K,\pp}^wf\|_{L^p(vdw)}.
$$

\end{theorem}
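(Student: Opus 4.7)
The plan is to deduce all four assertions from the general norm-comparison Proposition \ref{prop:comparison-general}, specialised to the operator family $T^{\hh}_t := t\nabla(t\sqrt{L_w})^K e^{-t^2L_w}$ in parts (a)--(b) and to its Poisson counterpart in parts (c)--(d). That proposition, through extrapolation (Theorem \ref{theor:extrapol}) and change of angle (Proposition \ref{prop:alpha}), turns $L^p(w)$--$L^q(w)$ off--diagonal estimates on balls for the generating family into two--sided norm comparisons between the associated vertical and conical square functions, on the largest extrapolation interval compatible with those off--diagonal estimates.

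For parts (a) and (b), the key preliminary step is to show $T^{\hh}_t \in \mathcal{O}(L^p(w)-L^q(w))$ for every $p\le q$ in $\mathcal{K}(L_w) = (q_-(L_w), q_+(L_w))$. I would obtain this from the semigroup factorisation $e^{-t^2L_w} = e^{-(t^2/2)L_w}\circ e^{-(t^2/2)L_w}$: the ``outer'' factor $t\nabla e^{-(t^2/2)L_w}$ is in $\mathcal{O}(L^p(w)-L^q(w))$ for $p,q\in\mathcal K(L_w)$ by Lemma \ref{lem:ODweighted}(c), while the ``inner'' factor $(t\sqrt{L_w})^K e^{-(t^2/2)L_w}$ is in $\mathcal{O}(L^p(w)-L^q(w))$ for $p,q\in (p_-(L_w),p_+(L_w))$ by Lemma \ref{lem:ODweighted}(a) combined with the functional calculus on sectors; composition preserves the smaller interval. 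Inserting this into Proposition \ref{prop:comparison-general} then yields (a) on $p\in\mathcal{W}_v^w(0,q_+(L_w))$ and (b) on $p\in\mathcal{W}_v^w(\max\{r_w,q_-(L_w)\},\infty)$. The appearance of $r_w$ only in (b) reflects the asymmetry of the two directions: passing from the conical to the vertical norm invokes the weighted Hardy--Littlewood maximal operator $\mathcal{M}^w$, whose $L^p(vdw)$--boundedness forces a lower bound of the form $p>\widetilde p\,r_w$, whereas the opposite direction relies only on the change of angle (Proposition \ref{prop:alpha}(ii)), which demands only a reverse-H\"older condition on $v$.

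For parts (c) and (d) I would use the subordination formula \eqref{subordinationformula} to write
$$
t\nabla(t\sqrt{L_w})^K e^{-t\sqrt{L_w}}f(y) = \frac{1}{\sqrt{\pi}}\int_0^\infty u^{1/2}e^{-u}\, t\nabla(t\sqrt{L_w})^K e^{-\frac{t^2}{4u}L_w}f(y)\,\frac{du}{u}.
$$
After the rescaling $s = t/(2\sqrt u)$ inside the relevant $L^2(dt/t)$ or $L^2(dw\,dt/(tw(B(y,t))))$ integral and Minkowski's inequality in $u$ against the finite measure $u^{(K-1)/2}e^{-u}\,du$, the Poisson vertical (resp.\ conical) square function is pointwise dominated by its heat counterpart. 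Combined with (a)--(b), this produces (c) and (d); the resulting interval $\mathcal{W}_v^w(\max\{r_w,q_-(L_w)\},q_+(L_w))$ is exactly the intersection of the two heat ranges.

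The main obstacle of the whole argument is the proof of Proposition \ref{prop:comparison-general} itself, which is the genuinely new ingredient: it couples the classical $L^2$ Fubini identity between vertical and conical expressions with change of angle (Proposition \ref{prop:alpha}) to transport the comparison from $L^2(w)$ to $L^p(vdw)$, and then extrapolates to the full interval by Theorem \ref{theor:extrapol}(i) or (iv), invoking the delicate selection of the auxiliary exponents $(\widehat r, q_0, r)$ of Remark \ref{remark:choicereverseholder} to prevent the change--of--angle and maximal-operator arguments from degenerating at the endpoints. Once this backbone is in place, the present theorem follows by the bookkeeping sketched above.
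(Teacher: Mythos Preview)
Your overall strategy---reduce to Proposition~\ref{prop:comparison-general} for the heat case and pass to the Poisson case by subordination---has the right shape, but there are two genuine gaps.

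\textbf{Parts (a)--(b).} Proposition~\ref{prop:comparison-general} does not take off-diagonal estimates for the full family $T_t^{\hh}$ as input; its hypotheses (ii)--(iii) are \emph{self-improving} estimates that bound $T_{t^2}F$ on a ball by $T_{t^2/2}F$ on annuli. With the correct choice $T_s=\nabla e^{-sL_w}$ and $F(y,t)=t(t\sqrt{L_w})^Kf(y)$, this amounts to controlling $L^q(w)$ averages of $\nabla e^{-t^2L_w}h$ by $L^p(w)$ averages of $\nabla e^{-(t^2/2)L_w}h$. Because the gradient sits on the left, you cannot simply peel off an $e^{-(t^2/2)L_w}$ and quote Lemma~\ref{lem:ODweighted}(c); the needed bound is the Sobolev-type off-diagonal estimate of Lemma~\ref{lem:Gsum}, whose proof uses a weighted Poincar\'e inequality and is precisely the source of the lower cutoff $p_0>\max\{r_w,(q_-(L_w))_{w,*}\}$. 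Your attribution of the $r_w$ restriction to the boundedness of $\mathcal{M}^w$ is therefore incorrect (and note that the proof of Proposition~\ref{prop:comparison-general}(a), which is the conical $\lesssim$ vertical direction where the restriction appears, uses no maximal function at all---see the Remark following the statement of Theorem~\ref{thm:verticalconicalgegenerategradient}).

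\textbf{Parts (c)--(d).} The pointwise domination via subordination and the change of variable $t\mapsto 2\sqrt{u}\,s$ is valid for the vertical functions, but fails for the conical ones: after rescaling, the cone has aperture $2\sqrt{u}$ and the normalisation is $w(B(y,2\sqrt{u}s))$, which for $u<\tfrac14$ is strictly smaller than $w(B(y,s))$ and produces an unbounded loss. More seriously, even if both pointwise bounds held, they go the \emph{wrong direction}: they dominate Poisson quantities by heat quantities, so combining with (a)--(b) would only give $\|\mathsf{g}_{K,\pp}^wf\|\lesssim\|\Grm_{K,\hh}^wf\|$ and $\|\Grm_{K,\pp}^wf\|\lesssim\|\mathsf{g}_{K,\hh}^wf\|$, not the comparisons between the two \emph{Poisson} square functions asserted in (c) and (d). The paper instead extrapolates from $p=2$: it applies subordination, splits into $u<\tfrac14$ and $u\ge\tfrac14$, and handles the small-$u$ regime via the $L^2(v_0dw)$ estimate of Remark~\ref{remark:possonmaximal} (the gradient analogue of Lemma~\ref{lemma:poissonmaximal}, again built on Lemma~\ref{lem:Gsum}), and the large-$u$ regime via Lemma~\ref{lem:Gsum} plus change of angle, so that on both sides the Poisson square function appears.
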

\begin{remark}
The additional restriction  $\max\{r_w,q_-(L_w)\}<p<\infty$ and
$v\in A_{\frac{p}{\max\{r_w,q_-(L_w)\}}}(w)$ in Theorem \ref{thm:verticalconicalgegenerategradient} (see  \eqref{intervalrsw} ), comes from the use of Poincar\'e inequality (see Lemma \ref{lem:Gsum}). Note that in the non-degenerate case (i.e., $w\equiv 1$) we have that $r_w=1$. Then, we would obtain that 
for every $K\in \N_0$, and $p\in \mathcal{W}_v(q_-(L),q_+(L))$,
$$
\|\mathsf{g}_{K,\hh}f\|_{L^p(v)}\approx
 \|\Grm_{K,\hh}f\|_{L^p(v)}
\quad
\textrm{and}\quad
\|\mathsf{g}_{K,\pp}f\|_{L^p(v)}\approx
 \|\Grm_{K,\pp}f\|_{L^p(v)}.
$$
\end{remark}
%
%
%
%
%
\subsection{Boundedness results}\label{sec:boundedness}
In our first result we study boundedness for the conical square functions defined in \eqref{conicalH-1}-\eqref{conicalP-2} allowing odd powers of the square root of the operator $L_w$. Recall that the case of even powers was considered in \cite{ChMPA16,MaPAI17} (see Theorem \ref{thm:boundednessconicaleven}).
\begin{theorem}\label{thm:boundednessconicalodd}
Given $w\in A_2$ and $v\in A_{\infty}(w)$, for every $m\in \N$, the conical square functions $\Scal_{2m-1,\hh}^w$, $\Scal_{2m-1,\pp}^w$, $\Grm_{2m-1,\pp}^w$, and $\Grm_{2m-1,\hh}^w$ can be extended to bounded operators on $L^p(vdw)$, 
for all $p\in \mathcal{W}_v^w(p_-(L_w),p_+(L_w)^{2m-1,*}_w)$.
\end{theorem}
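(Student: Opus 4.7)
The plan is to follow the framework developed in \cite{ChMPA16,MaPAI17} for Theorem \ref{thm:boundednessconicaleven}: obtain the $L^2(w)$ bound directly, then upgrade to $L^p(vdw)$ through off-diagonal estimates together with extrapolation. The $L^2(w)$ bound is essentially automatic: by Fubini, each conical square function reduces in $L^2(w)$ norm to the corresponding vertical one, and the latter is controlled by the $H^\infty$-functional calculus / quadratic estimates for the sectorial operator $L_w$ (for the gradient versions one also invokes $\|\sqrt{L_w}g\|_{L^2(w)}\approx \|\nabla g\|_{L^2(w)}$ from the weighted Kato result).

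To upgrade to $L^p(vdw)$, I would apply Theorem \ref{theor:extrapol}(d) and reduce to proving the base inequality $\|\Scal_{2m-1,\hh}^w f\|_{L^{q_1}(v_0 dw)}\lesssim \|f\|_{L^{q_1}(v_0 dw)}$ for a single fixed exponent $q_1\in (p_-(L_w),p_+(L_w)^{2m-1,*}_w)$ and every $v_0\in A_{q_1/p_-(L_w)}(w)\cap RH_{(p_+(L_w)^{2m-1,*}_w/q_1)'}(w)$. The main obstacle is that the off-diagonal estimates of Lemma \ref{lem:ODweighted} are only available for integer powers of $L_w$, so the odd factor $(t\sqrt{L_w})^{2m-1}$ has to be eliminated first. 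The natural way is to use the representation \eqref{squareroot} to rewrite
\[
(t\sqrt{L_w})^{2m-1} e^{-t^2 L_w} f
=
\frac{t}{\sqrt{\pi}}\int_0^\infty (t^2 L_w)^{m-1}\, L_w\, e^{-(t^2+s^2)L_w} f \, ds,
\]
which is now an integral of integer-power operators to which Lemma \ref{lem:ODweighted}(a,b) applies. One then decomposes $f=\sum_j f\chi_{C_j(B)}$ along the annuli about a localizing ball $B$, bounds each piece via $L^{q_1}(v_0 w)$--$L^q(v_0 w)$ off-diagonal estimates with an auxiliary exponent $q$ tuned by $1/q_1-1/q=(2m-1)/(nr_w)$, i.e., $q=(q_1)^{2m-1,*}_w<p_+(L_w)$ (this is possible precisely because we required $q_1<p_+(L_w)^{2m-1,*}_w$), and matches the weight assumptions using Remark \ref{remark:choicereverseholder}. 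Change-of-angle estimates (Proposition \ref{prop:alpha}) are used as needed to bring the cone integrations into compatible form before Minkowski.

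For the Poisson versions, inserting the subordination formula \eqref{subordinationformula}, applying Minkowski's inequality in the auxiliary variable $u$, and reducing to the heat case just obtained delivers the analogous bound for $\Scal_{2m-1,\pp}^w$, with an extra change of angle depending on $u$ handled again by Proposition \ref{prop:alpha}. The gradient versions $\Grm_{2m-1,\hh}^w$ and $\Grm_{2m-1,\pp}^w$ follow the same lines, substituting parts (c)--(d) of Lemma \ref{lem:ODweighted} for the off-diagonal estimates of $\sqrt{t}\nabla e^{-tL_w}$ and using Lemma \ref{lem:Gsum} to reconcile the gradient within the Kato range $(q_-(L_w),q_+(L_w))$. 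The principal technical difficulty throughout is the careful bookkeeping of the half-derivative contributed by $\sqrt{L_w}$: each odd power must be absorbed either through the integral representation above or through the Kato comparison $\sqrt{L_w}\sim\nabla$, and the resulting losses in the off-diagonal decay must match exactly the Sobolev-type gain of order $2m-1$ encoded in $p_+(L_w)^{2m-1,*}_w$.
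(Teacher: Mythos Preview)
Your route is a direct ``from-scratch'' argument, whereas the paper's proof is a one-line reduction: Theorem~\ref{thm:boundednessconicalodd} follows immediately from Theorem~\ref{thm:boundednessconicaleven} together with the comparison Propositions~\ref{prop:widetildeS-heatS}(b), \ref{prop:coniclalpoisson-heat}, and \ref{prop:gradient-singradientheat}. In particular, Proposition~\ref{prop:widetildeS-heatS}(b) gives $\|\Scal_{2m-1,\hh}^wf\|_{L^p(vdw)}\lesssim\|\Scal_{2m,\hh}^wf\|_{L^p(vdw)}$ on exactly the range $\mathcal{W}_v^w(0,p_+(L_w)^{2m-1,*}_w)$, and the even case then supplies the bound. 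Your integral representation of $\sqrt{L_w}$ and the ensuing bookkeeping are essentially what go into proving Proposition~\ref{prop:widetildeS-heatS} itself; so for $\Scal_{2m-1,\hh}^w$ and $\Scal_{2m-1,\pp}^w$ your plan is viable, just longer, and the heuristic ``tune $q$ by $1/q_1-1/q=(2m-1)/(nr_w)$'' would have to be replaced by the actual mechanism of Remark~\ref{remark:choicereverseholder} and Proposition~\ref{prop:Q}, which is how the endpoint $p_+(L_w)^{2m-1,*}_w$ genuinely arises.

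There is, however, a real gap in your treatment of $\Grm_{2m-1,\hh}^w$ and $\Grm_{2m-1,\pp}^w$. You propose to substitute the gradient off-diagonal estimates of Lemma~\ref{lem:ODweighted}(c)--(d) and work ``within the Kato range $(q_-(L_w),q_+(L_w))$''. Doing so would constrain the upper endpoint by $q_+(L_w)$ rather than $p_+(L_w)$, and since $q_+(L_w)\le q_+(L_w)^*_w\le p_+(L_w)$ (Lemma~\ref{lem:ODweighted}(c)), you would not reach the full claimed range $\mathcal{W}_v^w(p_-(L_w),p_+(L_w)^{2m-1,*}_w)$. The paper avoids this by a different reduction (Proposition~\ref{prop:gradient-singradientheat}(a)): inside the conical $L^2$-average one applies the gradient off-diagonal estimate only at the exponent $2$, i.e., $\sqrt{\tau}\nabla e^{-\tau L_w}\in\mathcal{O}(L^2(w)\!-\!L^2(w))$, which is always available, and this converts $\Grm_{m,\hh}^w$ pointwise (up to change of angle) into $\Scal_{m,\hh}^w$ with no loss of range. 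The weighted $L^p$ range then comes entirely from the non-gradient square function. Your sketch misses this step, and without it the gradient versions cannot be established on the interval stated in the theorem.
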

As a consequence of the above results and Theorem \ref{thm:boundednessconicaleven}, in the following results,
 we obtain boundedness of the vertical square functions defined in \eqref{vertical-H}-\eqref{vertical-P}.
\begin{theorem}\label{thm:boundednessverticalheat}
Given $w\in A_2$ and $v\in A_{\infty}(w)$, for all $m\in \N$, $K\in \N_0$, and $p\in\mathcal{W}_v^w(p_-(L_w),p_+(L_w))$ the operators $\mathsf{s}_{m,\hh}^w$ and $\mathsf{s}_{K,\pp}^w$ can be extended to  bounded operators on $L^p(vdw)$.
\end{theorem}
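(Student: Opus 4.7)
The plan is to chain the norm comparison between vertical and conical square functions provided by Theorem \ref{thm:conical-verticalnon-gradientdegenerate} with the $L^p(vdw)$ boundedness of the conical versions already at our disposal via Theorems \ref{thm:boundednessconicaleven} and \ref{thm:boundednessconicalodd}.

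First I would handle the heat vertical square function $\mathsf{s}_{m,\hh}^w$. Fix $p\in\mathcal{W}_v^w(p_-(L_w),p_+(L_w))$; since this range is contained in $\mathcal{W}_v^w(0,p_+(L_w))$, Theorem \ref{thm:conical-verticalnon-gradientdegenerate}(a) yields
\begin{equation*}
\|\mathsf{s}_{m,\hh}^w f\|_{L^p(vdw)} \lesssim \|\mathcal{S}_{m,\hh}^w f\|_{L^p(vdw)},\qquad f\in L^2(w).
\end{equation*}
The right-hand side is then controlled by $\|f\|_{L^p(vdw)}$ through the boundedness of $\mathcal{S}_{m,\hh}^w$: when $m$ is even this is Theorem \ref{thm:boundednessconicaleven}(a), which even covers the larger range $\mathcal{W}_v^w(p_-(L_w),\infty)$, while when $m$ is odd this is Theorem \ref{thm:boundednessconicalodd}, valid on $\mathcal{W}_v^w(p_-(L_w),p_+(L_w)^{m,*}_w)$, a range that contains ours because $p_+(L_w)\le p_+(L_w)^{m,*}_w$ by the very definition \eqref{p_w^*}. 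A standard density argument, using that $L^2(w)\cap L^p(vdw)$ is dense in $L^p(vdw)$, then extends $\mathsf{s}_{m,\hh}^w$ to a bounded operator on $L^p(vdw)$.

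The Poisson vertical square function $\mathsf{s}_{m,\pp}^w$ is treated in the same fashion. For $p\in\mathcal{W}_v^w(p_-(L_w),p_+(L_w))$, Theorem \ref{thm:conical-verticalnon-gradientdegenerate}(c) supplies
\begin{equation*}
\|\mathsf{s}_{m,\pp}^w f\|_{L^p(vdw)}\lesssim \|\mathcal{S}_{m,\pp}^w f\|_{L^p(vdw)},
\end{equation*}
and the right-hand side is controlled by $\|f\|_{L^p(vdw)}$ using Theorem \ref{thm:boundednessconicaleven}(b) for even $m$ (which applies on $\mathcal{W}_v^w(p_-(L_w),p_+(L_w)^{m+1,*}_w)$) or Theorem \ref{thm:boundednessconicalodd} for odd $m$ (which applies on $\mathcal{W}_v^w(p_-(L_w),p_+(L_w)^{m,*}_w)$). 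In both cases the ambient range contains $\mathcal{W}_v^w(p_-(L_w),p_+(L_w))$, and the same density argument concludes.

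There is no substantive obstacle: the proof is a direct concatenation of results already established earlier in the paper, and the only small bookkeeping point is the inclusion of the $p$-ranges, which is automatic since the Sobolev-type exponent satisfies $p_+(L_w)^{k,*}_w\ge p_+(L_w)$ for every $k\in\mathbb{N}$ by definition \eqref{p_w^*}.
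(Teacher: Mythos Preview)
Your proof is correct. For $\mathsf{s}_{m,\hh}^w$ it coincides with the paper's argument: Theorem \ref{thm:conical-verticalnon-gradientdegenerate}(a) reduces to the conical version, whose boundedness is supplied by Theorems \ref{thm:boundednessconicaleven} and \ref{thm:boundednessconicalodd}, with the range inclusion $p_+(L_w)\le p_+(L_w)^{m,*}_w$.

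For the Poisson case, however, your route differs from the paper's. You pass through the conical Poisson square function via Theorem \ref{thm:conical-verticalnon-gradientdegenerate}(c) and then invoke the boundedness of $\mathcal{S}_{m,\pp}^w$. The paper instead uses the elementary pointwise subordination bound \eqref{verticalpoissonverticalheat}, namely $\mathsf{s}_{m,\pp}^wf(x)\lesssim \mathsf{s}_{m,\hh}^wf(x)$, and then appeals directly to the already established boundedness of $\mathsf{s}_{m,\hh}^w$. The paper's path is shorter and more self-contained (it avoids Theorem \ref{thm:conical-verticalnon-gradientdegenerate}(c), whose proof is more delicate), while yours is perfectly valid and has the conceptual appeal of treating heat and Poisson symmetrically through the vertical-to-conical comparison.
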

\begin{theorem}\label{thm:boundednessverticalpoisson}
Given $w\in A_2$ and $v\in A_{\infty}(w)$, for all $m\in \N$, $K\in \N_0$, and $p\in\mathcal{W}_v^w(q_-(L_w),q_+(L_w))$ the operators $\mathsf{g}_{m,\hh}^w$ and $\mathsf{g}_{K,\pp}^w$ can be extended to  bounded operators on $L^p(vdw)$.
\end{theorem}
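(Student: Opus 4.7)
The plan is to reduce the boundedness of the vertical gradient square functions to that of the corresponding conical ones, and then invoke the conical boundedness results. For $\mathsf{g}_{m,\hh}^w$, Theorem \ref{thm:verticalconicalgegenerategradient}(a) gives
\[
\|\mathsf{g}_{m,\hh}^w f\|_{L^p(vdw)}\lesssim \|\Grm_{m,\hh}^w f\|_{L^p(vdw)},\qquad p\in \mathcal{W}_v^w(0,q_+(L_w)),
\]
and for $\mathsf{g}_{K,\pp}^w$ the analog from Theorem \ref{thm:verticalconicalgegenerategradient}(c) holds on $\mathcal{W}_v^w(\max\{r_w,q_-(L_w)\},q_+(L_w))$. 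In the genuinely degenerate case one may have $r_w>q_-(L_w)$, so the direct Poisson comparison does not exhaust the target interval. To patch this, I would invoke the auxiliary estimate announced in Remark \ref{remark:moreconvenient2}, by which $\mathsf{g}^w_{K,\pp}$ (and, if needed, $\mathsf{g}^w_{m,\hh}$) are instead controlled in $L^p(vdw)$-norm by a conical \emph{heat} square function without gradient throughout the full uniform-boundedness range $\mathcal{W}_v^w(q_-(L_w),q_+(L_w))$ of $\sqrt{t}\nabla e^{-tL_w}$.

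Next I would split according to the parity of the exponent of $\sqrt{L_w}$ and apply the conical boundedness results. Theorem \ref{thm:boundednessconicaleven}(a) yields boundedness of $\Grm_{2k,\hh}^w$ on $\mathcal{W}_v^w(p_-(L_w),\infty)$, while Theorem \ref{thm:boundednessconicalodd} gives $\Grm_{2k-1,\hh}^w$ bounded on $\mathcal{W}_v^w(p_-(L_w),p_+(L_w)^{2k-1,*}_w)$; Theorem \ref{thm:boundednessconicaleven}(b) and Theorem \ref{thm:boundednessconicalodd} provide the analogous intervals for $\Grm_{\cdot,\pp}^w$, with right endpoint $p_+(L_w)^{K+1,*}_w$ or $p_+(L_w)^{K,*}_w$ according to parity. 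The identity $q_-(L_w)=p_-(L_w)$ from Lemma \ref{lem:ODweighted}(c), together with $q_+(L_w)\le p_+(L_w)\le p_+(L_w)^{k,*}_w$ for every $k\in\N$ (interpreting the latter as $\infty$ when $nr_w\le kp_+(L_w)$), ensures that each of these conical intervals contains $\mathcal{W}_v^w(q_-(L_w),q_+(L_w))$. When the Remark \ref{remark:moreconvenient2} route is used, the resulting conical heat square function (without gradient) is bounded on $\mathcal{W}_v^w(p_-(L_w),\infty)$ by Theorem \ref{thm:boundednessconicaleven}(a), which again contains the target range.

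The main obstacle is the mismatch between $\max\{r_w,q_-(L_w)\}$ in Theorem \ref{thm:verticalconicalgegenerategradient}(c) and the $q_-(L_w)$ endpoint in the target range: the use of Poincar\'e's inequality in the proof of that comparison produces the spurious restriction $p>r_w$, which is absent in the statement we want to prove. Rerouting the estimate through a heat-semigroup conical square function without gradient, as provided by Remark \ref{remark:moreconvenient2}, rather than through the Poisson gradient conical one, is the delicate technical step; once that bypass is in place, the theorem follows by a straightforward interval-inclusion check combining the vertical-to-conical comparisons with the conical boundedness theorems above.
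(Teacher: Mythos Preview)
Your approach is correct but considerably more circuitous than the paper's. The paper's proof is two lines: apply \eqref{verticalverticalnorm} from Remark~\ref{rmk:moreconvenient} (the Riesz transform $\nabla L_w^{-1/2}$ is bounded on $L^p(vdw)$ for $p\in\mathcal{W}_v^w(q_-(L_w),q_+(L_w))$) to get $\|\mathsf{g}_{K,\hh}^w f\|\lesssim\|\mathsf{s}_{K+1,\hh}^w f\|$ and $\|\mathsf{g}_{K,\pp}^w f\|\lesssim\|\mathsf{s}_{K+1,\pp}^w f\|$, and then invoke Theorem~\ref{thm:boundednessverticalheat} together with the inclusion $\mathcal{W}_v^w(q_-(L_w),q_+(L_w))\subset\mathcal{W}_v^w(p_-(L_w),p_+(L_w))$. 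No passage to conical square functions is needed at all, and the $r_w$ obstruction you encounter never arises.

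Your detour through Theorem~\ref{thm:verticalconicalgegenerategradient}(c) creates the spurious $\max\{r_w,q_-(L_w)\}$ endpoint, which you then correctly repair via Remark~\ref{remark:moreconvenient2}. But note that Remark~\ref{remark:moreconvenient2} is itself \emph{derived} from \eqref{verticalverticalnorm} plus Theorem~\ref{thm:conical-verticalnon-gradientdegenerate}(a),(c), so your bypass is the paper's argument with an extra conical layer bolted on. Also, a small inaccuracy: Remark~\ref{remark:moreconvenient2} controls $\mathsf{g}_{K,\pp}^w$ by the \emph{Poisson} conical $\Scal_{K+1,\pp}^w$, not by a heat conical; hence your appeal to Theorem~\ref{thm:boundednessconicaleven}(a) (heat, range $\mathcal{W}_v^w(p_-(L_w),\infty)$) is misplaced, and you should instead cite Theorem~\ref{thm:boundednessconicaleven}(b) or Theorem~\ref{thm:boundednessconicalodd} according to the parity of $K+1$. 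This does not affect correctness, since those ranges still contain $\mathcal{W}_v^w(q_-(L_w),q_+(L_w))$.
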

Finally, in the next results, we improve the range of values of $p$ where $\Ncal_{\pp}^w$ and $\sqrt{L_w}$ are respectively known to be bounded on $L^p(vdw)$. Besides, note that Theorem \ref{thm:boundednessnon-tangential} improves the range of values of $p$ where $\Ncal_{\pp}^w$ is bounded even in the unweighted or weighted non-degenerate cases (see \cite{HMay09,MaPAII17,May10}). The boundedness of $\sqrt{L_w}$ on $L^p(vdw)$ was also studied in \cite{CMR15}. Additionally, see \cite{AMIII06} for the weighted non-degenerate case,  and \cite{Au07} for the  unweighted non-degenerate case.
\begin{theorem}\label{thm:boundednessnon-tangential}
Given $w\in A_2$ and $v\in A_{\infty}(w)$, the non-tangential maximal function
 $\Ncal_{\pp}^w$ can be extended to a bounded operator on $L^p(vdw)$, for all $p\in \mathcal{W}_v^w(p_-(L_w),p_+(L_w)^{*}_w)$.
\end{theorem}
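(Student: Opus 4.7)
The plan is to follow exactly the route sketched in the introduction after \eqref{comparisonpoissonnondegenerate}: I will first establish the weighted degenerate norm comparison
\[
\|\mathcal{N}_{\mathrm{P}}^w f\|_{L^p(vdw)} \lesssim \|\mathcal{N}_{\mathrm{H}}^w f\|_{L^p(vdw)} + \|\mathcal{S}_{2,\mathrm{H}}^w f\|_{L^p(vdw)}, \qquad p\in \mathcal{W}_v^w(p_-(L_w),p_+(L_w)^{*}_w),
\]
and then combine it with the boundedness of $\mathcal{N}_{\mathrm{H}}^w$ (available on the same extended range from the proof of Theorem \ref{thm:improvementnontangentialpoisson} / \cite{MaPAII17,ChMPA18}) and of $\mathcal{S}_{2,\mathrm{H}}^w$ on $\mathcal{W}_v^w(p_-(L_w),\infty)$ coming from Theorem \ref{thm:boundednessconicaleven} with $m=1$. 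Since $p_+(L_w)^{*}_w$ is the relevant endpoint, Theorem \ref{thm:boundednessconicaleven}(a) comfortably covers the square-function term.

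For the comparison inequality, I would start from the subordination formula \eqref{subordinationformula} and split it around $u\sim 1$,
\[
e^{-t\sqrt{L_w}}f = c\,e^{-t^{2}L_w}f + \frac{1}{\sqrt{\pi}}\int_0^\infty u^{\frac{1}{2}} e^{-u}\bigl(e^{-\frac{t^{2}}{4u}L_w}-e^{-t^{2}L_w}\bigr)f\,\frac{du}{u}.
\]
Squaring and averaging over $B(x,t)$ with respect to $dw$, the first piece is pointwise dominated by $\mathcal{N}_{\mathrm{H}}^w f(x)^{2}$. For the second piece I use the Fundamental Theorem of Calculus to write the difference of semigroups as $\int_{\frac{t^{2}}{4u}}^{t^{2}} L_w e^{-sL_w}f\,ds$ (and symmetrically when $u<1/4$), apply Minkowski's integral inequality both in $u$ and $s$, then re-scale $s=r^{2}$ so that $r^{2}L_w e^{-r^{2}L_w}f$ appears. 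The crucial manoeuvre is to absorb the spatial average $\dashint_{B(x,t)}$ into the geometry of the cone $\Gamma(x)$: combining with an off-diagonal decomposition in annuli around $B(x,t)$ via Lemma \ref{lem:ODweighted} and then applying Proposition \ref{prop:alpha} to switch apertures, the right-hand side is identified with $\mathcal{S}_{2,\mathrm{H}}^w f(x)$ rather than with the vertical square function $\mathsf{s}_{2,\mathrm{H}}^w f(x)$; this is the entire source of the improved upper endpoint $p_+(L_w)^{*}_w$.

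To upgrade from one exponent to the full interval $\mathcal{W}_v^w(p_-(L_w),p_+(L_w)^{*}_w)$, I would apply Theorem \ref{theor:extrapol}(d) with $p_0=p_-(L_w)$ and $q_0=p_+(L_w)^{*}_w$. At the fixed level $q_1$ one invokes Remark \ref{remark:choicereverseholder} to choose auxiliary exponents $\widehat r,\,q_0,\,r$ satisfying \eqref{positiveremark}; this is precisely the condition that renders the $\Upsilon$-factors and the annular constants $2^{j\theta_1}$ summable once paired with the exponential decay $e^{-c4^{j}}$ from the off-diagonal estimates, and simultaneously lets one apply \eqref{Asinpesoconpeso}--\eqref{RHsinpesoconpeso} to move between $dw$-averages and $d(vw)$-averages when bounding the square-function term.

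The hardest step will be the one producing $\mathcal{S}_{2,\mathrm{H}}^w$ (and not $\mathsf{s}_{2,\mathrm{H}}^w$) on the right-hand side: the naive Minkowski argument trivially yields only the vertical version, whose boundedness stops at $p_+(L_w)$ and therefore gives back nothing new. Preserving the $\dashint_{B(x,t)}$ averaging through the telescoping integrals and re-expressing it as an integration over a cone with variable aperture is precisely where the change-of-angle Proposition \ref{prop:alpha} and the weighted off-diagonal estimates of Lemma \ref{lem:ODweighted} must interact delicately; once this geometric rearrangement is in place, summing the off-diagonal series and matching the parameters through Remark \ref{remark:choicereverseholder} is routine.
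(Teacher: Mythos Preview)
Your overall strategy matches the paper's exactly: reduce Theorem \ref{thm:boundednessnon-tangential} to the comparison of Theorem \ref{thm:improvementnontangentialpoisson}, then quote the boundedness of $\Ncal_{\hh}^w$ (Theorem \ref{thm:boundednessnontangentialheat}) and of $\Scal_{2,\hh}^w$ (Theorem \ref{thm:boundednessconicaleven}). The subordination split at $u=1/4$, the treatment of $u>1/4$ via Proposition \ref{prop:alpha}, and your identification of the crux (forcing the \emph{conical} square function to appear on the right) are all correct and are what the paper does.

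There is, however, a genuine gap in your plan for the small-$u$ term $I_1$. Your phrasing suggests a pointwise bound of the form $I_1\lesssim \Scal_{2,\hh}^w f(x)$, but this is not attainable: for $u\ll 1$ the semigroup $e^{-t^2/(4u)L_w}$ lives at scale $t/\sqrt{u}\gg t$, so the average over $B(x,t)$ is genuinely non-local. The paper absorbs this non-locality through a weighted maximal function: after the $L^{p_0}(w)\to L^2(w)$ off-diagonal step (which you mention) and the FTC/H\"older/Fubini manipulations, one arrives at
\[
I_1\lesssim \int_0^{1/4} u^{1/2}(\log u^{-1/2})^{1/2}\,\mathcal{M}_{p_0}^w\big(\mathfrak{S}_{\hh}^{2\sqrt{u},w}f\big)(x)\,\frac{du}{u},
\]
where $\mathfrak{S}_{\hh}^{2\sqrt{u},w}$ is a conical square function of \emph{narrow} aperture $2\sqrt{u}<1$. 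Only after taking the $L^p(vdw)$ norm and using that $\mathcal{M}_{p_0}^w$ is bounded (since $v\in A_{p/p_0}(w)$) does one estimate $\|\mathfrak{S}_{\hh}^{2\sqrt{u},w}f\|_{L^p(vdw)}$. The positive power of $u$ needed to make $\int_0^{1/4}\,du/u$ converge does \emph{not} come from Proposition \ref{prop:alpha} but from Proposition \ref{prop:Q}, applied with $\beta=2\sqrt{u}$ at the $L^2(v_0dw)$ level and then transferred to $L^p(vdw)$ via Theorem \ref{theor:extrapol}(b); Remark \ref{remark:choicereverseholder} is used exactly to guarantee that this exponent together with the subordination factor $u^{1/2}$ satisfies \eqref{positive}. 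In short, the two missing ingredients are the $\mathcal{M}_{p_0}^w$ step and the narrow-aperture gain via Proposition \ref{prop:Q}; without them the $I_1$ integral does not close. (A minor organizational difference: the paper works directly at the given $p$ and extrapolates only the inner $\mathfrak{S}$ estimate, rather than extrapolating the whole comparison via part (d) as you propose; both routes are viable.)
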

\begin{theorem}\label{thm:boundednesssaquareroot}
Given $w\in A_2$ and $v\in A_{\infty}(w)$, assume that $\mathcal{W}_v^w(\max\{r_w,p_-(L_w)\},p_+(L_w))\neq \emptyset$. Then, the operator $\sqrt{L_w}$ can be extended to a bounded operator from  $\dot{W}^{1,p}(vdw)$ to $L^p(vdw)$,
for all  $p\in \mathcal{W}_v^w\big(\max\left\{r_w,(p_-(L_w))_{w,*}\right\},p_+(L_w)\big)$.
\end{theorem}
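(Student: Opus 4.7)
\bigskip

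\noindent\emph{Proof proposal for Theorem \ref{thm:boundednesssaquareroot}.}

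The strategy is to view $vw$ as a single weight in $A_{\infty}$ by means of Remark \ref{remark:product-weight}, and then run a Calder\'on-Zygmund argument directly with respect to the measure $d(vw)$. Set $\widetilde{p}:=\max\{r_w,(p_-(L_w))_{w,*}\}$. By the extrapolation Theorem \ref{theor:extrapol} part $(d)$ applied to the family $\mathcal{F}=\{(|\sqrt{L_w}f|,|\nabla f|)\}$ with endpoints $\widetilde{p}$ and $p_+(L_w)$, it suffices to establish, for some fixed $p_0\in (\widetilde{p},p_+(L_w))$ close to $\widetilde{p}$ and every $v_0\in A_{p_0/\widetilde{p}}(w)\cap RH_{(p_+(L_w)/p_0)'}(w)$, the estimate
\begin{equation*}
\|\sqrt{L_w}f\|_{L^{p_0}(v_0\,dw)}\lesssim \|\nabla f\|_{L^{p_0}(v_0\,dw)}.
\end{equation*}
The $L^2(w)$ Kato inequality of \cite{CUR15}, combined with the unweighted degenerate extension in \cite{CMR15}, supplies the a priori bound on $L^{q_0}(w)$ for some $q_0\in(2,p_+(L_w))$ interior to the range, and this can be upgraded via the weighted machinery to a strong $L^{q_0}(v_0\,dw)$ estimate in the upper part of the interval. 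The task therefore reduces to proving the weak-type bound
\begin{equation*}
v_0w\big(\{x\in\R^n:|\sqrt{L_w}f(x)|>\lambda\}\big)\lesssim \lambda^{-p_0}\int_{\R^n}|\nabla f(x)|^{p_0}\,d(v_0w)(x),\qquad \forall\,\lambda>0,
\end{equation*}
which, together with Marcinkiewicz interpolation in weighted Lebesgue/Sobolev spaces (see \cite{Ba09}), yields the desired strong bound.

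To prove the weak-type inequality, apply the Calder\'on-Zygmund decomposition of Lemma \ref{lem:CZweighted} to $|\nabla f|^{p_0}$ at level $\lambda^{p_0}$ relative to the doubling measure $v_0w\in A_{\infty}$. This produces a decomposition $f=g+\sum_i b_i$ with $b_i$ supported in pairwise almost-disjoint balls $B_i=B(x_i,r_i)$, the good function $g$ controlled in $L^{\infty}$, and the standard size/oscillation estimates for the $b_i$. The contribution of $g$ is absorbed by the $L^{q_0}(v_0\,dw)$ bound established above. For the bad part, write, using \eqref{squareroot},
\begin{equation*}
\sqrt{L_w}b_i = \frac{1}{\sqrt{\pi}}\int_{0}^{r_i}\!sL_we^{-s^2L_w}b_i\,\frac{ds}{s} + \frac{1}{\sqrt{\pi}}\int_{r_i}^{\infty}\!sL_we^{-s^2L_w}b_i\,\frac{ds}{s},
\end{equation*}
handling the local piece (small $s$) by the $L^2(w)$ Kato estimate together with the energy bound $\|(I-e^{-r_i^2L_w})b_i\|_{L^2(w)}\lesssim r_i\|\nabla b_i\|_{L^2(w)}$, and the nonlocal piece (large $s$) by decomposing $b_i$ against the annuli $C_j(B_i)$ and invoking the off-diagonal estimates of $(sL_w)e^{-s^2L_w}$ from Lemma \ref{lem:ODweighted} part $(b)$, so that the Gaussian decay beats the geometric weight growth coming from \eqref{pesosineq:Ap}--\eqref{pesosineq:RHq}. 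The passage between averages in $dw$ and in $d(v_0w)$ is done through \eqref{Asinpesoconpeso}--\eqref{RHsinpesoconpeso}, and the Sobolev-Poincar\'e inequality on the ball $B_i$ in the $A_2$-weighted setting is used to control $\|\nabla b_i\|_{L^2(w)}$ in terms of the CZ height $\lambda$ and $w(B_i)$.

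The main obstacle, and the reason for the two conditions $p_0>r_w$ and $p_0>(p_-(L_w))_{w,*}$, lies precisely in the Poincar\'e-Sobolev step applied to $b_i$ in the weighted setting: the weighted $(p_-(L_w),p_0)$-Poincar\'e inequality requires $p_0\geq (p_-(L_w))_{w,*}$, while $r_w$ enters through the doubling exponent of $w$ that controls the growth against the annular Gaussian factors. Once the weak-type $(p_0,p_0)$ bound is obtained in this way, interpolation with the strong $(q_0,q_0)$ bound gives the strong $(p_0,p_0)$ estimate, and the extrapolation step closes the argument across the full interval $\mathcal{W}_v^w(\widetilde{p},p_+(L_w))$.
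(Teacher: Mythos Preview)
Your overall plan---prove a weak-type bound at the lower end via a Sobolev Calder\'on--Zygmund decomposition with respect to $vw\in A_\infty$ and then interpolate with the known strong bound from \cite{CMR15}---is exactly the paper's strategy. However, two of your implementation choices introduce real gaps.

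First, the extrapolation layer is both unnecessary and broken. You reduce to proving the strong $L^{p_0}(v_0\,dw)$ bound for \emph{every} $v_0\in A_{p_0/\widetilde p}(w)\cap RH_{(p_+(L_w)/p_0)'}(w)$, and then try to obtain it by interpolating a weak-$(p_0,p_0)$ bound against a strong $L^{q_0}(v_0\,dw)$ bound. But for such a generic $v_0$ you only know $v_0\in RH_{(p_+(L_w)/p_0)'}(w)$, which does \emph{not} imply $v_0\in RH_{(p_+(L_w)/q_0)'}(w)$ for $q_0>p_0$, so the strong $L^{q_0}(v_0\,dw)$ bound from \cite{CMR15} is simply unavailable. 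Even if it were, Marcinkiewicz only gives the strong bound strictly between $p_0$ and $q_0$, never at the endpoint $p_0$ that extrapolation requires. The paper avoids this entirely: it fixes the given $v$ and a $p$ with $\mathfrak r_v(w)\max\{r_w,(p_-)_{w,*}\}<p<\mathfrak r_v(w)p_-$, proves the weak-$(p,p)$ bound directly for that $v$, and interpolates with a single $p_1\in(\mathfrak r_v(w)p_-,\min\{p_+/\mathfrak s_v(w),p^*_{vw}\})$ where the strong bound is guaranteed by the hypothesis $\mathcal W_v^w(\max\{r_w,p_-\},p_+)\neq\emptyset$. No extrapolation is used.

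Second, your treatment of the bad part does not carry enough cancellation. Splitting the representation \eqref{squareroot} at $s=r_i$ and using off-diagonal estimates of $sL_we^{-s^2L_w}$ on $C_j(B_i)$ gives at best a decay of order $2^{-j}$ after the $s$-integration, which is not summable against the doubling growth $vw(2^{j+1}B_i)\lesssim 2^{jnr_{vw}}vw(B_i)$. The paper instead writes $b_i=A_{r_{B_i}}b_i+B_{r_{B_i}}b_i$ with $B_r=(I-e^{-r^2L_w})^M$, treats $\sqrt{L_w}\sum_i A_{r_{B_i}}b_i$ via the strong $L^{p_1}$ bound and off-diagonal estimates for $\sqrt\tau\,\nabla e^{-\tau L_w}$ together with \eqref{CZ:PS} (this is where $p_1<p^*_{vw}$, hence the lower bound $(p_-)_{w,*}$, is used), and for $\sqrt{L_w}\,B_{r_{B_i}}b_i$ on $\R^n\setminus\cup_i 4B_i$ uses a functional-calculus representation $\phi(L_w,t)=\int_\Gamma e^{-zL_w}\eta(z,t)\,dz$ with $|\eta(z,t)|\lesssim r_{B_i}^{2M}(|z|+t^2)^{-(M+1)}$. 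This produces decay $2^{-j(2M+1-\theta_1)}$, and choosing $M$ large beats the weight growth. Your ``local piece via $L^2(w)$ Kato plus the energy bound'' does not substitute for this: that energy bound controls $(I-e^{-r_i^2L_w})b_i$, not $\int_0^{r_i} sL_we^{-s^2L_w}b_i\,\frac{ds}{s}$, and the latter is not bounded without the high-order smoothing. Also, Lemma~\ref{lem:CZweighted} is applied to $f$ (with $\nabla f\in L^p(vw)$), not to $|\nabla f|^{p_0}$.
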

The space $\dot{W}^{1,p}(vdw)$ is defined as the completion of $\big\{f\in C_0^{\infty}(\R^n): \nabla f\in L^p(vdw)\big\}$ under the semi-norm $\|f\|_{\dot{W}^{1,p}(vdw)}:=\|\nabla f\|_{L^p(vdw)}$.
%
%
\section{Auxiliary results}\label{sec:auxiliary}
In this section we obtain some results that will simplify the proofs of the theorems formulated in the previous section.

First of all, 
consider the following conical and vertical square functions:
$$
\mathcal{V}F(x):=\left(\int_0^{\infty}|T_{t^2}F(y,t)|^2\frac{dt}{t}\right)^{\frac{1}{2}}
\quad\textrm{and}\quad
\mathcal{S}F(x):=\left(\int_0^{\infty}\int_{B(x,t)}|T_{t^2}F(y,t)|^2\frac{dw(y)\,dt}{tw(B(y,t))}\right)^{\frac{1}{2}},
$$
where  $\{T_t\}_{t>0}$ is  a family of sublinear operators and $F$ is a measurable function defined in $\R^{n+1}_+$. 

Note that given $w\in A_{\infty}$ and $2<q_0<\infty$, for all $v_0\in RH_{\left(\frac{q_0}{2}\right)'}(w)$, there holds
\begin{align}\label{verticalsinRH}
\|\mathcal{V}F\|_{L^2(v_0dw)}
\lesssim
\left(\int_{\R^n}\int_{0}^{\infty}
\left(\dashint_{B(x,t)}|{T}_{t^2}F(y,t)|^{q_0}dw(y)\right)^{\frac{2}{q_0}}\frac{dt}{t}v_0(x)dw(x)\right)^{\frac{1}{2}}.
\end{align}
Indeed, by Fubini's theorem, \eqref{doublingcondition}, and \eqref{RHsinpesoconpeso}, we get
\begin{align*}
\|\mathcal{V}F\|_{L^2(v_0dw)}&=
\left(\int_0^{\infty}\int_{\R^n}\left|T_{t^2}F(y,t)\right|^{2}
\int_{B(y,t)}\frac{d(v_0w)(x)}{v_0w(B(y,t))}
v_0(y)dw(y)\frac{dt}{t}\right)^{\frac{1}{2}}
\\\nonumber
&
\lesssim
\left(\int_0^{\infty}\int_{\R^n}\dashint_{B(x,t)}\left|T_{t^2}F(y,t)\right|^{2}d(v_0w)(y)
v_0(x)dw(x)
\frac{dt}{t}\right)^{\frac{1}{2}}
\\\nonumber
&
\lesssim
\left(\int_{\R^n}\int_0^{\infty}\left(\dashint_{B(x,t)}\left|T_{t^2}F(y,t)\right|^{q_0}dw(y)\right)^{\frac{2}{q_0}}\frac{dt}{t}
v_0(x)dw(x)
\right)^{\frac{1}{2}}.
\end{align*}

From this and under some assumptions on
 the family $\{T_t\}_{t>0}$, we obtain comparison results between $\mathcal{V}$ and $\Scal$, as we see in  the following proposition.
\begin{proposition}\label{prop:comparison-general}
Given $w\in A_\infty$ and $v\in A_{\infty}(w)$. Let $\{T_t\}_{t>0}$ be a family of sublinear operators  and $0<p_0<2<q_0<\infty$. Consider $B:=B(x,t)$, for $(x,t)\in \R^{n+1}_+$,  a measurable function $F$ defined in $\R^{n+1}_+$, and the following conditions:
\begin{list}{$(\theenumi)$}{\usecounter{enumi}\leftmargin=1cm \labelwidth=1cm\itemsep=0.2cm\topsep=.0cm \renewcommand{\theenumi}{\alph{enumi}}}

\item[(i)] For any constant $c>0$ there exists a constant $C>0$ such that $F(y,ct)=C F(y,t)$;

\item[(ii)] $w(B)^{-\frac{1}{2}}\|\chi_{B}T_{t^2}F\|_{L^{2}(w)}\lesssim
\sum_{j\geq 1}e^{-c4^j}
w(2^{j+1}B)^{-\frac{1}{p_0}}\|\chi_{2^{j+1}B}T_{t^2/2}F\|_{L^{p_0}(w)}$;

\item[(iii)] $w(B)^{-\frac{1}{q_0}}\|\chi_{B}T_{t^2}F\|_{L^{q_0}(w)}\lesssim
\sum_{j\geq 1}e^{-c4^j}
w(2^{j+1}B)^{-\frac{1}{2}}\|\chi_{2^{j+1}B}T_{t^2/2}F\|_{L^{2}(w)}$.
\end{list}
Then, assuming that  $F$ satisfies condition $(i)$, there hold:
\begin{list}{$(\theenumi)$}{\usecounter{enumi}\leftmargin=1cm \labelwidth=1cm\itemsep=0.2cm\topsep=.0cm \renewcommand{\theenumi}{\alph{enumi}}}

\item If $\{T_t\}_{t>0}$ satisfies condition $(ii)$,  
\begin{align*}
 \|\Scal F\|_{L^p(vdw)}\lesssim \|\mathcal{V}F\|_{L^p(vdw)},\quad \forall p\in \mathcal{W}_v^w(p_0,\infty).
\end{align*}

\item If $\{T_t\}_{t>0}$ satisfies condition $(iii)$,
\begin{align*}
 \|\mathcal{V}F\|_{L^p(vdw)}\lesssim \|\Scal F\|_{L^p(vdw)},  \quad \forall p\in \mathcal{W}_v^w(0,q_0).
\end{align*}
\end{list}
In particular, if $F$ satisfies condition $(i)$ and $\{T_t\}_{t>0}$ satisfies conditions $(ii)$, and $(iii)$, we have
\begin{align*}
 \|\mathcal{V}F\|_{L^p(vdw)}\approx \|\Scal F\|_{L^p(vdw)}, \quad \forall p\in \mathcal{W}_v^w(p_0,q_0).
\end{align*}
\end{proposition}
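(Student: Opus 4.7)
The plan is to establish the endpoint inequality at $p=2$ with an appropriate starting weight, and then invoke Rubio de Francia extrapolation (Theorem~\ref{theor:extrapol}(d)) to reach the full range of $p$ in each part. Both directions follow the same template: use condition (ii) or (iii) to trade an $L^2$ average of $T_{t^2}F$ on $B(x,t)$ for an $L^{p_0}$ (resp.\ $L^{q_0}$) average of $T_{t^2/2}F$ on the dilated ball $B(x,2^{j+1}t)$, at the expense of a polynomial factor $2^{j\theta}$ coming from doubling and change of aperture, which is absorbed by the exponential gain $e^{-c4^j}$. The homogeneity hypothesis (i) then permits the substitution $t=\sqrt{2}\,s$, under which $t^2/2=s^2$ and $F(y,t)=CF(y,s)$, so that $T_{t^2/2}F(y,t)$ becomes a constant multiple of $T_{s^2}F(y,s)$ while $dt/t$ is preserved.

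For part (b), I would fix $v_0\in RH_{(q_0/2)'}(w)$ and start from~\eqref{verticalsinRH}. Applying (iii) pointwise in $t$, squaring by Cauchy--Schwarz in $j$ against the weights $e^{-c4^j}$, and using the doubling of $w\in A_2$ to replace $w(B(x,2^{j+1}t))^{-1}$ by $w(B(y,t))^{-1}$ at the cost of a factor $2^{j\theta}$, I obtain
\[
\|\mathcal{V}F\|_{L^2(v_0 dw)}^2
\lesssim
\sum_{j\ge 1} 2^{j\theta}e^{-c4^j}\,\|\mathcal{A}_w^{2^{j+1}}(T_{(\cdot)^2/2}F)\|_{L^2(v_0dw)}^2.
\]
The substitution $t=\sqrt{2}\,s$ (justified by (i)) identifies the right-hand side with a constant multiple of $\|\mathcal{A}_w^{2^{j+1}\sqrt{2}}G\|_{L^2(v_0dw)}^2$ for $G(y,s):=T_{s^2}F(y,s)$. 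Since $v_0\in RH_{(q_0/2)'}(w)\subset A_\infty(w)$ I may choose a finite $r$ with $v_0\in A_r(w)$ and apply Proposition~\ref{prop:alpha}(i) to bring the aperture back down to $1$; summing in $j$ yields $\|\mathcal{V}F\|_{L^2(v_0dw)}\lesssim \|\mathcal{S}F\|_{L^2(v_0dw)}$, and Theorem~\ref{theor:extrapol}(d) (read with $p_0=0$ and the given $q_0$) extends this to all $p\in \mathcal{W}_v^w(0,q_0)$.

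Part (a) runs in parallel but dually. For $v_0\in A_{2/p_0}(w)$, doubling lets me write $\|\mathcal{S}F\|_{L^2(v_0dw)}^2$ as (approximately) $\int_{\R^n}\int_0^{\infty}\dashint_{B(x,t)}|T_{t^2}F(y,t)|^2\,dw(y)\,\frac{dt}{t}\,v_0(x)\,dw(x)$, and (ii), squared via Cauchy--Schwarz, produces $L^{p_0}(w)$ averages of $T_{t^2/2}F$ on $B(x,2^{j+1}t)$ with the gain $e^{-c4^j}$. The self-improvement~\eqref{Asinpesoconpeso} with $\widetilde{p}=p_0$, $\widetilde{q}=2$ (valid thanks to $v_0\in A_{2/p_0}(w)$) turns these into $L^2$ averages with respect to $d(v_0w)$; Fubini combined with the doubling of $v_0w\in A_\infty$ (Remark~\ref{remark:product-weight}) collapses them to $\int_{\R^n}|T_{t^2/2}F(y,t)|^2 v_0(y)\,dw(y)$. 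A final rescaling via (i) converts this into $\|\mathcal{V}F\|_{L^2(v_0dw)}^2$ after summing in $j$, and Theorem~\ref{theor:extrapol}(d) with the given $p_0$ and $q_0=\infty$ yields the range $\mathcal{W}_v^w(p_0,\infty)$. The joint equivalence on the overlap $\mathcal{W}_v^w(p_0,q_0)$ is then immediate.

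The main obstacle is essentially bookkeeping rather than conceptual: I must track the polynomial factors $2^{j\theta}$ produced at each step (doubling of $w$, comparison of $w(B(y,t))$ with $w(B(y,2^{j+1}t))$, and change of aperture in Proposition~\ref{prop:alpha}) and verify that they are always dominated by $e^{-c4^j}$ so that the $j$-sum converges. Some additional care is required to justify the manipulations between $dw$ and $d(v_0w)$ via~\eqref{Asinpesoconpeso} and the Fubini step in part (a), and to interpret Theorem~\ref{theor:extrapol}(d) at the limiting endpoint $p_0=0$ in part (b), where the $A_{p/p_0}(w)$ condition becomes vacuous and only the $RH_{(q_0/p)'}(w)$ condition survives.
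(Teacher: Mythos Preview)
Your proposal is correct and follows essentially the same route as the paper: prove the $L^2(v_0\,dw)$ endpoint inequalities for $v_0\in A_{2/p_0}(w)$ (part~(a)) and $v_0\in RH_{(q_0/2)'}(w)$ (part~(b)) using conditions (ii), (iii), the self-improvement~\eqref{Asinpesoconpeso}, the change-of-angle Proposition~\ref{prop:alpha}, and the rescaling via (i), then extrapolate. The only cosmetic differences are that the paper pulls the $j$-sum out via Minkowski rather than squaring by Cauchy--Schwarz, and it cites Theorem~\ref{theor:extrapol}(a) and~(b) directly rather than interpreting part~(d) at the degenerate endpoints $q_0=\infty$ and $p_0=0$ (which, as you already note, is exactly what (a) and~(b) encode); also $w\in A_\infty$ suffices for all the doubling arguments, not $A_2$.
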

\begin{proof}
We shall proceed by extrapolation to prove both part ($a$) and part ($b$). Indeed, to obtain part ($a$), in view of \eqref{intervalrsw}, and from Theorem \ref{theor:extrapol}, part ($a$),  it is enough to prove 
\begin{align}\label{conicalverticalp=2}
\|\Scal F\|_{L^2(v_0dw)}\lesssim \|\mathcal{V}F\|_{L^2(v_0dw)}, \quad\forall\,v_0\in A_{\frac{2}{p_0}}(w).
\end{align}
As for proving part ($b$), in view of \eqref{intervalrsw}, and from Theorem \ref{theor:extrapol}, part ($b$), we just need to show 
\begin{align}\label{verticalconicalp=2}
\|\mathcal{V}F\|_{L^2(v_0dw)}\lesssim \|\Scal F\|_{L^2(v_0dw)},\quad \forall\,v_0\in RH_{\left(\frac{q_0}{2}\right)'}.
\end{align}

\medskip

We first prove \eqref{conicalverticalp=2}. 
By \eqref{doublingcondition}, condition $(ii)$, and \eqref{Asinpesoconpeso} (recall that $v_0\in A_{\frac{2}{p_0}}(w)$),  we get
\begin{align*}
\Scal F(x)&\lesssim
\left(\int_0^{\infty}\dashint_{B(x,t)}\left|T_{t^2}F(y,t)\right|^2\frac{dw(y)dt}{t}\right)^{\frac{1}{2}}
\\
&\lesssim \sum_{j\geq 1} e^{-c4^j}
\left(\int_0^{\infty}\left(\dashint_{B(x,2^{j+1}t)}\left|T_{\frac{t^2}{2}}F(y,t)\right|^{p_0}dw(y)\right)^{\frac{2}{p_0}}\frac{dt}{t}\right)^{\frac{1}{2}}
\\
&\lesssim \sum_{j\geq 1} e^{-c4^j}
\left(\int_0^{\infty}\dashint_{B(x,2^{j+1}t)}\left|T_{\frac{t^2}{2}}F(y,t)\right|^{2}d(v_0w)(y)\frac{dt}{t}\right)^{\frac{1}{2}}
\\
&\lesssim \sum_{j\geq 1} e^{-c4^j}
\left(\int_0^{\infty}\int_{B(x,2^{j+1}t)}\left|T_{\frac{t^2}{2}}F(y,t)\right|^{2}\frac{d(v_0w)(y)}{v_0w(B(y,2^{j+1}t))}\frac{dt}{t}\right)^{\frac{1}{2}},
\end{align*}
where we have used again \eqref{doublingcondition} in the last inequality.

Hence, by Fubini's theorem, changing the variable   $t$ into $\sqrt{2}t$, and by condition $(i)$, we conclude that
\begin{multline*}
\|\Scal F\|_{L^2(v_0dw)}
\lesssim \sum_{j\geq 1} e^{-c4^j}
\left(\int_0^{\infty}\int_{\R^n}\left|T_{\frac{t^2}{2}}F(y,t)\right|^{2}\int_{B(y,2^{j+1}t)}\frac{d(v_0w)(x)}{v_0w(B(y,2^{j+1}t))}v_0(y)dw(y)\frac{dt}{t}\right)^{\frac{1}{2}}
\\
\lesssim \sum_{j\geq 1} e^{-c4^j}
\left(\int_0^{\infty}\int_{\R^n}\left|T_{t^2}F(y,\sqrt{2}t)\right|^{2}v_0(y)dw(y)\frac{dt}{t}\right)^{\frac{1}{2}}
\\
\lesssim 
\left(\int_{\R^n}\int_0^{\infty}\left|T_{t^2}F(y,t)\right|^{2}\frac{dt}{t}v_0(y)dw(y)\right)^{\frac{1}{2}}
=\|\mathcal{V}F\|_{L^2(v_0dw)}.
\end{multline*}

\medskip

We next prove \eqref{verticalconicalp=2}. To this end, fist apply \eqref{verticalsinRH} and  condition $(iii)$. Then, changing the variable $t$ into $\sqrt{2}t$, by condition $(i)$, \eqref{doublingcondition}, and Proposition \ref{prop:alpha}, we obtain that
\begin{align*}
\|\mathcal{V}F\|_{L^2(v_0dw)}
&
\lesssim
\left(\int_{\R^n}\int_0^{\infty}\left(\dashint_{B(x,t)}\left|T_{t^2}F(y,t)\right|^{q_0}dw(y)\right)^{\frac{2}{q_0}}\frac{dt}{t}
v_0(x)dw(x)
\right)^{\frac{1}{2}}
\\
&
\lesssim \sum_{j\geq 1} e^{-c4^j}
\left(\int_{\R^n}\int_0^{\infty}\dashint_{B(x,2^{j+1}t)}\left|T_{\frac{t^2}{2}}F(y,t)\right|^{2}dw(y)
\frac{dt}{t}v_0(x)dw(x)
\right)^{\frac{1}{2}}
\\
&
\lesssim \sum_{j\geq 1} e^{-c4^j}
\left(\int_{\R^n}\int_0^{\infty}\dashint_{B(x,2^{j+2}t)}\left|T_{t^2}F(y,\sqrt{2}t)\right|^{2}dw(y)
\frac{dt}{t}v_0(x)dw(x)
\right)^{\frac{1}{2}}
\\
&
\lesssim \sum_{j\geq 1} e^{-c4^j}
\left(\int_{\R^n}\int_0^{\infty}\int_{B(x,2^{j+2}t)}\left|T_{t^2} F(y,t)\right|^{2}\frac{dw(y)dt}{tw(B(y,t))}
v_0(x)dw(x)\right)^{\frac{1}{2}}
\\
&
\lesssim \sum_{j\geq 1} e^{-c4^j}2^{j\theta_{v_0,w}}\|\Scal F\|_{L^2(v_0dw)}
\\
&
\lesssim
\|\Scal F\|_{L^2(v_0dw)},
\end{align*}
which concludes the proof.
\end{proof}
%
%
%
%
%
In the following proposition, for every $m\in \N$, we 
compare the norms on  $L^p(vdw)$ of $\Scal_{m,\hh}^w$  and $\Scal_{m+1,\hh}^w$. This will allow us to obtain Theorem \ref{thm:boundednessconicalodd} for $\Scal_{2m-1,\hh}^w$,  from Theorem \ref{thm:boundednessconicaleven}.
 This result  is proved in \cite{ChMPA18} for $m=1$, 
 (see also \cite[Proposition 9.4]{PA17} \cite[Corollary 4.17 and (5.21)]{HMayMc11}, where  the weighted and unweighted non-degenerate cases were considered respectively).  
\begin{proposition}\label{prop:widetildeS-heatS}
Given $w\in A_{2}$,  $v\in A_{\infty}(w)$, $m\in \N$, and $f\in L^2(w)$, there hold
\begin{list}{$(\theenumi)$}{\usecounter{enumi}\leftmargin=1cm \labelwidth=1cm\itemsep=0.2cm\topsep=.2cm \renewcommand{\theenumi}{\alph{enumi}}}

\item $\|\Scal_{m+1,\hh}^wf\|_{L^p(vdw)}\lesssim
\|\Scal_{m,\hh}^wf\|_{L^p(vdw)}$,\, for all\, $p\in \mathcal{W}_v^w(0,p_+(L_w)^{m+1,*}_w)$;

\item $\|\Scal_{m,\hh}^wf\|_{L^p(vdw)}\lesssim\|\Scal_{m+1,\hh}^wf\|_{L^p(vdw)}$,\, for all\, $p\in \mathcal{W}_v^w(0,p_+(L_w)^{m,*}_w)$.
\end{list}
In particular, we have
\begin{align*}
\|\Scal_{m,\hh}^wf\|_{L^p(vdw)}\approx\|\Scal_{m+1,\hh}^wf\|_{L^p(vdw)},\, \textrm{ for all }\, p\in \mathcal{W}_v^w(0,p_+(L_w)^{m,*}_w).
\end{align*}
\end{proposition}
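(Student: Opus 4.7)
The plan is to prove both inequalities by extrapolation, reducing each to an $L^{q_1}(v_0\,dw)$ estimate at a single fixed exponent $q_1$ for every weight $v_0$ in a suitable reverse-Hölder class. By Theorem \ref{theor:extrapol}(d) (applied with $p_0=0$, so the $A$-condition becomes $A_\infty(w)$, which is automatic since $v_0\in A_\infty(w)$), for part (a) it suffices to establish
\begin{equation*}
\|\Scal_{m+1,\hh}^w f\|_{L^{q_1}(v_0\,dw)} \lesssim \|\Scal_{m,\hh}^w f\|_{L^{q_1}(v_0\,dw)}
\end{equation*}
for some fixed $q_1\in (0, p_+(L_w)^{m+1,*}_w)$ and every $v_0 \in RH_{(p_+(L_w)^{m+1,*}_w/q_1)'}(w)$; and analogously for part (b) with $p_+(L_w)^{m+1,*}_w$ replaced by $p_+(L_w)^{m,*}_w$. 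Remark \ref{remark:choicereverseholder} then supplies compatible auxiliary exponents $\widehat{r}, q_0, r$ to drive the off-diagonal+Sobolev machinery of Lemma \ref{lem:ODweighted}.

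For part (a) I would use the semigroup factorization
\begin{equation*}
(t\sqrt{L_w})^{m+1}e^{-t^2L_w}f \;=\; (t\sqrt{L_w})\, e^{-t^2L_w/2}\,\bigl[(t\sqrt{L_w})^{m}e^{-t^2L_w/2}f\bigr].
\end{equation*}
Substituting this into the conical integral defining $\Scal_{m+1,\hh}^w f(x)^2$, I would enlarge the $L^2(dw)$ average on each fibre ball $B(y,t)$ to an $L^{q_0}(dw)$ average via \eqref{RHsinpesoconpeso} (exploiting $v_0\in RH_{(p_+(L_w)^{m+1,*}_w/q_1)'}(w)$), and then apply the off-diagonal estimates on balls from Lemma \ref{lem:ODweighted} (together with a subordination-type step to accommodate the half power $\sqrt{L_w}$) to the outer operator $(t\sqrt{L_w})e^{-t^2L_w/2}$. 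This yields a geometrically-decaying sum over annuli $C_j(B(y,t))$ of integrals of $(t\sqrt{L_w})^{m}e^{-t^2L_w/2}f$ on enlarged balls. A change of aperture (Proposition \ref{prop:alpha}) repositions the cones so that the inner integral reconstructs $\Scal_{m,\hh}^w f(x)$ (modulo the rescaling $t\mapsto\sqrt{2}\,t$), and the tails $e^{-c 4^j}$ remain summable.

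For part (b) the direction is reversed via the identity
\begin{equation*}
(t\sqrt{L_w})^m e^{-t^2L_w}f \;=\; 2\int_t^\infty \left(\frac{t}{s}\right)^m (s\sqrt{L_w})^{m+2}e^{-s^2L_w}f\,\frac{ds}{s},
\end{equation*}
which follows from the fundamental theorem of calculus applied to $g(s):=(\sqrt{L_w})^{m}e^{-s^2L_w}f$ combined with $g'(s)=-2sL_w^{(m+2)/2}e^{-s^2L_w}f$, the boundary term at $s=\infty$ vanishing on $L^2(w)$ thanks to the decay $\|(\sqrt{L_w})^m e^{-s^2L_w}f\|_{L^2(w)}\lesssim s^{-m}\|f\|_{L^2(w)}$. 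Since $\int_t^\infty(t/s)^{2m}\,ds/s = 1/(2m)$ for $m\geq 1$, Cauchy--Schwarz in $s$ converts the square of the left-hand side into a constant times $\int_t^\infty |(s\sqrt{L_w})^{m+2}e^{-s^2L_w}f|^2\,ds/s$. Factoring $(s\sqrt{L_w})^{m+2}e^{-s^2L_w}=(s\sqrt{L_w})e^{-s^2L_w/2}\,[(s\sqrt{L_w})^{m+1}e^{-s^2L_w/2}]$ and absorbing the outer factor exactly as in part (a) (but now needing only $m$ Sobolev steps since the inner exponent is $m+1$, which is what produces the endpoint $p_+(L_w)^{m,*}_w$), followed by Fubini swapping $s$ and $t$ and a change of aperture (Proposition \ref{prop:alpha}), bounds the resulting double integral by $\Scal_{m+1,\hh}^w f(x)$.

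The chief obstacle is the simultaneous bookkeeping of (i) the change of cone aperture after the off-diagonal expansion, (ii) the rescalings $t\mapsto\sqrt{2}\,t$ and the $s\leftrightarrow t$ Fubini in part (b), and (iii) the length of the Sobolev chain needed to bring $q_1$ up to $q_0$ — one iteration per power of $t\sqrt{L_w}$ to be absorbed — which is precisely what fixes the endpoint $p_+(L_w)^{k,*}_w$ with $k=m+1$ in (a) and $k=m$ in (b). Once the two $L^{q_1}(v_0\,dw)$ estimates are established, Theorem \ref{theor:extrapol}(d) promotes them to the full weighted $L^p$ ranges claimed in (a) and (b), and combining them yields the equivalence on $\mathcal{W}_v^w(0, p_+(L_w)^{m,*}_w)$.
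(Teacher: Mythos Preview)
Your overall framework—extrapolation to a fixed exponent, off-diagonal estimates, and change of angle—matches the paper's, but there is a genuine gap in part~(b). When you apply Cauchy--Schwarz with the split $(t/s)^m \cdot 1$, you obtain $|(t\sqrt{L_w})^m e^{-t^2L_w}f(y)|^2 \lesssim \int_t^\infty |(s\sqrt{L_w})^{m+2}e^{-s^2L_w}f(y)|^2\,\frac{ds}{s}$, which discards \emph{all} of the $(t/s)^m$ decay. Inserting this into the conical integral and swapping $t$ and $s$ leaves $\int_0^s \int_{B(x,t)}|G_s(y)|^2\,\frac{dw(y)}{w(B(y,t))}\,\frac{dt}{t}$; after Fubini in $x$ at the $L^2(v_0\,dw)$ level this produces $\int_0^s \frac{v_0 w(B(y,t))}{w(B(y,t))}\,\frac{dt}{t}$, which diverges logarithmically since the $w$-average of $v_0$ over $B(y,t)$ tends to $v_0(y)>0$ as $t\to 0$. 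The paper instead represents $\sqrt{L_w}$ via \eqref{squareroot} as $\int_0^\infty sL_w e^{-s^2L_w}\,\frac{ds}{s}$: for $s<t$ this leaves $e^{-s^2L_w}$ acting on $(t\sqrt{L_w})^{m+1}e^{-t^2L_w}f$ (giving $\Scal_{m+1,\hh}^w$ directly), and for $s>t$ it retains an explicit $e^{-t^2L_w}$ at scale $t$ together with the factor $(t/s)^m$. Crucially, one then needs Proposition~\ref{prop:Q} (not Proposition~\ref{prop:alpha}) to pass from the ball of radius $t$ to radius $s$ at the price $(t/s)^{n\widehat r(1/r-2/q_0)}$, and it is the positivity $m + \frac{n\widehat r}{2r} - \frac{n\widehat r}{q_0} > 0$ from Remark~\ref{remark:choicereverseholder} that yields the sharp endpoint $p_+(L_w)^{m,*}_w$---not an $m$-fold Sobolev chain as you suggest.

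A smaller issue in part~(a): the outer operator $(t\sqrt{L_w})e^{-t^2L_w/2}$ does \emph{not} enjoy exponential off-diagonal decay. Unfolding it via \eqref{squareroot} shows that the $s>t$ tail contributes only polynomial decay of order $t/d(E,F)$, so Lemma~\ref{lem:ODweighted} is not directly available; one must again split $s<t$ versus $s>t$ and invoke Proposition~\ref{prop:Q} for the latter, which is precisely the paper's treatment of term~$II$ in the proof of \eqref{extrapol-1}. Finally, \eqref{RHsinpesoconpeso} converts between $vw$- and $w$-averages at different exponents, not between two $dw$-averages; the step you describe there is just Jensen's inequality.
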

\begin{proof}\label{extrapol1}
We shall use extrapolation to prove both inequalities. Indeed, Theorem \ref{theor:extrapol}, part ($b$) (or Theorem \ref{theor:extrapol}, part ($c$), if $p_+(L_w)^{m+1,*}_w=\infty$ or $p_+(L_w)^{m,*}_w=\infty$) allows us to obtain  $(a)$ from
\begin{align}\label{extrapol-1}
\|\Scal_{m+1,\hh}^wf\|_{L^2(v_0dw)}\lesssim\|{\Scal^w_{m,\hh}}f\|_{L^2(v_0dw)},\quad \forall v_0\in  RH_{\left(\frac{p_+(L_w)^{m+1,*}_w}{2}\right)'}(w)
\end{align}
and $(b)$ from 
\begin{align}\label{extrapol2}
\|\Scal_{m,\hh}^wf\|_{L^2(v_0dw)}\lesssim\|\Scal_{m+1,\hh}^wf\|_{L^2(v_0dw)}\quad \forall v_0\in  RH_{\left(\frac{p_+(L_w)^{m,*}_w}{2}\right)'}(w).
\end{align}

Note that Remark \ref{remark:choicereverseholder} with $q_1=2$, $\tilde{q}=p_+(L_w)$, and $\widetilde{k}$ equal to $m+1$ or $m$ implies that we can find $\widehat{r}$, $q_0$, and $r$ such that $r_w<\widehat{r}<2$, $2<q_0<p_+(L_w)$, and $q_0/2\leq r<\infty$ so that $v_0\in RH_{r'}(w)$ and
\begin{align*}
\widetilde{k}+\frac{n\,\widehat{r}}{2r}-\frac{n\,\widehat{r}}{q_0}
> 0.
\end{align*}
We also observe that for $M\in \N$ large enough the above inequality gives
\begin{align}\label{positivetilde}
\widetilde{k}+\frac{n\,\widehat{r}}{2r}-\frac{n\,\widehat{r}}{q_0}
-\frac{1}{M}> 0.
\end{align}
After these observations, we prove \eqref{extrapol-1} and \eqref{extrapol2}.

\medskip

We first prove \eqref{extrapol-1}.  
By \eqref{squareroot}, Minkowski's integral inequality, and  \eqref{doublingcondition} (note that $B(x,t)\subset B(y,2t)$, for all $y\in B(x,t)$), we obtain
\begin{multline*}
\Scal_{m+1,\hh}^wf(x)
\lesssim\!\!
\left(\int_0^{\infty}\left(\int_{0}^{t}\left(\int_{B(x,t)}|tsL_we^{-s^2L_w}(t\sqrt{L_w})^me^{-t^2L_w}f(y)|^2dw(y)\right)^{\frac{1}{2}}\frac{ds}{s}\right)^2\!\!\frac{dt}{tw(B(x,t))}\right)^{\!\!\frac{1}{2}}
\\
+
\left(\int_0^{\infty}\left(\int_{t}^{\infty}\left(\int_{B(x,t)}\left|tsL_we^{-s^2L_w}(t\sqrt{L_w})^me^{-t^2L_w}f(y)\right|^2dw(y)\right)^{\frac{1}{2}}\frac{ds}{s}\right)^2\!\!\frac{dt}{tw(B(x,t))}\right)^{\!\!\frac{1}{2}}
=:I+II.
\end{multline*}
In the case that $s<t$, for $F(y,t):=(t\sqrt{L_w})^me^{-\frac{t^2}{2}L_w}f(y)$, we use the fact that $\tau L_we^{-\tau L_w}\in \mathcal{F}(L^2(w)-L^2(w))$
\begin{align*}
I&
=
\left(\int_0^{\infty}\!\!\left(\int_{0}^{t}\!\frac{st}{t^2/2+s^2}\left(\int_{B(x,t)}\!\!\!\!\!\!\big|(s^2+{t^2}/{2})L_we^{-\left(s^2+\frac{t^2}{2}\right)L_w}F(y,t)\big|^2dw(y)\right)^{\frac{1}{2}}\!\!\frac{ds}{s}\right)^{\!\!2}\!\!\frac{dt}{tw(B(x,t))}\right)^{\!\!\frac{1}{2}}
\\
&
\lesssim
\sum_{j\geq 1}e^{-c4^{j}}\left(
\int_0^{\infty}\left(\int_{0}^{t}\frac{s}{t}\frac{ds}{s}\right)^{2}\int_{B(x,2^{j+1}t)}|F(y,t)|^2\frac{dw(y)\,dt}{tw(B(x,t))}\right)^{\frac{1}{2}}
\\
&
\lesssim
\sum_{j\geq 1}e^{-c4^{j}}\left(
\int_0^{\infty}\int_{B(x,2^{j+2}t)}|F(y,\sqrt{2}t)|^2\frac{dw(y)\,dt}{tw(B(y,t))}\right)^{\frac{1}{2}},
\end{align*}
where in the last inequality we have changed the variable $t$ into $\sqrt{2}t$ and used \eqref{doublingcondition}.
Then, applying change of angle (Proposition \ref{prop:alpha}), we conclude that
\begin{align*}
\|I\|_{L^2(v_0dw)}
\lesssim
\sum_{j\geq 1}e^{-c4^j}2^{j\theta_{v_0,w}}
\|\Scal_{m,\hh}^wf\|_{L^2(v_0dw)}
\lesssim\|\Scal_{m,\hh}^wf\|_{L^2(v_0dw)}.
\end{align*}
As for the estimate of $II$, consider $\widetilde{F}(y,s):=(s\sqrt{L_w})^{m+2}e^{-s^2L_w}f(y)$,  apply Cauchy-Schwartz's inequality in the integral in $s$, the fact that $e^{-\tau L_w}\in \mathcal{F}(L^2(w)- L^2(w))$,\eqref{doublingcondition},     Jensen's inequality in the integral in $y$ ($q_0>2$), Fubini's theorem, and \eqref{doublingcondition} to obtain
\begin{align*}
II
&
=
\left(\int_0^{\infty}\left(\int_{t}^{\infty}\left(\frac{t}{s}\right)^{\frac{1}{M}}
\left(\frac{t}{s}\right)^{m+1-\frac{1}{M}}\left(\int_{B(x,t)}\big|e^{-t^2L_w}\widetilde{F}(y,s)\big|^2\frac{dw(y)}{w(B(x,t))}\right)^{\frac{1}{2}}\frac{ds}{s}\right)^2\frac{dt}{t}\right)^{\frac{1}{2}}
\\
&
\lesssim
\left(\int_0^{\infty}\left(\int_{t}^{\infty}\left(\frac{t}{s}\right)^{\frac{2}{M}}\frac{ds}{s}\right)\left(\int_{t}^{\infty}
\left(\frac{t}{s}\right)^{2(m+1)-\frac{2}{M}}\dashint_{B(x,t)}\big|e^{-t^2L_w}\widetilde{F}(y,s)\big|^2dw(y)\frac{ds}{s}\right)\frac{dt}{t}\right)^{\frac{1}{2}}
\\
&
\lesssim
\sum_{j\geq 1}e^{-c4^j}
\left(\int_0^{\infty}\int_{t}^{\infty}\left(\frac{t}{s}\right)^{2(m+1)-\frac{2}{M}}\dashint_{B(x,2^{j+1}t)}\big|\widetilde{F}(y,s)\big|^2dw(y)\frac{ds}{s}\frac{dt}{t}\right)^{\frac{1}{2}}
\\&
\lesssim
\sum_{j\geq 1}e^{-c4^j}\left(\int_0^{\infty}\int_{t}^{\infty}\left(\frac{t}{s}\right)^{2(m+1)-\frac{2}{M}}\left(\dashint_{B(x,2^{j+1}t)}\big|\widetilde{F}(y,s)\big|^{q_0}dw(y)\right)^{\frac{2}{q_0}}\frac{ds}{s}\frac{dt}{t}\right)^{\frac{1}{2}}
\\&
\lesssim
\sum_{j\geq 1}e^{-c4^j}\left(\int_0^{\infty}\int_{0}^{s}\left(\frac{t}{s}\right)^{2(m+1)-\frac{2}{M}}\left(\int_{B(x,2^{j+1}st/s)}\big|\widetilde{F}(y,s)\big|^{q_0}\frac{dw(y)}{w(B(y,2^{j+1}st/s))}\right)^{\frac{2}{q_0}}\frac{dt}{t}\frac{ds}{s}\right)^{\frac{1}{2}}.
\end{align*}
Note now that  applying Propositions \ref{prop:Q} with $\beta=t/s<1$ and $q=q_0/2$, and \eqref{doublingcondition},
\begin{multline*}
\int_{\R^n}\left(\int_{B(x,2^{j+1}st/s)}\big|\widetilde{F}(y,s)\big|^{q_0}\frac{dw(y)}{w(B(y,2^{j+1}st/s))}\right)^{\frac{2}{q_0}}v_0(x)dw(x)
\\
\lesssim \left(\frac{t}{s}\right)^{n\widehat{r}\left(\frac{1}{r}-\frac{2}{q_0}\right)}
\int_{\R^n}\left(\dashint_{B(x,2^{j+1}s)}\big|\widetilde{F}(y,s)\big|^{q_0}dw(y)\right)^{\frac{2}{q_0}}v_0(x)dw(x),
\end{multline*}
recall the choices of $r$, $\widehat{r}$, and $q_0$ at the beginning of the proof.
Besides, since   $\tau L_we^{-\tau L_w}\in \Ocal(L^{2}(w)-L^{q_0}(w))$, for $\widehat{F}(y,s):=(s\sqrt{L_w})^me^{-\frac{s^2}{2}L_w}$
\begin{multline*}
\left(\dashint_{B(x,2^{j+1}s)}\big|\widetilde{F}(y,s)\big|^{q_0}dw(y)\right)^{\frac{1}{q_0}}
\lesssim
\left(\dashint_{B(x,2^{j+1}s)}\left|\frac{s^2}{2}L_we^{-\frac{s^2}{2}L_w}\widehat{F}(y,s)\right|^{q_0}dw(y)\right)^{\frac{1}{q_0}}
\\
\lesssim
\sum_{l\geq 1}e^{-c4^l}2^{j\theta_2}
\left(\dashint_{B(x,2^{l+j+2}s)}\big|\widehat{F}(y,s)\big|^{2}dw(y)\right)^{\frac{1}{2}}.
\end{multline*}
Consequently, applying Fubini's theorem, \eqref{positivetilde} with $\widetilde{k}=m+1$, changing the variable $s$ into $\sqrt{2}s$, and by \eqref{doublingcondition} and change of angle (Proposition \ref{prop:alpha}), we get
\begin{align*}
&\|II\|_{L^2(v_0dw)}
\\
&
\lesssim\!\!
\sum_{j\geq 1}e^{-c4^j}\!\sum_{l\geq 1}e^{-c4^l}\!\!\left(\!\int_{\R^n}\!\int_0^{\infty}\!\!\int_{0}^{s}\!\left(\frac{t}{s}\right)^{\!\!2(m+1)+n\widehat{r}\left(\frac{1}{r}-\frac{2}{q_0}\right)-\frac{2}{M}}\!\frac{dt}{t}
\dashint_{\!B(x,2^{l+j+2}s)}\!\big|\widehat{F}(y,s)\big|^{\!2}dw(y)\frac{ds}{s}v_0(x)dw(x)\!\!\right)^{\!\!\!\frac{1}{2}}
\\&
\lesssim
\sum_{j\geq 1}e^{-c4^j}
\sum_{l\geq 1}e^{-c4^l}\left(\int_{\R^n}\int_0^{\infty}
\int_{B(x,2^{l+j+3}s)}\big|\widehat{F}(y,\sqrt{2}s)\big|^{2}\frac{dw(y)ds}{sw(B(y,s))}v_0(x)dw(x)\right)^{\frac{1}{2}}
\\&
\lesssim
\sum_{j\geq 1}e^{-c4^j}\sum_{l\geq 1}e^{-c4^l}2^{(l+j)\theta_{v_0,w}}\|\Scal_{m,\hh}^wf\|_{L^2(v_0dw)}
\\&
\lesssim
\|\Scal_{m,\hh}^wf\|_{L^2(v_0dw)}.
\end{align*}

\medskip

As for proving \eqref{extrapol2}, by \eqref{doublingcondition}, \eqref{squareroot}, and Minkowski's integral inequality, we obtain
\begin{multline*}
\Scal_{m,\hh}^wf(x)\lesssim\!\!
\left(\int_0^{\infty}\left(\int_0^{t}\left(\int_{B(x,t)}|tsL_we^{-s^2L_w}(t\sqrt{L_w})^{m-1}e^{-t^2L_w}f(y)|^2dw(y)\right)^{\frac{1}{2}}\frac{ds}{s}\right)^{{2}}\!\!\frac{dt}{tw(B(x,t))}\right)^{\!\!\frac{1}{2}}
\\
+
\left(\int_0^{\infty}\left(\int_{t}^{\infty}\left(\int_{B(x,t)}|tsL_we^{-s^2L_w}(t\sqrt{L_w})^{m-1}e^{-t^2L_w}f(y)|^2dw(y)\right)^{\frac{1}{2}}\frac{ds}{s}\right)^{{2}}\!\!\frac{dt}{tw(B(x,t))}\right)^{\!\!\frac{1}{2}}
=:\widetilde{I}+\widetilde{II}.
\end{multline*}
We first estimate $\widetilde{I}$. Using that $s<t$ and applying the fact that $e^{-\tau L_w}\in \mathcal{F}(L^2(w)-L^2(w))$, and \eqref{doublingcondition}, we have
\begin{align*}
\widetilde{I} &=
\left(\int_0^{\infty}\left(\int_0^{t}\frac{s}{t}\left(\int_{B(x,t)}|e^{-s^2L_w}(t\sqrt{L_w})^{m+1}e^{-t^2L_w}f(y)|^2dw(y)\right)^{\frac{1}{2}}\frac{ds}{s}\right)^{{2}}\frac{dt}{tw(B(x,t))}\right)^{\frac{1}{2}}
\\
&\lesssim
\sum_{j\geq 1}e^{-c4^j}\left(\int_0^{\infty}\left(\int_0^{t}\frac{s}{t}\frac{ds}{s}\right)^2\int_{B(x,2^{j+1}t)}|(t\sqrt{L_w})^{m+1}e^{-t^2L_w}f(y)|^2\frac{dw(y)\,dt}{tw(B(y,t))}\right)^{\frac{1}{2}}
\\
&\lesssim
\sum_{j\geq 1}e^{-c4^j}
\left(\int_0^{\infty}\int_{B(x,2^{j+1}t)}|(t\sqrt{L_w})^{m+1}e^{-t^2L_w}f(y)|^2\frac{dw(y)\,dt}{tw(B(y,t))}\right)^{\frac{1}{2}}.
\end{align*}
Therefore, applying change of angle (Proposition \ref{prop:alpha}), we get
$$
\|\widetilde{I}\|_{L^2(v_0dw)}
\lesssim
\sum_{j\geq 1}e^{-c4^j}2^{j\theta_{v_0,w}}\|\Scal_{m+1,\hh}^wf\|_{L^2(v_0dw)}\lesssim \|\Scal_{m+1,\hh}^wf\|_{L^2(v_0dw)}.
$$
The estimate of $\widetilde{II}$ is very similar to that of $II$ (in the proof of \eqref{extrapol-1}), so we skip some details. We  apply again the fact that $e^{-\tau L_w}\in \mathcal{F}(L^2(w)-L^2(w))$, Cauchy-Schwartz's inequality in the integral in $s$,  Jensen's inequality in the integral in $y$,  Fubini's theorem, and
\eqref{doublingcondition} in order to obtain
\begin{align*}
\widetilde{II}
&\lesssim\!\!
\sum_{j\geq 1}e^{-c4^j}\!\!
\left(\int_0^{\infty}\left(\int_{t}^{\infty}\left(\frac{t}{s}\right)^{\frac{1}{M}}\left(\frac{t}{s}\right)^{m-\frac{1}{M}}\left(
\dashint_{B(x,2^{j+1}t)}|(s\sqrt{L_w})^{m+1}e^{-s^2L_w}f(y)|^{2}dw(y)\right)^{\!\!\frac{1}{2}}\frac{ds}{s}\right)^{\!\!2}\frac{dt}{t}\right)^{\!\!\frac{1}{2}}
\\
&\lesssim\!\!
\sum_{j\geq 1}e^{-c4^j}\!\!
\left(\int_0^{\infty}\int_{t}^{\infty}\left(\frac{t}{s}\right)^{2m-\frac{2}{M}}\left(
\dashint_{B(x,2^{j+1}t)}|(s\sqrt{L_w})^{m+1}e^{-s^2L_w}f(y)|^{q_0}dw(y)\right)^{\frac{2}{q_0}}\frac{ds}{s}\frac{dt}{t}\right)^{\!\!\frac{1}{2}}
\\
&\lesssim\!\!
\sum_{j\geq 1}e^{-c4^j}\!\!
\left(\int_0^{\infty}\!\!\int_{0}^{s}\!\!\left(\frac{t}{s}\right)^{\!\!2m-\frac{2}{M}}\!\!\left(
\int_{B(x,2^{j+1}st/s)}|(s\sqrt{L_w})^{m+1}e^{-s^2L_w}f(y)|^{q_0}\frac{dw(y)}{w(B(y,2^{j+1}st/s))}\right)^{\!\!\frac{2}{q_0}}\!\!\frac{dt}{t}\frac{ds}{s}\right)^{\!\!\frac{1}{2}}.
\end{align*}
Note that,    Proposition \ref{prop:Q} with $\beta=t/s<1$ and $q=q_0/2$, and  \eqref{doublingcondition} imply
\begin{multline*}
\int_{\R^n}
\left(\int_{B(x,2^{j+1}st/s)}|(s\sqrt{L_w})^{m+1}e^{-s^2L_w}f(y)|^{q_0}\frac{dw(y)}{B(y,2^{j+1}st/s)}\right)^{\frac{2}{q_0}}v_0(x)dw(x)
\\
\lesssim
\left(\frac{t}{s}\right)^{n\,\widehat{r}\left(\frac{1}{r}-\frac{2}{q_0}\right)}\int_{\R^n}
\left(\dashint_{B(x,2^{j+1}s)}|(s\sqrt{L_w})^{m+1}e^{-s^2L_w}f(y)|^{q_0}dw(y)\right)^{\frac{2}{q_0}}v_0(x)dw(x).
\end{multline*}
Besides, since $e^{-\tau L_w}\in \mathcal{O}(L^{2}(w)\rightarrow L^{q_0}(w))$
\begin{multline*}
\left(\dashint_{B(x,2^{j+1}s)}\!\!|(s\sqrt{L_w})^{m+1}e^{-s^2L_w}f(y)|^{q_0}dw(y)\right)^{\!\!\frac{1}{q_0}}
\\
\lesssim
\sum_{l\geq 1}e^{-c4^l}2^{j\theta_2}\left(\dashint_{B(x,2^{j+l+2}s)}\!\!|(s\sqrt{L_w})^{m+1}e^{-\frac{s^2}{2}L_w}f(y)|^{2}dw(y)\right)^{\!\!\frac{1}{2}}.
\end{multline*}
Hence, applying Fubini's theorem, \eqref{positivetilde} with $k=m$, changing the variable $s$ into $\sqrt{2}s$, and by \eqref{doublingcondition} and  Proposition \ref{prop:alpha} we have
\begin{align*}
\|\widetilde{II}\|_{L^2(v_0dw)}&
\lesssim\!\!
\sum_{j\geq 1}e^{-c4^j} \sum_{l\geq 1}e^{-c4^l}
\!\left(\int_{\R^n}\!\!\int_0^{\infty}\!\!\int_{B(x,2^{j+l+3}s)}\!\!|(s\sqrt{L_w})^{m+1}e^{-s^2L_w}f(y)|^2\frac{
dw(y)\,ds}{sw(B(y,s))}v_0(x)dw(x)\!\right)^{\!\!\frac{1}{2}}
\\&
\lesssim\!\!
\sum_{j\geq 1}e^{-c4^j} \sum_{l\geq 1}e^{-c4^l}2^{(l+j)\theta_{w,v_0}}\|\Scal_{m+1,\hh}^wf\|_{L^2(v_0dw)}
\lesssim\|\Scal_{m+1,\hh}^wf\|_{L^2(v_0dw)}.
\end{align*} 
This and the estimate obtained for $\|\widetilde{I}\|_{L^2(v_0dw)}$ give us \eqref{extrapol2}.
\end{proof}
Our next result  will be useful  in the proof of Theorem \ref{thm:conical-verticalnon-gradientdegenerate} parts $(c)$ and $(d)$.
Given  a measurable function $F$ defined in $\R^{n+1}_+$, consider the following vertical and conical operators:
\begin{align*}
\widetilde{\mathcal{V}}F(x):=\left(\int_{\R^n}\int_0^{\infty}|F(y,t)|^{2}\frac{dt}{t}v_0(x)dw(x)\right)^{\frac{1}{2}}
\end{align*}
and
\begin{align*}
\widetilde{\Scal}F(x):=\left(\int_{\R^n}\!\!\int_0^{\infty}\!\!\!\int_{B(x,t)}\!\!|F(y,t)|^{2}\frac{dw(y)\,dt}{tw(B(y,t))}v_0(x)dw(x)\right)^{\!\!\frac{1}{2}}.
\end{align*}

\begin{lemma}\label{lemma:poissonmaximal}
Given $w\in A_2$, $0<p_0<2<q_0<\infty$, $r\geq q_0/2$, $v_0\in A_{\frac{2}{p_0}}(w)\cap RH_{r'}(w)$, $\alpha\geq 1$, $0<u<1/4$,  and $F$ a measurable function defined in $\R^{n+1}_+$,  let $\widehat{r}>r_w$ and let $\{\mathcal{T}_{\tau}\}_{\tau>0}$ be a family of sublinear operators such that $\mathcal{T}_{\tau}\in \mathcal{O}(L^{p_0}(w)-L^{q_0}(w))$. Then,
\begin{multline}\label{poissonmaximalop}
\left(\int_{\R^n}\int_0^{\infty}\int_{B(x,\alpha t)}\big|\mathcal{T}_{\frac{t^2}{4u}}F(y,t)\big|^2\frac{dw(y)\,dt}{tw(B(y, \alpha t))}v_0(x)dw(x)\right)^{\frac{1}{2}}
\\
\lesssim\left(\int_0^{\infty}\int_{\R^n}\left(
\int_{B(x,\alpha t)}\big|\mathcal{T}_{\frac{t^2}{4u}}F(y,t)\big|^{q_0}\frac{dw(y)}{w(B(y,\alpha t))}\right)^{\frac{2}{q_0}}v_0(x)dw(x)\frac{dt}{t}\right)^{\frac{1}{2}}
\\
\lesssim
u^{n\widehat{r}\left(\frac{1}{4r}-\frac{1}{2q_0}\right)}
\min\left\{\|\widetilde{\Scal}F\|_{L^2(v_0dw)},\|\widetilde{\mathcal{V}}F\|_{L^2(v_0dw)}\right\}.
\end{multline}
\end{lemma}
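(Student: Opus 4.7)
The statement contains two inequalities. The first—bounding the left-hand side by the middle quantity—is routine: since $y\in B(x,\alpha t)$ implies $w(B(y,\alpha t))\approx w(B(x,\alpha t))$ by doubling of $w$, the kernel $1/w(B(y,\alpha t))$ becomes (up to constants) $1/w(B(x,\alpha t))$, so the inner integral is a $dw$-average of $|\mathcal{T}_{t^2/(4u)}F|^2$ on $B(x,\alpha t)$. Jensen's inequality (since $q_0/2>1$) promotes this to the $L^{q_0}(dw)$-average in the middle term, after which one more application of doubling restores the form $\int_{B(x,\alpha t)}|\mathcal{T}_{t^2/(4u)}F|^{q_0}dw/w(B(y,\alpha t))$. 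The factor $u$ plays no role in this step.

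For the second inequality I would prove the vertical and conical upper bounds separately and take their minimum. The common opening move is to apply Proposition~\ref{prop:Q} with $q=q_0/2$ (allowed since $r\geq q_0/2$, so $v_0\in RH_{(q_0/2)'}(w)$) and $\beta=2\sqrt u$, taking $h(y,t):=|\mathcal T_{t^2/(4u)}F(y,t)|^{q_0}$. This replaces the ball $B(x,\alpha t)$ of aperture $\alpha$ by the larger ball $B_u:=B(x,\alpha t/(2\sqrt u))$, which is the ball adapted to the natural scale $\sqrt{t^2/(4u)}=t/(2\sqrt u)$ of the operator $\mathcal{T}_{t^2/(4u)}$. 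This is precisely where the factor $(2\sqrt u)^{n\widehat r(1/r-2/q_0)}$ appears; taking square roots at the end produces exactly the target exponent $u^{n\widehat r(1/(4r)-1/(2q_0))}$. After doubling on the resulting $dw(y)/w(B(y,\alpha t/(2\sqrt u)))$ kernel, the inner quantity becomes the simple $dw$-average $(\dashint_{B_u}|\mathcal{T}_{t^2/(4u)}F|^{q_0}dw)^{2/q_0}$, and the ratio $r_{B_u}/\sqrt{t^2/(4u)}=\alpha\geq 1$ is bounded, so I can invoke the hypothesis $\mathcal{T}_{t^2/(4u)}\in\mathcal O(L^{p_0}(w)-L^{q_0}(w))$ in its most useful form. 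Decomposing $F(\cdot,t)$ along dyadic annuli $C_k(B_u)$ and using sublinearity yields
\[
\Bigl(\dashint_{B_u}|\mathcal T_{t^2/(4u)}F|^{q_0}dw\Bigr)^{1/q_0}\lesssim\sum_{k\geq 1}2^{k\theta_1}\Upsilon(2^k\alpha)^{\theta_2}\,e^{-c4^k\alpha^2}\Bigl(\dashint_{C_k(B_u)}|F(\cdot,t)|^{p_0}dw\Bigr)^{1/p_0},
\]
and the Gaussian decay $e^{-c4^k\alpha^2}$ dominates every polynomial factor in $k$ and $\alpha$ that arises later. Squaring and Cauchy--Schwarz against this rapidly decaying weight turns the square of the sum into a term-by-term sum with the same decay.

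At this point the two bounds split. For the vertical one, I would use the pointwise estimate $(\dashint_{2^{k+1}B_u(x)}|F(\cdot,t)|^{p_0}dw)^{1/p_0}\leq (\mathcal M^w(|F(\cdot,t)|^{p_0})(x))^{1/p_0}$ together with the fact that $v_0\in A_{2/p_0}(w)$, which makes $\mathcal M^w$ bounded on $L^{2/p_0}(v_0\,dw)$; integrating in $t$ then produces $\|\widetilde{\mathcal V}F\|_{L^2(v_0dw)}^2$. For the conical one, I would instead transfer the inner $L^{p_0}(dw)$-mean to an $L^2(d(v_0w))$-mean by~\eqref{Asinpesoconpeso} (again using $v_0\in A_{2/p_0}(w)$), apply Fubini and the doubling of the product weight $v_0w\in A_\infty$ (Remark~\ref{remark:product-weight}) to reinterpret the resulting object as an aperture-$c_k$ conical integral with $c_k=2^k\alpha/\sqrt u$, and finally invoke Proposition~\ref{prop:alpha}(i) (valid here since $p=2\leq 2r$) to return to the aperture-$1$ cone that defines $\widetilde{\mathcal S}F$. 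The Gaussian decay in $k$ absorbs the polynomial factor $c_k^{\theta}$ introduced by the change of angle.

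The main obstacle is the bookkeeping of powers of $u$ in the conical branch: every enlargement of aperture and every application of doubling for the product weight produces extra powers of $u^{-1/2}$, and one has to use the freedom to take $\widehat r$ arbitrarily close to $r_w$ and to exploit the full strength of $v_0\in A_{2/p_0}(w)\cap RH_{r'}(w)$ with $r\geq q_0/2$ to check that these losses combine to give at most the target exponent $n\widehat r(1/(4r)-1/(2q_0))$. Once this check is performed, summing the Gaussian series in $k$ and taking the minimum of the two bounds completes the proof.
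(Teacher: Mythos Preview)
Your approach to the first inequality and to the vertical branch of the second inequality is correct and matches the paper's proof. The gap is in your conical branch.

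After the off-diagonal step you are left with $L^{p_0}(dw)$-averages on balls of radius $\sim 2^{k}\alpha t/\sqrt{u}$. Your plan---convert to $L^2(d(v_0w))$ via \eqref{Asinpesoconpeso}, reinterpret as a conical integral of aperture $c_k\sim u^{-1/2}$, then change angle back to aperture $1$---runs into two problems. First, after Fubini the quantity
\[
\int_{\R^n}\dashint_{B(x,c_k t)}|F(y,t)|^2\,d(v_0w)(y)\,v_0(x)\,dw(x)
\]
collapses to $\int_{\R^n}|F(y,t)|^2\,v_0(y)\,dw(y)$, which is the vertical integrand, not a conical one; the route via $v_0w$-averages therefore cannot see $\widetilde{\mathcal S}F$. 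Second, even if you kept a $w$-conical structure and applied Proposition~\ref{prop:alpha}(i), the change-of-angle factor is $(c_k)^{n\widetilde r r/2}$ with $c_k\sim u^{-1/2}$; this introduces an extra negative power of $u$ that cannot be absorbed: the only $u$-saving in the argument is the factor produced by Proposition~\ref{prop:Q}, and that has already been spent to obtain the target exponent. No choice of $\widehat r$, $r$, or $p_0$ compensates a further $u^{-n\widetilde r r/4}$.

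The paper sidesteps both issues with a pointwise bound that is \emph{independent of $u$ and $k$}: inside the $L^{p_0}(dw)$-average on $B(x,2^{k+1}\alpha t/(2\sqrt u))$ one inserts the trivial factor $\dashint_{B(y,t)}dw(\xi)=1$, swaps the order of integration in $(y,\xi)$ (using $2^{k+1}\alpha/(2\sqrt u)>1$ to enlarge the outer ball), and applies Jensen (since $p_0<2$) to obtain
\[
\Bigl(\dashint_{B(x,2^{k+1}\alpha t/(2\sqrt u))}|F(y,t)|^{p_0}\,dw(y)\Bigr)^{1/p_0}\lesssim \mathcal{M}_{p_0}^w\!\left(\Bigl(\dashint_{B(\cdot,t)}|F(y,t)|^{2}\,dw(y)\Bigr)^{1/2}\right)(x).
\]
The maximal operator is blind to the radius of the averaging ball, so no $u$-loss occurs here; boundedness of $\mathcal{M}_{p_0}^w$ on $L^2(v_0dw)$ (from $v_0\in A_{2/p_0}(w)$) then yields $\|\widetilde{\mathcal S}F\|_{L^2(v_0dw)}$ directly, with no change of angle at all.
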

\begin{proof}
We fix $w$, $v_0$, $q_0$, $r$, $\alpha$, $\widehat{r}$, and $u$ as in the statement of the lemma and denote:
$$
I:=\left(\int_{\R^n}\int_0^{\infty}\int_{B(x,\alpha t)}\big|\mathcal{T}_{\frac{t^2}{4u}}F(y,t)\big|^2\frac{dw(y)\,dt}{tw(B(y, \alpha t))}v_0(x)dw(x)\right)^{\frac{1}{2}}.
$$
Then, by \eqref{doublingcondition},  Jenssen's inequality ($q_0>2$), and Fubini's theorem 
\begin{align}\label{poissonmaximalop1}
I&\lesssim\left(\int_0^{\infty}\int_{\R^n}\left(
\int_{B(x,\alpha t)}\big|\mathcal{T}_{\frac{t^2}{4u}}F(y,t)\big|^{q_0}\frac{dw(y)}{w(B(y,\alpha t))}\right)^{\frac{2}{q_0}}v_0(x)dw(x)\frac{dt}{t}\right)^{\frac{1}{2}}=:\widetilde{I},
\end{align}
which proves the first inequality in \eqref{poissonmaximalop}.

Thus, by Proposition \ref{prop:Q} with $\beta=2\sqrt{u}<1$ and $q=q_0/2$, and \eqref{doublingcondition}, we have 
\begin{align}\label{poissonmaximalop2}
\widetilde{I}&=
\left(\int_0^{\infty}\int_{\R^n}\left(
\int_{B(x,2\sqrt{u}\alpha t/2\sqrt{u})}\big|\mathcal{T}_{\frac{t^2}{4u}}F(y,t)\big|^{q_0}\frac{dw(y)}{w(B(y,2\sqrt{u}\alpha t/2\sqrt{u}))}\right)^{\frac{2}{q_0}}v_0(x)dw(x)\frac{dt}{t}\right)^{\frac{1}{2}}
\\\nonumber&\lesssim
u^{n\widehat{r}\left(\frac{1}{4r}-\frac{1}{2q_0}\right)}
\left(\int_0^{\infty}\int_{\R^n}\left(
\dashint_{B(x,\alpha t/2\sqrt{u})}\big|\mathcal{T}_{\frac{t^2}{4u}}F(y,t)\big|^{q_0}dw(y)\right)^{\frac{2}{q_0}}v_0(x)dw(x)\frac{dt}{t}\right)^{\frac{1}{2}}
\\\nonumber&
\leq C_{\alpha}
u^{n\widehat{r}\left(\frac{1}{4r}-\frac{1}{2q_0}\right)}\sum_{i\geq 1}e^{-c4^i}
\left(\int_0^{\infty}\int_{\R^n}\left(
\dashint_{B(x,2^{i+1}\alpha t/2\sqrt{u})}|F(y,t)|^{p_0}dw(y)\right)^{\frac{2}{p_0}}v_0(x)dw(x)\frac{dt}{t}\right)^{\frac{1}{2}},
\end{align}
where in the last inequality we have also used that  $\mathcal{T}_{\tau}\in \mathcal{O}(L^{p_0}(w)-L^{q_0}(w))$.
Now note that,  on the one hand, we have
\begin{align*}
\left(
\dashint_{B(x,2^{i+1}\alpha t/2\sqrt{u})}|F(y,t)|^{p_0}dw(y)\right)^{\frac{1}{p_0}}
\lesssim
\mathcal{M}_{p_0}^w
\left(F(\cdot,t)\right)(x), \quad (x,t)\in \R^{n+1}_+.
\end{align*}
On the other hand, by Fubini's theorem and since $\frac{2^{i+1}\alpha}{2\sqrt{u}}>1$  we also have
\begin{align*}
\left(
\dashint_{B(x,2^{i+1}\alpha t/2\sqrt{u})}|F(y,t)|^{p_0}dw(y)\right)^{\frac{1}{p_0}}
&=
\left(
\dashint_{B(x,2^{i+1}\alpha t/2\sqrt{u})}\dashint_{B(y, t)}dw(\xi)|F(y,t)|^{p_0}dw(y)\right)^{\frac{1}{p_0}}
\\& \lesssim
\left(
\dashint_{B(x,2^{i+1}\alpha t/\sqrt{u})}\dashint_{B(\xi, t)}|F(y,t)|^{p_0}dw(y)dw(\xi)\right)^{\frac{1}{p_0}}
\\&\lesssim
\left(
\dashint_{B(x,2^{i+1}\alpha t/\sqrt{u})}\left(\dashint_{B(\xi, t)}|F(y,t)|^{2}dw(y)\right)^{\frac{p_0}{2}}dw(\xi)\right)^{\frac{1}{p_0}}
\\&\lesssim
\mathcal{M}_{p_0}^w
\left(
\left(\dashint_{B(\cdot, t)}|F(y,t)|^{2}dw(y)
\right)^{\frac{1}{2}}\right)(x),
\end{align*}
where in the first inequality we have used that for $y\in B(\xi,t)$ we have that $B(\xi,t)\subset B(y,2t)$ and \eqref{doublingcondition}, and in the second one Jensen's inequality, since $2>p_0$.

Consequently, letting
$
\mathfrak{O}F(x,t)$ be equal to $\mathcal{M}_{p_0}^w
\!\left(\!
\left(\dashint_{B(\cdot, t)}|F(y,t)|^{2}dw(y)
\right)^{\!\!\frac{1}{2}}\!\right)\!\!(x)$ or $ 
\mathcal{M}_{p_0}^w
\left(F(\cdot,t)\right)(x)
$,
 \eqref{poissonmaximalop2}, the boundedness of $\mathcal{M}_{p_0}^w$ on $L^2(v_0dw)$ (recall that $v_0\in A_{\frac{2}{p_0}}(w)$),  Fubini's theorem, and \eqref{doublingcondition} imply
\begin{multline*}
\widetilde{I}
\lesssim
u^{n\widehat{r}\left(\frac{1}{4r}-\frac{1}{2q_0}\right)}
\left(\int_0^{\infty}\int_{\R^n}\left|\mathfrak{O}F(x,t)\right|^2v_0(x)dw(x)\frac{dt}{t}\right)^{\frac{1}{2}}
\\
\lesssim
u^{n\widehat{r}\left(\frac{1}{4r}-\frac{1}{2q_0}\right)}\min\left\{\|\widetilde{\Scal}F\|_{L^2(v_0dw)},\|\widetilde{\mathcal{V}}F\|_{L^2(v_0dw)}\right\},
\end{multline*}
which, in view of \eqref{poissonmaximalop1}, completes the proof of \eqref{poissonmaximalop}.
\end{proof}
\begin{remark}\label{remark:possonmaximal}
Given $w\in A_2$,  $\max\{r_w,(q_-(L_w))_{w,*}\}<p_0<2<q_0<q_{+}(L_w)$, and  $v_0$, $\alpha\geq 1$, $u$, and $F$ as  in the statement of Lemma \ref{lemma:poissonmaximal}, we have that
\begin{multline}\label{poissonmaximalopgradiente}
\left(\int_{\R^n}\int_0^{\infty}\int_{B(x,\alpha t)}\big|\nabla e^{-\frac{t^2}{4u}}F(y,t)\big|^2\frac{dw(y)\,dt}{tw(B(y, \alpha t))}v_0(x)dw(x)\right)^{\frac{1}{2}}
\\
\lesssim\left(\int_0^{\infty}\int_{\R^n}\left(
\int_{B(x,\alpha t)}\big|\nabla e^{-\frac{t^2}{4u}}F(y,t)\big|^{q_0}\frac{dw(y)}{w(B(y,\alpha t))}\right)^{\frac{2}{q_0}}v_0(x)dw(x)\frac{dt}{t}\right)^{\frac{1}{2}}
\\
\lesssim
u^{n\widehat{r}\left(\frac{1}{4r}-\frac{1}{2q_0}\right)}
\min\left\{\|\widetilde{\Scal}\nabla F\|_{L^2(v_0dw)},\|\widetilde{\mathcal{V}}\nabla F\|_{L^2(v_0dw)}\right\}.
\end{multline}
The first inequality in \eqref{poissonmaximalopgradiente} follows as \eqref{poissonmaximalop1}. 

As for the second inequality,  note that, for every $t>0$, by Lemma \ref{lem:Gsum} with $q=q_0$, $p=p_0$, and $S_t$ equal to the identity, we have that 
$$
\left(
\dashint_{B(x,\alpha t/2\sqrt{u})}\big|\nabla e^{-\frac{t^2}{4u}}F(y,t)\big|^{q_0}dw(y)\right)^{\frac{1}{q_0}}\lesssim
\sum_{i\geq 1}e^{-c4^i}
\left(\dashint_{B(x,2^{i+1}\alpha t/2\sqrt{u})}|\nabla F(y,t)|^{p_0}dw(y)\right)^{\frac{1}{p_0}}.
$$
Therefore,
\begin{multline*}
\left(\int_0^{\infty}\int_{\R^n}\left(
\dashint_{B(x,\alpha t/2\sqrt{u})}\big|\nabla e^{-\frac{t^2}{4u}}F(y,t)\big|^{q_0}dw(y)\right)^{\frac{2}{q_0}}v_0(x)dw(x)\frac{dt}{t}\right)^{\frac{1}{2}}
\\
\lesssim 
\sum_{i\geq 1}e^{-c4^i}
\left(\int_0^{\infty}\int_{\R^n}\left(
\dashint_{B(x,2^{i+1}\alpha t/2\sqrt{u})}|\nabla F(y,t)|^{p_0}dw(y)\right)^{\frac{2}{p_0}}v_0(x)dw(x)\frac{dt}{t}\right)^{\frac{1}{2}}.
\end{multline*}
Proposition \ref{prop:Q} and the above inequality imply
\begin{multline*}
\left(\int_0^{\infty}\int_{\R^n}\left(
\int_{B(x,\alpha t)}\big|\nabla e^{-\frac{t^2}{4u}}F(y,t)\big|^{q_0}\frac{dw(y)}{w(B(y,\alpha t))}\right)^{\frac{2}{q_0}}v_0(x)dw(x)\frac{dt}{t}\right)^{\frac{1}{2}}
\\\lesssim
u^{n\widehat{r}\left(\frac{1}{4r}-\frac{1}{2q_0}\right)}\sum_{i\geq 1}e^{-c4^i}
\left(\int_0^{\infty}\int_{\R^n}\left(
\dashint_{B(x,2^{i+1}\alpha t/2\sqrt{u})}|\nabla F(y,t)|^{p_0}dw(y)\right)^{\frac{2}{p_0}}v_0(x)dw(x)\frac{dt}{t}\right)^{\frac{1}{2}}.
\end{multline*}
This substitutes \eqref{poissonmaximalop2} in the proof of Lemma \ref{lemma:poissonmaximal}. Hereafter, the proof follows as  the proof of that lemma, but writing $\nabla F$ in place of $F$.
\end{remark}
Note now that proceeding as in the proof of \cite[Theorem 3.5, part ($b$)]{ChMPA16}, we obtain the following comparison result between the conical square functions defined in \eqref{conicalH-1} and \eqref{conicalP-1}, even for odd values of $m$.
\begin{proposition}\label{prop:coniclalpoisson-heat}
Given $w\in A_{2}$, $v\in A_{\infty}(w)$, $m\in \N$, and $f\in L^2(w)$, there holds
$$\|\Scal_{m,\pp}^wf\|_{L^p(vdw)}\lesssim
\|\Scal_{m,\hh}^wf\|_{L^p(vdw)},\quad \textrm{for all}\quad p\in \mathcal{W}_v^w(0,p_+(L_w)^{m+1,*}_w).
$$
\end{proposition}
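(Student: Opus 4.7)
The plan is to adapt the extrapolation-plus-subordination template used for even $m$ in \cite[Theorem 3.5(b)]{ChMPA16} so that it also covers odd $m$, by means of a $t$-differentiation applied to the subordination formula. By Theorem \ref{theor:extrapol}, part $(b)$ (or part $(c)$ if $p_+(L_w)^{m+1,*}_w=\infty$), it suffices to establish
$$
\|\Scal_{m,\pp}^w f\|_{L^2(v_0\,dw)} \lesssim \|\Scal_{m,\hh}^w f\|_{L^2(v_0\,dw)}, \qquad \forall\, v_0\in RH_{\left(\frac{p_+(L_w)^{m+1,*}_w}{2}\right)'}(w).
$$
Remark \ref{remark:choicereverseholder} with $q_1=2$, $\tilde q=p_+(L_w)$, and $\widetilde k=m+1$ then provides parameters $r_w<\widehat r<2$, $2<q_0<p_+(L_w)$, and $q_0/2\le r<\infty$ with $v_0\in RH_{r'}(w)$ and $m+1+\tfrac{n\widehat r}{2r}-\tfrac{n\widehat r}{q_0}>0$, which is the positivity driving the argument.

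Setting $\ell=\lfloor m/2\rfloor$ and $k=\lceil m/2\rceil$, I would next derive the subordination-type identity
$$
(t\sqrt{L_w})^m e^{-t\sqrt{L_w}} f = c_m\int_0^\infty u^{\ell-1/2}\, e^{-u}\, \left(\tfrac{t^2}{4u}\, L_w\right)^{k} e^{-\frac{t^2}{4u}L_w} f\,du.
$$
For even $m=2\ell$ this follows immediately from \eqref{subordinationformula} together with $(t\sqrt{L_w})^{2\ell}=(t^2L_w)^\ell=(4u)^\ell(\tfrac{t^2}{4u}L_w)^\ell$. For odd $m=2\ell+1$ one writes $(t\sqrt{L_w})^m=(t^2L_w)^\ell\cdot t\sqrt{L_w}$ and uses $t\sqrt{L_w}\,e^{-t\sqrt{L_w}}=-t\,\partial_t e^{-t\sqrt{L_w}}$, differentiating \eqref{subordinationformula} under the integral sign; the extra $L_w$ produced combines with $L_w^\ell$ into $L_w^{\ell+1}=L_w^k$.

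Two applications of Minkowski's integral inequality -- first on $\Gamma(x)$ equipped with the measure $\tfrac{dw(y)\,dt}{t\,w(B(y,t))}$, and then on $L^2(v_0\,dw)$ -- reduce matters to estimating $\int_0^\infty u^{\ell-1/2}e^{-u}\|\Psi_u\|_{L^2(v_0\,dw)}\,du$, where
$$
\Psi_u(x)^2 := \iint_{\Gamma(x)}\left|\left(\tfrac{t^2}{4u}L_w\right)^{k} e^{-\frac{t^2}{4u}L_w} f(y)\right|^2 \frac{dw(y)\,dt}{t\,w(B(y,t))}.
$$
I would split the $u$-integral at $u=1/4$. For $u\ge 1/4$, the change of variables $s=t/(2\sqrt u)$ rewrites $\Psi_u$ as a conical square function at aperture $2\sqrt u\ge 1$ of the function $(s^2L_w)^ke^{-s^2L_w}f$; Proposition \ref{prop:alpha}, part $(i)$, reduces the aperture to one at the price of a polynomial factor in $u$ that is absorbed by $e^{-u}$, yielding the bound $\|\Scal_{2k,\hh}^w f\|_{L^2(v_0\,dw)}$. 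For $u<1/4$, I would invoke Lemma \ref{lemma:poissonmaximal} with $\alpha=1$ and $\mathcal T_\tau=(\tau L_w)^k e^{-\tau L_w/2}$, which belongs to $\mathcal O(L^{p_0}(w)-L^{q_0}(w))$ for a suitable $p_0\in(p_-(L_w),2)$ by Lemma \ref{lem:ODweighted}, paired with an auxiliary $F$ chosen so that the semigroup factorisation $e^{-\frac{t^2}{4u}L_w}=e^{-\frac{t^2}{8u}L_w}\circ e^{-\frac{t^2}{8u}L_w}$ is absorbed into $F$ and $\widetilde{\mathcal V}F$ is comparable to $\|\Scal_{2k,\hh}^w f\|_{L^2(v_0\,dw)}$; this yields $\|\Psi_u\|_{L^2(v_0\,dw)}\lesssim u^{n\widehat r(1/(4r)-1/(2q_0))}\|\Scal_{2k,\hh}^w f\|_{L^2(v_0\,dw)}$. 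The positivity afforded by Remark \ref{remark:choicereverseholder} makes $u^{\ell-1/2+n\widehat r(1/(4r)-1/(2q_0))}$ integrable near $0$.

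Finally, if $m$ is even, $2k=m$ and we are done. If $m$ is odd, $2k=m+1$ and Proposition \ref{prop:widetildeS-heatS}, part $(a)$, applied at $p=2$ (admissible precisely because $v_0\in RH_{(p_+(L_w)^{m+1,*}_w/2)'}(w)$) gives $\|\Scal_{m+1,\hh}^w f\|_{L^2(v_0\,dw)}\lesssim\|\Scal_{m,\hh}^w f\|_{L^2(v_0\,dw)}$, closing the chain. The main obstacle is the odd case: the subordination integrand naturally produces the non-integer power $L_w^{m/2}$, and the $t$-differentiation step promotes it to $L_w^{(m+1)/2}$; this ``$+1$'' is why the range in the statement involves $p_+(L_w)^{m+1,*}_w$ rather than $p_+(L_w)^{m,*}_w$ and why Remark \ref{remark:choicereverseholder} must be invoked with $\widetilde k=m+1$. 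A secondary technical point is the precise choice of $F$ in Lemma \ref{lemma:poissonmaximal} so that its $u$-independent structure is compatible with the semigroup factorisation; this is standard but requires some bookkeeping.
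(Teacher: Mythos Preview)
Your overall architecture (extrapolation to $p=2$, subordination, splitting at $u=1/4$, change of angle for $u\ge 1/4$, and a Proposition~\ref{prop:Q}/off-diagonal argument for $u<1/4$) is exactly the template the paper intends, but your treatment of odd $m$ contains a genuine gap. By writing $(t\sqrt{L_w})^m=(t^2L_w)^\ell\cdot t\sqrt{L_w}$ and differentiating the subordination formula in $t$, you obtain the integrand $u^{\ell-1/2}=u^{(m-2)/2}$ rather than the optimal $u^{(m-1)/2}$. The integrability condition near $0$ then reads $n\widehat r\big(\tfrac{1}{4r}-\tfrac{1}{2q_0}\big)>-\tfrac{m}{2}$, i.e.\ $m+\tfrac{n\widehat r}{2r}-\tfrac{n\widehat r}{q_0}>0$, whereas Remark~\ref{remark:choicereverseholder} with $\widetilde k=m+1$ only guarantees $m+1+\tfrac{n\widehat r}{2r}-\tfrac{n\widehat r}{q_0}>0$. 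Since $v_0\in RH_{(p_+(L_w)^{m+1,*}_w/2)'}(w)$ does \emph{not} imply $v_0\in RH_{(p_+(L_w)^{m,*}_w/2)'}(w)$, your argument as written only yields the range $\mathcal{W}_v^w(0,p_+(L_w)^{m,*}_w)$ for odd $m$, strictly smaller than the stated $\mathcal{W}_v^w(0,p_+(L_w)^{m+1,*}_w)$.

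The fix is simple: do \emph{not} differentiate. Apply \eqref{subordinationformula} directly to $e^{-t\sqrt{L_w}}$ and keep the half-integer power, obtaining for every $m\in\N$
\[
(t\sqrt{L_w})^m e^{-t\sqrt{L_w}}f
= c_m\int_0^\infty u^{\frac{m-1}{2}}e^{-u}\,
\big(s\sqrt{L_w}\big)^m e^{-s^2L_w}f\Big|_{s=t/(2\sqrt u)}\,du.
\]
After the change of variable $s=t/(2\sqrt u)$ in the conical integral, factor $e^{-s^2L_w}=e^{-\frac{s^2}{2}L_w}e^{-\frac{s^2}{2}L_w}$ and use only $e^{-\tau L_w}\in\mathcal O(L^2(w)-L^{q_0}(w))$ (Lemma~\ref{lem:ODweighted}) together with Proposition~\ref{prop:Q} and Proposition~\ref{prop:alpha}; no off-diagonal estimates for fractional powers are needed. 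This lands directly on $\|\Scal_{m,\hh}^wf\|_{L^2(v_0dw)}$ (so your final step-down via Proposition~\ref{prop:widetildeS-heatS}(a) becomes unnecessary), and the integrability condition becomes $(m-1)/2+n\widehat r(\tfrac{1}{4r}-\tfrac{1}{2q_0})>-1$, exactly matching Remark~\ref{remark:choicereverseholder} with $\widetilde k=m+1$. A secondary point: Lemma~\ref{lemma:poissonmaximal} as stated requires $v_0\in A_{2/p_0}(w)$, which you do not have here; you should invoke Proposition~\ref{prop:Q} and the off-diagonal estimates directly (as in the proof of Proposition~\ref{prop:gradient-singradientheat}(b)) rather than cite that lemma.
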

Besides, with the aim of  obtaining boundedness for the conical square functions defined via the gradient of the heat or the Poisson semigroup,  we  show the following comparison result.
\begin{proposition}\label{prop:gradient-singradientheat}
Given $w\in A_{2}$, $v\in A_{\infty}(w)$, $m\in \N$, $K\in \N_0$, and $f\in L^2(w)$, there hold
\begin{list}{$(\theenumi)$}{\usecounter{enumi}\leftmargin=1cm \labelwidth=1cm\itemsep=0.2cm\topsep=.2cm \renewcommand{\theenumi}{\alph{enumi}}}

\item $\|\Grm_{m,\hh}^wf\|_{L^p(vdw)}\lesssim\|\Scal_{m,\hh}^wf\|_{L^p(vdw)}$,\, for all\, $0<p<\infty$;

\item $\|\Grm_{K,\pp}^wf\|_{L^p(vdw)}\lesssim \|\Scal_{K+2,\hh}^wf\|_{L^p(vdw)}+
\|\Grm_{K,\hh}^wf\|_{L^p(vdw)}$,  $p\in \mathcal{W}_v^w(0,p_+(L_w)^{K+1,*}_w)$.

\end{list}
\end{proposition}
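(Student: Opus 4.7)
For part (a), I would exploit the semigroup splitting $(t\sqrt{L_w})^m e^{-t^2L_w}=e^{-t^2L_w/2}\cdot(t\sqrt{L_w})^m e^{-t^2L_w/2}$, combined with the $L^2(w)\to L^2(w)$ off-diagonal estimates for $t\nabla e^{-t^2L_w/2}$ available because $2\in\mathcal K(L_w)$ by Lemma \ref{lem:ODweighted}. Decomposing into annuli and using $w$-doubling, one obtains the averaged pointwise estimate
\[
\left(\dashint_{B(x,t)}|t\nabla(t\sqrt{L_w})^m e^{-t^2L_w}f|^2\,dw\right)^{\!1/2}\!\lesssim\sum_{j\geq 1}e^{-c4^j}\!\left(\dashint_{2^{j+1}B(x,t)}|(t\sqrt{L_w})^m e^{-t^2L_w/2}f|^2\,dw\right)^{\!1/2}\!\!.
\]
Integrating this in $t$ identifies the right-hand side as a conical square function of aperture $2^{j+1}$ at heat time $t^2/2$. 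Taking $L^p(vdw)$ norms, change of angle (Proposition \ref{prop:alpha}(i)) reduces the aperture back to $1$ with polynomial loss $2^{j\theta}$; since $v\in A_\infty(w)$, the parameter $r$ in the change-of-angle bound can always be taken large enough that $p\le 2r$, so the argument is valid for every $0<p<\infty$. A trivial rescaling $t\to\sqrt 2\, t$ replaces $e^{-t^2L_w/2}$ by $e^{-t^2L_w}$, and summability of $\sum_j e^{-c4^j}2^{j\theta}$ finishes the argument.

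For part (b), I would start from the subordination formula \eqref{subordinationformula}. Differentiating and performing the substitution $u=t^2/(4s^2)$ produces
\[
t\nabla(t\sqrt{L_w})^K e^{-t\sqrt{L_w}}f(y)=\frac{C}{\sqrt\pi}\int_0^\infty\Bigl(\frac{t}{s}\Bigr)^{\!K+2}\!e^{-t^2/(4s^2)}\,s\nabla(s\sqrt{L_w})^K e^{-s^2L_w}f(y)\,\frac{ds}{s}.
\]
Split this integral at $s=t$. For $s\ge t$, the kernel $(t/s)^{K+2}$ is integrable in $ds/s$; a Minkowski/Cauchy-Schwarz argument in $s$, followed by a Fubini swap of $s$ and $t$ inside the cone integral and a change-of-angle correction, yields the term $\|\Grm_{K,\hh}^w f\|_{L^p(vdw)}$. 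For $0<s<t$ the Gaussian $e^{-t^2/(4s^2)}$ dominates; I would reduce, by extrapolation (Theorem \ref{theor:extrapol}(b)), to the case $p=2$ with $v_0\in RH_{(p_+(L_w)^{K+1,*}_w/2)'}(w)$. Remark \ref{remark:choicereverseholder} applied with $\widetilde k=K+1$ and $\widetilde q=p_+(L_w)$ then supplies parameters $\widehat r,q_0,r$ satisfying the positivity $(K+1)+n\widehat r/(2r)-n\widehat r/q_0>0$. Using the splitting $s\nabla(s\sqrt{L_w})^K e^{-s^2L_w}=s\nabla e^{-s^2L_w/2}\cdot (s\sqrt{L_w})^K e^{-s^2L_w/2}$, the $L^2(w)\to L^{q_0}(w)$ off-diagonal for $\sqrt\tau\nabla e^{-\tau L_w}$ at $\tau=s^2/2$ on the ball $B(x,t)$, and Proposition \ref{prop:Q} to pass averages into conical square functions, together with a Caccioppoli-type replacement via $s^2L_w u=(s\sqrt{L_w})^{K+2}e^{-s^2L_w}f$ to trade powers of $L_w$, delivers the bound by $\|\Scal_{K+2,\hh}^w f\|_{L^2(v_0dw)}$ once the $s$-integral converges thanks to the positivity condition.

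The delicate step is the $s<t$ region in part (b). Three factors must be balanced simultaneously: the subordination weight $(t/s)^{K+2}$, the polynomial $\Upsilon(t/s)^{\theta_2}$ arising from off-diagonal estimates applied on a ball much larger than the semigroup's natural scale, and the Gaussian $e^{-t^2/(4s^2)}$. The positivity condition \eqref{positiveremark} with $\widetilde k=K+1$ encodes precisely the exponent budget available, which is exactly why the range in (b) cannot exceed $p_+(L_w)^{K+1,*}_w$. Passing from powers of $L_w$ via Proposition \ref{prop:widetildeS-heatS} after the fact would shrink the range further, so the Caccioppoli-type replacement must be carried out inside the $L^2(v_0dw)$ extrapolation step rather than applied to the final estimate.
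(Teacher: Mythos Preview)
Your argument for part (a) is correct and matches the paper's proof exactly.

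For part (b), however, your direct subordination approach has a genuine gap in the region $s\ge t$. After your Minkowski step with the substitution $s=t\sigma$ ($\sigma\ge 1$) and the rescaling $t\mapsto t/\sigma$, the inner object is
\[
\left(\int_0^\infty\int_{B(x,t/\sigma)}\bigl|t\nabla(t\sqrt{L_w})^K e^{-t^2L_w}f(y)\bigr|^2\,\frac{dw(y)\,dt}{t\,w(B(y,t/\sigma))}\right)^{1/2},
\]
whose normalization is $w(B(y,t/\sigma))$, not $w(B(y,t))$. Converting via doubling costs a factor $\sigma^{n\widehat r/2}$, and the change-of-angle gain from Proposition~\ref{prop:alpha}(ii) is only $\sigma^{-n/(s\tilde s p)}$, which can be made arbitrarily small. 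The resulting $\sigma$-integral $\int_1^\infty \sigma^{-(K+2)+n\widehat r/2}\,d\sigma/\sigma$ diverges whenever $n r_w\ge 2(K+2)$, so for $K=0$ it already fails once $n r_w\ge 4$. The ``Fubini swap plus change-of-angle correction'' you invoke does not absorb this doubling loss.

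The paper avoids this by subtracting off the heat part first: one writes $\Grm_{K,\pp}^w f\le \Grm_{K,\hh}^w f + Q_K f$ with $Q_K$ built from $t\nabla(t\sqrt{L_w})^K(e^{-t\sqrt{L_w}}-e^{-t^2L_w})f$. The $\Grm_{K,\hh}^w$ term then comes for free by the triangle inequality, with no doubling penalty. The point of working with the \emph{difference} is that the fundamental theorem of calculus produces an additional factor $(s\sqrt{L_w})^2$ inside the subordination integral, so both the $u>1/4$ and $u<1/4$ pieces of $Q_K$ carry the integrand $s\nabla(s\sqrt{L_w})^{K+2}e^{-s^2L_w}f$, and it is this extra power that yields $\Scal_{K+2,\hh}^w$ after the off-diagonal for $\sqrt\tau\nabla e^{-\tau L_w}$ and Proposition~\ref{prop:Q}. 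Your ``Caccioppoli-type replacement'' in the $s<t$ region cannot manufacture these two extra powers of $\sqrt{L_w}$: off-diagonal for $s\nabla e^{-s^2L_w/2}$ applied to $(s\sqrt{L_w})^K e^{-s^2L_w/2}f$ lands you on $(s\sqrt{L_w})^K e^{-s^2L_w/2}f$, which for $K=0$ is not even square-integrable in $ds/s$. The subtraction is therefore not a cosmetic device but the mechanism that simultaneously supplies the $\Grm_{K,\hh}^w$ term and the $K+2$ index.
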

\begin{proof}
The proof of part $(a)$ follows as the proof of \cite[Theorem 3.3]{ChMPA16}.  Indeed, use \eqref{doublingcondition} and apply  the fact that  
 $\sqrt{\tau}\nabla e^{-\tau L_w}\in \mathcal{O}(L^2(w)-L^2(w))$; then, change the variable $t$ into $\sqrt{2}t$ , and apply again \eqref{doublingcondition} and  Proposition \ref{prop:alpha} to obtain
\begin{multline*}
\|\Grm_{m,\hh}^wf\|_{L^p(vdw)}
\lesssim
\left(\int_{\R^n}\left(\int_0^{\infty}\dashint_{B(x,t)}|t\nabla e^{-\frac{t^2}{2}}(t\sqrt{L_w})^me^{-\frac{t^2}{2}L_w}f(y)|^2\frac{dw(y)dt}{t}\right)^{\frac{p}{2}}v(x)dw(x)\right)^{\frac{1}{p}}
\\
\lesssim	\sum_{j\geq 1}e^{-c4^j}
\left(\int_{\R^n}\left(\int_0^{\infty}\int_{B(x,2^{j+2}t)}|(t\sqrt{L_w})^me^{-t^2L_w}f(y)|^2\frac{dw(y)dt}{tw(B(y,t))}\right)^{\frac{p}{2}}v(x)dw(x)\right)^{\frac{1}{p}}
\\
\lesssim	\sum_{j\geq 1}e^{-c4^j}2^{j\theta_{v,w}}
\|\Scal_{m,\hh}^wf\|_{L^p(vdw)}
\lesssim	
\|\Scal_{m,\hh}^wf\|_{L^p(vdw)}.
\end{multline*}

\medskip

In order to prove part $(b)$, first of all note that
\begin{align*}
\Grm_{K,\pp}^wf(x)
&\leq
\left(\int_0^{\infty}\int_{B(x,t)}|t\nabla(t\sqrt{L_w})^K (e^{-t\sqrt{L_w}}-e^{-t^2L_w})f(y)|^2\frac{dw(y)dt}{tw(B(y,t))}\right)^{\frac{1}{2}}
+\Grm_{K,\hh}^wf(y)
\\&
=:{Q}_Kf(y)+\Grm_{K,\hh}^wf(y)
.
\end{align*}
So we just need to prove that
\begin{align}\label{QkSk+2}
\|{Q}_Kf\|_{L^p(vdw)}\lesssim 
\|\mathcal{S}_{K+2,\hh}^wf\|_{L^p(vdw)},\quad \forall p\in \mathcal{W}_v^w(0,p_+(L_w)^{K+1,*}_w).
\end{align}
We show this by  extrapolation.
In particular, by
Theorem \ref{theor:extrapol}, part($b$) (or Theorem \ref{theor:extrapol}, part($c$) if $p_+(L_w)_w^{K+1,*}=\infty$),  \eqref{QkSk+2} follows from the  inequality
\begin{align}\label{extrapolpoissonheat}
\|{Q}_Kf\|_{L^2(v_0dw)}\lesssim 
\|\mathcal{S}_{K+2,\hh}^wf\|_{L^2(v_0dw)},\quad \forall
v_0\in  RH_{\left(\frac{p_+(L_w)^{K+1,*}_w}{2}\right)'}(w).
\end{align}
To this end, fix $w\in A_2$, $f\in L^2(w)$, and $v_0\in RH_{\left(\frac{p_+(L_w)^{K+1,*}_w}{2}\right)'}(w)$, and note that by Remark \ref{remark:choicereverseholder}, with $q_1=2$, $\widetilde{q}=p_+(L_w)$ and $\widetilde{k}=K+1$, we can find $q_0$, $\widehat{r}$, and $r$ such that   $r_w<\widehat{r}<2$, $2<q_0<p_+(L_w)$, $q_0/2\leq r<\infty$, $v_0\in RH_{r'}(w)$, and
\begin{align}\label{positivepoissonpoissson}
K+1+\frac{n\,\widehat{r}}{2r}-\frac{n\,\widehat{r}}{q_0}
> 0.
\end{align}
Keeping this in mind,  by the subordination formula 
 \eqref{subordinationformula} and Minkowski's integral inequality, we have
\begin{align*}
Q_{K}f(x)
\lesssim
\int_{0}^{\frac{1}{4}}u^{\frac{1}{2}}e^{-u}
I(u)\frac{du}{u}
 +
\int_{\frac{1}{4}}^{\infty}u^{\frac{1}{2}}e^{-u}
I(u)\frac{du}{u}=:I+II,
\end{align*}
where
\begin{align*}
I(u):=\left(\int_0^{\infty}\int_{B(x,t)}|t\nabla(t\sqrt{L_w})^K  (e^{-\frac{t^2}{4u}L_w}-e^{-t^2L_w})f(y)|^2\frac{dw(y)dt}{tw(B(y,t))}\right)^{\frac{1}{2}}.
\end{align*}

We first estimate $II$,  for $1/4<u<\infty$, Cauchy-Schwartz's inequality implies
\begin{multline*}
\big|\big(t\nabla(t\sqrt{L_w})^K e^{-\frac{t^2}{4u}L_w}-t\nabla(t\sqrt{L_w})^Ke^{-t^2L_w}\big)f(y)\big|
\leq 
\int_{\frac{t}{2\sqrt{u}}}^{t} \big|\partial_s t\nabla(t\sqrt{L_w})^K e^{-s^2 L_w} f(z)\big| ds
\\
\lesssim \int_{\frac{t}{2\sqrt{u}}}^t
\big|t\nabla(t\sqrt{L_w})^K (s\sqrt{L_w})^2e^{-s^2L_w}f(y)\big|\frac{ds}{s}
\\
\lesssim u^{\frac{1}{4}} \left(\int_{\frac{t}{2\sqrt{u}}}^t
\big|t\nabla(t\sqrt{L_w})^K (s\sqrt{L_w})^2e^{-s^2L_w}f(y)\big|^2\frac{ds}{s}\right)^{\frac{1}{2}}.
\end{multline*}
Then, for $u>1/4$, we estimate $I(u)$ using \eqref{doublingcondition},   Fubini's theorem, the fact that $s<t<2\sqrt{u}s$, and that $\sqrt{\tau} \nabla e^{-\tau L_w}\in \mathcal{F}(L^2(w)-L^2(w))$. Besides, we change the variable $s$ into $\sqrt{2}s$ to obtain
\begin{align*}
I(u)&
\lesssim
u^{\theta_K}
\left(\int_0^{\infty}\int_{\frac{t}{2\sqrt{u}}}^t\int_{B(x,t)}|s\nabla(s\sqrt{L_w})^{K+2}e^{-s^2L_w}f(y)|^2\frac{dw(y)\,ds}{sw(B(y,t))}\frac{dt}{t}\right)^{\frac{1}{2}}
\\
&
\lesssim
u^{\theta_K}
\left(\int_0^{\infty}\int_{\frac{t}{2\sqrt{u}}}^t\int_{B(x,\sqrt{u}s)}\big|\frac{s}{{\scriptstyle\sqrt{2}}}\nabla e^{-\frac{s^2}{2}L_w}(s\sqrt{L_w})^{K+2}e^{-\frac{s^2}{2}L_w}f(y)\big|^2\frac{dw(y)\,ds}{sw(B(y,s))}\frac{dt}{t}\right)^{\frac{1}{2}}
\\
&\lesssim
\sum_{j\geq 1}
e^{-c4^j}
u^{\theta_K}
\left(\int_0^{\infty}\int_s^{2\sqrt{u}s}\frac{dt}{t}\int_{B(x,2^{j+1}\sqrt{u}s)}|(s\sqrt{L_w})^{K+2}e^{-\frac{s^2}{2}L_w}f(y)|^2\frac{dw(y)\,ds}{sw(B(y,s))}\right)^{\frac{1}{2}}
\\
&\lesssim
\sum_{j\geq 1}
e^{-c4^j}
u^{\theta_K}
\left(\int_0^{\infty}\int_{B(x,2^{j+2}\sqrt{u}s)}|(s\sqrt{L_w})^{K+2}e^{-s^2L_w}f(y)|^2\frac{dw(y)\,ds}{sw(B(y,s))}\right)^{\frac{1}{2}}.
\end{align*}
Hence, by Minkowski's integral inequality and change of angle (Proposition \ref{prop:alpha})
\begin{align*}
\|II\|_{L^2(v_0dw)}
\lesssim
\sum_{j\geq 1}
e^{-c4^j}2^{j\theta_{v_0,w}}
\int_{\frac{1}{4}}^{\infty}u^{\theta_{v_0,w,K}}e^{-u}
\frac{du}{u}\|\Scal_{K+2,\hh}^wf\|_{L^2(v_0dw)}
\lesssim
\|\Scal_{K+2,\hh}^wf\|_{L^2(v_0dw)}.
\end{align*}
As for the estimate of $I$, note that for $0<u<1/4$, again by Cauchy- Schwartz's inequality
\begin{multline*}
\abs{\Big(t\nabla(t\sqrt{L_w})^K e^{-\frac{t^2}{4u}L_w}-t\nabla(t\sqrt{L_w})^K e^{-t^2 L_w}\Big)f(y)} 
 \lesssim 
\int_{t}^{\frac{t}{2\sqrt{u}}} \abs{t\nabla(t\sqrt{L_w})^K (s\sqrt{L_w})^2 e^{-s^2 L_w} f(y)} \frac{ds}{s}
\\ \lesssim
\br{\int_{t}^{\infty} \abs{t\nabla(t\sqrt{L_w})^K (s\sqrt{L_w})^2 e^{-s^2 L_w} f(y)}^2 \frac{ds}{s}}^{\frac{1}{2}} \left(\log u^{-\frac{1}{2}}\right)^{\frac{1}{2}}.
\end{multline*}
The above estimate, Fubini's theorem, \eqref{doublingcondition}, and  Jensen's inequality ($q_0>2$) imply
\begin{multline*}
I
\lesssim
\int_{0}^{\frac{1}{4}}u^{\frac{1}{2}}\left(\log u^{-\frac{1}{2}}\right)^{\frac{1}{2}}\frac{du}{u}
\left(\int_0^{\infty}\dashint_{B(x,t)}\int_{t}^{\infty} \abs{t\nabla(t\sqrt{L_w})^K (s\sqrt{L_w})^2 e^{-s^2 L_w} f(y)}^2 \frac{ds}{s}\frac{dw(y)dt}{t}\right)^{\frac{1}{2}}
\\
\lesssim
\left(\int_0^{\infty}\int_{0}^{s} \left(\frac{t}{s}\right)^{2K+2}\left(\dashint_{B(x,t)} \abs{ s\nabla(s\sqrt{L_w})^{K+2}  e^{-s^2 L_w} f(y)}^{q_0}dw(y)\right)^{\frac{2}{q_0}}\frac{dt}{t}\frac{ds}{s}\right)^{\frac{1}{2}}
.
\end{multline*}
Besides, note that by Proposition \ref{prop:Q}, with $\beta=t/s<1$ and $q=q_0/2$,
\begin{multline*}
\int_{\R^n}\left(\int_{B(x,st/s)} \abs{  s\nabla(s\sqrt{L_w})^{K+2}  e^{-s^2 L_w} f(y)}^{q_0}\frac{dw(y)}{w(B(y,st/s))}\right)^{\frac{2}{q_0}}v_0(x)dw(x)
\\
\lesssim
\left(\frac{t}{s}\right)^{n\widehat{r}\left(\frac{1}{r}-\frac{2}{q_0}\right)}\int_{\R^n}\left(\int_{B(x,s)} \abs{   s\nabla(s\sqrt{L_w})^{K+2}   e^{-s^2 L_w} f(y)}^{q_0}\frac{dw(y)}{w(B(y,s))}\right)^{\frac{2}{q_0}}v_0(x)dw(x).
\end{multline*}
Thus, for $C_{1}:=2K+2+n\widehat{r}\left(\frac{1}{r}-\frac{2}{q_0}\right)$, we apply first  Fubini's theorem,  and  the above inequality. Then, by \eqref{doublingcondition}, changing the variable $s$ into $2s$, \eqref{positivepoissonpoissson},  the fact that $\sqrt{\tau}\nabla e^{-\tau L_w}\in \mathcal{O}(L^2(w)-L^{q_0}(w))$, and by Proposition \ref{prop:alpha}, we get
\begin{align*}
&\|I\|_{L^2(v_0dw)}
\\&\lesssim
\left(\int_{\R^n}\int_0^{\infty}\int_{0}^{s} \left(\frac{t}{s}\right)^{C_1}\frac{dt}{t}\left(\int_{B(x,s)} \abs{ s\nabla(s\sqrt{L_w})^{K+2}  e^{-s^2 L_w} f(y)}^{q_0}\frac{dw(y)}{w(B(y,s))}\right)^{\frac{2}{q_0}}\frac{ds}{s}v_0(x)dw(x)\right)^{\frac{1}{2}} 
\\&\lesssim
\left(\int_{\R^n}\int_0^{\infty}\left(\dashint_{B(x,2s)} \abs{s\nabla  e^{-s^2 L_w} (s\sqrt{L_w})^{K+2}  e^{-s^2 L_w} f(y)}^{q_0}dw(y)\right)^{\frac{2}{q_0}}\frac{ds}{s}v_0(x)dw(x)\right)^{\frac{1}{2}} 
\\&\lesssim
\sum_{i\geq 1}e^{-c4^i}
\left(\int_{\R^n}\int_0^{\infty}\int_{B(x,2^{i+2}s)} \abs{ (s\sqrt{L_w})^{K+2}  e^{-s^2 L_w} f(y)}^{2}\frac{dw(y)ds}{sw(B(y,s))}v_0(x)dw(x)\right)^{\frac{1}{2}} 
\\&
\lesssim
\sum_{i\geq 1}e^{-c4^i}2^{i\theta_{v_0,w}}
\|\Scal_{K+2,\hh}^wf\|_{L^2(v_0dw)}
\\&
\lesssim
\|\Scal_{K+2,\hh}^wf\|_{L^2(v_0dw)},
\end{align*}
which finishes the proof.
\end{proof}

\medskip

%
%
%

In the next theorem we obtain a norm comparison result for $\Ncal_{\pp}^w$. This  will be used in the proof of Theorem \ref{thm:boundednessnon-tangential}.
\begin{theorem}\label{thm:improvementnontangentialpoisson}
Given $w\in A_2$, $v\in A_{\infty}(w)$, and $f\in L^2(w)$, there holds
\begin{align*}
\|\mathcal{N}_{\pp}^wf\|_{L^p(vdw)}\lesssim \|\Ncal_{\hh}^wf\|_{L^p(vdw)}+\|\Scal_{2,\hh}^wf\|_{L^p(vdw)},\quad
\forall\,p\in \mathcal{W}_v^w(p_-(L_w),p_+(L_w)^{*}_w).
\end{align*}
\end{theorem}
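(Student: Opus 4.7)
The plan is to decompose $\Ncal_{\pp}^w f(x) \leq \Ncal_{\hh}^w f(x) + \widetilde{\Ncal}f(x)$, where
$$\widetilde{\Ncal}f(x) := \left(\sup_{t>0}\dashint_{B(x,t)}|G(y,t)|^2 \,dw(y)\right)^{1/2}, \quad G(y,t) := (e^{-t\sqrt{L_w}}-e^{-t^2L_w})f(y),$$
so that the task reduces to showing $\|\widetilde{\Ncal}f\|_{L^p(vdw)} \lesssim \|\Scal_{2,\hh}^w f\|_{L^p(vdw)}$. By Theorem \ref{theor:extrapol}$(d)$ applied with $p_0=p_-(L_w)$ and $q_0=p_+(L_w)^*_w$, it suffices to prove the $L^2(v_0 dw)$-version for every $v_0\in A_{2/p_-(L_w)}(w)\cap RH_{(p_+(L_w)^*_w/2)'}(w)$. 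Opening the extrapolation with the enlarged endpoint $q_0=p_+(L_w)^*_w$, as opposed to the older $q_0=p_+(L_w)$ for which the analogous inequality is already known with $\|\mathsf{s}_{2,\hh}^w f\|_{L^p(vdw)}$ on the right, is precisely the point at which Muckenhoupt extrapolation produces the improvement.

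For the $L^2(v_0 dw)$ inequality, I would first linearize the supremum by choosing a measurable selection $t:\R^n\to (0,\infty)$ such that $\widetilde{\Ncal}f(x)^2 \leq 2\dashint_{B(x,t(x))}|G(y,t(x))|^2 dw(y)$. The subordination formula \eqref{subordinationformula} together with Cauchy-Schwarz in $u$ (against the finite measure $u^{-1/2}e^{-u}du$) gives
$$|G(y,t)|^2 \lesssim \int_0^\infty u^{-1/2}e^{-u}\bigl|(e^{-t^2L_w/(4u)}-e^{-t^2L_w})f(y)\bigr|^2 du.$$
Then for each $u$, the fundamental theorem of calculus for $r\mapsto e^{-rL_w}$ and a further Cauchy-Schwarz in $s$ yield, with $\Phi(y,s):=(s\sqrt{L_w})^2 e^{-s^2L_w}f(y)$,
$$\bigl|(e^{-t^2L_w/(4u)}-e^{-t^2L_w})f(y)\bigr|^2 \lesssim |\log(2\sqrt u)|\int_{s_-}^{s_+}|\Phi(y,s)|^2\frac{ds}{s},\quad \{s_-,s_+\}=\{t,t/(2\sqrt u)\}.$$

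Substituting $s=\beta t(x)$ so that $\beta$ ranges over a fixed interval depending only on $u$, and applying Fubini together with doubling, the $x$-integral that results is an "aperture-$2\sqrt u$" (for $u\geq 1/4$, where $\beta\leq 1$) or "aperture-$1/(2\sqrt u)$" (for $u<1/4$, where $\beta\geq 1$) weighted conical $L^2$-norm of $\Phi$. The change-of-aperture Proposition \ref{prop:alpha}, used with parameters $r,\tilde r$ tuned to $v_0$, absorbs this dilation at the cost of a factor $(2\sqrt u)^{\pm \theta}$ times $\|\Scal_{2,\hh}^w f\|_{L^2(v_0 dw)}^2$. Putting all the pieces together and integrating in $u$ against $u^{-1/2}e^{-u}|\log(2\sqrt u)|(2\sqrt u)^{\pm\theta}du$ then yields the desired $L^2$ inequality.

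The main obstacle is the small-$u$ regime. There $s_+/s_-=1/(2\sqrt u)\to\infty$, so the linearized ball $B(x,t(x))$ is much smaller than the natural scale $s$ appearing in $\Phi(y,s)$, and Proposition \ref{prop:alpha} produces a factor $(2\sqrt u)^{-\theta}$ that must be integrable against $u^{-1/2}e^{-u}|\log u|du$ near $u=0$---equivalently, $\theta<1$. The access to the enlarged reverse-Hölder class $RH_{(p_+(L_w)^*_w/2)'}(w)\supsetneq RH_{(p_+(L_w)/2)'}(w)$, which is what powers the extrapolation with the larger endpoint $q_0=p_+(L_w)^*_w$, is precisely what grants enough freedom in the choice of $r,\tilde r$ in Proposition \ref{prop:alpha} to keep $\theta$ below this critical threshold and close the argument; this is the sense in which, as announced in the introduction, the improvement is "achieved by the use of extrapolation for Muckenhoupt weights."
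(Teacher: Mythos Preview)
Your overall plan---decompose $\Ncal_{\pp}^w \leq \Ncal_{\hh}^w + \widetilde{\Ncal}$, open $G$ via subordination, split the $u$-integral at $1/4$, and handle the large-$u$ piece by change of angle---is the same as the paper's. The gap is entirely in the small-$u$ regime.

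After linearizing the supremum with $t(x)$ and applying the fundamental theorem of calculus, the quantity you must control for $u<1/4$ is
\[
J_u=\int_{\R^n}\int_{t(x)}^{t(x)/(2\sqrt u)}\dashint_{B(x,t(x))}|\Phi(y,s)|^2\,dw(y)\,\frac{ds}{s}\,v_0(x)\,dw(x).
\]
You assert that the substitution $s=\beta t(x)$ followed by Fubini turns this into an ``aperture-$1/(2\sqrt u)$'' conical $L^2$-norm of $\Phi$. This is not correct: for each $s$ in the range one has $t(x)\in[2\sqrt u\,s,\,s]$, so the effective aperture $t(x)/s$ lies in $[2\sqrt u,1]$ and, crucially, depends on $x$. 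Neither Proposition~\ref{prop:alpha} nor Proposition~\ref{prop:Q} applies to such an object, since both require the ratio of ball radius to height to be a fixed constant independent of $x$. If instead one crudely enlarges $B(x,t(x))$ to $B(x,s)$ and uses doubling, the resulting factor is $(2\sqrt u)^{-n r_w}$, which is not integrable against $u^{-1/2}|\log u|\,du$ because $nr_w\ge 2$. Note also that if your claimed reduction held, Proposition~\ref{prop:alpha}(ii) would close the argument for \emph{every} $v_0\in A_\infty(w)$, yielding the range $\mathcal W_v^w(p_-(L_w),\infty)$---strictly stronger than the theorem---so the claim cannot hold as stated.

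The paper's remedy is to avoid linearizing the supremum. For small $u$ it first passes through the off-diagonal estimate $e^{-\tau L_w}\in\mathcal O(L^{p_0}(w)-L^2(w))$ with $p_0$ close to $p_-(L_w)$, then inserts a dummy average over $B(z,2\sqrt u\,s)$ via Fubini. This yields the \emph{pointwise} bound
\[
I_1(x)\lesssim\int_0^{1/4}u^{1/2}\bigl(\log u^{-1/2}\bigr)^{1/2}\,\mathcal M_{p_0}^w\bigl(\mathfrak S_{\hh}^{2\sqrt u,w}f\bigr)(x)\,\frac{du}{u},
\]
where $\mathfrak S_{\hh}^{2\sqrt u,w}$ is a genuine aperture-$2\sqrt u$ conical square function; the supremum over $t$ is absorbed by the maximal function. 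After the boundedness of $\mathcal M_{p_0}^w$ on $L^p(vdw)$ (this is where $v\in A_{p/p_-(L_w)}(w)$ enters), extrapolation is applied only to $\|\mathfrak S_{\hh}^{2\sqrt u,w}f\|_{L^p(vdw)}$: combining Proposition~\ref{prop:Q} with the off-diagonal estimate $e^{-\tau L_w}\in\mathcal O(L^2(w)-L^{q_0}(w))$ produces a factor $u^{n\widehat r(1/(4r)-1/(2q_0))}$, and Remark~\ref{remark:choicereverseholder} with $\widetilde k=1$ shows that the hypothesis $v\in RH_{(p_+(L_w)^*_w/p)'}(w)$ is exactly what allows choosing $\widehat r,r,q_0$ so that $\tfrac12+n\widehat r\bigl(\tfrac1{4r}-\tfrac1{2q_0}\bigr)>0$, making the $u$-integral converge.
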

\begin{proof}
First of all, fix $w$, $v$, $f$, and $p$ as in the statement of the theorem. Then, recalling the definition of $\Ncal_{\hh}^w$ in \eqref{nontangential}, note that
\begin{align}\label{sumpoissonnontangential1}
\Ncal_{\pp}^{w}f(x) \leq \Ncal_{\hh}^{w}f(x)
+
\sup_{t>0} \br{\fint_{B(x, t)} \abs{\br{e^{-t\sqrt {L_w}}-e^{-t^2L_w}}f(z)}^2 dw(z)}^{\frac{1}{2}}=:\Ncal_{\hh}^wf(x)+ \sup_{t>0}I.
\end{align}
Then, by the subordination formula in \eqref{subordinationformula} and Minkowski's integral inequality.
\begin{align}\label{nontangential2}
I&
\leq
\int_{0}^{{\frac{1}{4}}} u^{{\frac{1}{2}}} \br{\fint_{B(x,t)} \abs{\br{e^{-\frac{t^2}{4u} L_w}-e^{-t^2L_w}}f(z)}^2 dw(z)}^{\frac{1}{2}} \frac{du}{u}
\\\nonumber&\quad+
\int_{{\frac{1}{4}}}^{\infty} e^{-u} u^{{\frac{1}{2}}} \br{\fint_{B(x,t)} \abs{\br{e^{-\frac{t^2}{4u} L_w}-e^{-t^2L_w}}f(z)}^2 dw(z)}^{\frac{1}{2}} \frac{du}{u}
:=I_1+I_2.
\end{align}
Similarly as in the proof of \cite[Proposition 7.1, part $(b)$]{MaPAII17} (see also \cite{ChMPA18}), we have that
$$
I_2\lesssim \int_{\frac{1}{4}}^{\infty}e^{-cu}\Scal_{2,\hh}^{2\sqrt{u},w}f(x)\frac{du}{u}.
$$
Hence, by Minkowski's integral inequality and Proposition \ref{prop:alpha},
\begin{align}\label{nontangentialI2}
\|I_2\|_{L^p(vdw)}\lesssim \int_{{\frac{1}{4}}}^{\infty}  e^{-cu}\|\Scal_{2,\hh}^{2\sqrt u,w}f\|_{L^p(vdw)}\frac{du}{u}
\lesssim \|\Scal_{2,\hh}^{w}f\|_{L^p(vdw)}.
\end{align}
In order to estimate $I_1$, take $p_0$ such that $p_-(L_w)<p_0<\min\{2,p\}$ close enough to $p_-(L_w)$  so that $v\in A_{\frac{p}{p_0}}(w)$, and 
 note that the fact that $e^{-\tau L_w}\in \mathcal{O}(L^{p_0}(w)-L^{2}(w))$
implies
\begin{align*}
I_1
&
\lesssim  
\sum_{j\geq 1} e^{-c4^j} \int_{0}^{{\frac{1}{4}}} u^{\frac{1}{2}}  \br{\fint_{B(x,2^{j+1} t)} 
\abs{\br{e^{-(\frac{1}{4u}-\frac{1}{2}) t^2 L_w}-e^{-\frac{t^2}{2} L_w}}f(z)}^{p_0}  dw(z)}^{\frac{1}{p_0}} \frac{du}{u}.
\end{align*}
Besides, for $0<u<\frac{1}{4}$, it holds from  H\"older's inequality that
\begin{multline*}
\abs{\br{e^{-(\frac{1}{4u}-\frac{1}{2}) t^2 L_w}-e^{-\frac{t^2}{2} L_w}}f(z)} 
\leq 
\int_{\frac{t}{\sqrt 2}}^{t\sqrt{\frac{1}{4u}-\frac{1}{2}}} \abs{\partial_s e^{-s^2 L_w} f(z)} ds
\\ 
\lesssim
\int_{\frac{t}{\sqrt 2}}^{t\sqrt{\frac{1}{4u}-\frac{1}{2}}} \abs{s^2 L_w e^{-s^2 L_w} f(z)} \frac{ds}{s}
\lesssim
\br{\int_{\frac{t}{\sqrt 2}}^{\frac{t}{2\sqrt{u}}}\abs{s^2 L_w e^{-s^2 L_w} f(z)}^{p_0} \frac{ds}{s}}^{\frac{1}{p_0}}  (\log u^{-\frac{1}{2}})^{\frac{1}{p_0'}}.
\end{multline*}
Therefore,
\begin{align*}
I_1
&
\lesssim  
\sum_{j\geq 1} e^{-c4^j} \int_{0}^{{\frac{1}{4}}} u^{\frac{1}{2}} (\log u^{-\frac{1}{2}})^{\frac{1}{p_0'}} \br{\fint_{B(x,2^{j+1} t)} 
\int_{\frac{t}{\sqrt 2}}^{\frac{t}{2\sqrt{u}}} \abs{s^2 L_w e^{-s^2 L_w} f(z)}^{p_0} \frac{ds}{s}  dw(z)}^{\frac{1}{p_0}} \frac{du}{u}.
\end{align*}
Note now  that, for $\frac{t}{\sqrt{2}}<s<\frac{t}{2\sqrt{u}}$, 
\begin{multline*}\big\{(z,y)\in \R^{n}\times\R^n\!:z\in B(x,2^{j+1}t),\,y\in B(z,2\sqrt{u}\,s)\big\}\\\subset\big\{(z,y)\in \R^{n}\times\R^n\!:y\in B(x,2^{j+2}t),\,z\in B(y,2\sqrt{u}\,s)\big\},
\end{multline*}
and for $z\in B(y,2\sqrt{u} s)$ it holds that $ B(y,2\sqrt{u} s)\subset  B(z,4\sqrt{u} s)$. Then,  Fubini's theorem and \eqref{doublingcondition} imply
\begin{multline*}
\dashint_{B(x,2^{j+1} t)} 
\int_{\frac{t}{\sqrt 2}}^{\frac{t}{2\sqrt{u}}} \abs{s^2 L_w e^{-s^2 L_w} f(z)}^{p_0} \frac{ds}{s}  dw(z)
=
\int_{\frac{t}{\sqrt 2}}^{\frac{t}{2\sqrt{u}}}\fint_{B(x,2^{j+1} t)} 
 \abs{s^2 L_w e^{-s^2 L_w} f(z)}^{p_0} dw(z) \frac{ds}{s} 
 \\
=
\int_{\frac{t}{\sqrt 2}}^{\frac{t}{2\sqrt{u}}}\fint_{B(x,2^{j+1} t)} \dashint_{B(z,2\sqrt{u}s)}dw(y)
 \abs{s^2 L_w e^{-s^2 L_w} f(z)}^{p_0} dw(z) \frac{ds}{s} 
 \\
 \lesssim
 \int_{\frac{t}{\sqrt 2}}^{\frac{t}{2\sqrt{u}}}\dashint_{B(x,2^{j+2} t)} \dashint_{B(y,2\sqrt{u}s)}
 \abs{s^2 L_w e^{-s^2 L_w} f(z)}^{p_0}dw(z)dw(y)\frac{ds}{s}
 =:II.
  \end{multline*}
  Next, apply Fubini's theorem, \eqref{doublingcondition}, Jensen's inequality in the integral in $z$ ($2>p_0$), and H\"older's inequality in the integral in $s$ with $2/p_0$ and $(2/p_0)'$,  and again \eqref{doublingcondition}. Then,
 \begin{align*}
II
&\lesssim \dashint_{B(x,2^{j+2} t)}\int_{\frac{t}{\sqrt 2}}^{\frac{t}{2\sqrt{u}}} \dashint_{B(y,2\sqrt{u}s)}
 \abs{s^2 L_w e^{-s^2 L_w} f(z)}^{p_0} dw(z)\frac{ds}{s} dw(y)
 \\&
\leq
 \dashint_{B(x,2^{j+2} t)}\int_{\frac{t}{\sqrt 2}}^{\frac{t}{2\sqrt{u}}}\left( \dashint_{B(y,2\sqrt{u}s)}
 \abs{s^2 L_w e^{-s^2 L_w} f(z)}^{2}dw(z) \right)^{\frac{p_0}{2}}\frac{ds}{s} dw(y)
 \\&
\lesssim
 \dashint_{B(x,2^{j+2} t)}\left(\int_{\frac{t}{\sqrt 2}}^{\frac{t}{2\sqrt{u}}} \dashint_{B(y,2\sqrt{u}s)}
 \abs{s^2 L_w e^{-s^2 L_w} f(z)}^{2}dw(z)\frac{ds}{s}\right)^{\frac{p_0}{2}} dw(y)\left(\log u^{-\frac{1}{2}}\right)^{\frac{2-p_0}{2}}
 \\&
\lesssim
 \dashint_{B(x,2^{j+2} t)}\left(\int_{0}^{\infty} \int_{B(y,2\sqrt{u}s)}
 \abs{s^2 L_w e^{-s^2 L_w} f(z)}^{2} \frac{dw(z)\,ds}{sw(B(z,2\sqrt{u}s))}\right)^{\frac{p_0}{2}} dw(y)\left(\log u^{-\frac{1}{2}}\right)^{\frac{2-p_0}{2}}.
\end{align*}
Consequently,
\begin{align*}
I_1\lesssim \int_0^{\frac{1}{4}}u^{\frac{1}{2}}
\left(\log u^{-\frac{1}{2}}\right)^{\frac{1}{2}}\mathcal{M}_{p_0}^w\left(\mathfrak{S}_{\hh}^{2\sqrt{u},w}f\right)(x)\frac{du}{u},
\end{align*}
where 
$$
\mathfrak{S}_{\hh}^{2\sqrt{u},w}f(y):=
\left(\int_{0}^{\infty} \int_{B(y,2\sqrt{u}s)}
 \abs{s^2 L_w e^{-s^2 L_w} f(z)}^{2} \frac{dw(z)\,ds}{sw(B(z,2\sqrt{u}s))}\right)^{\frac{1}{2}}.
$$
Note now that since $v\in A_{\frac{p}{p_0}}(w)$ the maximal operator $\mathcal{M}_{p_0}^w$ is bounded on $L^p(vdw)$. Hence, by Minkowski's integral inequality
\begin{align}\label{LpI}
\|I_1\|_{L^p(vdw)}\lesssim
\int_0^{\frac{1}{4}}u^{\frac{1}{2}}
\left(\log u^{-\frac{1}{2}}\right)^{\frac{1}{2}}
\left\|\mathfrak{S}_{\hh}^{2\sqrt{u},w}f\right\|_{L^p(vdw)}\frac{du}{u}.
\end{align}

In order to estimate the norm on $L^p(vdw)$ of  $\mathfrak{S}_{\hh}^{2\sqrt{u},w}f$ we shall proceed by extrapolation. 
To this end, note that for $2<q_0<p_+(L_w)$, $r_w<\widehat{r}<2$ and $r\geq q_0/2$, for all $v_0\in RH_{r'}(w)$, by Jensen's inequality, Fubini's theorem,  Proposition \ref{prop:Q} with $\beta=2\sqrt{u}<1$  and $q=q_0/2$, and \eqref{doublingcondition}, we obtain
\begin{align*}
\left(\int_{\R^n}\right.&\left.|\mathfrak{S}_{\hh}^{2\sqrt{u},w}f(y)|^2v_0(y)dw(y)\right)^{\frac{1}{2}}
\\&
\leq
\left(\int_{0}^{\infty} \int_{\R^n}\left(\int_{B(y,2\sqrt{u}s)}
 \abs{s^2 L_w e^{-s^2 L_w} f(z)}^{q_0} \frac{dw(z)}{w(B(z,2\sqrt{u}s))}\right)^{\frac{2}{q_0}}v_0(y)dw(y)\frac{ds}{s}
 \right)^{\frac{1}{2}}
 \\&
 \leq u^{n\widehat{r}\left(\frac{1}{4r}-\frac{1}{2q_0}\right)}\left(
\int_{0}^{\infty}  \int_{\R^n}\left(\dashint_{B(y,s)}
 \abs{e^{-\frac{s^2}{2} L_w}s^2 L_w e^{-\frac{s^2}{2}L_w} f(z)}^{q_0} dw(z)\right)^{\frac{2}{q_0}}v_0(y)dw(y)\frac{ds}{s}\right)^{\frac{1}{2}}=:III.
  \end{align*}
 Now, apply the fact that $e^{-\tau L_w}\in \mathcal{O}(L^2(w)-L^{q_0}(w))$, change the variable $s$ into $\sqrt{2}s$, and apply Proposition \ref{prop:alpha} to get
\begin{align*}
 III&
 \lesssim \sum_{j\geq 1}e^{-c4^j} u^{n\widehat{r}\left(\frac{1}{4r}-\frac{1}{2q_0}\right)}
  \left(\int_{\R^n}\int_{0}^{\infty}\dashint_{B(y,2^{j+2}s)}
 \abs{s^2 L_w e^{-s^2 L_w} f(z)}^{2} dw(z)\frac{ds}{s}v_0(y)dw(y)\right)^{\frac{1}{2}}
  \\&
 \lesssim \sum_{j\geq 1}e^{-c4^j}2^{j\theta_{v_0,w}} u^{n\widehat{r}\left(\frac{1}{4r}-\frac{1}{2q_0}\right)}
  \left(\int_{\R^n}|\Scal_{2,\hh}^wf(y)|^2v_0(y)dw(y)\right)^{\frac{1}{2}}
 \\&
 \lesssim 
 \left(\int_{\R^n}|u^{c(r,q_0)}\Scal_{2,\hh}^wf(y)|^2v_0(y)dw(y)\right)^{\frac{1}{2}},
\end{align*}
where $c(r,q_0):=n\widehat{r}\left(\frac{1}{4r}-\frac{1}{2q_0}\right)$.

Hence,   by Theorem \ref{theor:extrapol}, part $(b)$, we have
\begin{align}\label{hipotesisextrapolation}
\int_{\R^n}&|\mathfrak{S}_{\hh}^{2\sqrt{u},w}f(y)|^{\frac{2r}{\widetilde{r}}}\widetilde{v}(y)dw(y)
 \lesssim 
 \int_{\R^n}|u^{c(r,q_0)}\Scal_{2,\hh}^wf(y)|^{\frac{2r}{\widetilde{r}}}\widetilde{v}(y)dw(y),
\end{align}
for all  $r_w<\widehat{r}<2$, $2<q_0<p_+(L_w)$, $r\geq q_0/2$, $1<\widetilde{r}<\infty$,
 and $\widetilde{v}\in RH_{\widetilde{r}'}(w)$.
Moreover,  for $v\in RH_{\left(\frac{p_+(L_w)_w^*}{p}\right)'}(w)$,  Remark \eqref{remark:choicereverseholder} with $q_1=p$, $\widetilde{k}=1$, and $\tilde{q}=p_+(L_w)$, implies that there exist 
$r_w<\widehat{r}<2$, $2<q_0<p_+(L_w)$, $r\geq q_0/2$, and $1<\widetilde{r}=2r/p<\infty$ so that
 ${v}\in RH_{\widetilde{r}'}(w)$, and 
 \begin{align}\label{positive}
1+\frac{n\widehat{r}}{2r}-\frac{n\widehat{r}}{q_0}>0.
\end{align}
Consequently, by \eqref{hipotesisextrapolation} we conclude that given $v\in RH_{\left(\frac{p_+(L_w)_w^*}{p}\right)'}(w)$, there exist $r_w<\widehat{r}<2$, $2<q_0<p_+(L_w)$, $r\geq q_0/2$ such that  \eqref{positive} is satisfied and
\begin{align*}
\int_{\R^n}&|\mathfrak{S}_{\hh}^{2\sqrt{u},w}f(y)|^{p}{v}(y)dw(y)
 \lesssim 
 \int_{\R^n}|u^{c(r,q_0)}\Scal_{2,\hh}^wf(y)|^{p}{v}(y)dw(y).
\end{align*}
Plugging this into \eqref{LpI} allows us to obtain
\begin{align*}
\|I_1\|_{L^p(vdw)}
\lesssim \int_0^{\frac{1}{4}}u^{\frac{1}{2}+n\widehat{r}\left(\frac{1}{4r}-\frac{1}{2q_0}\right)}
\left(\log u^{-\frac{1}{2}}\right)^{\frac{1}{2}}
\frac{du}{u}\left\|\Scal_{2,\hh}^{w}f\right\|_{L^p(vdw)}. 
\end{align*}
Note now that in view of \eqref{positive}, we can take  $M\in \N$ large enough so that 
$$
\frac{1}{2}+n\widehat{r}\left(\frac{1}{4r}-\frac{1}{2q_0}\right)-\frac{1}{4 M}>0.
$$
Therefore, 
\begin{multline*}
\|I_1\|_{L^p(vdw)}
\lesssim
\int_0^{\frac{1}{4}}u^{\frac{1}{2}+n\widehat{r}\left(\frac{1}{4r}-\frac{1}{2q_0}\right)}
\left(M \log u^{-\frac{1}{2M}}\right)^{\frac{1}{2}}
\frac{du}{u}\left\|\Scal_{2,\hh}^{w}f\right\|_{L^p(vdw)}
\\
\lesssim
\int_0^{\frac{1}{4}}u^{\frac{1}{2}+n\widehat{r}\left(\frac{1}{4r}-\frac{1}{2q_0}\right)-\frac{1}{4M}}
\frac{du}{u}\left\|\Scal_{2,\hh}^{w}f\right\|_{L^p(vdw)}
\lesssim
\left\|\Scal_{2,\hh}^{w}f\right\|_{L^p(vdw)}.
\end{multline*}

\medskip

This \eqref{sumpoissonnontangential1}, \eqref{nontangential2}, and \eqref{nontangentialI2} imply 
\begin{align*}
\norm{\Ncal_{\pp}^{w}f}_{L^p(vdw)} 
&
\lesssim 
\norm{\Ncal_{\hh}^{w} f}_{L^p(vdw)}
+
\norm{\Scal_{2,\hh}^{w}f}_{L^p(vdw)}.
\end{align*} 
\end{proof}
The boundedness of the non-tangential maximal function associated with the heat semigroup, $\Ncal_{\hh}^w$, is proved in \cite{ChMPA18} in the weighted degenerate case. The proof is easy and consist in controlling the norm on $L^p(vdw)$ of $\Ncal_{\hh}^w$ by that of an adequate Hardy-Littlewood maximal operator. This is the argument used in  \cite[Proposition 7.1]{MaPAII17} in the weighted non-degenerate case, see also \cite[Section 6]{HMay09}.
\begin{theorem}{\cite{ChMPA18}}\label{thm:boundednessnontangentialheat}
Given $w\in A_2$ and $v\in A_{\infty}(w)$, the non-tangential maximal function $\Ncal_{\hh}^w$ can be extended to a bounded operator on $L^p(vdw)$, for all $p\in \mathcal{W}_v^w(p_-(L_w),\infty)$.
\end{theorem}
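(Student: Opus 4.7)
My planned approach is to pointwise dominate $\Ncal_{\hh}^wf$ by a weighted Hardy-Littlewood maximal operator of $|f|^{p_0}$ for an appropriate exponent $p_0\in (p_-(L_w),2)$, and then invoke the $L^{p/p_0}(vdw)$-boundedness of $\Mcal^w$. This mirrors the strategy of \cite[Proposition 7.1]{MaPAII17} in the weighted non-degenerate case and is precisely what the remark preceding the theorem promises. Notably, neither extrapolation nor change-of-angle is required.

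Fix $p\in \mathcal{W}_v^w(p_-(L_w),\infty)$, which by \eqref{intervalrsw} means $v\in A_{p/p_-(L_w)}(w)$. Self-improvement of the weighted Muckenhoupt classes furnishes some $p_0\in (p_-(L_w),\min\{2,p\})$ with $v\in A_{p/p_0}(w)$. Since $p_-(L_w)<p_0\leq 2<p_+(L_w)$, Lemma \ref{lem:ODweighted} (a) gives $e^{-tL_w}\in \mathcal{O}(L^{p_0}(w)-L^2(w))$. For $(x,t)\in \R^{n+1}_+$, set $B:=B(x,t)$ and decompose $f=\sum_{j\geq 1}f\chi_{C_j(B)}$. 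I would handle each piece as follows: for $j=1$, first enlarge the integration domain from $B$ to $4B$ via \eqref{doublingcondition} and then apply \eqref{off:BtoB} to the ball $4B$; for $j\geq 2$, apply \eqref{off:CtoB} directly, together with \eqref{doublingcondition} to replace the average over $B$ by the average over $2^{j+1}B$. Because the semigroup is evaluated at time $t^2$ against balls of radii $t$ or $2^{j+1}t$, the $\Upsilon$-factor in \eqref{off:BtoB}-\eqref{off:CtoB} is only polynomial in $2^j$, which is more than absorbed by the Gaussian decay $e^{-c4^j}$. Summing the series then yields
\begin{equation*}
\left(\dashint_B |e^{-t^2L_w}f|^2\,dw\right)^{1/2}
\lesssim \sum_{j\geq 1} 2^{j\theta}\,e^{-c4^j}\left(\dashint_{2^{j+1}B}|f|^{p_0}\,dw\right)^{1/p_0}
\lesssim \bigl(\Mcal^w(|f|^{p_0})(x)\bigr)^{1/p_0},
\end{equation*}
with a constant independent of $t$. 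Taking the supremum in $t>0$ and then the $L^p(vdw)$-norm, and using that $\Mcal^w$ is bounded on $L^{p/p_0}(vdw)$ (because $v\in A_{p/p_0}(w)$ with $p/p_0>1$), produces $\|\Ncal_{\hh}^w f\|_{L^p(vdw)}\lesssim \|f\|_{L^p(vdw)}$ for $f$ in a dense class, and the extension to $L^p(vdw)$ is then standard.

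The only delicate point in the whole argument is the simultaneous selection of $p_0$ so that both the off-diagonal estimate $e^{-tL_w}\in \mathcal{O}(L^{p_0}(w)-L^2(w))$ and the Muckenhoupt condition $v\in A_{p/p_0}(w)$ hold at once; this is entirely routine thanks to the openness of $(p_-(L_w),p_+(L_w))$ and the self-improvement of the $A_p(w)$ classes. Everything else reduces to bookkeeping with annular decompositions and doubling.
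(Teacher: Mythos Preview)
Your proposal is correct and matches exactly the approach the paper indicates: the paper does not include a proof but, immediately before the statement, explains that the argument consists of controlling $\Ncal_{\hh}^w$ pointwise by a weighted Hardy--Littlewood maximal function via off-diagonal estimates, precisely as in \cite[Proposition 7.1]{MaPAII17} and \cite[Section 6]{HMay09}. Your choice of $p_0$, the annular decomposition, and the appeal to $\Mcal^w$-boundedness on $L^{p/p_0}(vdw)$ are all standard and correct.
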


\medskip

To conclude this section we observe some trivial norm comparison results between vertical square functions that will make  the proof of Theorem \ref{thm:boundednessverticalpoisson} easier.
%
%
\begin{remark}\label{rmk:moreconvenient}
Given $w\in A_2$ and $v\in A_{\infty}(w)$,
as a consequence of the boundedness of the Riesz transform $\nabla L_w^{-{\frac{1}{2}}}$ (see \cite{CMR15} and also \cite{Au07,AMIII06}), for every $m\in \N$ and $f\in L^2(w)$, we have that, for all $\mathcal{W}_v^w(q_-(L_w),q_+(L_w))$,
\begin{align}\label{verticalverticalnorm}
\|\mathsf{g}_{m-1,\hh}^wf\|_{L^p(vdw)}\lesssim
\|\mathsf{s}_{m,\hh}^wf\|_{L^p(vdw)}\quad \textrm{and}\quad\|\mathsf{g}_{m-1,\pp}^wf\|_{L^p(vdw)}\lesssim
\|\mathsf{s}_{m,\pp}^wf\|_{L^p(vdw)}.
\end{align}
We prove the above inequalities by extrapolation. In particular, by Theorem \ref{theor:extrapol}, part $(e)$, it is enough to prove that
\begin{align*}
\|\mathsf{g}_{m-1,\hh}^wf\|_{L^2(v_0dw)}\lesssim
\|\mathsf{s}_{m,\hh}^wf\|_{L^2(v_0dw)}\quad \textrm{and}\quad\|\mathsf{g}_{m-1,\pp}^wf\|_{L^2(v_0dw)}\lesssim
\|\mathsf{s}_{m,\pp}^wf\|_{L^2(v_0dw)},
\end{align*}
for all $v_0\in A_{\frac{2}{q_-(L_w)}}(w)\cap RH_{\left(\frac{q_+(L_w)}{2}\right)'}(w)$.
This follows  applying Fubini's theorem and the boundedness of the Riesz transform on $L^2(v_0dw)$. Indeed, letting $F(x,t)$ be equal to $e^{-t^2L_w}f(x)$ or $e^{-t\sqrt{L_w}}f(x)$, we get
\begin{multline*}
\int_{\R^n}\int_0^{\infty}|t\nabla (t\sqrt{L_w})^{m-1}F(x,t)|^2\frac{dt}{t}v_0(x)dw(x)
=
\int_0^{\infty}\int_{\R^n}|\nabla L_w^{-\frac{1}{2}} (t\sqrt{L_w})^{m}F(x,t)|^2v_0(x)dw(x)\frac{dt}{t}
\\
\lesssim
\int_{\R^n}
\int_0^{\infty}|(t\sqrt{L_w})^{m}F(x,t)|^2\frac{dt}{t}v_0(x)dw(x).
\end{multline*}

Besides, as it has been observed in several papers before (see, for instance, \cite{AHM12,HMay09}),  by applying \eqref{subordinationformula}, Minkownki's integral inequality, and  the change of  variable $t$ into $2\sqrt{u}t$, we also have that
\begin{align}\label{verticalpoissonverticalheat}
\mathsf{s}_{m,\pp}^wf(x)
\lesssim \int_0^{\infty}u^{\frac{1+m}{2}}e^{-u}\frac{du}{u}\mathsf{s}_{m,\hh}^wf(x)
\lesssim
\mathsf{s}_{m,\hh}^wf(x),\quad x\in \R^n.
\end{align}
\end{remark}
\begin{remark}\label{remark:moreconvenient2}
Note that \eqref{verticalverticalnorm} and Theorem \ref{thm:conical-verticalnon-gradientdegenerate} parts $(a)$ and $(c)$ imply that, for all $K\in \N_0$, $w\in A_2$, and $p\in \mathcal{W}_v^w(q_-(L_w),q_+(L_w))$,
\begin{align*}
\|\mathsf{g}_{K,\hh}^wf\|_{L^p(vdw)}\lesssim
\|\Scal_{K+1,\hh}^wf\|_{L^p(vdw)}\quad \textrm{and}\quad\|\mathsf{g}_{K,\pp}^wf\|_{L^p(vdw)}\lesssim
\|\Scal_{K+1,\pp}^wf\|_{L^p(vdw)}.
\end{align*}
\end{remark}

\section{Proof of the main results}\label{sec:proofs}
\subsection{Norm comparison for vertical and conical square functions}
\subsubsection{Proof of Theorem \ref{thm:conical-verticalnon-gradientdegenerate}, parts $(a)$ and $(b)$.}
Fix $w\in A_2$, $f\in L^2(w)$, and $m\in \N$. 
In order to prove parts $(a)$ and $(b)$ note that for $T_t=e^{-tL_w}$, $F(y,t)=(t\sqrt{L_w})^{m}f(y)$, and any $p_-(L_w)<p_0<2<q_0<p_+(L_w)$, the conditions $(i)-(iii)$ in Proposition \ref{prop:comparison-general} are satisfied. Then, given $0<p<p_+(L_w)$ and $v\in RH_{\left(\frac{p_+(L_w)}{p}\right)'}(w)$, since we can always find $\max\{2,p\}<q_0<p_+(L_w)$ close enough to $p_+(L_w)$ so that $v\in RH_{\left(\frac{q_0}{p}\right)'}(w)$, in view of \eqref{intervalrsw},  Proposition \ref{prop:comparison-general}, part $(b)$, implies
$$
\|\mathsf{s}_{m,\hh}^wf\|_{L^p(vdw)}\lesssim 
\|\Scal_{m,\hh}^wf\|_{L^p(vdw)}.
$$
On the other hand,  given $p_-(L_w)<p<\infty$ and $v\in A_{\frac{p}{p_-(L_w)}}(w)$, since we can always find $p_-(L_w)<p_0<\min\{2,p\}$ close enough to $p_-(L_w)$ so that $v\in A_{\frac{p}{p_0}}(w)$,  in view of \eqref{intervalrsw},  Proposition \ref{prop:comparison-general},  part $(a)$, implies
$$
\|\Scal_{m,\hh}^wf\|_{L^p(vdw)}\lesssim 
\|\mathsf{s}_{m,\hh}^wf\|_{L^p(vdw)}.
$$\qed
\subsubsection{Proof of Theorem \ref{thm:conical-verticalnon-gradientdegenerate}, part $(c)$}
Fix $w\in A_2$, $f\in L^2(w)$, and $m\in \N$.
We shall proceed by extrapolation. In particular, note that by Theorem \ref{theor:extrapol}, part $(d)$ (or part $(a)$ if $p_+(L_w)=\infty$) it is enough to show that
 \begin{align}\label{extrapolationpoissonverticalconnical}
 \|\mathsf{s}_{m,\pp}^wf\|_{L^2(v_0dw)}\lesssim\|\Scal_{m,\pp}^wf\|_{L^2(v_0dw)}, \quad \forall v_0\in A_{\frac{2}{p_-(L_w)}}(w)\cap RH_{\left(\frac{p_+(L_w)}{2}\right)'}(w).
 \end{align}
In order to prove this inequality, first of all note that since $v_0\in A_{\frac{2}{p_-(L_w)}}(w)\cap  RH_{\left(\frac{p_+(L_w)}{2}\right)'}(w)$ we can take $p_-(L_w)<p_0<2<q_0<p_+(L_w)$ so close to $p_-(L_w)$ and $p_+(L_w)$  so that $v_0\in  A_{\frac{2}{p_0}}(w)\cap RH_{\left(\frac{q_0}{2}\right)'}(w)$.
Keeping this choice of $p_0$ and $q_0$,
change the variable $t$ into $2t$ and apply the subordination formula \eqref{subordinationformula} to get
\begin{multline*}
\|\mathsf{s}_{m,\pp}^wf\|_{L^2(v_0dw)}
\lesssim
\left(\int_{\R^n}\int_0^{\infty}|(t\sqrt{L_w})^me^{-t\sqrt{L_w}}e^{-t\sqrt{L_w}}f(y)|^2\frac{dt}{t}v_0(y)dw(y)\right)^{\frac{1}{2}}
\\
\lesssim
\int_{0}^{\frac{1}{4}}u^{\frac{1}{2}}I(u)\frac{du}{u}
 +
\int_{\frac{1}{4}}^{\infty}u^{\frac{1}{2}}e^{-u}
I(u)\frac{du}{u}=:I+II,
\end{multline*}
where  in the second inequality we have  used  Minkowski's integral inequality and 
$$
I(u):=
\left(\int_{\R^n}\int_0^{\infty}|e^{-\frac{t^2}{4u}L_w}(t\sqrt{L_w})^me^{-t\sqrt{L_w}}f(y)|^2\frac{dt}{t}v_0(y)dw(y)\right)^{\frac{1}{2}}.
$$
Moreover, for each $u>0$, recalling our choice of $q_0$, the inequality \eqref{verticalsinRH}, with $T_{t^2}=e^{-\frac{t^2}{4u}L_w}$ and $F(y,t)=(t\sqrt{L_w})^me^{-t\sqrt{L_w}}f(y)$, and \eqref{doublingcondition} yield
\begin{align}\label{generalsubordinationpoison}
I(u)\lesssim
\left(\int_{\R^n}\int_0^{\infty}\left(\int_{B(x,t)}|e^{-\frac{t^2}{4u}L_w}(t\sqrt{L_w})^me^{-t\sqrt{L_w}}f(y)|^{q_0}\frac{dw(y)\,dt}{tw(B(y,t))}\right)^{\frac{2}{q_0}}
v_0(x)dw(x)\right)^{\frac{1}{2}}.
\end{align}
Hence, by Lemma \ref{lemma:poissonmaximal} with $\mathcal{T}_{\tau}=e^{-\tau L_w}$, $F(y,t)=(t\sqrt{L_w})^me^{-t\sqrt{L_w}}f(y)$, $\alpha=1$, and $r=q_0/2$
\begin{align}\label{Iverticalconicalpoisson}
I\lesssim \int_{0}^{\frac{1}{4}}u^{\frac{1}{2}}I(u)\frac{du}{u}\lesssim
\int_{0}^{\frac{1}{4}}u^{\frac{1}{2}}\frac{du}{u}\|\Scal_{m,\pp}^wf\|_{L^2(v_0dw)}\lesssim\|\Scal_{m,\pp}^wf\|_{L^2(v_0dw)}.
\end{align}

Finally,  \eqref{generalsubordinationpoison}, Fubini's theorem, the fact that $e^{-\tau L_w}\in \mathcal{O}(L^2(w)-L^{q_0}(w))$,  \eqref{doublingcondition}, and Proposition \ref{prop:alpha} imply
\begin{multline*}
II
\lesssim\sum_{j\geq 1}e^{-c4^j}
\int_{\frac{1}{4}}^{\infty}u^{\theta}e^{-u}\frac{du}{u}\left(\int_{\R^n}\int_0^{\infty}\int_{B(x,2^{j+1}t)}|(t\sqrt{L})^me^{-t\sqrt{L}}f(y)|^{2}\frac{dw(y)dt}{tw(B(y,t))}
v_0(x)dw(x)\right)^{\frac{1}{2}}
\\
\lesssim\sum_{j\geq 1}e^{-c4^j}2^{j\theta_{v_0,w}}\|\Scal_{m,\pp}^wf\|_{L^2(v_0dw)}
\lesssim\|\Scal_{m,\pp}^wf\|_{L^2(v_0dw)}.
\end{multline*}
This and \eqref{Iverticalconicalpoisson} imply \eqref{extrapolationpoissonverticalconnical} and the proof is complete.
\qed
\subsubsection{Proof of Theorem \ref{thm:conical-verticalnon-gradientdegenerate}, part $(d)$}
Fix $w\in A_2$, $f\in L^2(w)$, and $m\in \N$.
We shall use extrapolation to prove this result. In particular, by Theorem \ref{theor:extrapol}, part $(d)$ (or part $(a)$ if $p_+(L_w)^*_w=\infty$) it is enough to show that 
 \begin{align}\label{extrapolationpoissonconnicalvertical}
\|\Scal_{m,\pp}^wf\|_{L^2(v_0dw)}\lesssim\|\mathsf{s}_{m,\pp}^wf\|_{L^2(v_0dw)},\quad \forall v_0\in A_{\frac{2}{p_-(L_w)}}(w)\cap RH_{\left(\frac{p_+(L_w)_w^*}{2}\right)'}(w).
 \end{align}
To prove this inequality,
first of all note that since $v_0\in A_{\frac{2}{p_-(L_w)}}(w)\cap RH_{\left(\frac{p_+(L_w)^*_w}{2}\right)'}(w)$ by Remark \ref{remark:choicereverseholder} (with $\widetilde{p}=p_-(L_w)$, $\widetilde{q}=p_+(L_w)$, $q_1=2$, and $\widetilde{k}=1$), we can find $p_0, \widehat{r}, r, q_0$ such that $r_w<\widehat{r}<2$, $1<\frac{q_0}{2}\leq r<\infty$, $v_0\in A_{\frac{2}{p_0}}(w)\cap RH_{r'}(w)$, and
\begin{align}\label{positivecomparisonpoisson}
1+\widehat{r}n\left(\frac{1}{2r}-\frac{1}{q_0}\right)>0.
\end{align}

 Changing the variable $t$ into $2t$ and applying the subordination formula \eqref{subordinationformula}, and Minkowski's integral inequality, we get
\begin{align*}
\|\Scal_{m,\pp}^wf\|_{L^2(v_0dw)}
\lesssim \int_0^{\frac{1}{4}}u^{\frac{1}{2}}
I(u)\frac{du}{u}
+
\int_{\frac{1}{4}}^{\infty}u^{\frac{1}{2}}e^{-u}
I(u)\frac{du}{u}=:I+II,
\end{align*}
where 
$$
I(u):=\left(\int_{\R^n}\int_{0}^{\infty}\int_{B(x,2t)}|e^{-\frac{t^2}{4u}L_w}(t\sqrt{L_w})^me^{-t\sqrt{L_w}}f(y)|^2\frac{dw(y)\,dt}{tw(B(y,2t))}v_0(x)dw(x)\right)^{\frac{1}{2}}.
$$
For $0<u<1/4$, since $v_0\in A_{\frac{2}{p_0}}(w)\cap RH_{r'}(w)$ with $r\geq q_0/2$, Lemma \ref{lemma:poissonmaximal} with  $\mathcal{T}_{\tau}=e^{-\tau L_w}$, $F(y,t)=(t\sqrt{L_w})^me^{-t\sqrt{L_w}}f(y)$, and $\alpha=2$ implies
\begin{align*}
I(u)
\lesssim u^{n\widehat{r}\left(\frac{1}{4r}-\frac{1}{2q_0}\right)}
\left(\int_{\R^n}\int_0^{\infty}|(t\sqrt{L_w})^me^{-t\sqrt{L_w}}f(y)|^{2}\frac{dt}{t}v_0(y)dw(y)\right)^{\frac{1}{2}}=
u^{n\widehat{r}\left(\frac{1}{4r}-\frac{1}{2q_0}\right)}
\|\mathsf{s}_{m,\pp}^wf\|_{L^2(v_0dw)}.
\end{align*}
Therefore, \eqref{positivecomparisonpoisson} yields
\begin{align*}
I\lesssim \int_{0}^{\frac{1}{4}}u^{\frac{1}{2}+{n\widehat{r}}\left(\frac{1}{4r}-\frac{1}{2q_0}\right)}\frac{du}{u}
\|\mathsf{s}_{m,\pp}^wf\|_{L^2(v_0dw)}\lesssim 
\|\mathsf{s}_{m,\pp}^wf\|_{L^2(v_0dw)}.
\end{align*}

To estimate $II$ apply \eqref{doublingcondition}, the fact that $e^{-\tau L_w}\in \mathcal{O}(L^{p_0}(w)-L^2(w))$, \eqref{Asinpesoconpeso} (recall that $v_0\in A_{\frac{2}{p_0}}$), and Fubini's theorem 
to conclude
\begin{align*}
II&\lesssim\sum_{j\geq 1}e^{-c4^j}\!\!
\int_{\frac{1}{4}}^{\infty}\!\!u^{\theta}e^{-u}\frac{du}{u}\left(\int_{\R^n}\int_0^{\infty}\!\!\left(\dashint_{B(x,2^{j+2}t)}\!\!|(t\sqrt{L_w})^me^{-t\sqrt{L_w}}f(y)|^{p_0}dw(y)\right)^{\!\!\frac{2}{p_0}}\frac{dt}{t}v_0(x)dw(x)\right)^{\!\!\frac{1}{2}}
\\
&\lesssim\sum_{j\geq 1}e^{-c4^j}
\left(\int_{\R^n}\int_0^{\infty}\dashint_{B(x,2^{j+2}t)}|(t\sqrt{L_w})^me^{-t\sqrt{L_w}}f(y)|^{2}d(v_0w)(y)\frac{dt}{t}v_0(x)dw(x)\right)^{\frac{1}{2}}
\\
&\lesssim\sum_{j\geq 1}e^{-c4^j}
\left(\int_{\R^n}\int_0^{\infty}|(t\sqrt{L_w})^me^{-t\sqrt{L_w}}f(y)|^{2}\dashint_{B(y,2^{j+2}t)}d(v_0w)(x)\frac{dt}{t}v_0(y)dw(y)\right)^{\frac{1}{2}}
\\&
\lesssim 
\|\mathsf{s}_{m,\pp}^wf\|_{L^2(v_0dw)}.
\end{align*}
\qed
\subsubsection{Proof of Theorem \ref{thm:verticalconicalgegenerategradient}, parts $(a)$ and $(b)$}Fix $w\in A_2$, $f\in L^2(w)$, and $K\in \N_0$. Following the notation in Proposition \ref{prop:comparison-general} consider ${T}_t=\nabla e^{-t^2L_w}$ and $F(y,t)=t(t\sqrt{L_w})^mf(y)$. Note that $F$ satisfies condition $(i)$ in that proposition. Besides, for $\max\{r_w, (q_-(L_w))_{w,*}\}<p_0<2<q_0<q_+(L_w)$, by Lemma \ref{lem:Gsum} with $\alpha=1$ and $S_t=e^{-\frac{t^2}{2}L_w}$, we have that $T_t$ satisfies conditions $(ii)$ and $(iii)$ in Proposition \ref{prop:comparison-general}. 

Moreover, note that for all $0<p<q_{+}(L_w)$ and $v\in RH_{\left(\frac{q_+(L_w)}{p}\right)'}(w)$, we can find $q_0$ satisfying
$\max\{2,p\}<q_0<q_+(L_w)$ so that $v\in RH_{\left(\frac{q_0}{p}\right)'}(w)$. Hence,  Proposition \ref{prop:comparison-general}, part $(b)$,  implies
$$
\|\mathsf{g}_{K,\hh}^wf\|_{L^p(vdw)}
\lesssim
\|\Grm_{K,\hh}^wf\|_{L^p(vdw)},
$$
which proves part $(a)$.

If we now take $\max\{r_w,(q_-(L_w))_{w,*}\}<p<\infty$ and $v\in A_{\frac{p}{\max\{r_w,(q_-(L_w))_{w,*}\}}}(w)$, we can find $p_0$ satisfying 
$\max\{r_w,(q_-(L_w))_{w,*}\}<p_0<\min\{2,p\}$ (recall that $r_w<2$ and $(q_-(L_w))_{w,*}<q_-(L_w)<2$) so that $v\in A_{\frac{p}{p_0}}(w)$. Hence,  Proposition \ref{prop:comparison-general}, part $(a)$,  implies
$$
\|\Grm_{K,\hh}^wf\|_{L^p(vdw)}
\lesssim
\|\mathsf{g}_{K,\hh}^wf\|_{L^p(vdw)},
$$
which proves part $(b)$.
\qed
\subsubsection{Proof of Theorem \ref{thm:verticalconicalgegenerategradient}, part $(c)$}
We proceed as in the proof of Theorem \ref{thm:conical-verticalnon-gradientdegenerate}, part $(c)$, but in this case we need to prove, for every $K\in \N_0$,
 \begin{align*}
 \|\mathsf{g}_{K,\pp}^wf\|_{L^2(v_0dw)}\lesssim\|\Grm_{K,\pp}^wf\|_{L^2(v_0dw)}, \quad \forall v_0\in A_{\frac{2}{\max\{r_w,(q_-(L_w))_{w,*}\}}}(w)\cap RH_{\left(\frac{q_+(L_w)}{2}\right)'}(w), 
 \end{align*}
instead of \eqref{extrapolationpoissonverticalconnical}. 
To this end, fix $p_0$ and $q_0$ satisfying 
$\max\{r_w,(q_-(L_w))_{w,*}\}<p_0<2<q_0<q_+(L_w)$ so that $v_0\in A_{\frac{2}{p_0}}(w)\cap RH_{\left(\frac{q_0}{2}\right)'}(w)$.
From now on, the proof follows the lines of that of Theorem \ref{thm:conical-verticalnon-gradientdegenerate}, part $(c)$, replacing $p_+(L_w)$ with $q_+(L_w)$, using \eqref{verticalsinRH} with 
$T_{t^2}=\nabla e^{-\frac{t^2}{4u}L_w}$ and
$F(y,t)=t(t\sqrt{L_w})^Ke^{-t\sqrt{L_w}}f(y)$, and 
 Remark \ref{remark:possonmaximal} with  the previous $F(y,t)$, instead of Lemma \ref{lemma:poissonmaximal}, and   Lemma \ref{lem:Gsum} with $q=q_0$, $p=2$, $\alpha=2\sqrt{u}$, and  $S_t$ equal to the identity, instead of the fact that $e^{-\tau L_w}\in \mathcal{O}(L^2(w)-L^{q_0}(w))$. 
\qed
\subsubsection{Proof of Theorem \ref{thm:verticalconicalgegenerategradient}, part $(d)$} 
We proceed as in the proof of Theorem \ref{thm:conical-verticalnon-gradientdegenerate}, part $(d)$, but now we prove, for every $K\in  \N_0$,
 \begin{align*}
\|\Grm_{K,\pp}^wf\|_{L^2(v_0dw)} \lesssim\|\mathsf{g}_{K,\pp}^wf\|_{L^2(v_0dw)},\quad \forall v_0\in A_{\frac{2}{\max\{r_w,(q_-(L_w))_{w,*}\}}}(w)\cap RH_{\left(\frac{q_+(L_w)_w^*}{2}\right)'}(w),
 \end{align*}
instead of \eqref{extrapolationpoissonconnicalvertical}.
To this end, note that, by Remark \ref{remark:choicereverseholder} (with $\widetilde{p}=\max\{r_w,(q_-(L_w))_{w,*}\}$, $\widetilde{q}=q_+(L_w)$, $q_1=2$, and $\widetilde{k}=1$), we can find $p_0, \widehat{r}, r, q_0$ such that $r_w<\widehat{r}<2$, $1<\frac{q_0}{2}\leq r<\infty$, $v_0\in A_{\frac{2}{p_0}}(w)\cap RH_{r'}(w)$, and
\begin{align*}
1+\widehat{r}n\left(\frac{1}{2r}-\frac{1}{q_0}\right)>0.
\end{align*}
Hereafter, we can follow the proof of Theorem \ref{thm:conical-verticalnon-gradientdegenerate}, part $(d)$, but using Remark \ref{remark:possonmaximal} with $F(y,t)=t(t\sqrt{L_w})^Ke^{-t\sqrt{L_w}}f(y)$, instead of Lemma \ref{lemma:poissonmaximal}, and   Lemma \ref{lem:Gsum} with $p=p_0$, $q=2$, $\alpha=2\sqrt{u}$, and $S_t$ equal to the identity, instead of the fact that $e^{-\tau L_w}\in \mathcal{O}(L^{p_0}(w)-L^{2}(w))$. 
\qed

\subsection{Boundedness results}
 \subsubsection*{Proof of Theorem \ref{thm:boundednessconicalodd}}
The proof follows at once by Theorem \ref{thm:boundednessconicaleven} and Propositions \ref{prop:widetildeS-heatS}, part $(b)$, \ref{prop:coniclalpoisson-heat},  and \ref{prop:gradient-singradientheat}. \qed
 
 \medskip
 
 \subsubsection*{Proof of Theorem \ref{thm:boundednessverticalheat} } 
The boundedness of $\mathsf{s}^w_{m,\hh}$ follows by Theorem \ref{thm:conical-verticalnon-gradientdegenerate}, part ($a$), and Theorems \ref{thm:boundednessconicalodd} and \ref{thm:boundednessconicaleven}, since $p_+(L_w)<p_+(L_w)^{m,*}_w$, for all $m\in \N$.

The boundedness of $\mathsf{s}^w_{m,\pp}$ follows by \eqref{verticalpoissonverticalheat} and the boundedness of $\mathsf{s}^w_{m,\hh}$.
\qed

\medskip
 
 \subsubsection*{Proof of Theorem \ref{thm:boundednessverticalpoisson} }
The boundedness of $\mathsf{g}^w_{K,\hh}$ and $\mathsf{g}^w_{K,\pp}$ follow by \eqref{verticalverticalnorm} and Theorem \ref{thm:boundednessverticalheat}, since $\mathcal{W}_v^w(q_-(L_w),q_+(L_w))\subset
\mathcal{W}_v^w(p_-(L_w),p_+(L_w))$.\qed

\medskip

%
 \subsubsection*{Proof of Theorem \ref{thm:boundednessnon-tangential} }
The proof follows by Theorem \ref{thm:improvementnontangentialpoisson} and the fact that $\Ncal_{\hh}^w$ and $\Scal_{2,\hh}^w$ are bounded operators on $L^p(vdw)$ for all $p\in \mathcal{W}_v^w(p_-(L_w),\infty)$ (see Theorem \ref{thm:boundednessconicaleven}, and \cite{ChMPA18} or \cite[Proposition 7.1]{MaPAII17}).\qed

\medskip

Finally, we prove Theorem \ref{thm:boundednesssaquareroot}.
To this end, we need the following Calder\'on-Zygmund decomposition for functions on weighted Sobolev spaces
\begin{lemma}{\cite[Lemma 6.6]{AMIII06}.}\label{lem:CZweighted}
Let $n\ge 1$, $\alpha>0$, $\varpi\in A_{\infty}$, and $1\le p<\infty$ such that $\varpi\in A_p$. Assume that $f\in \Scal $ such that $\|\nabla f\|_{L^p(\varpi)}<\infty$.  Then, there exist a collection of balls $\{B_i\}_{i}$ with radii $r_{B_i}$, smooth functions $\{b_i\}_i$, and  a function $g\in L_{\loc}^1(\varpi)$ such that
\begin{equation}\label{CZ:decomp}
f=g+\sum_{i=1}^{\infty}b_i
\end{equation}
and the following properties hold:
\begin{equation}\label{CZ:g}
|\nabla g(x)| \leq C\alpha,\quad \text{for }\mu-\text{a.e. }x,
\end{equation}
\begin{equation}\label{CZ:b}
\supp b_i\subset B_i\quad \text{and}\quad \int_{B_i}|\nabla b_i(x)|^p d\varpi(x)\leq C\alpha^p \varpi(B_i),
\end{equation}
\begin{equation}\label{CZ:sum}
\sum_{i=1}^{\infty}\varpi(B_i)\leq \frac{C}{\alpha^p}\int_{\R^n}|\nabla f(x)|^p d\varpi(x),
\end{equation}
\begin{equation}\label{CZ:overlap}
\sum_{i=1}^{\infty}\chi_{4B_i}\leq N,
\end{equation}
where $C$ and $N$ depend only on $n$, $p$, and $\varpi$. In addition,  for $1\le q<p_{\varpi}^*$, where $p_{\varpi}^*$ is defined in \eqref{p_w^*},
\begin{equation}\label{CZ:PS}
\br{\fint_{B_i} |b_i|^q d\varpi}^{{\frac{1}{q}}} \lesssim \alpha r_{B_i}.
\end{equation}
\end{lemma}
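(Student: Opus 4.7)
The plan is to follow the classical Calder\'on--Zygmund scheme, adapted to the weighted Sobolev setting by systematically replacing the Lebesgue measure with $d\varpi$. First I would introduce the bad set
\[
E_{\alpha} := \bigl\{x\in\R^{n}: \mathcal{M}^{\varpi}(|\nabla f|^{p})(x) > \alpha^{p}\bigr\},
\]
where $\mathcal{M}^{\varpi}$ is the weighted Hardy--Littlewood maximal operator from \eqref{weightedHLM}. Since $\varpi\in A_{\infty}$ is doubling, $\mathcal{M}^{\varpi}$ is of weak-type $(1,1)$ with respect to $d\varpi$, so that $\varpi(E_{\alpha})\le C\alpha^{-p}\int_{\R^n}|\nabla f|^{p}\,d\varpi$. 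A Whitney covering of the open set $E_{\alpha}$ in the Euclidean sense produces balls $\{B_i\}$ with $4B_i\subset E_{\alpha}$, $r_{B_i}\sim d(B_i,E_{\alpha}^{c})$, and the finite overlap property $\sum_i\chi_{4B_i}\le N$, which yields \eqref{CZ:overlap}; summing $\varpi(B_i)$ against this overlap bound and the weak-type estimate yields \eqref{CZ:sum}.

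Next I would fix a smooth partition of unity $\{\chi_i\}$ subordinate to the dilated balls with $\supp\chi_i\subset B_i$, $\sum_i\chi_i=\chi_{E_\alpha}$ on $E_\alpha$ and $|\nabla\chi_i|\lesssim r_{B_i}^{-1}$, and set
\[
b_i:=(f-c_i)\chi_i, \qquad c_i:=\fint_{B_i} f\,d\varpi, \qquad g:=f-\sum_i b_i.
\]
The support condition in \eqref{CZ:b} is built in. The crucial analytic input is a weighted Poincar\'e--Sobolev inequality: since $\varpi\in A_p$, one has, for every $1\le q<p_{\varpi}^{*}$,
\[
\Bigl(\fint_{B_i}|f-c_i|^{q}\,d\varpi\Bigr)^{1/q}\lesssim r_{B_i}\Bigl(\fint_{CB_i}|\nabla f|^{p}\,d\varpi\Bigr)^{1/p}.
\]
Because a fixed dilate $CB_i$ necessarily meets $E_{\alpha}^{c}$ (by the Whitney construction), the right-hand side is controlled by a constant multiple of $\alpha r_{B_i}$, which gives \eqref{CZ:PS}. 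Plugging $b_i=(f-c_i)\chi_i$ into $\nabla b_i$ and using $|\nabla\chi_i|\lesssim r_{B_i}^{-1}$ together with the case $q=p$ of the Poincar\'e inequality above then delivers \eqref{CZ:b}.

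The remaining point \eqref{CZ:g} is obtained by writing, on $E_\alpha$ (where $\sum_i\chi_i=1$),
\[
\nabla g = \nabla f - \sum_i \chi_i\nabla f - \sum_i(f-c_i)\nabla\chi_i = -\sum_i(f-c_i)\nabla\chi_i,
\]
and on $E_\alpha^{c}$ observing by Lebesgue differentiation with respect to $d\varpi$ that $|\nabla f(x)|\le \alpha$ a.e. The sum on $E_\alpha$ is then bounded pointwise by $C\alpha$ using the finite overlap of $\{B_i\}$, the bound $|\nabla\chi_i|\lesssim r_{B_i}^{-1}$, and the $L^{1}(\varpi)$ Poincar\'e inequality $\fint_{B_i}|f-c_i|\,d\varpi\lesssim \alpha r_{B_i}$.

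The main obstacle I would expect is establishing the weighted Poincar\'e--Sobolev inequality in the sharp range $1\le q<p_{\varpi}^{*}$: the exponent $p_{\varpi}^{*}=npr_{\varpi}/(nr_{\varpi}-p)$ arises precisely from the self-improvement of the $A_p$ condition (one chooses $r$ slightly larger than $r_\varpi$ so that $\varpi\in A_{p/r_\varpi}$ fails by an $\varepsilon$), combined with the $(1,p)$-Poincar\'e inequality on Euclidean balls and H\"older's inequality against $d\varpi$. Everything else is a doubling/covering argument that transfers verbatim from the unweighted Euclidean theory once one works throughout with the measure $d\varpi$ in place of $dx$.
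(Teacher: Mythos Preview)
The paper does not supply its own proof of this lemma; it is simply quoted from \cite[Lemma~6.6]{AMIII06} and used as a black box in the proof of Theorem~\ref{thm:boundednesssaquareroot}. Your outline is precisely the argument carried out in that reference (itself an adaptation of the unweighted version in \cite{Au07}): Whitney decomposition of the superlevel set of $\mathcal{M}^{\varpi}(|\nabla f|^{p})$, a subordinate smooth partition of unity, and the weighted $(q,p)$ Poincar\'e--Sobolev inequality valid for $A_p$ weights in the range $1\le q<p_{\varpi}^{*}$.

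One small imprecision worth noting: the pointwise bound $|\nabla g(x)|\lesssim\alpha$ on $E_{\alpha}$ cannot be read off directly from the $L^{1}(\varpi)$ average estimate $\fint_{B_i}|f-c_i|\,d\varpi\lesssim\alpha r_{B_i}$, since $|f(x)-c_i|$ is not controlled pointwise. The standard fix is to use the cancellation $\sum_i\nabla\chi_i=0$ on $E_{\alpha}$ (coming from $\sum_i\chi_i\equiv 1$ there) to rewrite, for any fixed $j$ with $x\in B_j$,
\[
\sum_i(f-c_i)\nabla\chi_i(x)=\sum_i(c_j-c_i)\nabla\chi_i(x),
\]
and then to bound $|c_i-c_j|\lesssim\alpha r_{B_i}$ via the Poincar\'e inequality on a ball containing both $B_i$ and $B_j$ (their radii being comparable by the Whitney geometry). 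With this adjustment your sketch matches the proof in \cite{AMIII06}.
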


\subsubsection*{Proof of Theorem \ref{thm:boundednesssaquareroot}}
In \cite[Proposition 6.1]{CMR15} the authors proved that, for all $f\in \mathcal{S}$ and
 $$
 p\in \mathcal{W}_v^w(\max\{r_w,p_-(L_w)\},p_+(L_w))=
 \left(\mathfrak{r}_v(w)\max\{r_w,p_-(L_w)\},\frac{p_+(L_w)}{\mathfrak{s}_v(w)}\right),$$
it holds
\begin{align}\label{boundednesssquareroot}
\|\sqrt{L_w}f\|_{L^p(vdw)}\lesssim \|\nabla f\|_{L^{p}(vdw)}.
\end{align}
Here we extend this boundedness for  all
$$
p\in \mathcal{W}_v^w(\max\{r_w,(p_-(L_w))_{w,*}\},p_+(L_w))=
 \left(\mathfrak{r}_v(w)\max\{r_w,(p_-(L_w))_{w,*}\},\frac{p_+(L_w)}{\mathfrak{s}_v(w)}\right),
$$
(we recall that $(p_-(L_w))_{w,*}<p_-(L_w)$, see \eqref{p_{w,*}}).

First of all, note that we may assume that $r_w<p_{-}(L_w)$. Otherwise, 
$$\max\{r_w,p_-(L_w)\}=\max\{r_w,(p_-(L_w))_{w,*}\}$$ and by  \cite[Proposition 6.1]{CMR15} (see \eqref{boundednesssquareroot}) the proof would be complete. Hence, assuming that $r_w<p_-(L_w)$, let us extend
\eqref{boundednesssquareroot}  
for all $p\in\mathcal{W}_v^w(\max\{r_w,(p_-(L_w))_{w,*}\},p_+(L_w))$.

To lighten the proof we denote $p_-:=p_-(L_w)$ and $p_+:=p_+(L_w)$. Besides, we fix  $p$ satisfying:
\begin{align}\label{fixp}
\mathfrak{r}_v(w)\max\{r_w,(p_-)_{w,*}\}<p<\mathfrak{r}_v(w)p_-=\mathfrak{r}_v(w)\max\{r_w,p_-\}.
 \end{align}
We shall show that
 \begin{align}\label{weaksquarroot}
 vw\left(\left\{x\in \R^n:\sqrt{L_w}f(x)>\alpha\right\}\right)\lesssim \frac{1}{\alpha^p}\int_{\R^n}|\nabla f(x)|^p v(x)dw(x), \quad \forall \,\alpha>0.
 \end{align}
This, together with \eqref{boundednesssquareroot}, will allow us to conclude  the proof Theorem \ref{thm:boundednesssaquareroot} by interpolation  (see \cite{Ba09} and recall that by Remark \ref{remark:product-weight} $vw\in A_{\infty}$).
Thus, let us prove \eqref{weaksquarroot}.

To this end, note that the interval
\begin{align}\label{emptyinterval}
\left(\mathfrak{r}_v(w)p_-,\min\left\{\frac{p_+}{\mathfrak{s}_v(w)},p_{vw}^*\right\}\right)\neq \emptyset.
\end{align}
Indeed, by hypothesis we have that $\mathcal{W}_v^w(\max\{r_w,p_-\},p_+)\neq \emptyset$. Then, in view of \eqref{intervalrsw} and recalling that we are assuming that  $r_w<p_-$, our hypothesis implies that
$$
\mathfrak{r}_v(w)p_-<\frac{p_+}{\mathfrak{s}_v(w)}.
$$
Therefore, we just need to show that 
$$
\mathfrak{r}_v(w)p_-<p_{vw}^*.
$$
In order to prove this, notice that we can assume that $nr_{vw}>p$ (otherwise $p_{vw}^*=\infty$ and the inequality is trivial). Hence, by \eqref{fixp}, \eqref{p_w^*}, and Remark \ref{remark:product-weight},
\begin{multline*}
\frac{1}{p_{vw}^*}=\frac{1}{p}-\frac{1}{nr_{vw}}
<\frac{1}{\mathfrak{r}_v(w)(p_-)_{w,*}}-\frac{1}{nr_{vw}}
=
\frac{nr_w+p_-}{\mathfrak{r}_v(w)p_-nr_w}-\frac{1}{nr_{vw}}
\\
=
\frac{1}{\mathfrak{r}_v(w)p_-}-\frac{1}{nr_{vw}}\left(1-\frac{r_{vw}}{r_w\mathfrak{r}_v(w)}\right)
\leq 
\frac{1}{\mathfrak{r}_v(w)p_-}.
\end{multline*}

Therefore, in view of \eqref{emptyinterval}, we can take
\begin{align}\label{fixp_1}
\mathfrak{r}_v(w)p_-<p_1<\min\left\{\frac{p_+}{\mathfrak{s}_v(w)},p_{vw}^*\right\}.
\end{align}
In particular,
\begin{align}\label{p_1weight}
v\in A_{\frac{p_1}{p_-}}(w)\cap RH_{\left(\frac{p_+}{p_1}\right)'}(w).
\end{align}
Next, fix $\alpha >0$ and take a Calder\'on-Zygmund decomposition of $f$ at height $\alpha$ as in Lemma \ref{lem:CZweighted}, for $\varpi=vw$, and $p$ as in \eqref{fixp}.  Note that, by Remark \ref{remark:product-weight} $vw\in A_{\infty}$ and $r_{vw}\leq r_{w}\mathfrak{r}_v(w)<p$. Moreover, let $b_i$, $g$, and $\{B_i\}$ be the functions and the collection of balls given by Lemma \ref{lem:CZweighted}, and let $M\in \N$. We define $B_{r_{B_i}}:=(I-e^{-r_{B_i}^2L_w})^M$ and $A_{r_{B_i}}:=I-B_{r_{B_i}}=\sum_{k=1}^MC_{k,M}e^{-kr_{B_i}^2}$.
Hence, we can write $f=g+\sum_{i\in \N}A_{r_{B_i}}b_i+\sum_{i\in \N}B_{r_{B_i}}b_i=:g+\widetilde{b}+\widehat{b}$, and then
\begin{multline}\label{weakdecomposition}
vw\left(\left\{x\in \R^n:\sqrt{L_w}f(x)>\alpha\right\}\right)\leq
vw\left(\left\{x\in \R^n:\sqrt{L_w}g(x)>\frac{\alpha}{3}\right\}\right)
\\
+
vw\left(\left\{x\in \R^n:\sqrt{L_w}\,\widetilde{b}(x)>\frac{\alpha}{3}\right\}\right)
+
vw\left(\left\{x\in \R^n:\sqrt{L_w}\,\widehat{b}(x)>\frac{\alpha}{3}\right\}\right)=:I+II+III.
\end{multline}
In order to estimate $I$, first recall that $p<p_1$ (see \eqref{fixp}
 and \eqref{fixp_1}). Then,
apply Chebyshev's inequality, \eqref{boundednesssquareroot}, and properties \eqref{CZ:g}-\eqref{CZ:overlap} to obtain
\begin{multline}\label{termIsquareroot}
I\lesssim 
\frac{1}{\alpha^{p_1}}\int_{\R^n}|\sqrt{L_w}g(x)|^{p_1}v(x)dw(x)
\lesssim
\frac{1}{\alpha^{p_1}}\int_{\R^n}|\nabla g(x)|^{p_1}v(x)dw(x)
\lesssim
\frac{1}{\alpha^{p}}\int_{\R^n}|\nabla g(x)|^{p}v(x)dw(x)
\\
\lesssim
\frac{1}{\alpha^{p}}\left(\int_{\R^n}|\nabla f(x)|^{p}v(x)dw(x)
+\sum_{i\in \N}
\int_{\R^n}|\nabla b_i(x)|^{p}v(x)dw(x)\right)
\lesssim
\frac{1}{\alpha^p}\int_{\R^n}|\nabla f(x)|^pv(x)dw(x).
\end{multline}
In order to estimate  $II$ and $III$ we shall use the following inequality:
\begin{multline}\label{maximal-u}
\left(\sum_{i\in \N} \int_{B_i}\left(\mathcal{M}^{vw}(|u|^{p_1'})(x)\right)^{\frac{1}{p_1'}}v(x)dw(x)\right)^{p_1}
\lesssim
\left(\int_{\cup_{i\in \N}B_i}\left(\mathcal{M}^{vw}(|u|^{p_1'})(x)\right)^{\frac{1}{p_1'}}v(x)dw(x)\right)^{p_1}
\\
\lesssim
vw(\cup_{i\in \N}B_i)\|u\|_{L^{p_1'}(vdw)}^{p_1}\lesssim \frac{1}{\alpha^{p}}\int_{\R^n}|\nabla f(x)|^{p}v(x)dw(x),
\end{multline}
where  $u\in L^{p_1'}(vdw)$ such that $\|u\|_{L^{p_1'}(vdw)}=1$. The inequality follows by 
Kolmogorov's inequality (see \cite[Exercise 2.1.5]{GrafakosI}, and follow the proof suggested there replacing the Lebesgue measure with the measure given by the weight $vw$). Besides, in the last inequality we have applied  \eqref{CZ:sum}.

After this observation let us estimate $II$.
By Chebyshev's inequality, \eqref{boundednesssquareroot} and the definition of $\widetilde{b}$, we have
\begin{multline}\label{plugtermII}
II\lesssim 	\frac{1}{\alpha^{p_1}}\int_{\R^n}
\big|\sqrt{L_w}\,\widetilde{b}(x)\big|^{p_1}v(x)dw(x)
\lesssim
\frac{1}{\alpha^{p_1}}\int_{\R^n}
\big|\nabla\widetilde{b}(x)\big|^{p_1}v(x)dw(x)
\\
\lesssim
\frac{1}{\alpha^{p_1}}\left(\sup_{\|u\|_{L^{p_1'}(vdw)}=1}\sum_{i\in \N}\sum_{k=1}^{M}C_{k,M}\sum_{j\geq 1}\int_{C_j(B_i)}
|\nabla e^{-kr_{B_i}^2L_w}b_i(x)|\,|u(x)|v(x)dw(x)\right)^{p_1}.
\end{multline}
By H\"older's inequality and the fact that $\sqrt{\tau}\nabla e^{-\tau L_w}\in \mathcal{F}(L^{p_1}(vw)-L^{p_1}(vw))$ (see \eqref{fixp_1} and Lemma \ref{lem:ODweighted}), we estimate the integral in $x$ as follows
\begin{align*}
\int_{C_j(B_i)}&
|\nabla e^{-kr_{B_i}^2L_w}b_i(x)|\,|u(x)|v(x)dw(x)
\\&
\lesssim\frac{1}{r_{B_i}}
\left(\int_{C_j(B_i)}
\big|\sqrt{k}r_{B_i}\nabla e^{-kr_{B_i}^2L_w}b_i(x)\big|^{p_1}d(vw)(x)\right)^{\frac{1}{p_1}}
\left(\int_{C_j(B_i)}
|u(x)|^{p_1'}d(vw)(x)\right)^{\frac{1}{p_1'}}
\\&
\lesssim\frac{e^{-c4^j}}{r_{B_i}}vw(2^{j+1}B_i)
\left(\dashint_{B_i}
|b_i(x)|^{p_1}d(vw)(x)\right)^{\frac{1}{p_1}}
\left(\dashint_{2^{j+1}B_i}
|u(x)|^{p_1'}d(vw)(x)\right)^{\frac{1}{p_1'}}
\\&
\lesssim \alpha e^{-c4^j}vw(B_i)
\inf_{x\in B_i}\left(\mathcal{M}^{vw}(|u|^{p_1'})(x)\right)^{\frac{1}{p_1'}},
\end{align*}
where in the last inequality we have used \eqref{CZ:PS} (see \eqref{fixp_1}), and \eqref{doublingcondition}. Plugging this into \eqref{plugtermII} and applying \eqref{maximal-u},  we have
\begin{multline}\label{termIIsquareroot}
II
\lesssim 
\left(\sup_{\|u\|_{L^{p_1'}(vdw)}=1}\sum_{i\in \N}\sum_{k=1}^{M}C_{k,M}\sum_{j\geq 1}e^{-c4^{j}}\int_{B_i}\left(\mathcal{M}^{vw}(|u|^{p_1'})(x)\right)^{\frac{1}{p_1'}}v(x)dw(x)\right)^{p_1}
\\
\lesssim 
\left(\sup_{\|u\|_{L^{p_1'}(vdw)}=1}\sum_{i\in \N}\int_{B_i}\left(\mathcal{M}^{vw}(|u|^{p_1'})(x)\right)^{\frac{1}{p_1'}}v(x)dw(x)\right)^{p_1}
\lesssim \frac{1}{\alpha^p}\int_{\R^n}|\nabla f(x)|^pv(x)dw(x).
\end{multline}
Finally, we estimate $III$. By \eqref{CZ:sum} and \eqref{doublingcondition}, we have
\begin{multline}\label{termIIIsquareroot}
III\lesssim 
vw\Bigg(\bigcup_{i\in \N}4B_i\Bigg)+
vw\left(\left\{x\in \R^n\setminus \bigcup_{i\in \N}4B_i:\sqrt{L_w}\,\widehat{b}(x)>\frac{\alpha}{3}\right\}\right)
\\
\lesssim
\frac{1}{\alpha^p}
\int_{\R^n}|\nabla f(x)|^pv(x)dw(x)+\mathcal{III},
\end{multline}
where 
$$
\mathcal{III}:=
vw\left(\left\{x\in \R^n\setminus \bigcup_{i\in \N}4B_i:\sqrt{L_w}\,\widehat{b}(x)>\frac{\alpha}{3}\right\}\right).
$$
Again by Chebyshev's inequality and duality, proceeding as in the estimate of term $II$, we have
\begin{multline}\label{termIII}
\mathcal{III}
\lesssim
\frac{1}{\alpha^{p_1}}
\left(
\sup_{\|u\|_{L^{p_1'}(vdw)=1}}
\sum_{i\in \N}
\sum_{j\geq 2}
\left(\int_{C_j(B_i)}
\left|\sqrt{L_w}B_{r_{B_i}}b_i(x)\right|^{p_1}v(x)dw(x)
\right)^{\frac{1}{p_1}}
\|u\chi_{C_j(B_i)}\|_{L^{p_1'}(vdw)}
\right)^{p_1}
\\
=:
\frac{1}{\alpha^{p_1}}
\left(
\sup_{\|u\|_{L^{p_1'}(vdw)=1}}
\sum_{i\in \N}
\sum_{j\geq 2}\mathcal{III}_{ij}
\|u\chi_{C_j(B_i)}\|_{L^{p_1'}(vdw)}
\right)^{p_1}.
\end{multline}
We estimate $\mathcal{III}_{ij}$ by using \eqref{squareroot} and Minkowski's integral inequality:
 \begin{multline}\label{plugtermIIIij}
\mathcal{III}_{ij}
\lesssim
\left(\int_{C_j(B_i)}\left(\int_{0}^{\infty}
\left|tL_we^{-t^2L_w}B_{r_{B_i}}b_i(x)\right|\frac{dt}{t}\right)^{p_1}v(x)dw(x)
\right)^{\frac{1}{p_1}}
\\
\lesssim
\int_{0}^{\infty}
\left(\int_{C_j(B_i)}
\left|tL_we^{-t^2L_w}B_{r_{B_i}}b_i(x)\right|^{p_1}v(x)dw(x)
\right)^{\frac{1}{p_1}}\frac{dt}{t}.
\end{multline}
We compute the above integral in $x$ by using functional calculus.   The notation is taken from  \cite{Au07}, \cite[Section 7]{AMIII06}, and \cite{CMR15}.
We write $\vartheta\in[0,\pi/2)$ for the supremum of $|{\rm arg}(\langle L_wf,f\rangle_{L^2(w)})|$ over all $f$ in the domain of $L_w$.
Let $0<\vartheta <\theta<\nu<\mu<\pi /2$ and note that, for a fixed $t>0$, $\phi(z,t):=e^{-t^2 z}(1-e^{-r_{B_i}^2 z})^M$ is holomorphic in the open sector $\Sigma_\mu=\{z\in\mathbb{C}\setminus\{0\}:|{\rm arg} (z)|<\mu\}$ and satisfies $|\phi(z,t)|\lesssim |z|^M\,(1+|z|)^{-2M}$ (with implicit constant depending on $\mu$, $t>0$, $r_{B_i}$, and $M$) for every $z\in\Sigma_\mu$. 
Hence, we can write
$$
\phi(L_w,t)=\int_{\Gamma } e^{-zL_w }\eta (z,t)dz,
\qquad \text{where} \quad 
\eta(z,t) = \int_{\gamma} e^{\zeta z} \phi(\zeta,t) d\zeta.
$$
Here $\Gamma=\partial\Sigma_{\frac\pi2-\theta}$ with positive orientation (although orientation is irrelevant for our computations) and 
$\gamma=\R_+e^{i\,{\rm sign}({\rm Im} (z))\,\nu}$. It is not difficult to see that for every $z\in \Gamma$,
$$
|\eta(z,t)| \lesssim \frac{r_{B_i}^{2M}}{(|z|+t^2)^{M+1}}.
$$
By these observations, the fact that $zL_we^{-zL_w}\in \mathcal{O}(L^{p_1}(vw)-L^{p_1}(vw))$ (see \eqref{p_1weight}), \eqref{CZ:PS} (recall \eqref{fixp_1}) and since $j\geq 2$, we have
\begin{align}\label{plugtermIIIij2}
\Bigg(\int_{C_j(B_i)}&
\left|tL_we^{-t^2L_w}B_{r_{B_i}}b_i(x)\right|^{p_1}v(x)dw(x)
\Bigg)^{\frac{1}{p_1}}
\\\nonumber&
\lesssim vw(2^{j+1}B_i)
\int_{\Gamma}
\left(\dashint_{C_j(B_i)}
\left|zL_we^{-zL_w}b_i(x)\right|^{p_1}d(vw)(x)
\right)^{\frac{1}{p_1}}\frac{r_{B_i}^{2M}t}{(|z|+t^2)^{M+1}}\frac{|dz|}{|z|}
\\\nonumber&
\lesssim vw(2^{j+1}B_i)^{\frac{1}{p_1}}2^{j\theta_1}
\int_{\Gamma}\Upsilon\left(\frac{2^jr_{B_i}}{|z|^{\frac{1}{2}}}\right)^{\theta_2}
e^{-c\frac{4^jr_{B_i}^2}{|z|}}
\frac{r_{B_i}^{2M}t}{(|z|+t^2)^{M+1}}\frac{|dz|}{|z|}\left(\dashint_{B_i}
|b_i(x)|^{p_1}d(vw)(x)
\right)^{\frac{1}{p_1}}
\\\nonumber&
\lesssim \alpha r_{B_i}vw(2^{j+1}B_i)^{\frac{1}{p_1}}2^{j\theta_1}
\int_{0}^{\infty}\Upsilon\left(\frac{2^jr_{B_i}}{s^{\frac{1}{2}}}\right)^{\theta_2}
e^{-c\frac{4^jr_{B_i}^2}{s}}
\frac{r_{B_i}^{2M}t}{(s+t^2)^{M+1}}\frac{ds}{s}.
\\\nonumber&
\lesssim \alpha r_{B_i}vw(2^{j+1}B_i)^{\frac{1}{p_1}}2^{j\theta_1}
\int_{0}^{\infty}\Upsilon\left(s\right)^{\theta_2}
e^{-cs^2}
\frac{r_{B_i}^{2M}t}{(4^jr_{B_i}^2/s^2+t^2)^{M+1}}\frac{ds}{s},
\end{align}
where in the last inequality we have change the variable $s$ into $4^jr_{B_i}^2/s^2$.
Plugging this and \eqref{plugtermIIIij2} into \eqref{plugtermIIIij}, and changing the variable  $t$ into $2^{j}r_{B_i}t$, we obtain, for $M\in \N$ such that $2M>\theta_2$,
\begin{align*}
\mathcal{III}_{ij}&
\lesssim \alpha  vw(2^{j+1}B_i)^{\frac{1}{p_1}}2^{-j(2M+1-\theta_1)}
\int_{0}^{\infty}t\int_{0}^{\infty}\Upsilon\left(s\right)^{\theta_2}
e^{-cs^2}
\frac{1}{(1/s^2+t^2)^{M+1}}\frac{ds}{s}
\frac{dt}{t}
\\
&
\lesssim \alpha  vw(2^{j+1}B_i)^{\frac{1}{p_1}}2^{-j(2M+1-\theta_1)}
\Bigg(
\int_{0}^{1}t\frac{dt}{t}\int_{0}^{\infty}\Upsilon\left(s\right)^{\theta_2}
e^{-cs^2}s^{2M+2}
\frac{ds}{s}
\\&\hspace*{5cm}
+
\int_{1}^{\infty}t^{-1}\frac{dt}{t}\int_{0}^{\infty}\Upsilon\left(s\right)^{\theta_2}
e^{-cs^2}s^{2M}
\frac{ds}{s}\Bigg)
\\
&
\lesssim \alpha vw(2^{j+1}B_i)^{\frac{1}{p_1}}2^{-j(2M+1-\theta_1)}.
\end{align*}
Consequently, in view of \eqref{termIII},
by \eqref{pesosineq:Ap} and taking  $M\in \N$ large enough satisfying
$2M>\max\left\{\theta_2,\theta_1+r_w\mathfrak{r}_v(w)n-1\right\},$ we get
\begin{multline*}
\mathcal{III}
\lesssim 
\left(
\sup_{\|u\|_{L^{p_1'}(vdw)=1}}
\sum_{i\in \N}vw(B_i)
\inf_{x\in B_i}\left(\mathcal{M}^{vw}(|u|^{p_1'})(x)\right)^{\frac{1}{p_1'}}
\right)^{p_1}
\\
\lesssim 
\left(
\sup_{\|u\|_{L^{p_1'}(vdw)=1}}
\int_{\cup_{i\in \N}B_i}
\left(\mathcal{M}^{vw}(|u|^{p_1'})(x)\right)^{\frac{1}{p_1'}}v(x)dw(x)
\right)^{p_1}
\lesssim
\frac{1}{\alpha^p}\int_{\R^n}|\nabla f(x)|^pv(x)dw(x),
\end{multline*}
where in the last inequality we have used \eqref{maximal-u}.
This and \eqref{termIIIsquareroot} imply that 
$$III\lesssim \frac{1}{\alpha^p}\int_{\R^n}|\nabla f(x)|^pv(x)dw(x),$$ which, in turn, together with \eqref{weakdecomposition}, \eqref{termIsquareroot}, and  \eqref{termIIsquareroot},   implies \eqref{weaksquarroot}, and the proof is complete. 
\qed

%
\section{Unweighted boundedness for degenerate operators}\label{sec:unweighted}
In this section we prove unweighted results for degenerate operators. That is, we show boundedness on $L^p(\R^n)$ for the degenerate operators defined in \eqref{vertical-H}-\eqref{squareroot}. We obtain this from our results on $L^p(vdw)$, by taking $v=w^{-1}$.  The statements of our results are written so that the ranges where we obtain those boundedness results depend  on the weight $w$, but do not depend on the operator $L_w$.

Before being more precise, we observe that
since  we assume that $n\geq 2$, we have that $2^{*}_w=\infty$ if and only if $n=2$ and $r_w=1$. Otherwise,
$$
2^{*}_w=\frac{nr_w2}{nr_w-2}.
$$
In particular,
\begin{align}\label{2w*}
2_w^*=\begin{cases}
\frac{2nr_w}{nr_w-2}&\textrm{if}\,\,nr_w>2,
\\
\infty &\textrm{if}\,\, nr_w=2;
\end{cases}
\quad\textrm{and}\quad
(2_w^*)'=\begin{cases}
\frac{2nr_w}{nr_w+2}&\textrm{if}\,\,nr_w>2,
\\
1 &\textrm{if}\,\, nr_w=2.
\end{cases}
\end{align}
Besides, for all $M\in \N$
\begin{align}\label{2w*M}
(2^*_w)^{M,*}_w=2^{M+1,*}_w.
\end{align}
Finally, from \cite{CMR15} we know that
\begin{align}\label{chainpq2pq}
p_-(L_w)=q_-(L_w)<(2_w^*)'<2<\min\{2_w^*,q_+(L_w)\}\leq\max\{2_w^*,q_+(L_w)\}<q_+(L_w)_w^*<p_+(L_w).
\end{align}

\begin{corollary} 
Given $w \in A_2$, for $m\in \N$ and $(2_w^*)'s_w<p<\frac{2_w^*}{r_w}$, the vertical square functions $\mathsf{s}_{m,\hh}^w$ and $\mathsf{s}_{m,\pp}^w$ can be extended to   bounded operators on $L^p(\R^n)$. In particular, we have this in the following cases:
 \begin{list}{$(\theenumi)$}{\usecounter{enumi}\leftmargin=1cm \labelwidth=1cm\itemsep=0.2cm\topsep=.0cm \renewcommand{\theenumi}{\alph{enumi}}}
 
 \item If $r_w=1$, for $n=2$,  $w\in A_1\cap RH_{p'}$, $1<p<\infty$; and for $n>2$, $w\in A_1\cap RH_{\left(\frac{(n+2)p}{2n}\right)'}$, $\frac{2n}{n+2}<p<\frac{2n}{n-2}$.
 
  \item If $r_w>1$, for $1< r\leq 2$, $w\in A_r\cap 
  RH_{\left(\frac{(nr+2)p}{2nr}\right)'}$, 
   $\frac{2nr}{nr+2}<p<\frac{2n}{nr-2}$.
 
 \end{list}
\end{corollary}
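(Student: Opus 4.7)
The plan is to deduce the corollary as a direct consequence of Theorem \ref{thm:boundednessverticalheat} by specializing the weighted result to the ``unweighted'' choice $v=w^{-1}$. With this choice we have $v\,dw=dx$, so $L^p(vdw)=L^p(\R^n)$, and the admissibility requirement $v\in A_\infty(w)$ is automatic: by \eqref{dualityapclassesRHclasses}, $w^{-1}\in A_s(w)$ whenever $w\in RH_{s'}$, and since $w\in A_2\subset A_\infty$ such an $s$ exists.

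The core of the argument is to identify the range provided by Theorem \ref{thm:boundednessverticalheat} in this case. Using the duality \eqref{dualityapclassesRHclasses} between weighted and unweighted $A_p$ and $RH_s$ classes, together with the definitions \eqref{eq:defi:rw} and \eqref{def:rvw}, one computes
$$
\mathfrak{r}_{w^{-1}}(w)=\inf\{r:w^{-1}\in A_r(w)\}=\inf\{r:w\in RH_{r'}\}=s_w,
$$
and similarly $\mathfrak{s}_{w^{-1}}(w)=\inf\{s:w\in A_s\}=r_w$. Combined with the chain \eqref{chainpq2pq}, namely $p_-(L_w)\le (2_w^*)'<2<2_w^*\le p_+(L_w)$, and formula \eqref{intervalrsw}, this yields
$$
\mathcal{W}_{w^{-1}}^w(p_-(L_w),p_+(L_w))\supset \mathcal{W}_{w^{-1}}^w((2_w^*)',2_w^*)=\bigl((2_w^*)'s_w,\,2_w^*/r_w\bigr),
$$
which is precisely the range claimed in the corollary. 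Applying Theorem \ref{thm:boundednessverticalheat} then delivers the boundedness of $\mathsf{s}_{m,\hh}^w$ and $\mathsf{s}_{m,\pp}^w$ on $L^p(\R^n)$ for every $p$ in this range.

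Parts $(a)$ and $(b)$ follow by substituting into \eqref{2w*} the relevant value of $r_w$. In $(a)$ the hypothesis $w\in A_1$ forces $r_w=1$, giving the explicit values of $(2_w^*)'$ and $2_w^*$; the additional reverse H\"older hypothesis $w\in RH_{((n+2)p/(2n))'}$ (or $w\in RH_{p'}$ when $n=2$) gives the non-strict bound $s_w\le (n+2)p/(2n)$, which is then upgraded to the strict inequality $(2_w^*)'s_w<p$ needed to place $p$ in the general range via the self-improvement property of the reverse H\"older classes. In $(b)$, from $w\in A_r$ one obtains $r_w\le r$, hence $2_w^*/r_w\ge 2n/(nr-2)$ and $(2_w^*)'\le 2nr/(nr+2)$; the reverse H\"older hypothesis combined with self-improvement again yields $(2_w^*)'s_w<p<2_w^*/r_w$, completing the verification. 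The only technical (and mild) point is this appeal to self-improvement to convert non-strict inequalities coming from the hypotheses into the strict ones demanded by $\mathcal{W}_{w^{-1}}^w$; everything else reduces to a bookkeeping exercise on the intervals $\mathcal{W}_{w^{-1}}^w(p_-(L_w),p_+(L_w))$ already established in earlier sections.
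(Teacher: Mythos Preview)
Your proposal is correct and follows essentially the same approach as the paper: take $v=w^{-1}$ so that $L^p(vdw)=L^p(\R^n)$, use the duality \eqref{dualityapclassesRHclasses} to translate the weighted conditions on $w^{-1}$ into conditions on $w$, invoke \eqref{chainpq2pq}, and apply Theorem~\ref{thm:boundednessverticalheat}. The only cosmetic difference is that you compute $\mathfrak{r}_{w^{-1}}(w)=s_w$ and $\mathfrak{s}_{w^{-1}}(w)=r_w$ once and then read off the interval from \eqref{intervalrsw}, whereas the paper fixes $p$ and verifies directly that $w^{-1}\in A_{p/p_-(L_w)}(w)\cap RH_{(p_+(L_w)/p)'}(w)$; these are equivalent bookkeeping choices.
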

\begin{proof}
Fix $w\in A_2$, $m\in \N$, and $(2_w^*)'s_w<p<\frac{2_w^*}{r_w}$. Then, by \eqref{chainpq2pq}, we have that
$p_-(L_w)s_w<p<\frac{p_+(L_w)}{r_w}$. Consequently, by \eqref{eq:defi:rw}, 
$$
w\in RH_{\left(\frac{p}{p_-(L_w)}\right)'}\cap A_{\frac{p_+(L_w)}{p}},
$$
which in view of \eqref{dualityapclassesRHclasses} yields
$$
w^{-1}\in A_{\frac{p}{p_-(L_w)}}(w)\cap RH_{\left(\frac{p_+(L_w)}{p}\right)'}(w).
$$
Hence taking $v=w^{-1}$ in Theorem \ref{thm:boundednessverticalheat}, we conclude that  $\mathsf{s}_{m,\hh}^w$ and $\mathsf{s}_{m,\pp}^w$ can be extended to  bounded operators on $L^p(\R^n)$.

Assume now that $r_w=1$, in particular $w\in A_2$, and note that, since we are assuming that $n\geq 2$ by \eqref{2w*}, 
\begin{align*}
2_w^*=\begin{cases}
\frac{2n}{n-2}&\textrm{if}\,\,n>2,
\\
\infty &\textrm{if}\,\, n=2;
\end{cases}
\quad\textrm{and}\quad
(2_w^*)'=\begin{cases}
\frac{2n}{n+2}&\textrm{if}\,\,n>2,
\\
1 &\textrm{if}\,\, n=2.
\end{cases}
\end{align*}
Thus, for $n=2$ since $2_w^*=\infty$, the conditions $1<p<\infty$ and
$
w\in A_1\cap RH_{p'},
$
can be written as
$(2_w^*)'s_w<p<\frac{2_w^*}{r_w}$. If now $n>2$, note that again the conditions $\frac{2n}{n+2}<p<\frac{2n}{n-2}$ and
$
w\in A_1 \cap RH_{\left(\frac{(n+2)p}{2n}\right)'},
$
can be written as
$(2_w^*)'s_w<p<\frac{2_w^*}{r_w}$.

Assume next that $r_w>1$ and $w\in A_r\cap 
  RH_{\left(\frac{(nr+2)p}{2nr}\right)'}$, for $1<r\leq 2$ and
  $\frac{2nr}{nr+2}<p<\frac{2n}{nr-2}$. This can be written as $w\in A_r\subseteq A_2$ and, since $r_w<r$,
  $$
  (2_w^*)'s_w=\frac{2nr_w}{nr_w+2}s_w< \frac{2nr}{nr+2}s_w<p<\frac{2nr}{r_w(nr-2)}<\frac{2nr_w}{r_w(nr_w-2)}=
 \frac{2_w^*}{r_w}.$$

\end{proof}
\begin{corollary} 
Given $w \in A_2$, for $K\in \N_0$ and $(2_w^*)'s_w<p\leq\frac{2}{r_w}$, the vertical square functions $\mathsf{g}_{K,\hh}^w$ and $\mathsf{g}_{K,\pp}^w$ can be extended to bounded operators on $L^p(\R^n)$. In particular, we have this in the following cases:
 \begin{list}{$(\theenumi)$}{\usecounter{enumi}\leftmargin=1cm \labelwidth=1cm\itemsep=0.2cm\topsep=.0cm \renewcommand{\theenumi}{\alph{enumi}}}
 
 \item If $r_w=1$, for $n=2$, $w\in A_1\cap RH_{p'}$, and $1<p\leq 2$; and, for $n>2$, $w\in A_1\cap RH_{\left(\frac{(n+2)p}{2n}\right)'}$, and $\frac{2n}{n+2}<p\leq 2$.
 
  \item If $r_w>1$, for $1< r\leq 2$, $w\in A_r\cap 
  RH_{\left(\frac{(nr+2)p}{2nr}\right)'}$, $\frac{2nr}{nr+2}<p\leq \frac{2}{r}$.
 
 \end{list}
\end{corollary}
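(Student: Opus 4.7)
The plan is to reduce this unweighted corollary to Theorem \ref{thm:boundednessverticalpoisson} by taking $v=w^{-1}$, exactly mirroring the strategy of the preceding corollary (for $\mathsf{s}_{m,\hh}^w$ and $\mathsf{s}_{m,\pp}^w$). Fix $w\in A_2$, $K\in\N_0$, and $p$ with $(2_w^*)'s_w<p\le 2/r_w$. The first task is to show that $p\in\mathcal{W}_{w^{-1}}^w(q_-(L_w),q_+(L_w))$, i.e., $w^{-1}\in A_{p/q_-(L_w)}(w)\cap RH_{(q_+(L_w)/p)'}(w)$. By the duality relation \eqref{dualityapclassesRHclasses}, this is equivalent to $w\in RH_{(p/q_-(L_w))'}\cap A_{q_+(L_w)/p}$.

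For the reverse H\"older side, chain \eqref{chainpq2pq} gives $q_-(L_w)<(2_w^*)'$, so $q_-(L_w)s_w<(2_w^*)'s_w<p$, whence $s_w<p/q_-(L_w)$; by the definition of $s_w$ together with the self-improvement property of the reverse H\"older classes, this yields $w\in RH_{(p/q_-(L_w))'}$. For the Muckenhoupt side, since $pr_w\le 2<q_+(L_w)$ (using $q_+(L_w)>2$ from \eqref{chainpq2pq}), one has $r_w<q_+(L_w)/p$, hence $w\in A_{q_+(L_w)/p}$ by the definition of $r_w$. With these two conditions in hand, Theorem \ref{thm:boundednessverticalpoisson} applied to $v=w^{-1}$ immediately produces the boundedness of $\mathsf{g}_{K,\hh}^w$ and $\mathsf{g}_{K,\pp}^w$ on $L^p(w^{-1}dw)=L^p(\R^n)$.

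The particular cases are then routine translations of the general condition using \eqref{2w*}. In case (a), $r_w=1$ forces $w\in A_1$ and $2/r_w=2$; for $n=2$ one has $2_w^*=\infty$ and $(2_w^*)'=1$, so the hypothesis reduces to $s_w<p\le 2$ together with $w\in A_1$, which matches $w\in A_1\cap RH_{p'}$ with $1<p\le 2$; for $n>2$ one has $(2_w^*)'=2n/(n+2)$, so $(2_w^*)'s_w<p$ becomes $s_w<(n+2)p/(2n)$, i.e., $w\in RH_{((n+2)p/(2n))'}$, and the range becomes $\tfrac{2n}{n+2}<p\le 2$. In case (b), if $w\in A_r$ with $1<r\le 2$ then $r_w\le r$ and thus $2/r\le 2/r_w$; analogously $w\in RH_{((nr+2)p/(2nr))'}$ forces $s_w<(nr+2)p/(2nr)$, which combined with $r_w\le r$ gives $(2_w^*)'s_w=\frac{2nr_w}{nr_w+2}s_w<p$, so the hypothesis of the main statement is satisfied on the declared range $\frac{2nr}{nr+2}<p\le 2/r$.

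There is no genuine obstacle here, as the entire argument is a bookkeeping exercise: the nontrivial analytic content sits in Theorem \ref{thm:boundednessverticalpoisson} and in the chain of inequalities \eqref{chainpq2pq} already established. The only point that requires a little care is the borderline case $p=2/r_w$, where one uses the \emph{strict} inequality $2<q_+(L_w)$ from \eqref{chainpq2pq} to still obtain $r_w<q_+(L_w)/p$, ensuring that $w\in A_{q_+(L_w)/p}$.
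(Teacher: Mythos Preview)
Your proof is correct and follows exactly the same strategy as the paper: reduce to Theorem \ref{thm:boundednessverticalpoisson} with $v=w^{-1}$ via the duality \eqref{dualityapclassesRHclasses}, using \eqref{chainpq2pq} to get $q_-(L_w)s_w<p<q_+(L_w)/r_w$, and then unpack the special cases through \eqref{2w*}. Your added remark on the borderline $p=2/r_w$ is a nice explicit touch that the paper leaves implicit.
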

\begin{proof}
Fix $w\in A_2$, $K\in \N_0$, and $(2_w^*)'s_w<p\leq\frac{2}{r_w}$. Then, by \eqref{chainpq2pq}, we have that
$q_-(L_w)s_w<p<\frac{q_+(L_w)}{r_w}$. Consequently, by \eqref{eq:defi:rw},
$$
w\in RH_{\left(\frac{p}{q_-(L_w)}\right)'}\cap A_{\frac{q_+(L_w)}{p}},
$$
which in view of \eqref{dualityapclassesRHclasses} yields
$$
w^{-1}\in A_{\frac{p}{q_-(L_w)}}(w)\cap RH_{\left(\frac{q_+(L_w)}{p}\right)'}(w).
$$
Hence taking $v=w^{-1}$ in Theorem \ref{thm:boundednessverticalpoisson}, we conclude that $\mathsf{g}_{K,\hh}^w$ and $\mathsf{g}_{K,\pp}^w$ can be extended to bounded operators on $L^p(\R^n)$.

In particular if $r_w=1$,  for $n=2$ since $2_w^*=\infty$ (see \eqref{2w*}), the conditions $1<p\leq 2$ and
$
w\in A_1\cap RH_{p'},
$
can be written as
$(2_w^*)'s_w<p\leq \frac{2}{r_w}$. If now $n>2$, we have that $(2_w^*)'=\frac{2n}{n+2}$. Then,  the conditions $\frac{2n}{n+2}<p\leq 2$ and
$
w\in A_1 \cap RH_{\left(\frac{(n+2)p}{2n}\right)'},
$
can be written as
$(2_w^*)'s_w<p\leq \frac{2}{r_w}$ (see \eqref{2w*}).

If now $r_w>1$, $1<r\leq 2$, and $w\in A_r\cap 
  RH_{\left(\frac{(nr+2)p}{2nr}\right)'}$ with
  $\frac{2nr}{nr+2}<p\leq \frac{2}{r}$. This can be written as $w\in A_r\subseteq A_2$ and, since $r_w<r$,
  $$
  (2_w^*)'s_w=\frac{2nr_w}{nr_w+2}s_w< \frac{2nr}{nr+2}s_w<p\leq\frac{2}{r}<\frac{2}{r_w}.$$

\end{proof}
\begin{corollary}\label{cor:previous}
Given $w \in A_2$, for $m\in \N$ and $(2_w^*)'s_w<p<\frac{2^{2m,*}_{w}}{r_w}$, the conical square functions $\Scal_{2m-1,\hh}^w$, $\Scal_{2m-1,\pp}^w$,  $\Grm_{2m-1,\hh}^w$ and $\Grm_{2m-1,\pp}^w$  can be extended to bounded operators on $L^p(\R^n)$. In particular, this holds in the following cases:
 \begin{list}{$(\theenumi)$}{\usecounter{enumi}\leftmargin=1cm \labelwidth=1cm\itemsep=0.2cm\topsep=.0cm \renewcommand{\theenumi}{\alph{enumi}}}
 
 \item If $r_w=1$, for $n=2$, $w\in A_1\cap RH_{p'}$, and $1<p<\infty$; and, for $n>2$, $w\in A_1\cap RH_{\left(\frac{(n+2)p}{2n}\right)'}$,  and $\frac{2n}{n+2}<p<2_w^{2m,*}$.
 
  \item If $r_w>1$, for $1< r\leq 2$, $w\in A_r\cap RH_{\left(\frac{(nr+2)p}{2nr}\right)'}$, and $\frac{2n}{n+2}<p<\infty$,  if $nr\leq 4m$, and 
   $\frac{2nr}{nr+2}<p<\frac{2n}{nr-4m}$, if $nr>4m$.
  \end{list}  
    Note that $\big(\frac{2nr}{nr+2},\frac{2n}{nr-4m}\big)$ is a not empty interval for $n<4m+1$ and $1<r\leq 2$.
\end{corollary}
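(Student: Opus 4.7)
The plan is to deduce Corollary~\ref{cor:previous} from Theorem~\ref{thm:boundednessconicalodd} by the substitution $v = w^{-1}$, for which $v\,dw = dx$ and hence $L^p(vdw) = L^p(\R^n)$. The task then reduces to checking that the stated range of $p$ is contained in $\mathcal{W}_{w^{-1}}^w(p_-(L_w),p_+(L_w)^{2m-1,*}_w)$, i.e.\ that $w^{-1} \in A_\infty(w)$ (which is automatic) together with the two-sided weight condition imposed by Theorem~\ref{thm:boundednessconicalodd}.

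First I would invoke the duality relations \eqref{dualityapclassesRHclasses}: $w^{-1}\in A_q(w)$ iff $w \in RH_{q'}$, and $w^{-1}\in RH_{s'}(w)$ iff $w \in A_s$. With these, the required membership $w^{-1}\in A_{p/p_-(L_w)}(w)\cap RH_{(p_+(L_w)^{2m-1,*}_w/p)'}(w)$ becomes $w\in RH_{(p/p_-(L_w))'}\cap A_{p_+(L_w)^{2m-1,*}_w/p}$, which in view of \eqref{eq:defi:rw} is equivalent to
\[
p_-(L_w)\,s_w \;<\; p \;<\; \frac{p_+(L_w)^{2m-1,*}_w}{r_w}.
\]

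Next I would show that the range $(2_w^*)'s_w < p < 2_w^{2m,*}/r_w$ lies inside the interval above. The lower bound is immediate from \eqref{chainpq2pq}, which gives $p_-(L_w)\le (2_w^*)'$. For the upper bound, the monotonicity in $q$ of the exponent $q_w^{k,*}$ (evident from the definition \eqref{p_w^*}), combined with $2_w^*\le p_+(L_w)$ and the key identity \eqref{2w*M}, yields
\[
p_+(L_w)^{2m-1,*}_w \;\geq\; (2_w^*)^{2m-1,*}_w \;=\; 2_w^{2m,*},
\]
so $p<2_w^{2m,*}/r_w$ indeed forces $p<p_+(L_w)^{2m-1,*}_w/r_w$. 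This finishes the main assertion. The explicit cases (a) and (b) are then unpacked exactly as in the two previous corollaries of this section: when $r_w=1$, the formulas in \eqref{2w*} identify $(2_w^*)'$, and the hypothesis $w\in A_1 \cap RH_{((n+2)p/(2n))'}$ encodes precisely $r_w=1$ and $(2_w^*)'s_w<p$ (the strict inequality being absorbed by the self-improving property of reverse H\"older classes); for $r_w>1$, the assumption $w\in A_r \cap RH_{((nr+2)p/(2nr))'}$ with $1<r\le 2$ gives $r_w\le r$ and $s_w\le (nr+2)p/(2nr)$, so the stated bounds on $p$ again imply the main inequality after using monotonicity to pass from $r$ to $r_w$ (and distinguishing $nr\le 4m$ vs.\ $nr>4m$ to detect whether $2_w^{2m,*}$ is infinite or finite).

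The main obstacle is really only bookkeeping: one needs to keep track of the correct placement of $r_w$ and $s_w$ in the interval $\mathcal{W}_{w^{-1}}^w$ and to confirm the crucial identity \eqref{2w*M}, which upgrades the odd-power conical bound at $p_+(L_w)$ into a Sobolev-type exponent at level $2m$ starting from $2_w^*$. No new analytic estimate is required.
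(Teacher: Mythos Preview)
Your proposal is correct and follows essentially the same route as the paper: substitute $v=w^{-1}$, convert the weighted conditions on $w^{-1}$ into unweighted ones on $w$ via \eqref{dualityapclassesRHclasses}, and then use \eqref{chainpq2pq} together with \eqref{2w*M} to embed the stated $p$-range inside $\mathcal{W}_{w^{-1}}^w(p_-(L_w),p_+(L_w)^{2m-1,*}_w)$. Your only addition is making explicit the monotonicity of $q\mapsto q_w^{k,*}$ that the paper leaves implicit when citing \eqref{chainpq2pq} and \eqref{2w*M}; the unpacking of cases (a) and (b) is handled identically.
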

\begin{proof}
Fix $w\in A_2$, $m\in \N$, and $(2_w^*)'s_w<p<\frac{2_w^{2m,*}}{r_w}$. Then, by \eqref{chainpq2pq} and \eqref{2w*M}, we have that
$p_-(L_w)s_w<p<\frac{p_+(L_w)^{2m-1,*}_w}{r_w}$. Consequently, by \eqref{eq:defi:rw},
$$
w\in RH_{\left(\frac{p}{p_-(L_w)}\right)'}\cap A_{\frac{p_+(L_w)^{2m-1,*}_w}{p}},
$$
which in view of \eqref{dualityapclassesRHclasses} yields
$$
w^{-1}\in A_{\frac{p}{p_-(L_w)}}(w)\cap RH_{\left(\frac{p_+(L_w)_w^{2m-1,*}}{p}\right)'}(w).
$$
Hence taking $v=w^{-1}$ in Theorem \ref{thm:boundednessconicalodd}, we conclude that $\Scal_{2m-1,\hh}^w$, $\Scal_{2m-1,\pp}^w$,  $\Grm_{2m-1,\hh}^w$, and $\Grm_{2m-1,\pp}^w$  can be extended to bounded operators on $L^p(\R^n)$.

If now we assume that $r_w=1$ (in particular $w\in A_2$). Then,  for $n=2$ since $2_w^*=\infty$ (see \eqref{2w*}) we have that $2_w^{2m,*}=\infty$ (see \eqref{2w*M}). Thus, the conditions $1<p<\infty$ and
$
w\in A_1\cap RH_{p'},
$
can be written as
$(2_w^*)'s_w<p<\frac{2_w^{2m,*}}{r_w}$. Similarly, in view of \eqref{2w*},  if $n>2$, the conditions $\frac{2n}{n+2}<p<2^{2m,*}_w$ and
$
w\in A_1 \cap RH_{\left(\frac{(n+2)p}{2n}\right)'},
$
can be written as
$$
(2_w^*)'s_w=\frac{2n}{n+2}s_w<
p<2_w^{2m,*}=\frac{2_w^{2m,*}}{r_w}.
$$

Assume next that $r_w>1$, $1<r\leq 2$, and $w\in A_r\cap 
  RH_{\left(\frac{(nr+2)p}{2nr}\right)'}$, in particular, notice  that  $w\in A_2$.
  In the case that $nr\leq 4m$ we take  
  $\frac{2nr}{nr+2}<p<\infty$. Then, since $2_w^{2m,*}=\infty$,  we in fact have
  $$
  (2_w^*)'s_w=\frac{2nr_w}{nr_w+2}s_w< \frac{2nr}{nr+2}s_w<p<\infty=
 \frac{2_w^{2m,*}}{r_w}.$$
As for the case that $nr> 4m$, we take  
  $\frac{2nr}{nr+2}<p<\frac{2n}{nr-4m}$. Thus,  since $r_w<r$, we get
  $$
  (2_w^*)'s_w=\frac{2nr_w}{nr_w+2}s_w< \frac{2nr}{nr+2}s_w<p<\frac{2nr}{r_w(nr-4m)}<
\frac{2nr_w}{r_w(nr_w-4m)}= \frac{2_w^{2m,*}}{r_w}.$$
\end{proof}
\begin{corollary} 
Given $w \in A_2$, for $(2_w^*)'s_w<p<\infty$ we have that $\mathcal{N}_{\hh}^w$  can be extended to a bounded operator on $L^p(\R^n)$. In particular, this holds in the following cases:
 \begin{list}{$(\theenumi)$}{\usecounter{enumi}\leftmargin=1cm \labelwidth=1cm\itemsep=0.2cm\topsep=.0cm \renewcommand{\theenumi}{\alph{enumi}}}
 
 \item If $r_w=1$, for $n=2$, $w\in A_1\cap RH_{p'}$, and  $1<p<\infty$; and for $n>2$, $w\in A_1\cap RH_{\left(\frac{(n+2)p}{2n}\right)'}$, and  $\frac{2n}{n+2}<p<\infty$.
 
  \item If $r_w>1$, for $1\leq r<2$, $w\in A_r\cap RH_{\left(\frac{(nr+2)p}{2nr}\right)'}$, and $\frac{2nr}{nr+2}<p<\infty$.

 \end{list}
\end{corollary}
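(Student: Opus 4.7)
The plan is to follow the exact same strategy used in the previous corollaries of this section: fix $v:=w^{-1}$ so that $L^p(vdw)=L^p(\mathbb{R}^n)$, and then apply Theorem \ref{thm:boundednessnontangentialheat} to get the boundedness on $L^p(\mathbb{R}^n)$. This reduces everything to verifying that the Muckenhoupt-type condition required by Theorem \ref{thm:boundednessnontangentialheat} holds for the pair $(w,w^{-1})$ in the given range of $p$.

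More precisely, fix $w\in A_2$ and $(2_w^*)'s_w<p<\infty$. Theorem \ref{thm:boundednessnontangentialheat} gives boundedness of $\mathcal{N}_{\hh}^w$ on $L^p(vdw)$ for all $p\in \mathcal{W}_v^w(p_-(L_w),\infty)$, and by \eqref{intervalrsw} the only requirement on $v$ (with no upper endpoint condition) is $v\in A_{p/p_-(L_w)}(w)$. By the duality \eqref{dualityapclassesRHclasses}, the condition $w^{-1}\in A_{p/p_-(L_w)}(w)$ is equivalent to $w\in RH_{(p/p_-(L_w))'}$, which by \eqref{eq:defi:rw} means $p>p_-(L_w)s_w$. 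Now by \eqref{chainpq2pq} we have $p_-(L_w)\leq (2_w^*)'$, so our hypothesis $p>(2_w^*)'s_w$ implies $p>p_-(L_w)s_w$. Therefore $w^{-1}\in A_{p/p_-(L_w)}(w)$, hence Theorem \ref{thm:boundednessnontangentialheat} applied with $v=w^{-1}$ shows that $\Ncal_{\hh}^w$ extends to a bounded operator on $L^p(w^{-1}dw)=L^p(\mathbb{R}^n)$.

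It only remains to check that in the specific cases (a) and (b) the assumptions listed force $(2_w^*)'s_w<p<\infty$. For case (a), $r_w=1$ and \eqref{2w*} give $(2_w^*)'=1$ if $n=2$, and $(2_w^*)'=\frac{2n}{n+2}$ if $n>2$; also $w\in A_1$ is equivalent to $r_w=1$, and $w\in RH_{p'}$ (respectively $w\in RH_{((n+2)p/(2n))'}$) says $s_w<p$ (respectively $s_w<\frac{(n+2)p}{2n}$), which exactly rearranges to $(2_w^*)'s_w<p$. For case (b), $w\in A_r$ with $1\leq r<2$ gives $r_w\leq r$, so the monotonicity of $r\mapsto \frac{2nr}{nr+2}$ yields $(2_w^*)'=\frac{2nr_w}{nr_w+2}\leq \frac{2nr}{nr+2}$; together with $w\in RH_{((nr+2)p/(2nr))'}$, which means $s_w<\frac{(nr+2)p}{2nr}$, we conclude
\[
(2_w^*)'s_w\leq \frac{2nr}{nr+2}\cdot\frac{(nr+2)p}{2nr}= p,
\]
so the main hypothesis is in force.

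The argument is entirely analogous to the previous corollaries; no genuine obstacle appears, since Theorem \ref{thm:boundednessnontangentialheat} has no upper endpoint restriction (unlike the Poisson non-tangential maximal function, whose unweighted version would require the improvement in Theorem \ref{thm:improvementnontangentialpoisson}). The only point to be careful about is the bookkeeping at the endpoint $n=2$ with $r_w=1$, where $2_w^*=\infty$ and $(2_w^*)'=1$ by \eqref{2w*}, which matches the stated condition $1<p<\infty$.
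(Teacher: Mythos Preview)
Your proof is correct and follows essentially the same approach as the paper: take $v=w^{-1}$, use the duality \eqref{dualityapclassesRHclasses} together with $p_-(L_w)\le (2_w^*)'$ from \eqref{chainpq2pq} to verify $w^{-1}\in A_{p/p_-(L_w)}(w)$, and then apply Theorem~\ref{thm:boundednessnontangentialheat}. One cosmetic point: in case~(b) your displayed chain should end with a strict inequality (since $s_w<\frac{(nr+2)p}{2nr}$), giving $(2_w^*)'s_w<p$ as required.
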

\begin{proof}
Fix $w\in A_2$ and $(2_w^*)'s_w<p<\infty$. Then, by \eqref{chainpq2pq} and \eqref{2w*M}, we have that
$p_-(L_w)s_w<p<\infty$. Consequently, by \eqref{eq:defi:rw},
$$
w\in RH_{\left(\frac{p}{p_-(L_w)}\right)'},
$$
which in view of \eqref{dualityapclassesRHclasses} yields
$$
w^{-1}\in A_{\frac{p}{p_-(L_w)}}(w).
$$
Hence taking $v=w^{-1}$ in Theorem \ref{thm:boundednessnontangentialheat}, we conclude that $\Ncal_{\hh}^w$  can be extended to a bounded operator on $L^p(\R^n)$.

Assume now that $r_w=1$ (in particular, $w\in A_2$). For $n=2$, since $2_w^*=\infty$ (see \eqref{2w*}), the conditions $1<p<\infty$ and
$
w\in A_1\cap RH_{p'},
$
can be written as
$(2_w^*)'s_w<p<\infty$. If now $n>2$, in view of \eqref{2w*}, the conditions $\frac{2n}{n+2}<p<\infty$ and
$
w\in A_1 \cap RH_{\left(\frac{(n+2)p}{2n}\right)'},
$
can be written as
$(2_w^*)'s_w<p<\infty$.

Assume next that $r_w>1$ and $w\in A_r\cap 
  RH_{\left(\frac{(nr+2)p}{2nr}\right)'}$, for $1<r\leq 2$ and
  $\frac{2nr}{nr+2}<p<\infty$. In particular, $w\in A_2$. Besides, since $r_w<r$ in view on \eqref{2w*}, we can write the previous conditions as
  $$
  (2_w^*)'s_w=\frac{2nr_w}{nr_w+2}s_w< \frac{2nr}{nr+2}s_w<p<\infty.
  $$
 \end{proof}
\begin{corollary} 
Given $w \in A_2$, for $(2_w^*)'s_w<p<\frac{2^{2,*}_w}{r_w}$ we have that $\mathcal{N}_{\pp}^w$  can be extended to a bounded operator on $L^p(\R^n)$. In particular, this holds in the following cases:
 \begin{list}{$(\theenumi)$}{\usecounter{enumi}\leftmargin=1cm \labelwidth=1cm\itemsep=0.2cm\topsep=.0cm \renewcommand{\theenumi}{\alph{enumi}}}
 
 \item If $r_w=1$, for $n=2$, $w\in A_1\cap RH_{p'}$, and  $1<p<\infty$; and for $n>2$, $w\in A_1\cap RH_{\left(\frac{(n+2)p}{2n}\right)'}$, and  $\frac{2n}{n+2}<p<2_w^{2,*}$.
 
  \item If $r_w>1$, for $1\leq r<2$, $w\in A_r\cap RH_{\left(\frac{(nr+2)p}{2nr}\right)'}$, and $\frac{2nr}{nr+2}<p<\infty$, if $nr\leq 4$, or 
   $\frac{2nr}{nr+2}<p<\frac{2n}{nr-4}$, if $nr>4$.
   \end{list}
   Note that the interval $\big(\frac{2nr}{nr+2},\frac{2n}{nr-4}\big)$ is not empty for $n<5$ and $1<r\leq 2$.
\end{corollary}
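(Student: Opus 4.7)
The approach mirrors the previous corollaries in this section: specialize the weighted boundedness result, Theorem \ref{thm:boundednessnon-tangential}, to the choice $v=w^{-1}$, so that $L^p(v\,dw)=L^p(\R^n)$, and then translate the hypothesis $v\in A_{p/p_-(L_w)}(w)\cap RH_{(p_+(L_w)^*_w/p)'}(w)$ into a condition on the unweighted classes of $w$ via the duality \eqref{dualityapclassesRHclasses}. This yields
\[
w\in RH_{(p/p_-(L_w))'}\cap A_{p_+(L_w)^*_w/p},
\]
which by the definition of $r_w,s_w$ in \eqref{eq:defi:rw} is equivalent to $p_-(L_w)\,s_w<p<p_+(L_w)^*_w/r_w$.

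Next I would invoke the chain \eqref{chainpq2pq}, which gives $p_-(L_w)\le (2_w^*)'$ and $p_+(L_w)\ge 2_w^*$, so that (also using \eqref{2w*M})
\[
p_+(L_w)^*_w\ge (2_w^*)^*_w=2_w^{2,*}.
\]
Consequently the hypothesis $(2_w^*)'\,s_w<p<2_w^{2,*}/r_w$ suffices to place $p$ inside $\mathcal{W}_{w^{-1}}^w(p_-(L_w),p_+(L_w)^*_w)$, and the abstract statement follows from Theorem \ref{thm:boundednessnon-tangential}.

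For parts (a) and (b), I would simply substitute the formulas in \eqref{2w*} for $(2_w^*)'$ and $2_w^{2,*}$, together with \eqref{2w*M}, and check that the stated Muckenhoupt/reverse H\"older hypotheses on $w$ force $s_w$ and $r_w$ into the required range (using the usual self-improvement of the $A_p$ and $RH_s$ classes to get the strict inequalities). For instance, in case (b), $w\in A_r$ with $1<r\le 2$ gives $r_w<r$, and $w\in RH_{((nr+2)p/(2nr))'}$ gives $s_w\le (nr+2)p/(2nr)$; a short computation shows $(2_w^*)'s_w\le r_w(nr+2)p/(r(nr_w+2))<p$ precisely because $r_w<r$, while the right endpoint condition $p<2_w^{2,*}/r_w$ is verified by comparing $2n/(nr-4)$ to $2n/(nr_w-4)$ in the case $nr>4$, and is automatic when $nr\le 4$ (since then $nr_w<4$ and $2_w^{2,*}=\infty$). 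Case (a) is handled in the same way, with the observation that $r_w=1$ so $2_w^{2,*}/r_w=2_w^{2,*}$ and the formula for $(2_w^*)'$ collapses as in \eqref{2w*}.

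No deep obstacle is expected; the only point requiring care is the bookkeeping of the case distinctions $nr_w\lessgtr 4$ (respectively $nr\lessgtr 4$), and making sure that the sufficient conditions in (a), (b) are indeed expressed in terms of $r$ (an upper bound for $r_w$) rather than $r_w$ itself, so that the corollary is directly verifiable from standard $A_p$/$RH_s$ membership of $w$ without computing $r_w$ and $s_w$ explicitly.
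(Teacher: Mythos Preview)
Your proposal is correct and follows essentially the same route as the paper: specialize Theorem \ref{thm:boundednessnon-tangential} to $v=w^{-1}$, use the duality \eqref{dualityapclassesRHclasses} together with \eqref{eq:defi:rw}, \eqref{chainpq2pq}, and \eqref{2w*M} to land in the interval $(2_w^*)'s_w<p<2_w^{2,*}/r_w$, and then read off (a) and (b) from \eqref{2w*} exactly as in Corollary \ref{cor:previous} with $m=1$. The paper in fact simply refers back to that corollary for (a) and (b), so your slightly more explicit verification is fine.
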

\begin{proof}
Fix $w\in A_2$ and $(2_w^*)'s_w<p<\frac{2_w^{2,*}}{r_w}$. Then, by \eqref{chainpq2pq} and \eqref{2w*M}, we have that
$p_-(L_w)s_w<p<\frac{p_+(L_w)^{*}_w}{r_w}$. Consequently, by \eqref{eq:defi:rw},
$$
w\in RH_{\left(\frac{p}{p_-(L_w)}\right)'}\cap A_{\frac{p_+(L_w)^{*}_w}{p}},
$$
which in view of \eqref{dualityapclassesRHclasses} yields
$$
w^{-1}\in A_{\frac{p}{p_-(L_w)}}(w)\cap RH_{\left(\frac{p_+(L_w)_w^{*}}{p}\right)'}(w).
$$
Hence taking $v=w^{-1}$ in Theorem \ref{thm:boundednessnon-tangential}, we conclude that $\Ncal_{\pp}^w$  can be extended to a bounded operator on $L^p(\R^n)$.

To obtain parts $(a)$ and $(b)$ we proceed as in the proof Corollary \ref{cor:previous} taking $m=1$.
 \end{proof}
\begin{corollary} \label{cor:previous2}
Given $w \in A_2$, for  $\max\left\{r_w,((2_w^*)')_{w,*}\right\} s_w<p<\frac{2^{*}_w}{r_w}$, we have that $\sqrt{L_w}$
can be extended to a bounded operator on $L^p(\R^n)$.
In particular, this is the case in the following situations:
 \begin{list}{$(\theenumi)$}{\usecounter{enumi}\leftmargin=1cm \labelwidth=1cm\itemsep=0.2cm\topsep=.0cm \renewcommand{\theenumi}{\alph{enumi}}}
  \item For $p=2$,   $1\leq r_w<\frac{2}{n}+1$ and 
  $s_w<\min\left\{\frac{2}{r_w},\frac{nr_w+4}{nr_w}\right\}$ ($(s_w)'>\max\left\{\left(\frac{2}{r_w}\right)',\frac{nr_w}{4}+1\right\}$).
  
  \item If $r_w=1$ and $n=2$ for $w\in A_1\cap RH_{p'}$ and
  $1<p<\infty$; if 
  $r_w=1$ and
  $2< n\leq 4$, for $w\in A_1\cap RH_{p'}$ and
  $1<p<\frac{2n}{n-2}$.
  
  \item  If $r_w=1$ and $n> 4$, for $w\in A_1\cap RH_{\left(\frac{p(n+4)}{2n}\right)'}$ and
  $\frac{2n}{n+4}<p<\frac{2n}{n-2}$.
  
  \item If $r_w>1$, for $1< r\leq 2$, $w\in A_r\cap RH_{\left(\frac{p}{\max\left\{r,2rn/(nr+4)\right\}}\right)'}$ and $\max\left\{r,\frac{2rn}{nr+4}\right\}<p<\frac{2n}{nr-2}$.
  
  \end{list}

\end{corollary}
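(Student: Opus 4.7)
The plan is to apply Theorem~\ref{thm:boundednesssaquareroot} with the distinguished choice $v=w^{-1}\in A_\infty(w)$, for which $vw\equiv 1$, so that $L^p(vdw)=L^p(\R^n)$ and $\dot W^{1,p}(vdw)=\dot W^{1,p}(\R^n)$. This reduces the unweighted degenerate statement to a computation of the weight parameters of $w^{-1}$ as an element of $A_\infty(w)$, mirroring the strategy used in the preceding corollaries of this section.

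The first task is to identify the indices $\mathfrak r_{w^{-1}}(w)$ and $\mathfrak s_{w^{-1}}(w)$: from the duality~\eqref{dualityapclassesRHclasses} between weighted and unweighted Muckenhoupt/reverse H\"older classes one reads off
\[
\mathfrak r_{w^{-1}}(w)=s_w,\qquad \mathfrak s_{w^{-1}}(w)=r_w,
\]
and hence, by~\eqref{intervalrsw}, $\mathcal W_{w^{-1}}^w(p_0,q_0)=(p_0 s_w,\,q_0/r_w)$ for any $0\le p_0<q_0\le\infty$. Consequently, the range of $p$ produced by Theorem~\ref{thm:boundednesssaquareroot} becomes $\bigl(\max\{r_w,(p_-(L_w))_{w,*}\}\,s_w,\ p_+(L_w)/r_w\bigr)$.

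To replace $p_\pm(L_w)$ by the universal quantities $(2_w^*)'$ and $2_w^*$, I would invoke the inequalities $p_-(L_w)\le(2_w^*)'$ and $p_+(L_w)\ge 2_w^*$ from~\eqref{chainpq2pq}, together with the monotonicity of $p\mapsto p_{w,*}$ defined in~\eqref{p_{w,*}}; this yields the inclusion
\[
\bigl(\max\{r_w,((2_w^*)')_{w,*}\}\,s_w,\ 2_w^*/r_w\bigr)\subset \mathcal W_{w^{-1}}^w\bigl(\max\{r_w,(p_-(L_w))_{w,*}\},p_+(L_w)\bigr),
\]
which is exactly the main assertion of the corollary. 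The four explicit cases (a)--(d) are then routine: using the closed-form $((2_w^*)')_{w,*}=\tfrac{2nr_w}{nr_w+4}$ (valid when $nr_w>2$, with the natural limit otherwise), one verifies in each regime that the stated hypotheses translate into both a lower bound $p>\max\{r_w,((2_w^*)')_{w,*}\}\,s_w$ and an upper bound $p<2_w^*/r_w$. I anticipate the main bookkeeping obstacle to lie in case~(d): here the hypotheses $w\in A_r\cap RH_{t'}$ produce $r_w<r$ and $s_w<t$ by self-improvement, and one must check that the prefactor $\max\{r,2nr/(nr+4)\}$ appearing in the hypothesis dominates $\max\{r_w,((2_w^*)')_{w,*}\}$, while the endpoint $2n/(nr-2)$ stays below $2_w^*/r_w=2n/(nr_w-2)$.
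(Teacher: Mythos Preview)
Your approach is correct and essentially identical to the paper's: both take $v=w^{-1}$, use the duality~\eqref{dualityapclassesRHclasses} to translate the $A_p(w)$ and $RH_{s'}(w)$ conditions on $w^{-1}$ into $RH$ and $A_p$ conditions on $w$ (the paper does this pointwise for a fixed $p$, whereas you compute $\mathfrak r_{w^{-1}}(w)=s_w$ and $\mathfrak s_{w^{-1}}(w)=r_w$ once and for all), and then invoke~\eqref{chainpq2pq} together with the monotonicity of $p\mapsto p_{w,*}$ to pass from $p_\pm(L_w)$ to the universal quantities $(2_w^*)'$, $2_w^*$. The verification of cases (a)--(d) in the paper is exactly the bookkeeping you outline, including the key observation $((2_w^*)')_{w,*}=2nr_w/(nr_w+4)$ and, for~(d), the comparison $\max\{r,2nr/(nr+4)\}>\max\{r_w,((2_w^*)')_{w,*}\}$ and $2n/(nr-2)<2_w^*/r_w$.
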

\begin{proof}
Fix $w\in A_2$ and $\max\left\{r_w,((2_w^*)')_{w,*}\right\}s_w<p<\frac{2_w^{*}}{r_w}$. Then, by \eqref{chainpq2pq} and \eqref{2w*M}, we have that
$\max\left\{r_w,(p_-(L_w))_{w,*}\right\}s_w<p<\frac{p_+(L_w)}{r_w}$. Consequently, by \eqref{eq:defi:rw},
$$
w\in RH_{\left(\frac{p}{\max\left\{r_w,(p_-(L_w))_{w,*}\right\}}\right)'}\cap A_{\frac{p_+(L_w)}{p}},
$$
which in view of \eqref{dualityapclassesRHclasses} yields
$$
w^{-1}\in A_{\frac{p}{\max\left\{r_w,(p_-(L_w))_{w,*}\right\}}}(w)\cap RH_{\left(\frac{p_+(L_w)}{p}\right)'}(w).
$$
Hence taking $v=w^{-1}$ in Theorem \ref{thm:boundednesssaquareroot}, we conclude that $\sqrt{L_w}$  can be extended to a bounded operator on $L^p(\R^n)$.

Now, note that, 
for $p=2$, 
$1\leq r_w<\frac{2}{n}+1$ and 
  $s_w<\min\left\{\frac{2}{r_w},\frac{nr_w+4}{nr_w}\right\}$ imply that
  $$
  \max\left\{r_w, ((2^*_w)')_{w,*}\right\}s_w= \max\left\{r_w, \frac{2nr_w}{nr_w+4}\right\}s_w<2<\frac{2n}{nr_w-2}=\frac{2nr_w}{r_w(nr_w-2)}\leq \frac{2^*_w}{r_w}.
  $$

 To see that $(b)$ and $(c)$ imply that $w\in A_2$ and $\max\left\{r_w,((2_w^*)')_{w,*}\right\} s_w<p<\frac{2^{*}_w}{r_w}$, it is enough to notice that for  $r_w=1$, if $2\leq n\leq 4$  we have that $\max\{r_w,((2_w^*)')_{w,*}\}=r_w=1$. Moreover, by \eqref{2w*}, $2_w^*=\infty$, if $n=2$, and $2_w^*=\frac{2n}{n-2}$, if $n>2$.

  On the other hand, if $r_w=1$ and $n>4$, we have that $\max\{r_w,((2_w^*)')_{w,*}\}=((2_w^*)')_{w,*}=\frac{2n}{n+4}$. Besides, as we have observed above,
 $2n/(n-2)=2^*_w=2^*_w/r_w$.

Finally, part $(d)$ follows  from the following observation: for $1<r_w<r\leq 2$,
$$
\max\left\{r,\frac{2rn}{nr+4}\right\}>
\max\left\{r_w,\frac{2r_wn}{nr_w+4}\right\}=\max\left\{r_w,((2_w^*)')_{w,*}\right\},
$$
and 
$$
\frac{2n}{nr-2}<\frac{2nr_w}{r_w(nr_w-2)}=\frac{2_w^*}{r_w}.
$$
\end{proof}
\begin{remark}
Note that in Corollary \ref{cor:previous2}, part $(a)$, we improve the range obtained in  \cite[Theorem 11.4]{CMR15}. To see this, note that we define $s_w$ as the conjugate exponent of the one defined in  \cite{CMR15} using the same notation.
\end{remark}


\section*{Acknowledgements}
I thank J.M. Martell for some valuable comments.

\bibliographystyle{acm}

\end{document}